\renewcommand{\Bbb}{\mathbb}
\newcommand{\R}{\mathbb{R}}
\newcommand{\C}{\mathbb{C}}
\newcommand{\N}{\mathbb{N}}
\newcommand{\Z}{\mathbb{Z}}
\newcommand{\Q}{\mathbb{Q}}
\newcommand{\T}{\mathbb{T}}
\newcommand{\Su}{\mathbb{S}}
\newcommand{\SL}{{\rm SL}}
\newcommand{\GL}{{\rm GL}^+}
\newcommand{\Mat}{{\rm Mat}}
\newcommand{\tr}{\mbox{tr}}
\newcommand{\conorm}{\mathrm{m}}
\newcommand{\proj}[1]{\hat{#1}}
\newcommand{\abs}[1]{\bigl| #1 \bigr|} 
\newcommand{\norm}[1]{\lVert#1\rVert} 
\newcommand{\normtwo}[1]{
	{\left\vert\kern-0.25ex\left\vert\kern-0.25ex\left\vert #1
		\right\vert\kern-0.25ex\right\vert\kern-0.25ex\right\vert} }
\newcommand{\length}{\rm length}
\newcommand{\Pp}{\mathbb{P}}
\newcommand{\U}{\mathscr{U}}
\newcommand{\medir}{{\overline{v}}}
\newcommand{\ledir}{{\underline{v}}}
\newcommand{\Bscr}{\mathscr{B}}
\newcommand{\vb}{\mathbf{v}}
\newcommand{\bld}[1]{\mathbf{{#1}}}
\newcommand\restr[2]{{
		\left.\kern-\nulldelimiterspace 
		#1 
		\vphantom{\big|} 
		\right|_{#2} 
}}
\theoremstyle{plain}
\newtheorem{theorem}{Theorem}[section]
\newtheorem{proposition}{Proposition}[section]
\newtheorem{corollary}[proposition]{Corollary}
\newtheorem{lemma}[proposition]{Lemma}
\theoremstyle{definition}
\newtheorem{definition}{Definition}[section]
\newtheorem{assumption}{Assumption}
\theoremstyle{definition}
\newtheorem{remark}{Remark}[section]
\newtheorem{example}[theorem]{Example}
\numberwithin{equation}{section}
\newcommand{\Prob}{\mathrm{Prob}}
\newcommand{\Imp}{\mathrm{Im}\,}
\newcommand{\rank}{\mathrm{rank}}
\newcommand{\Ker}{\mathrm{Ker}}
\newcommand{\supp}{\mathrm{supp}}
\title {A dynamical Thouless formula}
\date{}
\begin{document}

\author[J. Bezerra]{Jamerson Bezerra}
\address{Faculty of Mathematics and Computer Science, Nicolaus Co\-pernicus University (UMK), Poland.}
\email{jdouglas@impa.br}

\author[A. Cai]{Ao Cai}
\address{Departamento de Matem\'atica, Pontif\'icia Universidade Cat\'olica do Rio de Janeiro (PUC-Rio), Brazil}
\email{godcaiao@gmail.com}

\author[P. Duarte]{Pedro Duarte}
\address{Departamento de Matem\'atica and CMAFcIO\\
Faculdade de Ci\^encias\\
Universidade de Lisboa\\
Portugal
}
\email{pmduarte@fc.ul.pt}

\author[C. Freijo]{Catalina Freijo}
\address{Instituto de Matem\'{a}tica e Estat\'{i}stica, Universidade de S\~{a}o Paulo, Brazil}
\email{catalinafreijo@gmail.com}

\author[S. Klein]{Silvius Klein}
\address{Departamento de Matem\'atica, Pontif\'icia Universidade Cat\'olica do Rio de Janeiro (PUC-Rio), Brazil}
\email{silviusk@mat.puc-rio.br}

\begin{abstract}
In this paper we establish an abstract, dynamical Thouless-type formula for affine families of $\rm{GL} (2,\R)$ cocycles. This result extends the classical formula relating, via the Hilbert transform, the maximal Lyapunov exponent and the integrated density of states of  a Schr\"odinger operator. Here, the role of the integrated density of states will be played by a more geometrical quantity, the fibered rotation number. As an application of this formula we present limitations on the modulus of continuity of random linear cocycles. Moreover, we derive H\"older-type continuity properties of the fibered rotation number for linear cocycles over various base dynamics.
\end{abstract}

\maketitle

\setcounter{tocdepth}{1}
\tableofcontents

\section{Introduction and statements}\label{intro}
Thouless formula relates a mathematical physics object, the integrated density of states (IDS) of a Schr\"odinger operator, and a dynamical systems object, the Lyapunov exponent (LE) of the operator, via a singular integral operator, namely the Hilbert transform. It is named after the British condensed-matter physicist David J. Thouless, who formulated it in the context of the one dimensional Anderson model and proved it (not completely rigorously) in~\cite{Thou72}. The result was later extended and proven rigorously by Avron and Simon~\cite{AvronSimon}, Craig and Simon~\cite{CS83} and others. Let us describe it more precisely.

Consider an invertible ergodic transformation  $T\colon X \to X$ 
over a probability space $(X,\mu)$. Given a bounded and  measurable observable $\upsilon \colon X \to \R$,
let $v_n (x) := \upsilon (T^n x)$ for all $x \in X$ and $n \in \Z$. 

Denote by $l^2 (\Z)$ the Hilbert space of square summable sequences of real numbers $(\psi_n)_{n\in\Z}$.  
The {\em discrete ergodic Schr\"odinger operator} with potential
$n\mapsto \upsilon_n(x)$  is the operator $H (x)$ defined on $l^2 (\Z) \ni \psi  = \{\psi_n\}_{n \in \Z}$ by
\begin{equation}\label{ s op}
[ H (x) \, \psi ]_n := - (\psi_{n+1} + \psi_{n-1}) +  v_n (x) \, \psi_n\, .
\end{equation} 

Note that due to the ergodicity of the system, the spectral properties of the family of operators $\{ H (x) \colon x \in X \}$ are $\mu$-a.s. independent of the phase $x$. 

Given an energy parameter $E\in\R$, the Schr\"odinger (or eigenvalue) equation $H (x) \, \psi = E \, \psi$ can be solved formally by means of the iterates of a certain dynamical system. More precisely, consider the  associated  {\em Schr\"odinger cocycle} 
$ X \times \R^2 \ni (x, v) \mapsto (T x, A_E (x) \, v) \in X \times \R^2$, where  
$A_E \colon X \to \SL_2 (\R)$ is given by
$$A_{  E} (x) := \left[ \begin{array}{ccc} 
 \upsilon (x)  - E  & &  -1  \\
1 & &  \phantom{-}0 \\  \end{array} \right] = \left[ \begin{array}{ccc} 
 \upsilon (x)    & &  -1  \\
1 & &  \phantom{-}0 \\  \end{array} \right] + E \, \left[ \begin{array}{ccc} 
 -1   & &  0  \\
\phantom{-}0 & &  0 \\  \end{array} \right] \,.$$

Let $A^n_{ E}$ denote the $n$-th iterate of the cocycle, that is,
$$A^n_E (x) = A_E (T^{n-1} x) \cdots A_E (Tx) \, A_E (x) \, .$$ 

Then the formal solution of the Schr\"odinger equation $H (x) \, \psi = E \, \psi$ is given by
\begin{align} 
\label{schrodinger formal solution}
\left[\begin{array}{c}
\psi_{n}\\
\psi_{n-1} \\ \end{array}\right]  =  
A^n_{ E} (x)
   \left[\begin{array}{c}
\psi_0\\
\psi_{-1} \\ \end{array}\right]\, .
\end{align}

The average asymptotic growth rate of the iterates of the Schr\"odinger cocycle $A_E$ is called the maximal {\em Lyapunov exponent}, denoted by $L_1 (A_E)$. Moreover, the {\em integrated density of states} $N(E)$ measures, in some sense, how many states correspond to energies below the level $E$. Thouless formula establishes the following relation  between these two quantities:
$$
L_1 (A_E)=\int \log\abs{E-E'}dN(E') \, , 
$$
where the integral above is in the Lebesgue-Stieljes sense.

 A version of the formula is also valid  for (the slightly more general) one dimensional self-adjoint Jacobi operators, and it was subsequently extended in several directions: to band lattice Schr\"odinger operators by Craig and Simon~\cite{CraigSimon}, relating the sum of the nonnegative Lyapunov exponents to the IDS;  to i.i.d. random non self-adjoint Jacobi operators by Goldsheid and Khoruzhenko~\cite{GK2005}; to long-range quasi-periodic Schr\"odinger operators with trigonometric polynomial potentials by Haro and Puig~\cite{HaroPuig}. Finally, a more general version of the formula for self-adjoint block Jacobi matrices with dynamically defined entries was obtained by Chapman and Stolz~\cite{CS15}.
 
Thouless formula was initially employed by Craig and Simon~\cite{CS83, CraigSimon} to establish the $\log$-H\"older continuity of the IDS. 

Since it relates the LE to the IDS via a singular integral operator, Thouless formula can be used to transfer H\"older-type (e.g. H\"older or weak-H\"older) moduli of continuity\footnote{Note that much weaker moduli of continuity, such as $\log$-H\"older, cannot be transferred via the Hilbert transform. This can also be seen by recalling that the IDS is always $\log$-H\"older continuous while the LE can be discontinuous, e.g.  in the case of non-uniformly hyperbolic $\SL(2,\R)$ cocycles in the $C^0$ topology, see~\cite{Bochi02}, or even in the case of quasiperiodic cocycles in the smooth topology, see~\cite{WangYou13,WangYou18}.}  from one quantity to the other, see~\cite[Lemma 10.3]{GS01} for a formal statement. For instance, for the classical Anderson model (where the potential $\{v_n\}_{n\in\Z}$ is an i.i.d. sequence of random variables), Le Page~\cite{LePage} established the H\"older continuity of the LE, which then implies the H\"older continuity of the IDS. This, in turn, can be used in a multiscale analysis scheme to establish the Anderson localization of the operator. This approach was also employed in other related contexts, see for instance~\cite{CS15, DK-Holder}. 

In the opposite direction, the formula can be used to establish {\em limitations} on the modulus of continuity of the LE, via the IDS. This method goes back to Halperin, whose argument was made rigorous by Simon and Taylor~\cite{ST85} and extended by Duarte, Klein and Santos~\cite{DKS19}  and more recently  by Bezerra and Duarte~\cite{BeDu22}.

Moreover,  Thouless formula also plays a role in establishing the absolutely continuous spectrum of the almost Mathieu operator, see Avila~\cite{Avila-ac}.

Note that all of the aforementioned results are within the scope of lattice (or band lattice) Schr\"odinger or Jacobi operators. It turns out that the IDS $N(E)$, a physical quantity, is (linearly) related  to $\rho (A_E)$, the fibered rotation number of the cocycle $A_E$ (the exact linear relation depends on the scaling considered).  

In this paper, instead of the one-parameter family $E \mapsto A_E$ of Schr\"odinger cocycles, we consider  general affine families of $\rm{GL} (2,\R)$ cocycles of the form $A_t = A+tB$ where $A \colon X \to  \rm{GL} (2,\R)$ and $B \colon X \to \Mat(2,\R)$. Under appropriate assumptions, to be formally introduced below, we 
establish the following abstract Thouless formula:
$$
L_1(A_t)=L_1(B)+\int_\R \log\abs{t-s}d\rho(s),\,\,\, \forall t \in \C,
$$
where $L_1$ refers to the first Lyapunov exponent and $d\rho$ is a density measure associated with the fibered rotation number of $A_t$.

\bigskip

 Moreover, we employ the above Thouless formula to establish sharp limitations on the modulus of continuity of the Lyapunov exponent of random linear cocycles, which improve on the result in~\cite{DKS19}. 
 Furthermore, we derive the H\"older-type continuity of the fibered rotation number for linear cocycles over various types of base dynamics.

\bigskip

\subsection{The main assumptions}\label{assumptions}

Let $T \colon X\to X$ be a homeomorphism on a compact metric space $X$ and let $\mu\in\Prob(X)$ 
be an ergodic $T$-invariant probability measure.
Define 
$$\GL_2(\R):=\left\{ A\in \Mat_2(\R) \, \colon \, \det A >0 \, \right\}  $$
to be the group of  $2$ by $2$ matrices with positive determinant.

Consider a continuous function $A\colon \R\times X\to\GL_2(\R)$ which we regard as a one parameter family
$A_t \colon X\to\GL_2(\R)$ indexed by $t\in\R$. Let $F_t\colon X\times \R^2\to X\times \R^2$ be the cocycle defined by
$$F_t(x,v):=(Tx,A_t(x)v),$$
whose iterates are denoted by
$$
A_t^n(x):= A_t(T^{n-1} x)\, \cdots\, A_t(Tx)\, A_t(x) .
$$

\begin{assumption}[Analyticity and Invertibility]\label{inv}
	There are positive constants $R$ and $c$ such that for each $x\in X$, the function $\R\ni t \mapsto A_t(x)$ admits an analytic extension to the complex strip
	$$ \mathscr{S}_R:= \{ z\in \C\colon\,  |\Imp z|\leq R \}   $$
	 with  $\abs{\det(A_t(x))}\geq c>0$
	for all $(t,x)\in \mathscr{S}_R\times X$. 
\end{assumption}

Many  of the results below are stated for one-parameter families of matrices $\{A_t  \colon  t\in I\}$ which are smooth but not necessarily analytic, where the index set $I$  always stands for some interval $I\subset\R$.

\begin{definition}
	\label{winding def}
	A smooth curve  of matrices $I\ni t\mapsto A_t\in\GL_2(\R)$ is said to be positively (resp. negatively)  winding, if for all $t\in I$, the quadratic form
	$Q_{A_t}:\R^2\to\R$,
	$$Q_{A_t}(v):= (A_t \,v)\wedge (\dot A_t\, v) = (\det A_t)\,  v \wedge (A_t^{-1} \, \dot A_t \, v),$$
	is  positive (resp. negative) semidefinite, with one eigenvalue bounded away from $0$. Here $\dot A_t:=\frac{d}{dt}A_t$ and given any two vectors $v_1, v_2\in\R^2$,
	$v_1\wedge v_2 := \det(v_1, v_2)=\norm{v_1} \norm{v_2} \sin \measuredangle(v_1, v_2)$.
\end{definition}

Positive (negative) winding means that for every  non-zero vector $v\in\R^2$ which is not a real eigenvector of any of the matrices $A_t^{-1} \, \dot A_t$, the curve $I\ni t\mapsto A_t\, v\in\R^2\setminus\{0\}$ winds positively (resp. negatively) around the origin as
$t$ runs in $I$.

\begin{definition}	
	A family of cocycles $A_t:X\to\GL_2(\R)$ is said to be positively (negatively)  winding, if for every $x\in X$ the analytic curve $I\ni t\mapsto A_t(x)$ is   positively (negatively)  winding.
\end{definition}

\begin{assumption}[Winding]
	\label{winding assumption}
	The family of cocycles $A_t$ is positively (or negatively)  winding. 
\end{assumption}

The first Lyapunov exponent of the cocycle, denoted by  $L_1(A_t)$, measures the fiber exponential growth rate along the orbits of $F_t$. Since $(T,\mu)$ is ergodic, by  J. Kingman sub-additive theorem~\cite{Ki68}, for $\mu$-almost every $x\in X$
$$ L_1(A_t) =\lim_{n\to +\infty} \frac{1}{n}\,\log \norm{A_t^n(x)}.$$

Throughout the manuscript  we denote by $\proj{v}\in\Pp^1$ the projective point of a vector $v \in \R^2\setminus\{0\}$.
Similarly we denote by $\hat A$ the projective action of a matrix $A\in\SL_2(\R)$ and
let $\hat{F}_t:X\times \Pp^1\to X\times\Pp^1$ denote the projective cocycle $$\hat F_t(x, \proj v):=(T x, \proj{A}_t(x) \proj{v}) .$$
For the sake of notational simplicity, many times we write $A\hat v$ instead of $\hat A \hat v$.
The fibered rotation number of $A_t$, denoted by $\rho(A_t)$, is defined as the $\mu$-almost sure limit
\begin{equation}\label{def rot number}
	\rho(A_t) :=  \lim_{n\to +\infty} \frac{1}{\pi n}\, \measuredangle(A_t^n(x)\hat v, \hat v)  , 
\end{equation}
where $(x,t,v)\in X\times  I \times \Pp^1$.
In Section~\ref{winding} we properly define the angle $\measuredangle(A_t^n(x)\hat v, \hat v)$ and show that $I\ni t\mapsto \rho(A_t)$ is a well defined, non-decreasing and continuous function.
Moreover up to an additive constant the fibered rotation number is independent of the choices made to define the angle   $\measuredangle(A_t^n(x)\hat v, \hat v)$.

\begin{assumption}[Affine form]\label{affine}
	The one-parameter family $A_t$ has the form $A_t(x)=A(x)+tB(x)$ where $A  \colon X\to \GL_2(\R)$  and $B \colon X\to \Mat_2(\R)$ are continuous functions.
\end{assumption}

The family of cocycles $A_t$ is well defined for all $t\in\C$, although, by  Assumption 1, the matrices $A_t(x)$ are  possibly only invertible for $t\in \mathscr{S}_{R}$.

We say  that $B$  has \emph{dominated splitting} when there exists  a continuous decomposition  $\R^2=E_0(x)\oplus E_\infty(x)$ in lines $E_0(x)$ and $E_\infty(x)$, which is $T$-invariant, i.e.,
 $B(x)  E_0(x)=E_0(T x)$ and $B(x) E_\infty(x)\subset E_\infty(T x)$, for all $x\in X$, and such that
for some integer $n_0$, 
$\norm{ B^{n_0}(x)\, v_0} > \norm{ B^{n_0}(x)\, v_\infty} $
for all $x\in X$ and all unit vectors $v_0\in E_0(x)$ and $v_\infty \in E_\infty(x)$. For a rank $1$ cocycle $B$, $E_\infty(x)=\Ker(B(x))$.

\begin{assumption}[Dominated Splitting]\label{dom-split}
	The cocycle $B$ has dominated splitting. In particular we have that $L_1(B)>L_2(B)\geq -\infty$. 
\end{assumption}

\subsection{Statements}
\label{statements}
We can now state the main result of this paper and some of its consequences.

\begin{theorem}\label{main}
	With assumptions 1-4 fulfilled, for any $t\in \C$,
	\begin{equation}
		\label{Thouless ID}
		L_1(A_t)=L_1(B)+\int_\R \log\abs{t-s}d\rho(s) ,
	\end{equation}
	where $d\rho$ is the Lebesgue-Stieltjes measure associated with the fibered rotation number $\rho(A_t)$.
\end{theorem}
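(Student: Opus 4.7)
The plan is to identify $u(t) := L_1(A_t)$ as the logarithmic potential of a positive Borel measure on $\R$ via the Riesz representation theorem for subharmonic functions, and then to verify that this measure coincides with $d\rho$.

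First I would show that $u$ is subharmonic on all of $\C$ with the correct asymptotic behaviour at infinity. By Assumption 3 the map $t\mapsto A_t(x)$ is affine, so for every $n$ and every $x$ the function $t\mapsto \log\norm{A_t^n(x)}$ is subharmonic on $\C$ as the logarithm of the norm of a matrix-valued polynomial. Averaging in $x$ preserves subharmonicity, and Kingman's subadditive theorem writes $u(t)=\inf_n \tfrac{1}{n}\int\log\norm{A_t^n(x)}\,d\mu(x)$ as a decreasing limit of subharmonic functions, hence subharmonic. For the asymptotics, factor $A_t=t\,(B+t^{-1}A)$; by Assumption 4 the dominated splitting of $B$ persists under small continuous perturbations and makes $L_1$ continuous at $B$, yielding $L_1(B+t^{-1}A)\to L_1(B)$ and hence $u(t)=\log\abs{t}+L_1(B)+o(1)$ as $\abs{t}\to\infty$.

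The Riesz representation theorem then produces a positive Borel measure $\nu:=\tfrac{1}{2\pi}\Delta u$ of total mass $1$ and a harmonic function $h$ such that
$$u(t)=\int_\C \log\abs{t-z}\,d\nu(z)+h(t),$$
with $h\equiv L_1(B)$ forced by the growth of $u$. Two things remain: $\supp\nu\subset\R$ and $\nu=d\rho$. For the support statement, the winding Assumption 2 should provide complex-hyperbolic behaviour of $A_t$ for $t\in\mathscr{S}_R\setminus\R$: the projective action of $A_t(x)$ on $\Pp^1(\C)$ contracts toward a holomorphic invariant section (landing in the upper or lower half-plane according to the signs of $\Imp t$ and of the winding), making $u$ harmonic off $\R$, so $\nu|_{\C\setminus\R}=0$. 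For the identification, I would construct a holomorphic function $\tilde u$ on $\mathscr{S}_R\cap\HP$ with $\Rep\tilde u=u$ and $\Imp\tilde u=\pi\,\rho(A_t)$ up to an additive constant, taking $\tilde u$ to be (an ergodic average of) the logarithm of the expansion factor of $A_t(x)$ along the holomorphic invariant section produced in the previous step. The Cauchy--Riemann equations together with Stieltjes inversion for Herglotz functions then give $\nu=d\rho$ on $\R$.

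The main obstacle is this last step: producing the holomorphic object $\tilde u$ with imaginary part $\pi\rho$ requires a complexified Oseledets-type argument for the invariant holomorphic section, careful tracking of a single-valued branch of the angle used to define $\rho$ --- the very purpose of the winding Assumption 2 --- and a precise normalization ensuring that the mass of $d\rho$ on $\R$ matches the mass $1$ of $\nu$ (which in turn corresponds to the total increment of $\rho$ at $\pm\infty$, readable off the factorization $A_t=t(B+t^{-1}A)$). Assumption 1 supplies the analytic framework on $\mathscr{S}_R$ needed to carry out these complex-analytic extensions rigorously.
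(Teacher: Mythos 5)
Your proposal takes a genuinely different route from the paper. You follow the classical potential--theoretic proof of Thouless formula (Avron--Simon, Craig--Simon, Johnson--Moser): identify $u(t)=L_1(A_t)$ as a logarithmic potential via the Riesz decomposition of a subharmonic function, pin down the constant and the support from the asymptotics, and then identify the Riesz measure with $d\rho$ by building a Herglotz function whose imaginary boundary values recover $\pi\rho$. The paper instead proceeds by a direct, more elementary computation: by Lemma~\ref{bad winding directions} and Proposition~\ref{prop log-concave}, the winding property forces $t\mapsto\langle A_t^n(x)e_i,e_j\rangle$ to be a degree-$n$ polynomial with $n$ simple real roots and leading coefficient $\langle B^n(x)e_i,e_j\rangle$; taking $\frac1n\log|\cdot|$ and letting $n\to\infty$ splits the limit into $L_1(B)$ (Lemma~\ref{L1(B) limit}) plus a logarithmic potential of the normalized root-counting measure, and the key Proposition~\ref{lim sum f(tk)=int f drho} shows the root-counting measures converge weakly to $d\rho$ because each pair of consecutive roots corresponds to one half-turn of the projective curve. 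Only the final extension from $t\in\C\setminus\R$ to all of $\C$ (via the fact that two subharmonic functions equal a.e.\ must coincide) overlaps with your strategy. Your route, if completed, would give a conceptually clean picture and some robustness, but the paper's route is self-contained and avoids complex-dynamics input entirely.

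Two steps of your sketch carry real gaps. First, the assertion that $u$ is harmonic on $\mathscr{S}_R\setminus\R$ needs a proof that the complexified cocycle $A_t$ is uniformly hyperbolic on $\Pp^1(\C)$ when $\Imp t\neq0$. For Schr\"odinger cocycles this is the Herman contraction of the upper half-plane, but for a general winding family the quadratic form $Q_{A_t}$ may only be positive \emph{semi}-definite for $t\in\R$ (Assumption~\ref{winding assumption} allows one degenerate direction at each $t,x$), so the Möbius maps $\hat A_t(x)$ for $\Imp t>0$ a priori only send $\overline{\HP}$ into $\overline{\HP}$ rather than strictly into $\HP$; passing to an iterate and using compactness of $X$ should repair this, but it is not immediate and must be argued. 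Even then you only get harmonicity on $\mathscr{S}_R\setminus\R$, so you still need the two-subharmonic-functions-equal-a.e.\ step to rule out mass of $\nu$ on $\{\,|\Imp t|>R\,\}$, which your write-up omits. Second --- and you flag this yourself --- the identification $\nu=d\rho$ via a holomorphic $\tilde u$ with $\Imp\tilde u=\pi\rho$ is precisely the content of the theorem: producing this function requires a complexified Oseledets section, control of its continuous boundary values, and a careful match with the branch of $\measuredangle$ used to define $\rho$ in Section~\ref{winding}. Without carrying out that construction (a nontrivial generalization of Weyl/Johnson--Moser theory to this dynamical setting), the proof is a plan rather than an argument; it is exactly this difficulty that the paper sidesteps by counting real roots and using Lemma~\ref{lem:lengthToRotation}.
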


\begin{remark}
\label{rmk assumption 4}

The dominated splitting assumption is only used in Lemma~\ref{compact support} below through the following chain of implications
$$\begin{aligned}
 B\text{ has dominated splitting }&\Rightarrow \begin{aligned}  &A_t \text{ has dominated}\text{ splitting }
\forall\text{ large }t \end{aligned}\\
& \Rightarrow  d\rho\text{ has compact support}.
\end{aligned}$$
If we can prove that $A_t$ has dominated splitting for all sufficiently large $t$, or else that the Lebesgue-Stieltjes measure $d\rho$ has compact support then Theorem~\ref{main} would hold with Assumption 4 replaced by the much weaker hypothesis $L_1(B)>L_2(B)\geq -\infty$.

Moreover,  the compactness of $\supp(d\rho)$ is a technical assumption required in the proof of
Proposition~\ref{lim sum f(tk)=int f drho}. So it is possible that this proposition, and whence Theorem~\ref{main}, still holds even if this support is not compact.
\end{remark}

As a consequence of Theorem~\ref{main} we establish a limitation on the modulus of continuity of the Lyapunov exponent of {\em random} linear cocycles. Let
$X:=\{1,\ldots, \kappa\}^\Z$ be the space of sequences in $\kappa$ symbols, and let  $T:X\to X$ be the Bernoulli shift on $X$ equipped  with some Bernoulli probability  measure $\mu=(p_1,\ldots, p_\kappa)^\Z$, where $p_1+\cdots + p_\kappa=1$ and $p_j>0$ for $j=1,\ldots, \kappa$. 

A random or locally constant cocycle $\bld A \colon X\to \SL_2(\R)$ is determined by
a vector of $\kappa$ matrices $\underline A=(A_1,\ldots, A_\kappa)\in \SL_2(\R)^\kappa$, via the formula $\bld A(\omega):=A_{\omega_0}$,
where $\omega=(\omega_j)_{j\in\Z} \in X$. We will use the notations $\underline A$ and $\bld A$ interchangeably. 

The iterates of the cocycle $\bld A$ are thus the multiplicative random process corresponding to the finitely supported measure
$$  \mu(\underline A):=\sum_{j=1}^\kappa p_j\, \delta_{A_j} \in\Prob(\SL_2(\R)) .$$
Denote by $H(\mu):=-\sum_{j=1}^\kappa p_j\, \log p_j$ the Shannon entropy of the measure $\mu(\underline A)$. A simplified version of our result is as follows (see Corollary~\ref{regularity dichotomy} for its more precise formulation). Throughout the manuscript we will use the notation
$\SL_2^\ast(\R):=\SL_2(\R)\setminus\{-I,I\}$.

\begin{theorem}\label{intro low regularity thm}

Let $\underline A=(A_1,\ldots, A_\kappa)\in \SL_2^\ast(\R)^\kappa$ be a random cocycle  such that $L_1 (\underline A) > 0$ but $\underline A$ is not uniformly hyperbolic. There is an open set $\U$ of ``directions'' in $\{ \underline P = (P_1, \ldots, P_\kappa) \colon   P_i^2 = 0, \tr P_i = 0 \ \forall i \} \subset \Mat_2 (\R)^\kappa$, such that for all  $ \underline P = (P_1, \ldots, P_\kappa) \in \U$, if we denote by $\bld A_t$ the random cocycle determined by the list $\underline A_t := \underline A \, (I + t \underline P)$, then the map $t \mapsto L_1 (\bld A_t)$ is not $\alpha$-H\"older continuous near $t=0$ provided that $\alpha > \alpha (\underline A) := \frac{H(\mu)}{L_1 (\underline A)} $.
\end{theorem}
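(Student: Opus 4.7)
The plan is to deduce Theorem~\ref{intro low regularity thm} from the abstract Thouless formula of Theorem~\ref{main} by producing a quantitative lower bound on the jumps of the fibered rotation number $t\mapsto \rho(\bld A_t)$ near $t=0$. First I set $B(\omega):=\underline A(\omega)\,\underline P(\omega)$ coordinatewise, so that $\underline A_t = \underline A+tB$. Since each $P_i$ is nilpotent with $P_i^2=0$ and $\tr P_i=0$, the factor $I+tP_i$ is unimodular for every $t\in\C$, whence $\underline A_t(x)\in\SL_2(\R)$ for all real $t$ and the matrices remain uniformly invertible on every complex strip; Assumptions~\ref{inv} and~\ref{affine} are therefore automatic. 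The open set $\U$ is defined by imposing two additional open conditions: (i) for each $i$, the analytic curve $t\mapsto A_i(I+tP_i)$ is positively winding in the sense of Definition~\ref{winding def} (a non-degeneracy condition on the quadratic form $v\mapsto (A_tv)\wedge(Bv)$), and (ii) the rank-one cocycle $B$ admits a dominated splitting. Both are open, and explicit $\underline P$ satisfying them are easy to write down, so $\U$ is non-empty. Once $\underline P\in\U$, Theorem~\ref{main} applies and yields
$$L_1(\bld A_t)=L_1(B)+\int_\R \log|t-s|\,d\rho(s),\qquad t\in\C.$$

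The heart of the argument is a Halperin-type lower bound
$$\rho(\bld A_\epsilon)-\rho(\bld A_{-\epsilon})\;\gtrsim\; \epsilon^{\,H(\mu)/L_1(\underline A)+o(1)} \qquad \text{as } \epsilon\downarrow 0.$$
To establish this, I exploit the hypothesis that $L_1(\underline A)>0$ but $\underline A$ is not uniformly hyperbolic to produce, for every sufficiently large $n$, a family of at least $e^{n(H(\mu)-o(1))}$ finite words $\omega=(i_1,\ldots,i_n)$ of $\mu$-weight $\approx e^{-nH(\mu)}$ (Shannon--McMillan--Breiman / Viana counting), such that $A_\omega:=A_{i_1}\cdots A_{i_n}$ has norm $\approx e^{nL_1(\underline A)}$ and there exists a small parameter $t_\omega$ for which the perturbed product $A_{t_\omega,\omega}:=(A_{i_1}(I+t_\omega P_{i_1}))\cdots(A_{i_n}(I+t_\omega P_{i_n}))$ is elliptic. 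The winding assumption forces the rotation number of $t\mapsto A_{t,\omega}$ to increase by $1/n$ as $t$ traverses an elliptic-resonance interval of width $\approx e^{-nL_1(\underline A)}$ around $t_\omega$. Choosing $n$ so that $e^{-nL_1(\underline A)}\approx\epsilon$ and summing the monotone contributions of the resonant words with $t_\omega\in[-\epsilon,\epsilon]$ yields the claimed lower bound.

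Finally, a classical estimate from logarithmic potential theory (essentially as in~\cite{ST85, DKS19}) shows that whenever a non-decreasing function $\rho$ grows by at least $c\,\epsilon^\beta$ on $[-\epsilon,\epsilon]$, the potential $\Phi(t):=\int_\R\log|t-s|\,d\rho(s)$ satisfies $|\Phi(\epsilon)-\Phi(-\epsilon)|\gtrsim\epsilon^\beta\log(1/\epsilon)$. Combined with the Thouless identity from Step~1 and the lower bound of Step~2 with $\beta=H(\mu)/L_1(\underline A)$, this contradicts any $\alpha$-H\"older control on $t\mapsto L_1(\bld A_t)$ near $t=0$ for $\alpha>\beta$, completing the argument. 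The main obstacle is the Halperin step: while non-hyperbolic periodic orbits of $\underline A$ are known to exist by classical results, locating them \emph{along} the analytic perturbation $\underline P$, quantifying the width of each elliptic-resonance interval by $e^{-nL_1(\underline A)}$ rather than sharper pointwise norms, and showing that the $\sim e^{nH(\mu)}$ resonance contributions add up \emph{monotonically}---rather than cancelling---all hinge on the winding condition built into $\U$, whereas the dominated splitting of $B$ enters (via Remark~\ref{rmk assumption 4}) only to guarantee that $d\rho$ has compact support so that Thouless applies.
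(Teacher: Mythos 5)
Your scaffold --- choose a direction $\underline E\in N_\pm(\underline A)$ so that Assumptions 1--4 hold, apply Theorem~\ref{main}, produce a Halperin-type lower bound on the jumps of $\rho(\bld A_t)$, and transfer it to $L_1(\bld A_t)$ --- is the paper's architecture, and your description of $\U$ as an open subset of nilpotent traceless tuples matches $N_\pm(\underline A)$ of Definition~\ref{def Npm(underline A)}. The transfer step the paper uses is~\cite[Lemma 10.3]{GS01} (the Hilbert transform preserves H\"older modulus, so a failure of $\alpha$-H\"older continuity for $\rho$ forces the same failure for the log-potential); your direct estimate on the potential is a plausible alternative, but be careful: the integrand $\log|\epsilon-s|-\log|{-\epsilon}-s|$ changes sign on $[-\epsilon,\epsilon]$, so the claimed lower bound on $|\Phi(\epsilon)-\Phi(-\epsilon)|$ does not follow from $d\rho([-\epsilon,\epsilon])\gtrsim\epsilon^\beta$ without further argument.

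The genuine gap is the Halperin step, which you flag as the ``main obstacle'' but whose stated scalings are not right and whose key dynamical input is absent. First, the paper does not produce resonances at $t=0$. Since $\underline A$ is merely not uniformly hyperbolic, the paper first moves (Corollary~\ref{corollary:low regularity rotation number}, citing~\cite[Propositions 7.8, 7.12]{BeDu22}) to a nearby parameter $t_0$ at which $\underline A_{t_0}$ is irreducible \emph{and} has a heteroclinic tangency; this reduction is needed because Theorem~\ref{breakRegularity} requires both. Your sketch offers no mechanism to manufacture elliptic resonances for a generic non-UH cocycle without it. Second, your claimed window width $\approx e^{-nL_1}$ for a length-$n$ word does not match the construction: in Proposition~\ref{lem:120522.2} (a reformulation of~\cite[Lemma~8.1]{BeDu22}) the matchings live on words of length $k=2l^3+l$, the matching exponent is $\gamma_l\sim l^3\lambda$, yet the parameter interval $I_l$ has width $\sim e^{-l\lambda}$, vastly wider than $e^{-k\lambda}$. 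The freedom to tune $t$ comes only from the short middle block of length $l$ in the heteroclinic configuration, while the outer blocks of length $l^3$ control the norms. It is exactly this decoupling that lets the measure of matchings $\mu(\mathcal{M}_l)\gtrsim e^{-lH(\mu)}$ dominate $|I_l|^\alpha\sim e^{-\alpha l\lambda}$ whenever $\alpha>H(\mu)/\lambda$. The decoupling, the one-full-turn content of a matching, and the monotone (non-cancelling) accumulation are precisely what the $(\gamma,k,t)$-matching machinery delivers (Definition~\ref{def:generalWinding}, Propositions~\ref{prop:matchingsToTurns} and~\ref{length and sum of chi of Sigma}, Corollary~\ref{lower bound drho}) together with~\cite[Lemma~8.1]{BeDu22}. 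Shannon--McMillan--Breiman counting of words with norm $\approx e^{nL_1}$ does not, on its own, produce the elliptic behaviour, control the resonance width, or rule out cancellation; that is exactly what your appeal to ``SMB / Viana counting'' would need to establish and does not.
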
 

	 This result extends~\cite[Theorem A]{BeDu22},
	 where the same conclusion was achieved
	for a particular curve of cocycles obtained embedding $A$ into a family of Schr\"odinger  cocycles over a Markov shift.




\begin{remark}

It is well known, starting with the work of Le Page~\cite{LePage}, see also Duarte and Klein~\cite{DK-book, DK-CBM} that under an irreducibility assumption, the maximal Lyapunov exponent is always H\"older continuous near cocycles  $\bld A$ with $L_1 (\bld A) > 0$. Moreover, in~\cite{DK-Holder} it was shown that if such a cocycle $\bld A$ is diagonalizable,  then the Lyapunov exponent is at least weak-H\"older continuous in its vicinity, and if it is not diagonalizable (since then either $\bld A$ or its inverse satisfy some irreducibility condition), the Lyapunov exponent is locally H\"older continuous.

Moreover, since uniform hyperbolicity is an open property, if a cocycle $\bld A$ is uniformly hyperbolic, then the same holds for 
$\bld A_t$ near $t=0$. In this case  the map $t \mapsto L_1 (\bld A_t)$ is analytic. 

Therefore we have the following dichotomy for an irreducible  cocycle $\bld A$ with $L_1 (\bld A) > 0$. Near $t=0$, the Lyapunov exponent of $\bld A_t$ is either analytic or it is continuous with a strict limitation on the strength of its modulus of continuity, namely  $\alpha$-H\"older with $\alpha \le \alpha (\underline A)$.
\end{remark}

Thouless formula~\eqref{Thouless ID} shows that the maps $t \mapsto L_1(A_t) - L_1(B)$ and $t \mapsto \rho (A_t)$ are obtained one from the other via the Hilbert transform (see for instance~\cite{Duoan-FA} for its definition). It is well known (see~\cite[Lemma 10.3]{GS01}) that the Hilbert transform preserves H\"older-type continuity properties. 

The H\"older (or weak-H\"older) continuity of the maximal Lyapunov exponents has been established for various types of linear cocycles. We list only a few of the more recent such results (for a more complete list of results, see the references therein). Lyapunov exponents of quasiperiodic cocycles (that is, linear cocycles over a torus translation) are H\"older or weak-H\"older continuous provided the translation frequency satisfies an appropriate arithmetic condition (e.g. a Diophantine condition) and the fiber action depends analytically on the base point (see~\cite{DK-Coposim}); results in other regimes (e.g. almost reducibility) are also available, see for instance~\cite{CCYZ}. Lyapunov exponents of random cocycles (i.e. locally constant cocycles over a Bernoulli or Markov shift) are H\"older continuous assuming a generic irreducibility condition (see~\cite{LePage} and~\cite[Chapter 5]{DK-book}) and weak-H\"older continuous without such an assumption (see~\cite{DK-Holder}). Under appropriate conditions, Lyapunov exponents of linear cocycles over uniformly hyperbolic systems are H\"older continuous (see~\cite{DKP}). Similar continuity properties were also obtained for mixed random-quasiperiodic cocycles (see~\cite{CDK-paper4}). 

\begin{proposition}\label{conseq prop} 
For each of the types of linear cocycles described above, under the specific assumptions ensuring the H\"older (resp. weak-H\"older) continuity of the Lyapunov exponent, the fibered rotation number $\rho (A_t)$,	 of a family of cocycles $A_t$ satisfying assumptions 1-4 above, is a locally H\"older (resp. locally weak-H\"older) continuous function.
\end{proposition}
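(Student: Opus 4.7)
\smallskip

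\noindent\textbf{Proof plan.} The strategy is to exploit Thouless formula \eqref{Thouless ID} to recast $t\mapsto L_1(A_t)-L_1(B)$ as the logarithmic potential of the compactly supported measure $d\rho$, and then identify $\rho(A_t)$ as the harmonic conjugate of this potential (up to an additive constant). Concretely, for $z$ in the open upper half-plane set
\[
W(z):=\int_\R \log(z-s)\,d\rho(s),
\]
using the principal branch of $\log$. Since by Assumption 4 (and the chain of implications in Remark~\ref{rmk assumption 4}) $d\rho$ has compact support, $W$ is analytic off $\supp d\rho$, and its boundary values on the real line are $\Rep W(t+i 0)=L_1(A_t)-L_1(B)$ and $\Imp W(t+i 0)=\pi\bigl(\rho(+\infty)-\rho(A_t)\bigr)$. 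Hence $\rho(A_t)$ and $L_1(A_t)-L_1(B)$ are, on $\R$, the imaginary and real boundary values of the same upper half-plane analytic function, so they differ by a Hilbert transform (and an additive constant).

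Once this is in place, the second step is to quote the classical fact~\cite[Lemma 10.3]{GS01} that the Hilbert transform preserves both H\"older and weak-H\"older moduli of continuity on compact real intervals. Combined with Step~1 this means that any local H\"older (resp.\ weak-H\"older) regularity of the map $t\mapsto L_1(A_t)$ transfers, on a slightly smaller neighborhood, to the map $t\mapsto \rho(A_t)$.

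The third step is therefore to verify that in each of the listed cocycle classes the map $t\mapsto L_1(A_t)$ has the required regularity. For each of quasiperiodic (Diophantine, analytic), random with irreducibility, general random without irreducibility, cocycles over a uniformly hyperbolic base, and mixed random-quasiperiodic cocycles, the cited theorems give H\"older or weak-H\"older continuity of the functional $\bld C\mapsto L_1(\bld C)$ on the natural cocycle space, under hypotheses that are preserved by small affine perturbations $A+tB$. Since by Assumption~\ref{inv} the map $t\mapsto A_t$ is real-analytic into that cocycle space, it is Lipschitz on compact subintervals, so composition preserves the local H\"older (resp.\ weak-H\"older) exponent, yielding the required regularity of $t\mapsto L_1(A_t)$, and hence by Step~2 of $t\mapsto \rho(A_t)$.

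The main obstacle I expect is purely bookkeeping: in each listed setting, checking that the affine family $A_t=A+tB$ stays in the cocycle class where the cited H\"older estimate applies (e.g.\ preservation of irreducibility, positivity of $L_1$, the diagonalizability/non-diagonalizability dichotomy used in~\cite{DK-Holder}, or the uniform hyperbolicity of the base), and matching the particular definition of ``weak-H\"older'' in each cited work with the one that is preserved by the Hilbert transform. None of these are essential difficulties, but they must be carried out separately for each class to get the uniform statement of the proposition.
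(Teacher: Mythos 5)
Your argument is correct and follows exactly the same route as the paper: Thouless formula identifies $t\mapsto L_1(A_t)-L_1(B)$ and $t\mapsto \rho(A_t)$ as Hilbert-transform pairs, then \cite[Lemma 10.3]{GS01} transfers the H\"older / weak-H\"older modulus of $L_1$ (supplied by the cited results for each cocycle class) to $\rho$. The extra detail you give on the boundary values of $W(z)=\int\log(z-s)\,d\rho(s)$ is a harmless explicit unpacking of the same mechanism, not a different proof.
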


It is natural to consider the problem of extending the results in this paper in other  directions, as follows: relaxing Assumption~\ref{dom-split} as explained in Remark~\ref{rmk assumption 4}; considering instead of affine one parameter families $A_t$, polynomial or even more general families;  obtaining an analogue of Thouless formula in Theorem~\ref{main} for symplectic (higher dimensional) cocycles; using the approach in Theorem~\ref{intro low regularity thm} to derive limitations on the modulus of continuity for other types of cocycles, such as mixed random-quasiperiodic cocycles, see~\cite{CDK-paper1, Jamerson-M}. These extensions will be considered in separate projects.

In~\cite{GK21} A. Gorodetski and V. Kleptsyn have established the following result
under a similar setting.
Given a smooth family   of positively winding  cocycles
$\{A_t:X\to \SL(2,\R)\}_{t\in I}$, which is not uniformly hyperbolic for any $t\in I$, there exist  $\Omega\subset X$ with full probability and a residual subset  $S\subset I$ such that 
\begin{align*}
		\limsup_{n\to \infty} \frac{1}{n}\, \log \norm{A_t^n(x)} = L_1(A_t) \qquad &\forall x\in \Omega,\; t\in I \\
		\liminf_{n\to \infty} \frac{1}{n}\, \log \norm{A_t^n(x)} = 0 \qquad &\forall x\in \Omega,\; t\in S  .
\end{align*}
This behavior complements the regularity dichotomy  in Theorem~\ref{intro low regularity thm}. It would  be interesting to explore the connection between the orbits analyzed in~\cite{GK21} and the fractal structure of the set of heteroclinic tangencies alluded to in Subsection 1.3 of ~\cite{BeDu22}.

Our results are also related to the work~\cite{DD2012},
where B. Deroin and R. Dujardin
study holomorphic families $\{A_\lambda\}_{\lambda\in\Lambda}$  of random (locally constant) $\SL(2,\C)$-cocycles
parametrized on a complex manifold $\Lambda$. The connection becomes more clear when $\Lambda$ has dimension $1$, say $\Lambda=\C$. In this case the Lyapunov exponent $\lambda\mapsto L_1(A_\lambda)$ is a subharmonic function  and its Laplacian (in the sense of distributions) is the so-called \textit{bifurcation current} $T_{\mathrm{bif}}$. In~\cite[Theorem A]{DD2012}
the authors characterize  the \textit{bifurcation locus}
of the holomorphic family $\{A_\lambda\}_{\lambda\in\Lambda}$
as being the support of the current $T_{\mathrm{bif}}$.
For positively winding families, the bifurcation locus coincides with the set of parameters $\lambda\in\Lambda$ such that $A_\lambda$ is not uniformly hyperbolic. Moreover,~\cite[Theorem 3.5]{DD2012} can  be used to prove that this  bifurcation current matches the fibered rotation measure, i.e., 
$T_{\mathrm{bif}}=d\rho$. The Thouless formula~\eqref{Thouless ID}  follows then from the Riesz representation theorem for subharmonic functions (see~\cite{HK1976}). This connection provides an enlightening description of $d\rho$ and its support in the special case where the homeomorphism $T:X\to X$ is a Bernoulli shift in finitely many symbols.

\medskip

The rest of this paper is organized as follows. In Chapter~\ref{winding} we study the winding property and formally introduce the fibered rotation number. In Chapter~\ref{mainproof} we provide the proof of our main result, the Thouless formula in Theorem~\ref{main}. In Chapter~\ref{affine families} we present sufficient conditions ensuring the validity of each of the main assumptions besides the affine form (namely the invertibility, the winding property and the dominated splitting). In Chapter~\ref{consequences} we present applications of the main result, namely we establish Theorem~\ref{intro low regularity thm} regarding  the limitations on the modulus of continuity
of random cocycles. In the Appendix, Chapter 6, we develop  some linear algebra tools used in the rest of the work.

\section{The winding property}\label{winding}

In this section we establish some consequences of the winding property in Assumption \ref{winding assumption}. In Sections 3.1 and 3.2 the results are stated for general the one parameter differentiable family of maps $A_t\colon X \to \GL_2(\mathbb{R})$ indexed by $t\in I$. Section 3.3 is centred in the particular case where $A_t=A+tB$.

\subsection{General properties}

The winding property means that for almost all $\hat v\in\Pp^1$, the projective curve $I\ni t\mapsto \hat A_t\,\hat v\in\Pp^1$, has non-vanishing derivative for almost every $t\in I$,
keeping the same orientation as $t$ runs in $I$. 
See Proposition~\ref{winding meaning} below.

\begin{proposition}
	\label{derivative At v/norm}
	Given a unit vector $ v\in \R^2$,
	$$   \frac{d}{dt}[\,  A_t\, \hat v \, ] = \frac{d}{dt} \frac{A_t\, v}{\norm{A_t\, v}}      =  
	\frac{ (A_t\, v)\wedge (\dot A_t\, v)}{\norm{A_t\, v}^2}   $$
	where as before $\dot A_t= \frac{d}{dt}A_t$.
\end{proposition}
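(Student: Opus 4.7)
The key observation is that the rightmost expression is the standard formula for the angular velocity of the moving vector $t \mapsto A_t v$ in $\R^2 \setminus \{0\}$, and the proposition amounts to identifying the derivative of the projective curve $[A_t v]$ in $\Pp^1$ with this scalar via the natural isomorphism between the tangent line to $\Pp^1$ at $[w]$ and $\R$. My plan is to first carry out the direct calculation of the vector-valued derivative $\frac{d}{dt}\frac{A_t v}{\norm{A_t v}}$, and then convert the resulting orthogonal vector to a scalar using this identification.

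Write $w(t):=A_t v$ and note $\dot w(t) = \dot A_t\,v$ since $v$ is independent of $t$. Differentiating $\norm{w}^2=\langle w,w\rangle$ gives $\frac{d}{dt}\norm{w}=\langle w,\dot w\rangle/\norm{w}$, so the quotient rule yields
$$\frac{d}{dt}\frac{w}{\norm{w}} \,=\, \frac{\dot w}{\norm{w}} \,-\, \frac{\langle w,\dot w\rangle\, w}{\norm{w}^3} \,=\, \frac{\norm{w}^2\,\dot w \,-\, \langle w,\dot w\rangle\, w}{\norm{w}^3}.$$
This is automatically orthogonal to $w$, as it must be since it is the derivative of a curve on the unit circle.

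To convert this vector into the scalar on the right, let $J$ denote counterclockwise rotation by $\pi/2$, so that $Jw/\norm{w}$ is a unit spanning vector of the tangent line to $\Pp^1$ at $[w]$, oriented positively. Pairing the displayed expression with $Jw/\norm{w}$, and using $\langle w,Jw\rangle=0$ together with the coordinate identity $\langle \dot w,Jw\rangle = w\wedge \dot w$ (immediate from $Jw=(-w_2,w_1)$), one obtains
$$\left(\frac{d}{dt}\frac{w}{\norm{w}}\right)\cdot \frac{Jw}{\norm{w}} \,=\, \frac{\langle \dot w,Jw\rangle}{\norm{w}^2} \,=\, \frac{w\wedge \dot w}{\norm{w}^2} \,=\, \frac{(A_t v)\wedge(\dot A_t v)}{\norm{A_t v}^2}.$$
The leftmost equality in the proposition is then automatic, because the projective derivative is well defined through the covering $S^1\to\Pp^1$ and coincides with the spherical derivative computed for any unit lift. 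There is no substantive obstacle here: the statement is essentially a restatement of the standard angular-velocity formula, and the only real work is to make explicit the identification of the three different presentations of a single tangent vector to $\Pp^1$.
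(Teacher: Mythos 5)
Your proof is correct, and it takes a slightly different route from the paper's. The paper starts from the projective metric $d(\hat v,\hat w)=\lvert v\wedge w\rvert/(\norm{v}\norm{w})$, forms a difference quotient $d(\hat A_t\hat v,\hat A_{t'}\hat v)/\lvert t-t'\rvert$, and passes to the limit $t'\to t$; this yields only the equality of absolute values, and the paper then appeals to ``simple geometric considerations on the oriented angle'' to obtain the signed identity. You instead differentiate the normalized curve $w/\norm{w}$ directly via the quotient rule, observe that the result is orthogonal to $w$, and extract the scalar by pairing with the positively oriented unit normal $Jw/\norm{w}$, using $\langle\dot w,Jw\rangle = w\wedge\dot w$. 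The advantage of your version is that the orientation is built into the identification from the outset, so the sign comes out automatically rather than being patched in at the end; the advantage of the paper's version is that it makes explicit the link to the projective metric already used elsewhere in the paper. Both are sound; your derivation is a bit more self-contained on the sign issue.
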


\begin{remark}
To interpret this equality the reader should
either regard right-hand-side derivative as a real number, because  $\Pp^1$ is $1$-dimensional, or else the left-hand-side as a vector, multiplying it by the unique unit and  positive vector in $T_{A_t v/\norm{A_t v}}\Pp^1$.
\end{remark}

\begin{proof}
We want to establish an expression for the variation of the angle of the projective map $t\mapsto \hat A_t \hat v$. Thus, since the metric considered in $\Pp^1$ is $d(\hat{v},\hat{w}):=\frac{|v\wedge w|}{\|v\|\|w\|}$, we get that,
$$d(\proj A_t \proj v, \proj A_{t^\prime} \proj v)=\frac{| A_t v\wedge A_{t^\prime}v|}{\|A_tv\| \|A_{t^{\prime}}v\|}=\frac{| A_t v\wedge (A_{t^\prime}-A_t)v|}{\|A_tv\| \|A_{t^{\prime}}v\|}$$
 by adding $A_t v$ in the second term because $A_tv\wedge A_tv=0$. Dividing by $|t-t^{\prime}|$,
$$\frac{d(\proj A_t \proj v, \proj A_{t^\prime} \proj v)}{|t-t^\prime|}=\frac{| A_t v\wedge \frac{A_{t^\prime}-A_t}{t-t^\prime}v|}{\|A_tv\| \|A_{t^{\prime}}v\|}$$
and since the limit on the left hand side when $t^\prime$ goes to $t$ is the absolute value of the derivative we obtain $\left\vert\frac{d}{dt}  \frac{A_t\, v}{\norm{A_t\, v}}  \right\vert=
	\left\vert\frac{ (A_t\, v)\wedge (\dot A_t\, v)}{\norm{A_t\, v}^2} \,\right\vert$.
	The identity follows from simple geometric considerations on the oriented angle between $A_t v$ and $\dot A_t v$.
\end{proof}

\begin{proposition}
	\label{winding meaning}
	Let $I\ni t\mapsto A_t\in \GL_2(\R)$ be an analytic  curve with the winding property. For almost every $\hat v\in\Pp^1$ and almost every $t\in I$, the map
	$I\ni t\mapsto \hat A_t\,\hat v\in\Pp^1$
	has non-vanishing derivative and
	keeps the same orientation as $t$ runs in $I$. 
\end{proposition}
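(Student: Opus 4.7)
My strategy is to reduce the claim about the projective derivative to a pointwise statement about the quadratic form $Q_{A_t}$ via Proposition~\ref{derivative At v/norm}, and then invoke Fubini. First, I would apply Proposition~\ref{derivative At v/norm} to rewrite, for a unit representative $v$,
$$\frac{d}{dt}\bigl[\,\hat A_t\,\hat v\,\bigr] \;=\; \frac{(A_t v)\wedge(\dot A_t v)}{\|A_t v\|^2} \;=\; \frac{Q_{A_t}(v)}{\|A_t v\|^2}.$$
Since the denominator is strictly positive (each $A_t$ being invertible), both non-vanishing and orientation of the projective derivative are controlled entirely by the sign and zero locus of $Q_{A_t}$.

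Next, I would extract the key consequence of the winding hypothesis. Without loss of generality $Q_{A_t}$ is positive semidefinite for every $t \in I$, with one eigenvalue bounded away from zero. This forces $Q_{A_t}$ to have rank at least $1$, so its kernel in $\R^2$ is at most a line. In projective terms, for each fixed $t$, the set $\{\hat v \in \Pp^1 : Q_{A_t}(v)=0\}$ consists of at most a single point, while off this point $Q_{A_t}(v)>0$.

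I would then assemble these ingredients using Fubini on the product $I \times \Pp^1$ (with Lebesgue measure on $I$ and the standard measure on $\Pp^1$). Set
$$Z := \{(t,\hat v) \in I \times \Pp^1 \,:\, Q_{A_t}(v)=0\}.$$
By continuity of $(t,\hat v)\mapsto Q_{A_t}(v)/\|v\|^2$ the set $Z$ is closed, hence measurable, and by the previous paragraph every $t$-slice has Lebesgue measure zero in $\Pp^1$. Fubini then gives $|Z|=0$ in the product, so for almost every $\hat v \in \Pp^1$ the corresponding slice $\{t\in I : Q_{A_t}(v)=0\}$ has Lebesgue measure zero in $I$. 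For such $\hat v$, the derivative of $t\mapsto \hat A_t\,\hat v$ is non-zero for a.e.\ $t\in I$, and whenever it is non-zero it is strictly positive (by semidefiniteness of $Q_{A_t}$), giving a consistent orientation as $t$ runs in $I$.

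The statement is really a bookkeeping exercise once Proposition~\ref{derivative At v/norm} and the rank-at-least-one conclusion of the winding hypothesis are in hand; the only point requiring slight care is verifying that a semidefinite $2\times 2$ form with one eigenvalue bounded away from $0$ has at most one projective zero, so that each $t$-slice of $Z$ is genuinely negligible before applying Fubini. Note that analyticity is not actually used at this level of the argument; it would only be needed for sharper conclusions such as discreteness (rather than just null measure) of the bad set of $t$'s for a given $\hat v$, which is not part of the claim.
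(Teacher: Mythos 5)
Your proof is correct, and it takes a genuinely different route from the paper's.

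The paper's argument splits into two cases according to whether the analytic discriminant $\Delta(t):=4\det(A_t^{-1}\dot A_t)-\tr(A_t^{-1}\dot A_t)^2$ is identically zero on $I$. In the first case analyticity forces the zero set of $\Delta$ to be discrete, so $Q_{A_t}$ is positive definite off a discrete set of parameters and the conclusion holds for \emph{every} $\hat v$. In the second case the unique degenerate direction $\varphi(t)\in\Pp^1$ defines an analytic map $I\to\Pp^1$, and Sard's theorem is used to show that for a.e.\ $\hat v$ the preimage $\varphi^{-1}(\hat v)$ is again a discrete set. Your Fubini argument replaces both cases by a single measure-theoretic step: the bad set $Z\subset I\times\Pp^1$ is closed, each $t$-slice is at most one projective point (a positive semidefinite rank-$\ge 1$ form on $\R^2$ has at most a line of zeros), so $|Z|=0$ and Fubini gives a.e.\ null $\hat v$-slices. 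This is cleaner and, as you correctly observe, does not use analyticity at all --- it works for $C^1$ winding curves, which the paper's Sard/discriminant argument does not. The trade-off is that the paper's route yields a slightly stronger conclusion (for a.e.\ $\hat v$ the bad set of $t$'s is discrete, not merely null), but that extra precision is not required by the statement. One minor remark for completeness: you should say a word about why the projective zero set of a positive semidefinite quadratic form coincides with the kernel of the associated bilinear form (Cauchy--Schwarz for semidefinite forms), which is what justifies ``at most one projective zero''; you allude to it but do not quite spell it out.
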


\begin{proof}
By the winding assumption, $Q_{A_t}$ is either positive or negative semi-definite for all $t$. From now on we assume the winding is positive.
By Definition~\ref{winding def}, 
$Q_{A_t} (v):=(\det A_t)\, v\wedge A_t^{-1}\dot A_t v=0$
if and only if $v$ is an eigenvector of $A_t^{-1}\dot A_t$, and since $Q_{A_t}$ is a quadratic form, this eigenvector is unique. Therefore, if we denote  
$$\mathrm{Eig}(A_t):=\left\{\hat v\in\Pp^1 \colon v \text{ is an eigenvector of } A_t^{-1}\dot A_t \right\},$$
then either $Q_{A_t}$ is positive definite and $\mathrm{Eig}(A_t)=\emptyset$, or else
$Q_{A_t}$ is positive semi-definite and $\mathrm{Eig}(A_t)$ is singleton.

To follow we analyse the cases where $\mathrm{Eig}(A_t)\not=\emptyset$, otherwise the conclusion is obvious. Therefore for the degree two polynomial  $p(\lambda):=\det(I + \lambda \,A_t^{-1}\dot A_t)$, its discriminant is given by $\Delta(t):=4\,\det( A_t^{-1}\dot A_t))-\tr (A_t^{-1}\dot A_t)^2$ 
 up to a sign, which is proved in Proposition~\ref{prop invertibility criterion} below.
Because this function is analytic, we consider two cases

In the first case, $\Delta(t)\not\equiv 0$ and $Z:=\{t\in I \colon \Delta(t)=0 \}$
is a countable set consisting of isolated points. By Proposition~\ref{derivative At v/norm} for all $t\notin Z$, $\frac{d}{dt}  \frac{A_t\, v}{\norm{A_t\, v}}=\norm{A_t v}^{-2} Q_{A_t}(v) >0$ and the conclusion follows.

In the second case $\Delta(t)\equiv 0$,  $\mathrm{Eig}(A_t)\neq \emptyset$  for all $t\in I$,
and there exists $\varphi:I\to \Pp^1$ analytic such that
$\mathrm{Eig}(A_t)=\{\varphi(t)\}$ for all $t\in I$.
Let $V$ be the set of regular values of   $\varphi:I\to \Pp^1$. By Sard's Theorem, $V$ has full  measure in $\Pp^1$. For $\hat v\in V$ the set $Z_{\hat v}:=\{t\in I\colon \hat v\in \mathrm{Eig}(A_t) \}
= \{ t\in I\colon \varphi(t)=\hat v\}$ is again a countable set consisting of  isolated regular points.
Since for all $t\notin Z_{\hat{v}}$, $\frac{d}{dt}  \frac{A_t\, v}{\norm{A_t\, v}}=\norm{A_t v}^{-2} Q_{A_t}(v) >0$,
 the map
$I\ni t\mapsto \hat A_t\,\hat v\in\Pp^1$
has non-vanishing derivative for almost every $t\in I$,
keeping  the same orientation as $t$ runs in $I$.
\end{proof}

We want to see that the winding property 
is preserved under  iterations of the cocycle. The following proposition  will imply that the composition of positively (negatively) winding cocycles is also positively (negatively) winding.

\begin{proposition}
	\label{prods of pos winding are pos winding}
	Given curves $I\ni t\mapsto A_{i,t}\in \GL_2(\R)$, for $i=1,\ldots, n$, if each $A_{i,t}$ is positively (negatively) winding then so is their product
	$M_t:= A_{n,t}\, \cdots \, A_{2,t}\, A_{1,t}$.
\end{proposition}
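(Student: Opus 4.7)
The plan is to reduce to the case $n=2$ by induction: if the product of any two positively winding curves is positively winding, then writing $M_t = A_{n,t} \cdot (A_{n-1,t} \cdots A_{1,t})$ and applying the inductive hypothesis to the factor in parentheses finishes the proof. So the entire content lies in verifying the base case, i.e.\ that for two positively winding curves $t\mapsto A_t$ and $t \mapsto B_t$ in $\GL_2(\R)$, the curve $t\mapsto M_t := B_t A_t$ is positively winding as well.

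For the base case, I would start from the product rule $\dot M_t = \dot B_t \, A_t + B_t \, \dot A_t$ and expand
$$Q_{M_t}(v) = (M_t v)\wedge (\dot M_t v) = (B_t A_t v)\wedge (\dot B_t A_t v) + (B_t A_t v)\wedge (B_t \dot A_t v).$$
The second term is handled by the standard multiplicative identity $(Bw_1)\wedge (Bw_2) = (\det B)(w_1\wedge w_2)$, which collapses it to $(\det B_t)\,Q_{A_t}(v)$; the first term is precisely $Q_{B_t}(A_t v)$ by definition. The key structural identity is therefore
$$Q_{M_t}(v) = Q_{B_t}(A_t v) + (\det B_t)\, Q_{A_t}(v).$$

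From here, positive semidefiniteness is immediate: $\det B_t > 0$ since $B_t \in \GL_2(\R)$, the second summand is a nonnegative multiple of a PSD form, and the first summand is the pullback of a PSD form by the invertible linear map $A_t$, hence PSD. For the eigenvalue bound, I observe that pulling back $Q_{B_t}$ by the invertible linear map $A_t$ preserves the rank of the form, so $v\mapsto Q_{B_t}(A_t v)$ still carries one eigenvalue bounded away from $0$ (quantitatively, at least $\lambda_{\max}(Q_{B_t})\cdot \sigma_{\min}(A_t)^2$). Adding another PSD form can only increase eigenvalues (by Weyl's inequality), so the sum $Q_{M_t}$ inherits one eigenvalue bounded away from $0$. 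The negatively winding case follows by replacing $Q$ with $-Q$ throughout, since both summands then flip sign together.

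The main obstacle, if any, is interpreting the phrase ``bounded away from $0$'' uniformly in $t$: the two ingredients $\det B_t$ and $\sigma_{\min}(A_t)$ vary with $t$, so strictly speaking one needs the bound to be uniform on the parameter interval $I$. This is handled by the continuity of $t\mapsto A_t, B_t$ together with compactness of the parameter range under consideration (which is how the proposition is used inside the paper), and it is really the only point where one must resist a purely algebraic statement. Everything else is a direct computation based on bilinearity of $\wedge$ and the determinantal identity for $2\times 2$ matrices.
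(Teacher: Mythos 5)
Your proof is correct and follows essentially the same route as the paper's: reduce to $n=2$ by induction, expand $\dot M_t$ via the product rule, and split $(M_t v)\wedge(\dot M_t v)$ into the pullback $Q_{B_t}(A_t v)$ plus $(\det B_t)\,Q_{A_t}(v)$ using the identity $(Bw_1)\wedge(Bw_2)=(\det B)(w_1\wedge w_2)$. The only difference is that you spell out the eigenvalue-bound step (pullback by the invertible $A_t$ rescales the top eigenvalue by at least $\sigma_{\min}(A_t)^2$, and Weyl's inequality lets the second PSD summand only help) and flag the uniformity-in-$t$ caveat, both of which the paper leaves implicit; these are welcome refinements but not a genuinely different argument.
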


\begin{proof}
	For simplicity consider $n=2$.
	Writing $v_1 :=  A_{1,t}\, v$, we have
	\begin{align*}
	(M_t\, v) \wedge (\dot M_t\, v) &=
	(A_{2,t}\, A_{1,t}\, v) \wedge (\dot A_{2,t}\, A_{1,t}\, v + A_{2,t}\, \dot A_{1,t}\, v) \\
	&= (A_{2,t}\, v_1) \wedge (\dot A_{2,t}\,v_1) +
	(A_{2,t}\, A_{1,t}\, v) \wedge (A_{2,t}\, \dot A_{1,t}\, v)\\
	&= \underbrace{ (A_{2,t}\, v_1) \wedge (\dot A_{2,t}\,v_1)  }_{\geq0} +
	(\det A_{2,t})\,\underbrace{ ( A_{1,t}\, v) \wedge ( \dot A_{1,t}\, v) }_{\geq0}  \geq0.
	\end{align*}	
Moreover, it is positive if for example $v$ is not an eigenvector of $A_{1,t}^{-1}\dot A_{1,t}$. Thus $M_t$ is positively winding. The general case follows by induction.
\end{proof}

The previous argument shows a bit more.
For the sake of simplicity we only state the following result for $\SL_2(\R)$-valued curves.

\begin{proposition} 	
	\label{winding velocity formula}
	Given curves $I\ni t\mapsto A_{i,t}\in \SL_2(\R)$, for $i=1,\ldots, n$,
	define $\vb_j(t):=A_{j,t}\, \cdots\, A_{1,t}\, v/ \norm{ A_{j,t}\, \cdots\, A_{1,t}\, v }$, with the convention that $\vb_0=v$.
	Then for $M_t:= A_{n,t}\, \cdots \, A_{2,t}\, A_{1,t}$, 
	\begin{align*}
	\frac{ (M_t v)\wedge (\dot M_t v)}{\norm{M_t v}^2} &= \sum_{j=1}^n
	\frac{1}{\norm{A_{n,t}\,\cdots \,  A_{j+1,t} \, \vb_j}^2}\, \frac{ (A_{j,t} \vb_{j-1} )\wedge (\dot A_{j,t} \vb_{j-1})}{\norm{A_{j,t} \vb_{j-1}}^2}\\
	 &= \sum_{j=1}^n
	\left( \frac{\norm{A_{j,t}\, \cdots \, A_{1,t}\, v}}{\norm{M_t\, v}} \right)^2\, \frac{ (A_{j,t} \vb_{j-1} )\wedge (\dot A_{j,t} \vb_{j-1})}{\norm{A_{j,t} \vb_{j-1}}^2} .	
	\end{align*} 
\end{proposition}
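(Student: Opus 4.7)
The natural approach is induction on $n$, mirroring the telescoping already carried out in Proposition~\ref{prods of pos winding are pos winding}. For $n=1$ the identity reduces to Proposition~\ref{derivative At v/norm}, since the empty product $A_{n,t}\cdots A_{j+1,t}$ evaluated at the unit vector $\vb_n$ contributes a factor of $1$.

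For the inductive step, I would split $M_t=A_{n,t}\,\tilde M_t$ with $\tilde M_t:=A_{n-1,t}\cdots A_{1,t}$ and use the Leibniz rule to obtain
\[
(M_t v)\wedge(\dot M_t v)=(A_{n,t}\tilde M_t v)\wedge(\dot A_{n,t}\tilde M_t v)+(A_{n,t}\tilde M_t v)\wedge(A_{n,t}\dot{\tilde M}_t v).
\]
The second summand equals $(\det A_{n,t})\,(\tilde M_t v)\wedge(\dot{\tilde M}_t v)$, and this is precisely where the $\SL_2(\R)$ hypothesis is used: the determinant factor drops out. Now I would pull out the norm of $\tilde M_t v$ using bilinearity of $\wedge$ in order to re-express the first summand in terms of the unit vector $\vb_{n-1}$, obtaining
\[
\frac{(A_{n,t}\tilde M_tv)\wedge(\dot A_{n,t}\tilde M_tv)}{\|M_tv\|^2}=\frac{1}{\|A_{n,t}\vb_{n-1}\|^2}\cdot\frac{(A_{n,t}\vb_{n-1})\wedge(\dot A_{n,t}\vb_{n-1})}{1},
\]
which is exactly the $j=n$ term of the claimed sum.

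For the remaining contribution $\dfrac{(\tilde M_tv)\wedge(\dot{\tilde M}_tv)}{\|M_tv\|^2}$, I would factor out $\dfrac{\|\tilde M_tv\|^2}{\|M_tv\|^2}=\dfrac{1}{\|A_{n,t}\vb_{n-1}\|^2}$ and invoke the inductive hypothesis applied to $\tilde M_t$. The resulting $j$-th summand carries the weight $\|A_{n,t}\vb_{n-1}\|^{-2}\cdot\|A_{n-1,t}\cdots A_{j+1,t}\vb_j\|^{-2}$, so the key algebraic identity to verify is
\[
\|A_{n,t}\vb_{n-1}\|\cdot\|A_{n-1,t}\cdots A_{j+1,t}\vb_j\|=\|A_{n,t}\cdots A_{j+1,t}\vb_j\|.
\]
This follows from the observation that $A_{n-1,t}\cdots A_{j+1,t}\vb_j$ is a positive scalar multiple of $\vb_{n-1}$, since both vectors project to the same line $\hat A_{n-1,t}\cdots\hat A_{j+1,t}\hat\vb_j=\hat\vb_{n-1}$ and the scaling factor is preserved under the further application of $A_{n,t}$. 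Collecting everything yields the first equality.

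The second equality is just a rewriting of the weights: from $\vb_j=(A_{j,t}\cdots A_{1,t}v)/\|A_{j,t}\cdots A_{1,t}v\|$ one gets $A_{n,t}\cdots A_{j+1,t}\vb_j=M_tv/\|A_{j,t}\cdots A_{1,t}v\|$, so $\|A_{n,t}\cdots A_{j+1,t}\vb_j\|^{-2}=(\|A_{j,t}\cdots A_{1,t}v\|/\|M_tv\|)^{2}$. The only genuinely non-trivial step in the whole argument is the bookkeeping identity in the inductive step; once one notices that the intermediate vectors all lie on the same projective line $\hat\vb_{n-1}$, the norms factor cleanly and the induction closes.
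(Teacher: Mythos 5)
Your argument is correct and matches the paper's (implicit) proof: the paper merely remarks that ``the previous argument shows a bit more,'' i.e.\ the Leibniz-rule decomposition $\dot M_t v = \dot A_{n,t}\tilde M_t v + A_{n,t}\dot{\tilde M}_t v$ from Proposition~\ref{prods of pos winding are pos winding} together with an induction, which is exactly what you carry out. Your explicit verification of the norm-bookkeeping identity $\|A_{n,t}\vb_{n-1}\|\cdot\|A_{n-1,t}\cdots A_{j+1,t}\vb_j\|=\|A_{n,t}\cdots A_{j+1,t}\vb_j\|$ (via the observation that $A_{n-1,t}\cdots A_{j+1,t}\vb_j$ is a positive multiple of $\vb_{n-1}$) and your identification of where $\det A_{n,t}=1$ is used both fill in exactly the steps the paper leaves to the reader.
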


\begin{remark}
	The ratio $\frac{ (M_t v)\wedge (\dot M_t v)}{\norm{\hat M_t \hat v}^2}$ measures the rotation speed of $M_t v$. Likewise the ratio
	$\frac{ (A_{j,t} \vb_{j-1} )\wedge (\dot A_{j,t} \vb_{j-1})}{\norm{A_{j,t} \vb_{j-1}}^2}$ measures the rotation speed of $\hat A_{j,t} \hat w$ when  $w:=\vb_{j-1}(t)$ is fixed.
\end{remark}

\begin{corollary}
	\label{coro:pos winding cocycle}
	If $A_t:X\to\GL_2(\R)$ is a  positively (negatively) winding  family of cocycles then for every $n\in\N$ the family of iterated cocycles
	$A^n_t:X\to\GL_2(\R)$ is positively (negatively) winding.
\end{corollary}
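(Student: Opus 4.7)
The plan is to reduce this immediately to Proposition~\ref{prods of pos winding are pos winding}. Fix $x \in X$ and $n \in \N$. By the definition of the iterated cocycle,
$$A^n_t(x) = A_t(T^{n-1}x) \cdot A_t(T^{n-2}x) \cdots A_t(Tx) \cdot A_t(x),$$
which I view as a product of $n$ analytic curves in $\GL_2(\R)$ indexed by $t \in I$, namely $A_{j,t} := A_t(T^{j-1}x)$ for $j = 1, \dots, n$.

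Since the family $A_t\colon X \to \GL_2(\R)$ is positively (respectively negatively) winding, the defining property applied at each of the points $T^{j-1}x \in X$ tells me that each individual analytic curve $t \mapsto A_t(T^{j-1}x)$ is a positively (respectively negatively) winding curve of matrices in the sense of Definition~\ref{winding def}. Proposition~\ref{prods of pos winding are pos winding} then yields directly that the product $A^n_t(x)$, as a function of $t$, is positively (respectively negatively) winding.

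Since the point $x \in X$ was arbitrary, the family of iterated cocycles $A^n_t\colon X \to \GL_2(\R)$ is positively (respectively negatively) winding, as required.

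There is no real obstacle here: the content of the corollary lies entirely in Proposition~\ref{prods of pos winding are pos winding}, and the only thing to verify is that iteration of a cocycle over $(X,T)$ amounts, pointwise in the base, to composition of the curves of matrices along an orbit of $T$. The statement for the negative case is identical after reversing the sign of the quadratic forms $Q_{A_t}$ (equivalently, by applying the argument to the positively winding family $-A_t$, or simply noting that Proposition~\ref{prods of pos winding are pos winding} is stated for both signs).
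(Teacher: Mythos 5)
Your proof is correct and is exactly the route the paper intends: the corollary is stated immediately after Proposition~\ref{prods of pos winding are pos winding} precisely because, for each fixed $x$, the iterate $A_t^n(x)$ is the product of the winding curves $t\mapsto A_t(T^{j-1}x)$, $j=1,\dots,n$. Nothing to add.
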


\bigskip

\subsection{Fibered Rotation Number}
In this section we introduce the notion of fibered rotation number referred to in the main theorem. This concept  was first introduced in~\cite{He83b} and  further developed in~\cite{ABD12,GK21}.

Let $\pi\colon \R\to \Pp^1$
denote the canonical covering map of  $\Pp^1$, which induces a diffeomorphism between $\T^1:=\R/\pi\Z$ and $\Pp^1$. 
Consider a continuous family of cocycles $A_t:X\to \GL_2(\R)$, with parameter $t\in I$, over the continuous base map $T:X\to X$.
Each matrix $A_t(x)$ admits a lifting $\tilde A_t(x):\R\to\R$ such that
the following diagram commutes
$$\begin{CD}
	\R @>\tilde A_t(x) >> \R\\
	@V\pi VV @VV\pi V\\
	\Pp^1 @>> \hat A_t(x) > \Pp^1
\end{CD} \; . $$
We gather these liftings in a single function
$\tilde G:X\times I\times\R\to\R$,
$\tilde G(x,t,v):=\tilde A_t(x)(v)$.

Given two non-zero vectors $v,w\in\R^2$, the angle
$\angle(v, w)$ is well defined as an element in $\R/(2\pi\Z)$. We will use the notation $\measuredangle(v,w)$ to represent a real argument of $\angle(v, w)$ so that
$$ \angle(v, w) = \measuredangle(v,w) + 2\pi\,\Z . $$
Notice that $\measuredangle(v,w)$ can not be globally and continuously defined as a function of $(v,w)\in \Su^1\times \Su^1$, where $\Su^1:=\{v\in\R^2\colon \norm{v}=1\}$.

\begin{proposition}
	\label{prop def h}
	Fix  $t_0\in I$
	and a unit vector $v_0\in\Su^1$. Then there exists $h:X\to\R$ such that 
	\begin{enumerate}
		\item $h$ is bounded and measurable,
		\item $\angle(A_{t_0}(x)\, v_0, v_0)=h(x)+2\pi\Z$ for all $x\in X$,
		\item  $\mu\left\{x\in X\colon x\, \text{ is a discontinuity point of }\, h \, \right\}=0$.
	\end{enumerate}
\end{proposition}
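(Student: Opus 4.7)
The plan is to construct $h$ by choosing a measurable ``branch cut'' in $\R/(2\pi\Z)$ that the pushforward of $\mu$ under the angle map avoids.

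First, since $A_{t_0}\colon X\to\GL_2(\R)$ is continuous and $v_0\in\Su^1$ is fixed, the map $x\mapsto A_{t_0}(x)v_0/\norm{A_{t_0}(x)v_0}$ is continuous from $X$ to $\Su^1$. Identifying $\Su^1$ with $\R/(2\pi\Z)$ via the oriented angle from $v_0$, this defines a continuous map $g\colon X\to\R/(2\pi\Z)$ with $\angle(A_{t_0}(x)v_0,v_0)=g(x)$ for every $x\in X$.

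For each $\theta_0\in\R$, let $\sigma_{\theta_0}$ denote the inverse of the natural bijection $[\theta_0,\theta_0+2\pi)\to\R/(2\pi\Z)$. Then $\sigma_{\theta_0}$ is a bounded Borel measurable section of the quotient map, continuous on $\R/(2\pi\Z)\setminus\{\overline{\theta_0}\}$, where $\overline{\theta_0}$ denotes the class of $\theta_0$ modulo $2\pi$. Setting $h_{\theta_0}:=\sigma_{\theta_0}\circ g$ yields a bounded measurable function on $X$ satisfying condition (2), and the discontinuity set of $h_{\theta_0}$ is contained in $g^{-1}\bigl(\{\overline{\theta_0}\}\bigr)$ by continuity of $g$ together with the local continuity of $\sigma_{\theta_0}$ off the branch point.

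The crux is then to select $\theta_0$ so that $\mu\bigl(g^{-1}\{\overline{\theta_0}\}\bigr)=0$. Let $\nu:=g_\ast\mu$; as a Borel probability measure on the compact metric space $\R/(2\pi\Z)$, $\nu$ has at most countably many atoms. The set of $\theta_0\in[0,2\pi)$ for which $\overline{\theta_0}$ is not an atom of $\nu$ is therefore co-countable, hence non-empty; taking $h:=h_{\theta_0}$ for any such $\theta_0$ establishes condition (3). I expect no real obstacle in this proof; it boils down to the elementary measure-theoretic fact that a Borel probability measure on a separable metric space has only countably many atoms.
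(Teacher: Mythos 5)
Your proof is correct, but it follows a different route from the paper's. The paper constructs $h$ by first choosing, around each $x\in X$, a radius $r$ such that $\mu(\partial B_r(x))=0$ (possible because the spheres around a fixed center are pairwise disjoint, so only countably many can have positive measure) and a local continuous lift of the angle on $B_r(x)$; it then extracts a finite subcover $B_1,\ldots,B_m$ of the compact space $X$ and pastes the local lifts together via $h(x):=h_i(x)$ for $x\in B_i\setminus(B_1\cup\cdots\cup B_{i-1})$, so that the discontinuities of $h$ lie in $\bigcup_i\partial B_i$, a null set. You instead push everything down to the circle: you regard $x\mapsto\angle(A_{t_0}(x)v_0,v_0)$ as a continuous map $g\colon X\to\R/(2\pi\Z)$, choose a branch point $\theta_0$ that is not an atom of the pushforward measure $g_\ast\mu$ (again a ``countably many atoms'' argument), and compose $g$ with the Borel section $\sigma_{\theta_0}$, whose only discontinuity is at $\theta_0$. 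Both arguments rest on the same elementary measure-theoretic fact, but yours avoids the finite-cover bookkeeping entirely and locates the null discontinuity set on the target circle rather than on the source space $X$; this is arguably more transparent, and it also works without invoking compactness of $X$. One small point worth noting: as in the paper's proof, you implicitly use that $A_{t_0}(x)v_0\neq0$, which holds because $A_{t_0}(x)\in\GL_2(\R)$ is invertible; other than that, the argument is complete.
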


\begin{proof}
	For each point $x\in X$ we can take a radius $r>0$ such that the ball $B_r(x):=\{z\in X\colon d(z,x)<r\}$ has  boundary
	$\partial B_r(x):=\{z\in X\colon d(z,x)=r\}$ with zero measure,
	$\mu (\partial B_r(x))=0$, and a locally defined continuous function
	$h_x:B_r(x)\to\R$ such that 
	$\angle(A_{t_0}(z)\, v_0, v_0)=h_x(z)+2\pi\Z$ for all $z\in B_r(x)$.  
	Since $X$ is compact we can cover $X$ with a finite number of these balls $B_1,\ldots, B_m$, where
	$B_i=B_{r_i}(x_i)$ for $i=1,\ldots, m$. Writing $h_i=h_{x_i}$, we define
	$h(x):=h_1(x)$ if $x\in B_1$ and more generally
	$h(x):=h_i(x)$ if $x\in B_i\setminus (B_1\cup \cdots \cup B_{i-1})$ for some $i=2,\ldots, m$.
	This function $h$ satisfies (1)-(3).
\end{proof}

\begin{proposition}
\label{tilde G}
The function $\tilde G$, dependent on the arbitrary choices of the liftings $\tilde A_t(x)$, can be made continuous in $(t,v)\in I\times \R$, and measurable in $(x,t,v)\in X\times I\times \R$ in a way that for some measurable set $D\subset X$ with $\mu(D)=0$, the function $\tilde G$ is continuous over  $(X\setminus D)\times I\times\R$.
\end{proposition}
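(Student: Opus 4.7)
The plan is to anchor $\tilde G$ at a single reference point via Proposition~\ref{prop def h} and then extend it uniquely by path-lifting, exploiting the simple-connectedness of $I\times\R$. Choose once and for all a real lift $\mathbf{v}_0\in \pi^{-1}(\hat v_0)$ of the base projective direction. Proposition~\ref{prop def h} supplies a bounded measurable function $h\colon X\to\R$, continuous off a measurable $\mu$-null set $D$, with $\angle(A_{t_0}(x)\,v_0,v_0)=h(x)+2\pi\Z$. I set
\[
\tilde G(x,t_0,\mathbf{v}_0):=\mathbf{v}_0+h(x),
\]
which is a valid lift since $\pi(\mathbf{v}_0+h(x))=\hat A_{t_0}(x)\,\hat v_0$.

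For each fixed $x\in X$, the projected cocycle $(t,v)\mapsto \hat A_t(x)\,\pi(v)$ is continuous from the convex (hence simply connected) domain $I\times\R$ into $\Pp^1$. Standard covering-space theory then yields a unique continuous function $(t,v)\mapsto \tilde G(x,t,v)\in\R$ lifting this map with the prescribed initial value; this establishes the continuity in $(t,v)$ required by the statement.

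To obtain the measurability in $(x,t,v)$ and the joint continuity on $(X\setminus D)\times I\times\R$, I would realize the lift as an explicit cumulative sum. On any compact rectangle $K\subset I\times\R$, the uniform continuity of $A$ on compact subsets of $X\times I$ produces a finite grid on $K$ so fine that, simultaneously for every $x\in X$, the image $\hat A_t(x)\,\pi(v)$ varies along each grid edge by less than one third of the fundamental period of $\pi\colon\R\to\Pp^1$. Each such edge increment then has a unique lift to $\R$ lying in a short interval around zero, and this choice depends jointly continuously on $(x,t,v)$. Summing these finitely many increments and adding the initial value $\mathbf{v}_0+h(x)$ recovers $\tilde G$ on $X\times K$: measurability in $(x,t,v)$ inherits from that of $h$, while joint continuity on $(X\setminus D)\times K$ follows from the continuity of $h$ off $D$. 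An exhaustion of $I\times\R$ by such rectangles concludes the argument.

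The main obstacle is arranging the grid uniformly in $x$, i.e., upgrading the pointwise continuity of the projected cocycle into a modulus of continuity valid simultaneously for every $x\in X$. This is where the compactness of $X$ together with the joint continuity of $A$ on $X\times I$ is essential; once this uniform control is in place, the cell-by-cell lifting is explicit measurable bookkeeping.
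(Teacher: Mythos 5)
Your proposal takes essentially the same route as the paper: anchor the lift at $(t_0,\hat v_0)$ using the function $h$ from Proposition~\ref{prop def h}, extend by unique covering-space lifting over the simply connected parameter domain, and inherit the exceptional set $D$ from the discontinuity set of $h$. Your explicit grid/cumulative-sum construction is a more detailed way of justifying the joint measurability and the continuity off $D\times I\times\R$, which the paper's proof asserts rather tersely with ``extending these liftings continuously in the variable $t$''; beyond that elaboration the two arguments coincide.
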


\begin{proof}
In general it may not be possible to realize $\tilde G$ as a globally continuous function, see~\cite[Remark A.4]{GK21}.
Fix $t_0\in\R$, $v_0\in\Su^1$, $x_0\in\R$ such that $\pi(x_0)=\hat v_0$   and a measurable function $h:X\to\R$ as in Proposition~\ref{prop def h}. For each $x\in X$
let $\tilde A_{t_0}(x)\colon\R\to\R$ be the unique lifting of
$\hat A_{t_0}(x):\Pp^1\to\Pp^1$ such that
$ \tilde A_{t_0}(x)(x_0) - x_0 = h(x)$.
Extending these liftings continuously in the variable $t\in I$, we obtain a function $\tilde G(x,t,v):=\tilde A_t(x)\, v-v$, continuous in $(t,v)\in I\times\R$, and sharing with $h(x)$ the same continuity points in $X$.
\end{proof}

 We define recursively 
$\tilde G_n:X\times I\times \R\to\R$,
$$  \tilde G_n(x,t,v):= \tilde G\left(T^{n-1} x,t, \tilde G_{n-1}(x,t,v) \right) \; \text{ for }  n \geq 1 $$
with $\tilde G_0(x,t,v):=v$.
Through this function we define the angle
$$ \measuredangle(A_t^n(x)\, \pi(v), \pi(v) ):= \tilde G_n(x,t,v)-v .$$
The left-hand-side does not depend on the representative $v\in\R$ of the projective point $\pi(v)\in\Pp^1$.
Hence the expression $\measuredangle(A_t^n(x) v, v)=\measuredangle(A_t^n(x)\hat v, \hat v)$ makes sense and defines a function on $X\times I\times \Pp^1$.
For $n=1$, this expression determines the family of functions
$\tilde H_t\colon X \times \Pp^1\to\R$, 
$$ \tilde H_t(x,\hat v ):= \measuredangle(A_t(x)\, \hat v, \hat v) . $$

Given $s<t$, define for any $(x,v)\in X\times \Pp^1$,
$$ \measuredangle(A_t^n(x)\, v, \, A_s^n(x)\, v):=
 \measuredangle(A_t^n(x)\, v, \, v) -  \measuredangle(A_s^n(x)\, v, \, v) . $$	
\begin{proposition}
	\label{Hn properties}
	The functions defined above satisfy:
	\begin{enumerate}
		\item  For each $x\in X$, 
		$I\times\Pp^1 \ni (t,\hat v)\mapsto \measuredangle(A_t^n(x)\, \hat v, \hat v)$, is a continuous map;
		\item  $\measuredangle(A_{t}^n(x)\,  \hat v,  \hat v)  =\sum_{j=0}^{n-1} \tilde H_t(\hat F^j(x, \hat  v))$,\;  $\forall t\in I$ $\forall (x, \hat v) \in   X \times \Pp^1$;
		\item The angle functions $(x,t,\hat v)\mapsto \measuredangle(A_{t}^n(x)\, \hat v, \hat v)$ are continuous in the complement of a product set $D\times I\times \Pp^1$ where $\mu(D)=0$;
		\item  $ \angle(A_t^n(x)\, v, \, A_s^n(x)\, v)
		=  \measuredangle(A_t^n(x)\, v, \, A_s^n(x)\, v)+ \pi\,\Z$;
		\item  $\measuredangle(A_t^n(x)\, v, \, A_s^n(x)\, v)\geq 0$, \; for $t\geq s$,  if the family of cocycles $A_t$ is positively winding. 
	
	\end{enumerate}
\end{proposition}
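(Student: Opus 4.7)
The plan is to derive all five properties from the construction of the lift $\tilde G$ provided by Proposition~\ref{tilde G}, together with the recursion $\tilde G_n(x,t,v) = \tilde G(T^{n-1}x,\, t,\, \tilde G_{n-1}(x,t,v))$ and the basic identity $\measuredangle(A_t^n(x)\hat v, \hat v) = \tilde G_n(x,t,v) - v$. I would prove items (1)--(3) together by induction on $n$, and then address (4) and (5) separately.

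The heart of the argument is the telescoping decomposition in (2). Writing
$$\tilde G_n(x,t,v) - v \,=\, \sum_{j=0}^{n-1} \bigl[\tilde G_{j+1}(x,t,v) - \tilde G_j(x,t,v)\bigr],$$
each summand unfolds to
$$\tilde G\bigl(T^j x,\, t,\, \tilde G_j(x,t,v)\bigr) - \tilde G_j(x,t,v) \,=\, \tilde H_t\bigl(T^j x,\, \hat A_t^j(x)\hat v\bigr) \,=\, \tilde H_t\bigl(\hat F^j(x,\hat v)\bigr),$$
after checking that $\tilde G(y,t,w) - w$ is $\pi\Z$-periodic in $w$, so that it descends to a well defined function of the projective point. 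Item (1) then follows inductively, since $\tilde G(y,t,\cdot)$ is continuous in $(t,\cdot)$ for each fixed $y\in X$ by Proposition~\ref{tilde G}, and a finite composition and telescoping sum of such continuous functions is continuous in $(t,\hat v)$. For (3), I would set $D_n := D \cup T^{-1}D \cup \cdots \cup T^{-(n-1)}D$; since $T$ preserves $\mu$, $\mu(D_n)=0$, and on $(X\setminus D_n)\times I\times\R$ every intermediate argument $T^j x$ lies outside $D$, so the joint continuity provided by Proposition~\ref{tilde G} propagates through the iterated composition.

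For (4), the difference $\measuredangle(A_t^n(x)v,v) - \measuredangle(A_s^n(x)v,v)$ is a real number whose reduction modulo $\pi\Z$ is independent of the choice of representative $v \in \pi^{-1}(\hat v)$, so it represents a well defined class in $\R/\pi\Z$ which is precisely the projective angle $\angle(A_t^n(x)v,\, A_s^n(x)v)$. The indeterminacy is $\pi\Z$ rather than $2\pi\Z$ because of the identification $\Pp^1 = \R/\pi\Z$ induced by the covering $\pi\colon\R\to\Pp^1$.

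Finally, for (5), Corollary~\ref{coro:pos winding cocycle} upgrades positive winding of $A_t$ to positive winding of the iterate $A_t^n$. Proposition~\ref{derivative At v/norm} then gives, for every nonzero $v\in\R^2$,
$$\frac{d}{dt}\,\frac{A_t^n(x)v}{\norm{A_t^n(x)v}} \,=\, \frac{Q_{A_t^n(x)}(v)}{\norm{A_t^n(x)v}^2} \,\geq\, 0,$$
so the continuous lift $t\mapsto \tilde G_n(x,t,v)$ is monotone non-decreasing in $t$; subtracting at $s$ and $t$ yields (5). The main subtlety lies in items (2) and (3): performing the $\pi\Z$-periodicity check so that the summation expression descends to a function on $X\times \Pp^1$, and correctly tracking the $T$-equivariant null set $D_n$ so that joint continuity survives arbitrarily many iterations.
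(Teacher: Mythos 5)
Your proof is correct and takes essentially the same approach as the paper's, which is quite terse (items (1)--(4) are attributed to the definitions and Proposition~\ref{tilde G}, and only (5) is spelled out via the telescoping sum from item (2) together with the $t$-monotonicity of the single-step angle $\tilde H_t$). Your handling of (5) is a mild variant -- you apply Corollary~\ref{coro:pos winding cocycle} and the derivative formula of Proposition~\ref{derivative At v/norm} directly to the iterate $A_t^n$ rather than summing the per-step monotonicity -- but the two routes are equivalent, both resting on Proposition~\ref{prods of pos winding are pos winding}.
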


\begin{proof}
Follows from the definitions and Proposition~\ref{tilde G}.
The set in item (3) is the union $D:=\cup_{j\in \Z} T^{-j}(D_h)$, where $D_h$ is the set of discontinuity points of the  function $h$
used in the proof of Proposition~\ref{tilde G}.

For item (5),
notice that	
\begin{align*}
\measuredangle(A_t^n(x)\, v, \, A_s^n(x)\, v)&:=
\measuredangle(A_t^n(x)\, v, \, v)-\measuredangle(A_s^n(x)\, v, \, \, v)\\
&=\sum_{j=0}^{n-1} \tilde H_t(F^j(x,\hat v))
- \tilde H_s(F^j(x,\hat v))  
\end{align*}
and the function $\tilde H_t(x,\hat v)$ is non-decreasing in the variable $t$, as a consequence of the winding property.
\end{proof}

\begin{proposition}
	\label{rotation number full measure set}
	For every $t\in I$ there exist a number $\rho\in\R$ and a measurable set  $\Omega_t\subset X$ with full measure, $\mu(\Omega_t)=1$, such that for all $x\in \Omega_t$ and all $\hat v\in\Pp^1$,
	$$  \rho =  \lim_{n\to +\infty} \frac{1}{\pi n}\, \measuredangle(A_t^n(x)\, \hat v, \hat v) .  $$
\end{proposition}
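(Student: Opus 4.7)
The plan is to apply Kingman's subadditive ergodic theorem to two angle cocycles and then show the resulting limits coincide. Fix $t\in I$ and set
\begin{align*}
\phi_n(x) &:= \sup_{\hat v\in\Pp^1}\measuredangle(A_t^n(x)\,\hat v,\,\hat v),\\
\psi_n(x) &:= \inf_{\hat v\in\Pp^1}\measuredangle(A_t^n(x)\,\hat v,\,\hat v).
\end{align*}
Splitting the Birkhoff sum in item (2) of Proposition~\ref{Hn properties} and substituting $\hat w:=A_t^m(x)\hat v$, one obtains
$$\measuredangle(A_t^{n+m}(x)\hat v,\hat v)=\measuredangle(A_t^m(x)\hat v,\hat v)+\measuredangle(A_t^n(T^m x)\hat w,\hat w).$$
Taking $\sup$ (resp.\ $\inf$) in $\hat v$ and using that $\hat v\mapsto A_t^m(x)\hat v$ is a bijection of $\Pp^1$, we see that $(\phi_n)$ is subadditive and $(\psi_n)$ is superadditive over $(T,\mu)$. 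Since $\tilde H_t$ is bounded on $X\times\Pp^1$ (by the constructions in Propositions~\ref{prop def h} and~\ref{tilde G}), so are $\phi_1$ and $\psi_1$, and Kingman's theorem together with ergodicity produces $\mu$-a.e.\ constant limits $\bar\phi$ and $\bar\psi$.

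The heart of the argument is the uniform estimate $0\le\phi_n(x)-\psi_n(x)\le 2\pi$ for every $n$ and $x$. The key input is that the lift $\tilde G_n(x,t,\cdot)\colon\R\to\R$ of the orientation-preserving homeomorphism $\hat A_t^n(x)\colon\Pp^1\to\Pp^1$ is monotone non-decreasing and commutes with translation by $\pi$. Hence for any $v_1,v_2\in\R$ with $|v_1-v_2|\le\pi$ one has $|\tilde G_n(x,t,v_1)-\tilde G_n(x,t,v_2)|\le\pi$, which yields $|\measuredangle(A_t^n(x)\hat v_1,\hat v_1)-\measuredangle(A_t^n(x)\hat v_2,\hat v_2)|\le 2\pi$.

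Combining the two steps forces $\bar\phi=\bar\psi=:\pi\rho$. Letting $\Omega_t\subset X$ be the $\mu$-full-measure set on which both Kingman limits hold, the squeeze $\psi_n(x)\le\measuredangle(A_t^n(x)\hat v,\hat v)\le\phi_n(x)$ yields
$$\rho=\lim_{n\to\infty}\frac{1}{\pi n}\,\measuredangle(A_t^n(x)\,\hat v,\,\hat v)$$
simultaneously for every $x\in\Omega_t$ and every $\hat v\in\Pp^1$, which is the stated conclusion.

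The main obstacle is the uniform bound $\phi_n-\psi_n\le 2\pi$: one has to check that the lifts produced by Proposition~\ref{tilde G} iterate consistently to $\pi$-equivariant, monotone maps $\tilde G_n(x,t,\cdot)$ for every $n$, where orientation preservation of the $\GL_2(\R)$-action on $\Pp^1$ is the crucial input, and that $\tilde H_t$ is globally bounded on $X\times\Pp^1$. Once these points are settled, subadditivity and Kingman close the argument with no further difficulty.
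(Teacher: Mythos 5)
Your proof is correct, and it takes a genuinely different route from the paper: the paper's ``proof'' of this proposition is a one-line citation to Proposition~A.1 of Gorodetski--Kleptsyn~\cite{GK21}, so you have supplied a self-contained argument where the paper relies on an external reference. The mechanism you use---Kingman's subadditive ergodic theorem applied to $\phi_n(x)=\sup_{\hat v}\measuredangle(A_t^n(x)\hat v,\hat v)$ and the superadditive $\psi_n(x)=\inf_{\hat v}\measuredangle(A_t^n(x)\hat v,\hat v)$, pinned together by the uniform bound $\phi_n-\psi_n\le 2\pi$ that comes from monotonicity and $\pi$-equivariance of the lift $\tilde G_n(x,t,\cdot)$---is the classical one for fibered rotation numbers of orientation-preserving circle cocycles, and all the inputs it requires are indeed in place in the paper: item~(2) of Proposition~\ref{Hn properties} supplies the cocycle identity needed for sub- and superadditivity (together with the bijectivity of $\hat v\mapsto A_t^m(x)\hat v$ on $\Pp^1$), the boundedness of $h$ from Proposition~\ref{prop def h} yields $\phi_1,\psi_1$ bounded, and the standing convention that $\GL_2(\R)$ consists of \emph{positive}-determinant matrices is exactly what makes $\hat A_t^n(x)$ orientation-preserving and hence $\tilde G_n(x,t,\cdot)$ monotone non-decreasing. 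Two points worth making explicit if you write this up: measurability of $\phi_n,\psi_n$ in $x$ follows by taking the sup and inf over a countable dense subset of $\Pp^1$, using continuity of $\hat v\mapsto\measuredangle(A_t^n(x)\hat v,\hat v)$ for fixed $(x,t)$; and the two Kingman constants coincide because the pinching forces $\bar\phi-\bar\psi\le 0$ while $\phi_n\ge\psi_n$ forces $\bar\phi\ge\bar\psi$. Your approach buys transparency and keeps the argument internal to the paper; the authors' citation buys brevity at the cost of opacity.
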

\begin{proof}
Follows from~\cite[Proposition A.1]{GK21}.
\end{proof}

\begin{definition}
The previous limit $\rho$ is called the fibered rotation number of the cocycle $A_t$ and denoted by $\rho(A_t)$.
\end{definition}

\begin{proposition}
\label{rot number via stationary measures}
Given $t\in I$, for any measure $\nu\in \Prob(X\times\Pp^1)$ such that $(\hat F_t)_\ast\nu=\nu$
and $\pi_\ast\nu=\mu$, where $\pi\colon X\times\Pp^1\to X$
is the canonical projection $\pi(x,\hat v):=x$, we have
$$ \rho(A_t) =\frac{1}{\pi} \, \int_{ X\times \Pp^1} \measuredangle(A_t(x)\, \hat v, \hat v)\, d\nu(x,\hat v) .$$
\end{proposition}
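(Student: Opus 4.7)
The plan is to recognize $\measuredangle(A_t^n(x)\hat v,\hat v)$ as a Birkhoff sum for the $\nu$-preserving skew-product $\hat F_t$ and then compare the ergodic average with the pointwise formula for the rotation number provided by Proposition~\ref{rotation number full measure set}. The starting point is item~(2) of Proposition~\ref{Hn properties}, which says that
\begin{equation*}
\frac{1}{\pi n}\,\measuredangle(A_t^n(x)\hat v,\hat v)=\frac{1}{\pi n}\sum_{j=0}^{n-1}\tilde H_t\bigl(\hat F_t^j(x,\hat v)\bigr).
\end{equation*}

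First I would check that $\tilde H_t\in L^\infty(\nu)$. Using the specific lifts from the proof of Proposition~\ref{tilde G}, where $\tilde A_{t_0}(x)(x_0)-x_0=h(x)$ with $h$ bounded (Proposition~\ref{prop def h}(1)), the $\pi$-periodicity relation $\tilde A_t(x)(v+\pi)=\tilde A_t(x)(v)+\pi$ and the monotonicity of $\tilde A_t(x)$ force $\tilde A_t(x)(v)-v$ to lie within $\pi$ of its value at $x_0$ on the fundamental domain. Hence $|\tilde H_t(x,\hat v)|\le \|h\|_\infty+\pi$ uniformly in $(x,\hat v)$, so in particular $\tilde H_t\in L^1(\nu)$.

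Next, I would apply Birkhoff's ergodic theorem to the measure-preserving system $(\hat F_t,\nu)$ with observable $\tilde H_t$. This yields an $\hat F_t$-invariant function $\tilde H_t^\ast\in L^1(\nu)$ such that for $\nu$-a.e. $(x,\hat v)$,
\begin{equation*}
\tilde H_t^\ast(x,\hat v)=\lim_{n\to\infty}\frac{1}{n}\sum_{j=0}^{n-1}\tilde H_t\bigl(\hat F_t^j(x,\hat v)\bigr),\qquad \int \tilde H_t^\ast\,d\nu=\int \tilde H_t\,d\nu.
\end{equation*}
On the other hand, Proposition~\ref{rotation number full measure set} provides a set $\Omega_t\subset X$ with $\mu(\Omega_t)=1$ on which the left-hand side of the displayed identity above the previous paragraph converges to $\rho(A_t)$ for every $\hat v\in\Pp^1$. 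Since $\pi_\ast\nu=\mu$, the set $\Omega_t\times\Pp^1$ has full $\nu$-measure, so the two limits must agree $\nu$-a.e., giving $\tilde H_t^\ast\equiv \pi\,\rho(A_t)$. Integrating with respect to $\nu$ yields
\begin{equation*}
\pi\,\rho(A_t)=\int_{X\times\Pp^1}\tilde H_t^\ast\,d\nu=\int_{X\times\Pp^1}\measuredangle(A_t(x)\hat v,\hat v)\,d\nu(x,\hat v),
\end{equation*}
which is the claimed formula.

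The only substantive step is the $L^1$-integrability (in fact boundedness) of $\tilde H_t$, which is a technical point about the chosen lifts and is handled by the construction in Proposition~\ref{tilde G}; once this is granted, the statement is an immediate consequence of the Birkhoff ergodic theorem combined with the pointwise description of the rotation number. Note in particular that the ergodicity of $\nu$ is not required: even when $\hat F_t$ fails to be $\nu$-ergodic, the invariant limit $\tilde H_t^\ast$ is forced to be constant $\nu$-a.e.\ by comparison with the deterministic constant $\pi\,\rho(A_t)$.
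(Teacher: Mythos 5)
Your proof is correct and rests on the same two ingredients the paper uses: item (2) of Proposition~\ref{Hn properties} to write the angle as a Birkhoff sum, and Proposition~\ref{rotation number full measure set} to identify the pointwise limit with $\pi\rho(A_t)$. Where you diverge from the paper is in how you handle the case where $(\hat F_t,\nu)$ is not ergodic: the paper first proves the identity under an ergodicity hypothesis and then extends it by appealing to the ergodic decomposition of the compact convex set of $\hat F_t$-invariant lifts of $\mu$, whereas you apply Birkhoff directly to the (possibly non-ergodic) system and observe that the $\hat F_t$-invariant limit function $\tilde H_t^\ast$ must be $\nu$-a.e.\ equal to the constant $\pi\rho(A_t)$, because it agrees pointwise with the deterministic limit from Proposition~\ref{rotation number full measure set} on the full-$\nu$-measure set $\Omega_t\times\Pp^1$. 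This bypasses the ergodic decomposition entirely and is, if anything, slightly more direct; the paper's route makes the role of the extremal measures explicit, which is conceptually useful elsewhere but not needed for this statement. One small caveat: your bound $|\tilde H_t|\le\|h\|_\infty+\pi$ is literally only what the construction gives at $t=t_0$; for a general fixed $t\in I$ you should appeal to the continuous extension in $t$ built in Proposition~\ref{tilde G} together with compactness of $X$ (minus a $\mu$-null set) and of $\Pp^1$ to conclude boundedness. This is a minor quantitative detail and does not affect the validity of the argument.
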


\begin{proof}
Assuming $(\hat F_t, \nu)$ is ergodic, the conclusion follows applying  Birk\-hoff ergodic theorem to item (2) of Proposition~\ref{Hn properties} and Proposition~\ref{rotation number full measure set}. In general the space $M$ of
all measures $\nu\in \Prob(X\times\Pp^1)$ such that $(\hat F_t)_\ast\nu =\nu$
and $\pi_\ast\nu=\mu$ is weak$\ast$ compact and convex. Moreover the extremal points of $M$ are the measures $\nu\in M$ such that $(\hat F_t, \nu)$ is ergodic. 
When $\nu$ is not ergodic, it admits an ergodic decomposition consisting of ergodic measures, i.e., extremal points of $M$. Hence the stated  identity must hold in this case as well.
\end{proof}

\begin{proposition}
	\label{rho continuity and monotonicity}
	Assuming the family of cocycles $A_t$ is positively winding then the function $\rho:I\to\R$, $t\mapsto \rho(A_t)$, is   continuous and non decreasing.
\end{proposition}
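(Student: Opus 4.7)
The plan is to handle monotonicity and continuity separately: monotonicity is a direct consequence of the winding inequality already recorded, while continuity will be obtained via a weak-$\ast$ compactness argument applied to the integral representation of the rotation number. For monotonicity, fix $s\le t$ in $I$. Combining the definition of $\measuredangle(A_t^n(x)v,A_s^n(x)v)$ given just before Proposition~\ref{Hn properties} with item~(5) of that proposition,
$\measuredangle(A_t^n(x)\hat v,\hat v)-\measuredangle(A_s^n(x)\hat v,\hat v)=\measuredangle(A_t^n(x)\hat v,A_s^n(x)\hat v)\ge 0$
for every $(x,\hat v)\in X\times\Pp^1$ and every $n\in\N$. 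Choosing $x\in \Omega_t\cap\Omega_s$, the intersection of the two full-measure sets supplied by Proposition~\ref{rotation number full measure set}, dividing by $\pi n$, and letting $n\to\infty$ yields $\rho(A_t)\ge \rho(A_s)$.

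For continuity, since $\rho$ is monotone it admits one-sided limits everywhere and has at most countably many jump discontinuities, so it suffices to rule out jumps. Fix $t_0\in I$ and a sequence $t_n\to t_0$. For each $n$, applying Krylov--Bogolyubov to the continuous skew product $\hat F_{t_n}$ on the compact space $X\times\Pp^1$ (starting from any measure with $X$-marginal $\mu$) yields an $\hat F_{t_n}$-invariant probability $\nu_n\in\Prob(X\times\Pp^1)$ whose $X$-marginal is $\mu$. By weak-$\ast$ compactness of $\Prob(X\times\Pp^1)$, a subsequence $\nu_{n_k}\to \nu^{\ast}$, and the joint continuity of $(t,x,\hat v)\mapsto \hat F_t(x,\hat v)$ guarantees that $\nu^{\ast}$ is $\hat F_{t_0}$-invariant with $X$-marginal $\mu$. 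Proposition~\ref{rot number via stationary measures} then gives $\pi\,\rho(A_{t_{n_k}})=\int \tilde H_{t_{n_k}}\,d\nu_{n_k}$ and $\pi\,\rho(A_{t_0})=\int \tilde H_{t_0}\,d\nu^{\ast}$.

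The main obstacle is justifying $\int \tilde H_{t_{n_k}}\,d\nu_{n_k}\to \int \tilde H_{t_0}\,d\nu^{\ast}$, because $\tilde H_t$ is only jointly continuous off a product set $D\times I\times\Pp^1$ with $\mu(D)=0$ (see Proposition~\ref{tilde G} and Proposition~\ref{Hn properties}(3)). I plan to split the difference as $\int(\tilde H_{t_{n_k}}-\tilde H_{t_0})\,d\nu_{n_k}+\int \tilde H_{t_0}\,d(\nu_{n_k}-\nu^{\ast})$. Since $\tilde H_{t_0}$ is bounded and continuous outside $D\times \Pp^1$, a set of $\nu^{\ast}$-mass zero (because its $X$-marginal is $\mu$), the Portmanteau theorem forces the second term to vanish. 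For the first term, fix $\varepsilon>0$ and use inner regularity of $\mu$ to choose a compact $K_\varepsilon\subset X\setminus D$ with $\mu(X\setminus K_\varepsilon)<\varepsilon$; on the compact set $K_\varepsilon\times \Pp^1\times [t_0-\delta,t_0+\delta]$ the function $\tilde H$ is uniformly continuous, while on the complement the integrand is controlled by the uniform bound on $\tilde H_t$ on compact $t$-intervals (the angle of one matrix application lies in a bounded interval). Letting $k\to\infty$ and then $\varepsilon\to 0$ yields $\rho(A_{t_{n_k}})\to \rho(A_{t_0})$; combined with monotonicity, this rules out jump discontinuities and proves continuity at $t_0$.
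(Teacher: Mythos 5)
Your proof is correct and follows essentially the same two-pronged strategy as the paper: monotonicity from the winding inequality (item (5) of Proposition~\ref{Hn properties}) applied on $\Omega_t\cap\Omega_s$, and continuity via weak-$\ast$ compactness of the space of $\hat F_t$-invariant lifts of $\mu$ together with the integral formula of Proposition~\ref{rot number via stationary measures}. The one place you diverge in style is the justification of $\int\tilde H_{t_{n_k}}\,d\nu_{n_k}\to\int\tilde H_{t_0}\,d\nu^{\ast}$: the paper asserts almost-uniform convergence of $\measuredangle(A_{t_n}(x)\hat v,\hat v)$ to $\measuredangle(A_t(x)\hat v,\hat v)$ and invokes the fact that the discontinuity set of the limit has $\mu$-measure zero, whereas you unpack this into the two-term split (Portmanteau for the fixed function against varying measures, plus uniform continuity off a compact $K_\varepsilon\subset X\setminus D$ of nearly full measure). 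Your version is more explicit but captures the same mechanism. One small remark: at the end the paper concludes convergence of the full sequence $\rho(A_{t_n})$ by observing the limit is the same along every convergent subsequence of $(\nu_n)$, while you invoke monotonicity to rule out jumps; both are valid, and your route has the mild advantage of not requiring the sub-subsequence principle, though it does implicitly need you to run the argument for one-sided sequences $t_n\uparrow t_0$ and $t_n\downarrow t_0$ separately.
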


\begin{proof}
	Consider a convergent sequence  $t_n\to t$  in $I$.
For each $n\in\N$ take   $\nu_{n}  \in \Prob(X\times\Pp^1)$ such that
$(\hat F_{t_n})_\ast \nu_{n}=\nu_{n}$ and $\pi_\ast \nu_{n} =\mu$. Denote by $\eta\in\Prob(X\times \Pp^1)$ any accumulation point of the sequence $\nu_{n}$.
We can easily check that
$(\hat F_{t})_\ast \eta=\eta$ and $\pi_\ast \eta=\mu$.
Since $\measuredangle(A_{t_n}(x)\hat v, \hat v)$ converges almost uniformly to
$\measuredangle(A_{t}(x)\hat v, \hat v)$ on $X\times\Pp^1$ 
and  by  item (3) of Proposition~\ref{Hn properties} the set of discontinuity points of $\measuredangle(A_{t}(x)\hat  v, \hat v)$   has  $\mu$-measure zero,
we have for some sub-sequence $n_i$,
\begin{align*}
	\lim_{i\to \infty} \rho(A_{t_{n_i}}) &= \lim_{i\to \infty} \frac{1}{\pi}\,\int \measuredangle(A_{t_{n_i}}(x)\hat v, \hat v)\, d\nu_{{n_i}}(x,\hat v) \\
	&= \frac{1}{\pi}\, \int \measuredangle(A_{t}(x)\hat  v, \hat v)\, d\eta(x,\hat v)=\rho(A_t).
\end{align*} 
Finally, the fact that this convergence holds for all sub-limits $\eta$ of $\nu_{n}$ implies that  \, $\lim_{n\to\infty}\rho(A_{t_n})$ \, does indeed exist and is equal to $\rho(A_t)$.

Given $t_0<t_0'$,  take a measurable set $\Omega\subset X$,
with full measure, i.e., $\mu(\Omega)=1$, such that for all $x\in \Omega$ and both $t\in \{t_0,t_0'\}$, 
$$ \rho(A_t)=\lim_{n\to \infty} \frac{1}{\pi n}\, \measuredangle(A^n_t(x)\, e_1, e_1) . $$
By the positive winding property
$$ \measuredangle(A^n_{t_0'}(x) e_1, e_1) -\measuredangle(A^n_{t_0}(x) e_1, e_1) =  \measuredangle(A^n_{t_0'}(x) e_1, A^n_{t_0}(x) e_1) \geq 0 $$
Hence taking limits
$$ \rho(A_{t_0'})=\lim_{n\to \infty} \frac{1}{\pi n}\,  \measuredangle(A^n_{t_0'}(x) e_1, e_1) 
\geq  \lim_{n\to \infty} \frac{1}{\pi n}\,  \measuredangle(A^n_{t_0}(x) e_1, e_1) =\rho(A_t) . $$
Therefore  $t\mapsto \rho(A_t)$ is a non-decreasing function.  
\end{proof}

\begin{proposition}
	Given any $t,t'\in I$, the relative rotation number
	$\rho(A_{t'})- \rho(A_t)$, $t'>t$ does not depend on the choice of the function $h$.\\
	In particular this relative rotation number  is intrinsically defined.
\end{proposition}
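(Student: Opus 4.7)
The plan is to show that changing $h$ to another admissible function $h'$ shifts the unoriented angle $\measuredangle(A_t^n(x)\hat v,\hat v)$ by a quantity that depends on $x$ (and $n$) but \emph{not} on $t$, so that the shift cancels when one takes the difference at $t'$ and $t$.

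First I would note that since both $h$ and $h'$ lift $\angle(A_{t_0}(x)v_0,v_0)\in \R/(2\pi\Z)$, their difference takes values in $2\pi\Z$; this yields a bounded measurable $k\colon X\to \Z$ with $h'=h+2\pi k$. Pushing this through the construction in Proposition~\ref{tilde G}, the two liftings at $t=t_0$ differ by $2\pi k(x)$, and this difference persists for every $t\in I$: indeed, $\tilde A'_t(x)-\tilde A_t(x)$ is continuous in $t$ with values in the discrete deck group $\pi\Z$, so on the interval $I$ it is constant and equals $2\pi k(x)$ throughout.

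Next I would establish by induction on $n\geq 1$ the identity
\[
\tilde G'_n(x,t,v) = \tilde G_n(x,t,v) + 2\pi\sum_{j=0}^{n-1} k(T^j x),
\]
using the key fact that $2\pi k(T^{n-1}x)\in \pi\Z$ is a deck transformation of the cover $\pi\colon\R\to\Pp^1$ and therefore commutes with every lifting $\tilde A_t(T^{n-1}x)$. Translating this back via $\measuredangle(A_t^n(x)\hat v,\hat v)=\tilde G_n(x,t,v)-v$ gives a correction term $2\pi\, S_n k(x)$, with $S_n k(x):=\sum_{j=0}^{n-1}k(T^jx)$, which is manifestly independent of $t$. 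Subtracting the expressions at $t'$ and at $t$ makes the correction cancel, so the \emph{relative} angles coincide for both choices of $h$; dividing by $\pi n$ and passing to the limit on the full-measure set provided by Proposition~\ref{rotation number full measure set} yields $\rho'(A_{t'})-\rho'(A_t)=\rho(A_{t'})-\rho(A_t)$.

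The main subtlety I anticipate is keeping the two moduli straight: angles of vectors in $\R^2$ live in $\R/(2\pi\Z)$ while the covering $\pi\colon\R\to\Pp^1$ has period $\pi$. The argument works precisely because $2\pi k(x)$ is simultaneously an admissible ambiguity for $h$ (an element of $2\pi\Z$) and a deck transformation (an element of $\pi\Z$) that commutes with the liftings. A secondary point is that continuity in $t$, together with the discreteness of $\pi\Z$ and the connectedness of $I$, must be invoked to propagate the initial difference $2\pi k(x)$ from $t=t_0$ to all of $I$; once that is in hand, the Birkhoff-sum form of the correction term makes the cancellation automatic.
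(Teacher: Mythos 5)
The paper gives no internal proof of this proposition, deferring instead to an external reference (Remark A.8 of Gorodetski--Kleptsyn). Your argument is correct and self-contained, so it fills that gap rather than duplicating something in the text.

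The key points you identify are exactly the ones that make the argument work: (i) two admissible choices $h,h'$ differ by a bounded measurable $2\pi\Z$-valued function $2\pi k$, so the corresponding liftings $\tilde A_{t_0}(x)$ and $\tilde A'_{t_0}(x)$ differ by the deck transformation $y\mapsto y+2\pi k(x)$ (note $2\pi\Z\subset\pi\Z$, the deck group of $\pi\colon\R\to\Pp^1$); (ii) connectedness of $I$ plus continuity in $t$ and discreteness of $\pi\Z$ propagates this constant offset to all $t\in I$; (iii) since each $\hat A_t(x)$ has degree $1$, its lift commutes with deck translations, giving the Birkhoff-sum identity $\tilde G'_n=\tilde G_n+2\pi\sum_{j<n}k(T^jx)$ by induction; and (iv) this correction is $t$-independent, hence cancels in the relative angle, and passes to the limit on the full-measure sets from Proposition~\ref{rotation number full measure set}. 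A minor streamlining you could mention: dividing by $\pi n$ and applying Birkhoff's theorem to $k$ shows directly that $\rho'(A_t)=\rho(A_t)+2\int_X k\,d\mu$ for every $t$, i.e.\ the entire rotation number function shifts by a constant, which immediately implies the invariance of differences and makes the ``up to an additive constant'' assertion earlier in the section explicit. No gaps.
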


\begin{proof}
See~\cite[Remark A.8]{GK21}.
\end{proof}

\subsection{Homotopy properties}

In this section we discuss what happen with the length of  a the curve satisfying the winding property.

\begin{definition} Given  $I\ni t\mapsto A_t\in \GL_2(\R)$ smooth and a unit vector $v\in\Su^1$,  the length (oriented angle) of $A_t\, v/\norm{A_t\, v}\in\Su^1$ as $t$ ranges in $I$ is denoted by $\ell_I(A_t \, v)$.
This also agrees with the length of the projective curve $I\ni t\mapsto A_t\,\hat v$.
\end{definition}

\begin{proposition} Given a positively winding smooth curve $I\ni t\mapsto A_t\in \GL_2(\R)$  and $\hat v\in\Pp^1$, 
	$$ \ell_I(A_t \, \hat v) =\int_I \frac{ (A_t\, v) \wedge (\dot A_t\, v) }{\norm{A_t\, v}^2}\, dt . $$
\end{proposition}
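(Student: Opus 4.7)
The plan is to reduce the statement to a direct application of Proposition~\ref{derivative At v/norm} combined with the fundamental theorem of calculus, interpreting the oriented length on $\Pp^1$ (equivalently, on $\Su^1$) as the integral of the angular velocity.

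First, I would fix a unit representative $v\in\Su^1$ and set $\gamma(t):= A_t\,v/\norm{A_t\,v}\in \Su^1$, a smooth curve on the circle. By the covering $\pi:\R\to\Su^1$, $\pi(\theta)=(\cos\theta,\sin\theta)$, and smoothness of $\gamma$, there exists a smooth lift $\theta:I\to\R$ with $\gamma(t)=(\cos\theta(t),\sin\theta(t))$. The oriented length of $\gamma$ over $I=[a,b]$ is by definition $\ell_I(A_t\,v)=\theta(b)-\theta(a)$, a quantity that only depends on $\gamma$ and descends to the corresponding projective curve $t\mapsto A_t\hat v\in\Pp^1$ under the double cover $\Su^1\to\Pp^1$.

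Second, I would compute the angular velocity $\dot\theta$. Differentiating $\gamma=(\cos\theta,\sin\theta)$ gives $\dot\gamma=\dot\theta\,(-\sin\theta,\cos\theta)$, so $\gamma\wedge\dot\gamma=\dot\theta$. Hence
$$\dot\theta(t)=\gamma(t)\wedge\dot\gamma(t)=\frac{A_t\,v}{\norm{A_t\,v}}\wedge\frac{d}{dt}\!\left[\frac{A_t\,v}{\norm{A_t\,v}}\right].$$
Now Proposition~\ref{derivative At v/norm} identifies the second factor as $(A_t v)\wedge(\dot A_t v)/\norm{A_t v}^2$, and a one-line computation (expanding $\tfrac{d}{dt}(u/\norm u)=\dot u/\norm u-u\,\tfrac{d}{dt}\norm u/\norm u^{2}$ and using $u\wedge u=0$ with $u=A_t v$) yields
$$\dot\theta(t)=\frac{(A_t\,v)\wedge(\dot A_t\,v)}{\norm{A_t\,v}^2}.$$

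Third, the fundamental theorem of calculus gives
$$\ell_I(A_t\,\hat v)=\theta(b)-\theta(a)=\int_I \dot\theta(t)\,dt=\int_I \frac{(A_t\,v)\wedge(\dot A_t\,v)}{\norm{A_t\,v}^2}\,dt,$$
as desired. The positive winding hypothesis is not needed for the equality itself, but it ensures via Definition~\ref{winding def} that the integrand is non-negative, so that this signed oriented length coincides with the geometric arc length of the projective curve (no cancellations occur). The only subtle point, and the one I would check carefully, is the bookkeeping between $\Su^1$ and $\Pp^1$ under the identification $\Pp^1\simeq\R/\pi\Z$ used earlier in the section: since $v\mapsto\hat v$ is a local isometry (up to the standard normalization), the length of $t\mapsto A_t\,\hat v$ on $\Pp^1$ equals the length of $t\mapsto A_t\,v/\norm{A_t\,v}$ on $\Su^1$, making the identity meaningful in both guises.
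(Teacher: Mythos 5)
Your argument is correct and is essentially the paper's proof, spelled out: the paper simply cites Proposition~\ref{derivative At v/norm} and implicitly invokes the fundamental theorem of calculus, which is exactly the chain lift $\to$ $\dot\theta = (A_t v)\wedge(\dot A_t v)/\norm{A_t v}^2$ $\to$ integrate that you carry out. Your closing remark — that the winding hypothesis is what makes the signed quantity $\theta(b)-\theta(a)$ agree with the (unsigned) arc length, and that the $\Su^1$ versus $\Pp^1$ bookkeeping under $\Pp^1\simeq\R/\pi\Z$ is harmless — correctly addresses the only points worth checking.
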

\begin{proof}
	Follows from Proposition~\ref{derivative At v/norm}.
\end{proof}

Next lemma relates the asymptotic length of curves $J\ni t \mapsto A_t^n\, v$ with the Lebesgue-Stieltjes measure $d\rho$ determined by the  fibered rotation number $\rho(A_t)$.
\begin{lemma}\label{lem:lengthToRotation}
     For every $J \subset I$, every $\hat v\in \mathbb{P}^1$ and $\mu$-a.e. $x\in X$,
     \begin{align*}
         \lim_{n\to\infty}\frac{1}{n\pi}\ell_J(A^n_t(x)\, \hat v) = d\rho(J).
     \end{align*}
\end{lemma}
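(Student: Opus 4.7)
The plan is a three-step approach.

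\textbf{Step 1 (length equals lift displacement).} By Corollary~\ref{coro:pos winding cocycle}, the iterated family $A^n_t$ is positively winding, so by Proposition~\ref{Hn properties}(5) the lift $t \mapsto \tilde G_n(x,t,v)$ of $t\mapsto A_t^n(x)\hat v$ is monotone non-decreasing in $t$. Because the covering $\pi\colon\R\to\Pp^1=\R/\pi\Z$ is a local isometry, for any interval $J=[s,t]\subset I$ the projective length equals the displacement of the lift, i.e.
\begin{align*}
\ell_J(A^n_u(x)\hat v) \;=\; \measuredangle(A_t^n(x)v, v) - \measuredangle(A_s^n(x)v, v).
\end{align*}

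\textbf{Step 2 (convergence on a dense family of intervals).} By Proposition~\ref{rotation number full measure set}, for each $t\in I$ there is a full $\mu$-measure set $\Omega_t\subset X$ on which $\frac{1}{\pi n}\measuredangle(A_t^n(x)v, v)\to \rho(A_t)$ for every $\hat v\in\Pp^1$. Fix a countable dense subset $\mathcal{Q}\subset I$ and set $\Omega:=\bigcap_{t\in\mathcal{Q}}\Omega_t$, still of full $\mu$-measure. For $x\in\Omega$ and $J=[s,t]$ with $s,t\in\mathcal{Q}$, Step 1 then yields
\begin{align*}
\lim_{n\to\infty}\frac{1}{n\pi}\ell_J(A^n_u(x)\hat v) \;=\; \rho(A_t) - \rho(A_s) \;=\; d\rho(J),
\end{align*}
where the continuity of $\rho$ (Proposition~\ref{rho continuity and monotonicity}) ensures that $d\rho$ is atomless and the closed-interval identity is unambiguous.

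\textbf{Step 3 (extension to arbitrary $J$).} For fixed $x\in\Omega$ and each $n$, the set function $J\mapsto \frac{1}{n\pi}\ell_J(A^n_u(x)\hat v)$ defines a finite, atomless Borel measure on bounded subintervals of $I$ (atomless because a single point has zero length). An inner/outer sandwich by intervals with endpoints in $\mathcal{Q}$, together with atomlessness of both these measures and of $d\rho$, upgrades Step 2 to every subinterval of $I$. For a general Borel $J$, one observes that these measures converge to $d\rho$ on a $\pi$-system generating the Borel $\sigma$-algebra and that their total masses converge to $d\rho(I)$, so that they converge weakly to $d\rho$; Portmanteau then yields the desired convergence for every Borel $J$ with $d\rho(\partial J)=0$.

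I expect the main obstacle to be Step 3: promoting pointwise convergence on a countable family of intervals to convergence on arbitrary Borel $J$ forces one to work at the level of measures rather than numbers. The critical simplification is the atomlessness of $d\rho$, a direct consequence of the continuity of $\rho$, which ensures that every subinterval of $I$ is automatically a continuity set of the limit measure and makes the Portmanteau step go through without extra technicalities.
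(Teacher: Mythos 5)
Your Steps 1 and 2 are exactly the paper's argument, just spelled out more explicitly: by the winding property and the definition of $\tilde G_n$, the length of the projective curve $t\mapsto A_t^n(x)\hat v$ over $[t_0,t_1]$ equals $\measuredangle(A_{t_1}^n(x)v,v)-\measuredangle(A_{t_0}^n(x)v,v)$, and then Proposition~\ref{rotation number full measure set} applied at the two endpoints (for $x\in\Omega_{t_0}\cap\Omega_{t_1}$) gives the limit $\rho(A_{t_1})-\rho(A_{t_0})=d\rho(J)$. That is all the paper does, and it is enough because the quantifier order in the lemma is ``for every $J$, for $\mu$-a.e.\ $x$'': the null set is allowed to depend on $J$, and the paper uses the lemma in Proposition~\ref{lim sum f(tk)=int f drho} precisely in this way, with finitely many rational endpoints and $x\in\Omega=\bigcap_{\beta\in\Q}\Omega_\beta$.

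Your Step 3 is therefore redundant: you are proving the stronger statement ``for $\mu$-a.e.\ $x$, for every subinterval $J$,'' which requires the sandwich-by-rational-endpoints argument together with atomlessness of $d\rho$. That argument is correct as you sketch it, but it goes beyond what the lemma asserts and beyond what the later applications require. Two small imprecisions worth flagging: the covering $\pi\colon\R\to\Pp^1$ is not literally an isometry for the metric $d(\hat v,\hat w)=\sin\measuredangle(v,w)$ used in the paper, but the infinitesimal speed in this metric coincides with the derivative of the lift, so the length-equals-displacement identity still holds; and in Step 1 the variable name in $\ell_J(A^n_u(x)\hat v)$ should match the running parameter ($t$) rather than $u$.
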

\begin{proof}
   By the winding property,  for any $\hat v\in \mathbb{P}^1$ and $t_0,\, t_1\in I$ with 
  $t_0 < t_1$ and  $x\in\Omega_{t_0}\cap \Omega_{t_1}$, 
    \begin{align*}
        \ell_{[t_0,\, t_1]}( A_t^n(x)\, \hat  v)
        = \measuredangle(
            A^n_{t_1}(x)\, v,\, v
        ) - \measuredangle(
           A^n_{t_0}(x)\, v,\, v 
        ).
    \end{align*}
 Therefore this lemma follows from Proposition \ref{rotation number full measure set}.
\end{proof}

\begin{figure}[h]
	\begin{center}
		\includegraphics*[width=10cm]{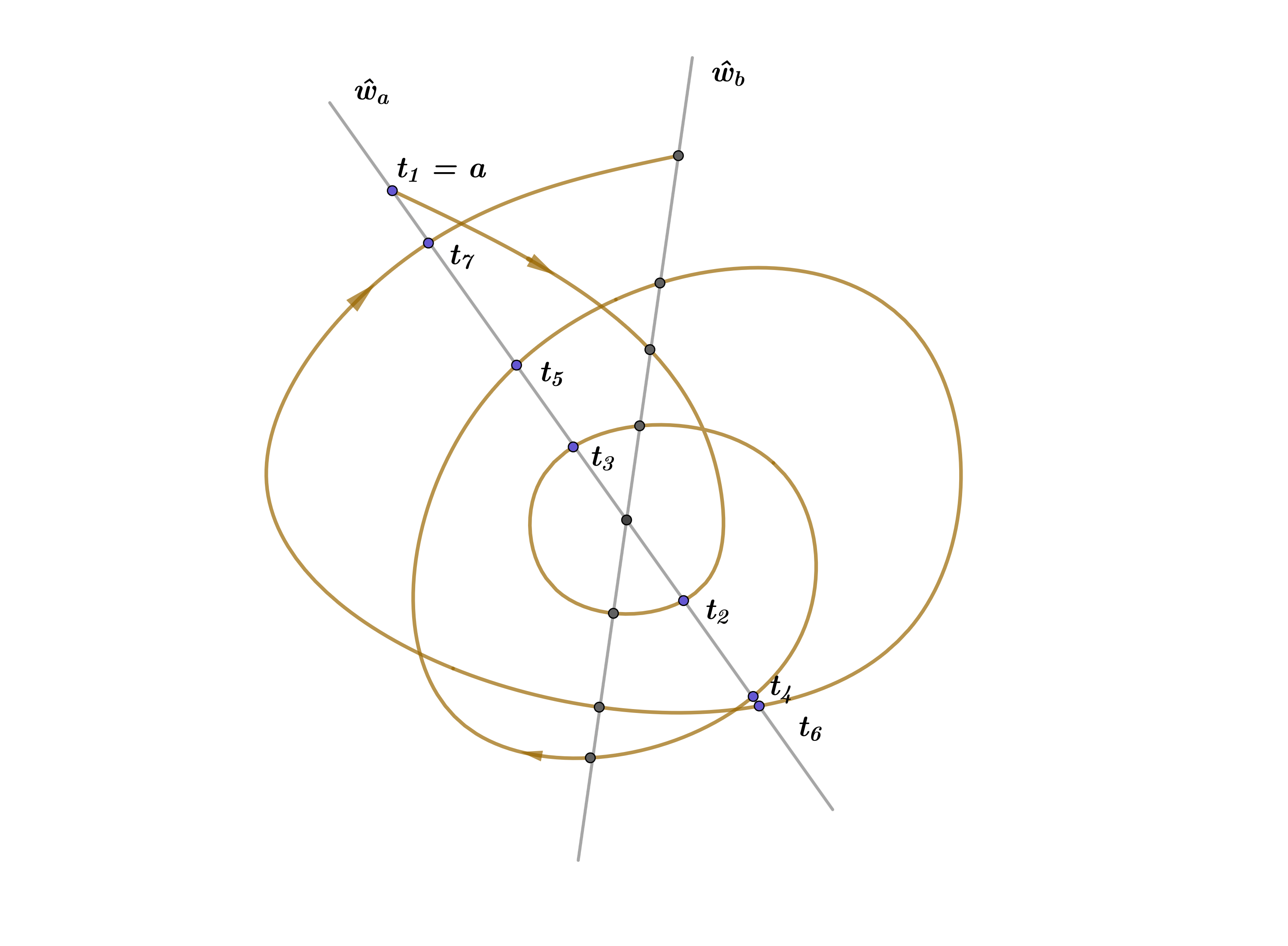}
	\end{center}
	\caption{Solutions of the equation $\hat A_t\, \hat v = \hat w_a := \hat A_a\, \hat v$, $t\in [a,\, b]$.}
	\label{solutions}
\end{figure}

\begin{lemma}
	\label{lemma 1 ell_I} 
	Given $v\in\Su^1$ and $I\ni t\mapsto A_t\in\GL_2(\R)$ a smooth curve positively winding, the following are equivalent:
	\begin{enumerate}
		\item $\ell_I(A_t\,v)\geq n\,\pi$;
		\item $\forall\, w\in \Su^1$, $\#\{t\in I\colon \hat A_t\, \hat v=\hat w \}\geq n$.
	\end{enumerate}
Moreover, if $I$ is closed, (1) or (2) hold, and
$\hat w\in\Pp^1$ is one of the end points of the curve $I\ni t\mapsto \hat A_t \, \hat v$, then the equation  equation $\hat A_t\, \hat v=\hat w$ has at least $n+1$ solutions in $t\in I$.
\end{lemma}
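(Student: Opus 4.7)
The plan is to reduce the equivalence to a counting statement about a monotone real-valued function meeting an arithmetic progression. Let $\pi\colon\R\to\Pp^1=\R/\pi\Z$ denote the canonical covering of the projective line. Since $I$ is simply connected, the continuous curve $t\mapsto \hat A_t\,\hat v$ admits a continuous (in fact smooth) lift $\psi\colon I\to\R$ with $\hat A_t\,\hat v=\pi(\psi(t))$. By Proposition~\ref{derivative At v/norm} together with the positive winding hypothesis,
$$\dot\psi(t)\;=\;\frac{(A_t\,v)\wedge(\dot A_t\,v)}{\norm{A_t\,v}^2}\;\geq\;0,$$
so $\psi$ is non-decreasing; consequently its total displacement equals the length,
$$L\;:=\;\ell_I(A_t\,v)\;=\;\int_I\dot\psi(t)\,dt\;=\;\psi(\sup I^-)-\psi(\inf I^+),$$
and $\psi(I)$ is an interval of length $L$. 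If $w_0\in\R$ is any lift of $\hat w$, the solution set $\{t\in I\colon \hat A_t\,\hat v=\hat w\}$ is the disjoint union $\bigcup_{k\in\Z}\psi^{-1}(w_0+k\pi)$, each preimage being empty, a single point, or a closed subinterval of $I$.

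For (1)$\Rightarrow$(2), if $L\geq n\pi$ then any arithmetic progression of spacing $\pi$ contains at least $n$ terms inside an interval of length $L$, so at least $n$ translates $w_0+k\pi$ lie in $\psi(I)$, and each contributes at least one solution; hence $\#\{t\in I\colon \hat A_t\,\hat v=\hat w\}\geq n$. For the converse I argue the contrapositive. Assume $L<n\pi$. Since $\psi$ is monotone it has at most countably many plateau values; choosing $\hat w$ outside this countable set ensures that every non-empty preimage $\psi^{-1}(w_0+k\pi)$ is a single point. A generic alignment of $w_0\bmod\pi$ (shifting $w_0$ slightly so that no translate hits the endpoints of $\psi(I)$) leaves at most $\lfloor L/\pi\rfloor\leq n-1$ translates inside $\psi(I)$, producing strictly fewer than $n$ solutions and contradicting (2).

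For the ``moreover'' part, suppose $I=[a,b]$ is closed and $\hat w=\pi(\psi(a))$ is the initial endpoint of the curve (the terminal case is analogous). Taking $w_0=\psi(a)$, the translates $\psi(a)+k\pi$ lie in $[\psi(a),\psi(b)]$ precisely for $k\in\{0,1,\ldots,\lfloor L/\pi\rfloor\}$; since $L\geq n\pi$ this yields at least $n+1$ indices, each giving at least one solution in $I$ (with $t=a$ recovered from $k=0$). The main subtlety is the treatment of possible plateaus of $\psi$ in the contrapositive step—these occur exactly when $v$ is an eigenvector of $A_t^{-1}\dot A_t$ on a subinterval—but they cause no harm because plateau values form an at most countable subset of $\R$, and a generic choice of $\hat w$ avoids them while still permitting the alignment argument.
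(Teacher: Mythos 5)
Your proof is correct in substance but takes a genuinely different route from the paper's for the implication $(2)\Rightarrow(1)$. The paper argues by a continuity/compactness ``solution-pulling'' trick: starting from $n$ solutions of $\hat A_t\hat v=\hat w_a$, it perturbs the target to $\hat w_{t_n-\varepsilon}$, invokes hypothesis (2) to manufacture an $n$-th solution beyond $t_n$, and lets $\varepsilon\to0$ to produce an $(n{+}1)$-st solution of the original equation, from which the length bound drops out. Your argument instead lifts the projective curve to a monotone function $\psi\colon I\to\R$ and reduces everything to counting how many terms of the arithmetic progression $w_0+\pi\Z$ land in the interval $\psi(I)$. Both rest on the same underlying intuition (a positively winding projective curve visits every point once per $\pi$ of length), but your lift-and-count version makes the bookkeeping explicit and is arguably easier to verify; it also gives the ``moreover'' clause immediately from the same picture.

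One point worth tightening. In the contrapositive step you assert that ``shifting $w_0$ slightly so that no translate hits the endpoints of $\psi(I)$'' leaves at most $\lfloor L/\pi\rfloor$ translates in $\psi(I)$. That is not literally true: if $\psi(I)=[0.9\pi,\,3.4\pi]$ and $w_0=0$, then $\pi,2\pi,3\pi$ all lie strictly inside, so the count is $3>\lfloor 2.5\rfloor=2$ with no endpoint coincidence at all. What you actually need is to choose the residue of $w_0$ modulo $\pi$ so that the interval $\bigl[(\alpha-w_0)/\pi,\ (\alpha+L-w_0)/\pi\bigr]$ contains only $\lfloor L/\pi\rfloor$ integers; this holds for a positive-measure but not full-measure set of residues (for example, take $w_0$ just to the left of $\alpha$). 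Since you are free to choose $\hat w$, and since the additional requirement of avoiding the countably many plateau values still leaves room, the argument goes through — but the correct genericity condition is a quantitative alignment of the residue, not merely avoiding the two endpoints. Also note that the equivalence as you (and the paper) state it really does need $\psi(I)$ to contain its endpoints: for an open interval with $L=n\pi$ exactly, one can arrange $n-1$ hits, so $(1)\Rightarrow(2)$ would fail. This matches the paper's implicit assumption $I=[a,b]$ in its own proof.
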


\begin{proof}
The direct implication (1) $\Rightarrow$ (2) is clear.
Assume (2), $I=[a,b]$ and set $w_t=A_t\, v$.
By (2) there are at least $n$ solutions
$a=t_1<t_2<\ldots < t_n\leq b$ of the equation 
$\hat A_t\, \hat v=\hat w_a $, see Figure~\ref{solutions}. For every small $\varepsilon>0$, notice that the equation $\hat A\, \hat v = \hat w_{t_n - \varepsilon}$ has $n-1$ solutions for $t\in [a,\, t_n]$. In fact, we have one solution of this equation in each interval $[t_i,\, t_{i+1}]$. Hence using the assumption, for each $\varepsilon$, there exists $s(\varepsilon) \in (t_n,\, b]$ which is the $n$-th solution of the equation $\hat A\, \hat v = \hat w_{t_n - \varepsilon}$. Making $\varepsilon\to 0$ and using the winding property we find another solution $t_{n+1}:=\lim_{s\to 0}s(\varepsilon)\in (t_n,\, b]$ of the equation $\hat A_t\, \hat v=\hat w_a$. This argument also shows that for any $c\in [t_{n+1},\, b]$, the equation $\hat A_t\, \hat v=\hat w_c $ has at least $n+1$ solutions in $[a,\, b]$. In particular, $l_I(A_t\, \hat{v})\geq n\pi$.
\end{proof}

\begin{corollary}
\label{corollary local winding}
	 Given  intervals $J\subset I$, $K\subset \Pp^1$ and $\hat v\in\Pp^1$, if for every $\hat w\in K$, the equation $A_t\, \hat v = \hat w$ has at least one solution for some $t\in J$, then $\ell_J(A_t\, \hat v)  \geq \mbox{length}(K)$.
\end{corollary}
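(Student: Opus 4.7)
The plan is to lift the projective curve $J \ni t \mapsto \hat A_t\,\hat v \in \Pp^1$ through the canonical covering $\pi\colon \R\to\Pp^1\cong\R/\pi\Z$ to a continuous non-decreasing map on the real line, and then compare the length of its image with the length of $K$.

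By Proposition~\ref{derivative At v/norm}, the velocity of $t\mapsto\hat A_t\,\hat v$ equals $\frac{(A_tv)\wedge(\dot A_t v)}{\|A_tv\|^2}=\frac{Q_{A_t}(v)}{\|A_t v\|^2}$, which is pointwise non-negative for every $v\in\R^2\setminus\{0\}$ by the positive winding hypothesis of Definition~\ref{winding def}. Standard path-lifting through $\pi$ therefore produces a continuous non-decreasing function $\phi\colon \bar J\to\R$ satisfying $\pi\circ\phi(t)=\hat A_t\,\hat v$, and the length of the projective curve is
$$\ell_J(A_t\,\hat v) \;=\; \int_J \phi'(t)\,dt \;=\; \phi(\sup J)-\phi(\inf J) \;=\; \mathrm{length}(\phi(\bar J)).$$
The hypothesis of the corollary translates into the inclusion $K\subset \pi(\phi(\bar J))$, since every $\hat w\in K$ equals $\hat A_t\,\hat v = \pi(\phi(t))$ for some $t\in J$.

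To conclude I would split into two cases, depending on whether $\ell_J(A_t\,\hat v) \geq \pi$, the total length of $\Pp^1$. If $\ell_J(A_t\,\hat v)\geq\pi$, the inequality is immediate since $\mathrm{length}(K)\leq \mathrm{length}(\Pp^1)=\pi$. Otherwise $\phi(\bar J)$ is an interval in $\R$ of length strictly less than $\pi$, so $\pi$ restricts to an isometric injection on $\phi(\bar J)$, and the inclusion $K\subset \pi(\phi(\bar J))$ yields $\mathrm{length}(K)\leq \mathrm{length}(\pi(\phi(\bar J)))=\mathrm{length}(\phi(\bar J))=\ell_J(A_t\,\hat v)$. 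The one point requiring care is that the continuous non-decreasing lift $\phi$ must be produced for \emph{every} choice of $v$, not just for the full-measure set of $v$'s appearing in Proposition~\ref{winding meaning}; this is guaranteed by the pointwise non-negativity of the quadratic form $Q_{A_t}(v)$, valid for every $v\in\R^2\setminus\{0\}$ under the positive winding assumption.
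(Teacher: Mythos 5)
Your proof is correct and takes essentially the same approach as the paper, which simply asserts that a positively winding curve passing through every point of $K$ must have length at least $\mbox{length}(K)$; you make the implicit argument rigorous by lifting to the universal cover $\R\to\Pp^1$ and exploiting the monotonicity of the lift, with a clean case split to deal with the possibility of wrapping around $\Pp^1$.
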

\begin{proof}
 The curve $J\ni t\mapsto A_t\, \hat v$ winds positively and by assumption passes through all the $\hat w\in  K$. So, its length is larger or equal than $\geq \mbox{length}(K)$.

\end{proof}

\begin{definition}
	We say that a smooth curve $I\ni t\mapsto A_t\in\GL_2(\R)$ winds $n$ times around $\Pp^1$ if
	for all $\hat v\in\Pp^1$, $\ell_I(A_t\, \hat v)\geq n\,\pi$.
\end{definition}

\begin{corollary}
	\label{lemma winds n times}
	Given $I\ni t\mapsto A_t\in\GL_2(\R)$ a smooth curve positively winding, the following are equivalent:
	\begin{enumerate}
		\item  the curve $I\ni t\mapsto A_t\in\GL_2(\R)$ winds $n$ times around $\Pp^1$;
		\item $\forall\,  \hat  v, \hat w\in \Pp^1$,  $\#\{t\in I\colon \hat A_t\, \hat v=\hat w \}\geq n$.
	\end{enumerate}
\end{corollary}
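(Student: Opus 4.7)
The plan is to deduce this corollary as an immediate, pointwise-in-$\hat v$ consequence of Lemma~\ref{lemma 1 ell_I}. Indeed, the definition of ``winds $n$ times'' quantifies the length condition $\ell_I(A_t\,\hat v)\geq n\pi$ over all $\hat v\in\Pp^1$, while condition (2) of the corollary quantifies the preimage-counting condition over both $\hat v$ and $\hat w$. Lemma~\ref{lemma 1 ell_I} already establishes the equivalence, for a fixed $\hat v$, between $\ell_I(A_t\,\hat v)\geq n\pi$ and $\#\{t\in I\colon \hat A_t\,\hat v=\hat w\}\geq n$ for every $\hat w\in\Su^1$. So the whole corollary will follow by simply exchanging/adding the universal quantifier in $\hat v$.

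More concretely, for (1)$\Rightarrow$(2), I would fix $\hat v\in\Pp^1$ arbitrary. By definition of the curve winding $n$ times around $\Pp^1$ we have $\ell_I(A_t\,\hat v)\geq n\pi$, and applying the direct implication in Lemma~\ref{lemma 1 ell_I} to this fixed $\hat v$ yields $\#\{t\in I\colon \hat A_t\,\hat v=\hat w\}\geq n$ for every $\hat w\in\Pp^1$. Since $\hat v$ was arbitrary, condition (2) holds.

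Conversely, for (2)$\Rightarrow$(1), fix again $\hat v\in\Pp^1$. Hypothesis (2) guarantees that for every $\hat w\in\Pp^1$ the equation $\hat A_t\,\hat v=\hat w$ has at least $n$ solutions in $I$, so the converse implication in Lemma~\ref{lemma 1 ell_I} gives $\ell_I(A_t\,\hat v)\geq n\pi$. Since this holds for every $\hat v\in\Pp^1$, the curve winds $n$ times around $\Pp^1$ in the sense of the preceding definition.

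There is essentially no technical obstacle here: the only point to be careful about is that Lemma~\ref{lemma 1 ell_I} is phrased with $\hat w\in\Su^1$ (which, under the canonical identification induced by $\pi$, corresponds to $\Pp^1$ up to the double covering), so a short remark pointing out that having $n$ solutions for each $\hat w\in\Su^1$ is the same statement as having $n$ solutions for each $\hat w\in\Pp^1$ (because the projective action $\hat A_t$ factors through $\Pp^1$) is enough to make the two quantifiers match up. After that, the corollary is just the previous lemma made uniform in $\hat v$.
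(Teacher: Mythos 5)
Your proposal is correct and is exactly the paper's approach: the paper's proof of this corollary is the one-line remark that it ``Follows from Lemma~\ref{lemma 1 ell_I}'', and what you have written simply spells out that passage, namely that condition (1) is by definition the length condition $\ell_I(A_t\,\hat v)\geq n\pi$ quantified over all $\hat v\in\Pp^1$, and Lemma~\ref{lemma 1 ell_I} converts that, for each fixed $\hat v$, into the preimage-counting condition quantified over $\hat w$. Your side remark about $\Su^1$ versus $\Pp^1$ is a harmless clarification and does not change the argument.
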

\begin{proof}
Follows from Lemma~\ref{lemma 1 ell_I}.
\end{proof}

\begin{proposition}
\label{adding turns around P}
Given positively winding smooth curves
 $I\ni t\mapsto A_{i,t}\in\GL_2(\R)$,  $i=1,\ldots, k$, 
 if each $A_{i,t}$ winds $n_i\geq 0$ times around $\Pp^1$ then the composition curve
 $I\ni t \mapsto A_{n,t}\, \cdots \, A_{1,t}$
 winds $n_1+\cdots + n_k$ times around $\Pp^1$.
\end{proposition}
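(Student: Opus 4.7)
The plan is to argue by induction on $k$, with the base case $k=1$ trivial and the inductive step reducing to the two-factor case. Indeed, once the statement is established for two curves, if it is assumed for $k-1$, then $N_t := A_{k-1,t}\cdots A_{1,t}$ is itself positively winding by Proposition~\ref{prods of pos winding are pos winding} and winds $n_1+\cdots+n_{k-1}$ times by the inductive hypothesis, so applying the two-factor case to $A_{k,t}\, N_t$ closes the induction. Thus the core is: given positively winding smooth curves $B_t, A_t$ winding $m$ and $n$ times respectively, I must show $A_t B_t$ winds $n+m$ times. By Corollary~\ref{lemma winds n times}, it suffices to verify that for arbitrary $\hat v, \hat w \in \Pp^1$, the equation $A_t B_t \hat v = \hat w$ admits at least $n+m$ solutions $t \in I$, which I would rewrite as $B_t \hat v = A_t^{-1} \hat w$.

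The first curve $t \mapsto B_t \hat v$ is positively winding with length at least $m\pi$ by hypothesis, so the key is to control $t \mapsto A_t^{-1} \hat w$. I would show that this second curve is \emph{negatively} winding and that it meets every $\hat v \in \Pp^1$ at least $n$ times, which, by monotone rotation, forces its length to be at least $n\pi$. Negativity follows from $\tfrac{d}{dt} A_t^{-1} = -A_t^{-1} \dot A_t A_t^{-1}$: setting $u := A_t^{-1} w$ and using $(Mx)\wedge(My) = \det(M)\, x\wedge y$, a short calculation gives
\[
(A_t^{-1} w) \wedge \tfrac{d}{dt}\bigl(A_t^{-1}\, w\bigr) \;=\; -\tfrac{1}{\det A_t}\, Q_{A_t}(u) \;\leq\; 0,
\]
using $\det A_t>0$ and $Q_{A_t}\geq 0$. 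The $\geq n$ covering property is just Corollary~\ref{lemma winds n times} applied to $A_t$: the equation $A_t \hat v = \hat w$ has $\geq n$ solutions for every $\hat v$, which reads as $A_t^{-1} \hat w = \hat v$ having $\geq n$ solutions.

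With two monotone rotations in opposite directions in hand, the final step is a lifting argument via the universal cover $\R \to \Pp^1 \cong \R/\pi\Z$. Continuous lifts $\alpha(t)$ of $B_t \hat v$ and $\beta(t)$ of $A_t^{-1} \hat w$ are, respectively, nondecreasing with total increase at least $m\pi$ and nonincreasing with total decrease at least $n\pi$, so $\gamma := \alpha - \beta$ is continuous and nondecreasing with total increase at least $(m+n)\pi$. Since solutions of $A_t B_t \hat v = \hat w$ correspond exactly to $\gamma(t) \in \pi\Z$, and the range of $\gamma$ is an interval of length at least $(m+n)\pi$ containing at least $m+n$ integer multiples of $\pi$, the intermediate value theorem produces a preimage for each, and distinct multiples give disjoint preimages, yielding at least $n+m$ solutions. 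I expect the main technical obstacle to be the clean conversion of the ``winds $n$ times'' count for $A_t$ into the length lower bound for the inverse curve $t \mapsto A_t^{-1} \hat w$, since this ties a preimage count to a geometric length via the monotonicity of negative winding rather than through a direct velocity integral.
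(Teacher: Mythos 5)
Your proof is correct, and it takes a genuinely different route from the paper's. The paper proves the two-factor case by a homotopy argument on the square $[a,b]^2$: it shows that the diagonal curve $\Gamma(t,t)=\hat A_t\hat B_t\hat v$ is homotopic with fixed endpoints to a concatenation of three edge curves $\Gamma_1,\Gamma_2,\Gamma_3$, and then lower-bounds the length of the concatenation by $n\pi+m\pi+0$. Your approach instead stays one-dimensional: you use the equivalence in Corollary~\ref{lemma winds n times} to turn the length bound into a solution count, observe that $t\mapsto A_t^{-1}\hat w$ is \emph{negatively} winding via the identity
\[
(A_t^{-1} w)\wedge\tfrac{d}{dt}(A_t^{-1}w)=-\frac{Q_{A_t}(A_t^{-1}w)}{\det A_t}\le 0,
\]
and count intersections of the two monotone curves $t\mapsto B_t\hat v$ and $t\mapsto A_t^{-1}\hat w$ by lifting to the universal cover $\R\to\Pp^1\cong\R/\pi\Z$ and applying the intermediate value theorem to the nondecreasing difference $\gamma=\alpha-\beta$. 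Your route avoids the two-dimensional homotopy entirely and is arguably more elementary; what it requires is the extra (and worth making explicit) observation that the negative-winding analogue of Lemma~\ref{lemma 1 ell_I} holds, which follows from orientation reversal. Both proofs reduce, correctly, to the multi-factor case by induction using Proposition~\ref{prods of pos winding are pos winding}. One minor point worth stating cleanly: after the lift, the range of $\gamma$ is the closed interval $[\gamma(a),\gamma(b)]$ of length $\ge(m+n)\pi$, which contains at least $m+n$ multiples of $\pi$, and monotonicity of $\gamma$ (even if not strict) gives at least one preimage per value, hence at least $m+n$ solutions. Your argument is sound.
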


\begin{proof}
Using induction it is enough proving this statement for $k=2$. For notational simplicity we will denote the positively winding smooth matrix curves as $A_t$ and $B_t$. 
Assume $A_t$ winds $n$ times around $\Pp^1$ and 
$B_t$ winds $m$ times around $\Pp^1$.
We will prove that for any $\hat v\in\Pp^1$,
$\ell_I(A_t\, B_t\, v)\geq (n+m)\,\pi$.
By Corollary~\ref{lemma winds n times}, this will imply that $A_t\, B_t$ winds $n+m$ times around $\Pp^1$.

Let $I$ be an interval with end points $a<b$.
Take $\hat u=\hat B_a\, \hat v$ to be the starting point of the curve
$I\ni t\mapsto \hat B_t\, \hat v$ and let 
$\hat w=\hat A_a\, \hat u$ be the starting point of the curve
$I\ni t\mapsto \hat A_t\, \hat u$.
By the second statement in  Lemma~\ref{lemma 1 ell_I}, the equation
$\hat B_s\, \hat v=\hat u$ has at least $m+1$ solutions
$a=s_0<s_1 <\cdots < s_m\leq b$ in $I$, while
 the equation
$\hat A_t\, u=\hat w$ has also $n+1$ solutions
$a=t_0<t_1 <\cdots < t_n\leq b$ in $I$.
Let $\Gamma\colon I\times I\to \Pp^1$ be the continuous mapping
$\Gamma(t,s):=\hat A_t\, \hat B_s \, \hat v$.
The curve $\Gamma(t,t)$, with $t\in I$, is homotopic with fixed endpoints to the concatenation of the following three curves
\begin{itemize}
	\item $\Gamma_1\colon [a,t_n]\to \Pp_1$,
	$\Gamma_1(t):=\Gamma(t,a)=\hat A_t\, \hat B_a\, \hat v=\hat A_t \hat u$;
	\item $\Gamma_2\colon [a,b]\to \Pp_1$,
	$\Gamma_2(s):=\Gamma(t_n,s)=\hat A_{t_n}\, \hat B_s\, \hat v$;
	\item $\Gamma_3\colon [t_n, b]\to \Pp_1$,
$\Gamma_3(t):=\Gamma(t, b)=\hat A_{t}\, \hat B_b\, \hat v$.
\end{itemize}

\begin{figure}[h]
\begin{center}
	\includegraphics*[width=7cm]{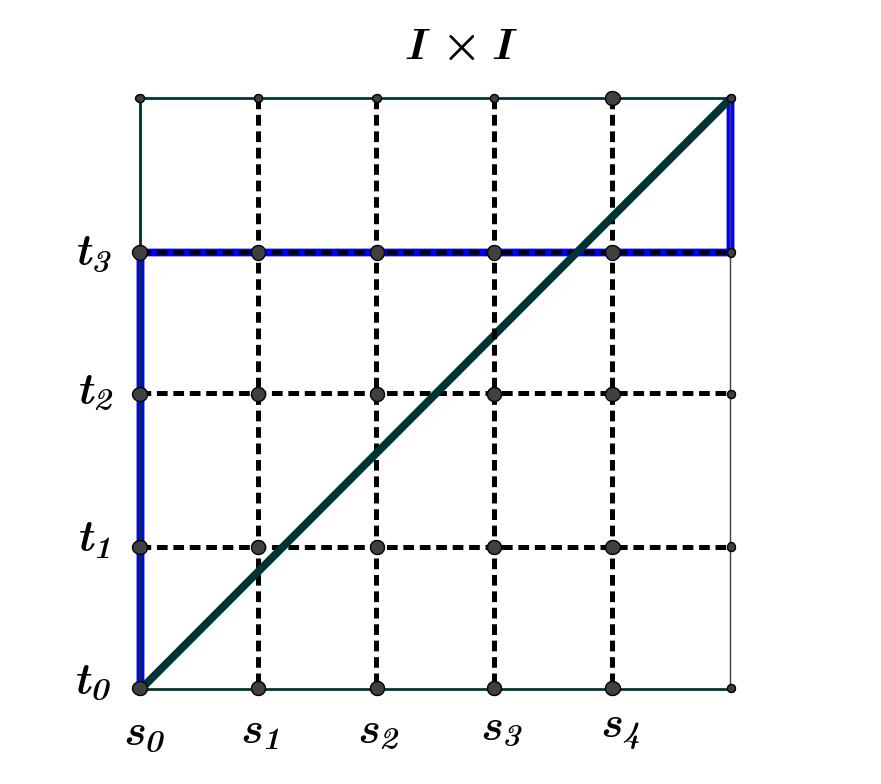}
\end{center}
\caption{The concatenation of $\Gamma_3\ast \Gamma_2\ast\Gamma_1$ is homotopic to the diagonal curve $\Gamma(t):=\hat A_t\, \hat B_t\, \hat v$.}
\label{homotopy pic}
\end{figure}
See Figure~\ref{homotopy pic}.
Since all these curves are positively winding and the diagonal $\Gamma(t,t)$ is homotopic to polygonal concatenation
$\Gamma_3\ast \Gamma_2\ast \Gamma_1$ through a homotopy which fixes the endpoints
\begin{align*}
	\ell_I(A_t\, B_t\, v)= \ell_I(\Gamma(t,t))&=
	\ell_{[a,t_n]}(\Gamma_1(t)) +
	\ell_{[a,b]}(\Gamma_2(s)) +
	\ell_{[t_n,b]}(\Gamma_3(t))\\
	&\geq n\, \pi + m\, \pi + 0 =(n+m)\,\pi .
\end{align*}
This concludes the proof.
\end{proof}

From now on we deal with affine families.

\begin{proposition}
	\label{affine winding}
	Given an affine curve $M_t:= A+t \, B\in\GL_2(\R)$ satisfying the winding property, for any $v\in\R^2$ such that $B\, v\neq 0$ we have \, $\ell_\R(M_t\, v)=\pi$.
\end{proposition}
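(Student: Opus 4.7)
The plan is to invoke the integral formula for the length proved in the preceding proposition,
\begin{equation*}
\ell_\R(M_t v) \;=\; \int_\R \frac{(M_t v)\wedge(\dot M_t v)}{\norm{M_t v}^2}\, dt,
\end{equation*}
and evaluate it explicitly using the affine form of $M_t$. Observe that $\dot M_t = B$, so $\dot M_t v = Bv$ is independent of $t$, and $(M_t v)\wedge(Bv) = (Av + tBv)\wedge(Bv) = (Av)\wedge(Bv)$, since $(Bv)\wedge(Bv)=0$. Thus the numerator is a constant, with sign fixed by the winding orientation, giving
\begin{equation*}
\ell_\R(M_t v) \;=\; \abs{(Av)\wedge(Bv)} \int_\R \frac{dt}{\norm{Av + tBv}^2}.
\end{equation*}

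To evaluate the integral I would work in an orthonormal frame adapted to $Bv$: write $Bv = \norm{Bv}\,e_1$ and $Av = a\,e_1 + b\,e_2$. Then $\norm{M_t v}^2 = (a+t\norm{Bv})^2 + b^2$ and $\abs{(Av)\wedge(Bv)} = \abs{b}\,\norm{Bv}$. The key observation is that $b \neq 0$: because $M_t \in \GL_2(\R)$ for all real $t$ and $v \neq 0$ (as $Bv\neq 0$), the vector $M_t v$ never vanishes, so the affine line $\{Av+tBv:t\in\R\}$ avoids the origin. The substitution $u = a + t\norm{Bv}$ then gives $\int_\R \norm{M_t v}^{-2}\,dt = \pi/(\abs{b}\,\norm{Bv})$, and multiplying yields exactly $\pi$.

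As a geometric sanity check, the same conclusion can be reached without integration. Since $\{Av+tBv : t\in\R\}$ is an affine line in $\R^2$ missing the origin, it meets every line through the origin other than $\R\cdot Bv$ in exactly one point, and both of its infinite ends project to $\widehat{Bv}\in\Pp^1$. Thus the projective curve $t\mapsto \widehat{M_t v}$ is a bijection from $\R$ onto $\Pp^1\setminus\{\widehat{Bv}\}$, and the monotonicity guaranteed by the winding property forces its total length to equal the full length of $\Pp^1 = \R/\pi\Z$, namely $\pi$. The only delicate point in either argument is the nonvanishing of $b$, equivalently that the affine line avoids the origin, which requires invertibility of $M_t$ for all real $t$ and not merely the winding hypothesis.
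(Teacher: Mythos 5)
Your proposal is correct, and your ``geometric sanity check'' is essentially the paper's own argument: the paper observes that $Av\neq 0$ (from $A\in\GL_2(\R)$) and that $t\mapsto Av+tBv$ traces a simple closed curve in $\Pp^1$ beginning and ending at $\widehat{Bv}$, which forces $\ell_\R(M_t v)=\length(\Pp^1)=\pi$. Your integral computation is a valid and more explicit alternative: the numerator $(M_tv)\wedge(Bv)=(Av)\wedge(Bv)$ collapses to a constant, and the Poisson-kernel integral $\int_\R du/(u^2+b^2)=\pi/\abs{b}$ does the rest; this trades a small amount of bookkeeping for a proof that does not lean on the homotopy picture. You also correctly isolate the one genuinely delicate point that the paper treats tersely: it is not enough to know $Av\neq 0$; one must rule out $Av$ being parallel to $Bv$, i.e.\ $b\neq 0$, and this is exactly what invertibility of $M_t$ for every real $t$ provides (if $Av=cBv$ then $M_{-c}v=0$). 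This is the same fact the paper is implicitly using when it asserts $(M_tv)\wedge(\dot M_t v)>0$ from the (merely semi-definite) winding hypothesis together with $Bv\neq0$, so your version is, if anything, a touch more careful than the published one.
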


\begin{proof}
	If $B\, v\neq 0$ then by the winding property $A_t v\wedge \dot{A}_tv>0$ for every $t\in \R$.  Notice that  $A\,v\neq 0$ because $A\in\GL_2(\R)$. Hence the straight-line
	$M_t \, v=A\, v + t\, B\, v$ induces a simple closed curve in $\Pp^1$ that begins and ends at $\proj{B}\, \hat v$. This implies that $\ell_\R(M_t\, v)=\length(\Pp^1)=\pi$.
\end{proof}

\begin{proposition}
	\label{polynomial winding}
	Let $M^n_t:= (A_n+t \, B_n)\,\cdots \, (A_1+t \, B_1)\, $, where
	$A_i\in\GL_2(\R)$ and $A_i+tB_i$ is positively winding  for  $i=1,\ldots, n$. Given $v\in\R^2\setminus\{0\}$, if $B_n B_{n-1}\, \cdots\, B_1\, v\neq 0$ \, then \,	$\ell_\R(M^n_t\, v)=n\,\pi$.
\end{proposition}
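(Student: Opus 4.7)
I would proceed by induction on $n$, using a homotopy argument closely modeled on the proof of Proposition~\ref{adding turns around P}. The base case $n=1$ is just Proposition~\ref{affine winding} applied to $A_1+tB_1$ and $v$, since $B_1 v\neq 0$. Now assume the statement for $n-1$, and set $N^{n-1}_t := (A_{n-1}+tB_{n-1})\cdots(A_1+tB_1)$, so that $M^n_t = (A_n+tB_n)\,N^{n-1}_t$. The hypothesis $B_n\cdots B_1 v\neq 0$ implies $B_{n-1}\cdots B_1 v\neq 0$, and by Proposition~\ref{prods of pos winding are pos winding} the curve $N^{n-1}_t$ is positively winding, so the induction hypothesis gives $\ell_\R(N^{n-1}_t v)=(n-1)\pi$. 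Note that $M^n_t v$ is a polynomial in $t$ of degree exactly $n$ with leading coefficient $B_n\cdots B_1 v$, so its (finitely many) zeros are points across which the projectivization $\widehat{M^n_t v}$ extends continuously by factoring out the appropriate power of $t-t_0$; moreover, $\widehat{M^n_t v}\to \widehat{B_n\cdots B_1 v}$ as $t\to\pm\infty$.

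Next I would introduce the two-parameter family $\Gamma\colon \R^2\to \Pp^1$, $\Gamma(s,t):=\widehat{(A_n+sB_n)\,N^{n-1}_t v}$, again extended continuously across its zero locus. For $T>0$ large, the diagonal path $\tau\mapsto \Gamma(\tau,\tau)$ on $[-T,T]$ is homotopic in $\Pp^1$ with fixed endpoints to the staircase concatenation of the horizontal segment $s\mapsto \Gamma(s,-T)$, $s\in[-T,T]$, followed by the vertical segment $t\mapsto \Gamma(T,t)$, $t\in[-T,T]$, via the straight-line homotopy in the $(s,t)$-plane (perturbing $T$ slightly to avoid the one-dimensional zero set). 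Both the diagonal and each staircase segment are positively winding, which makes the lift-length a homotopy invariant with fixed endpoints, so the diagonal length equals the sum of the two staircase segment lengths. The horizontal segment is an affine curve $s\mapsto (A_n+sB_n)\,N^{n-1}_{-T}v$; since $B_n N^{n-1}_{-T}v\neq 0$ for large $T$ (the $t^{n-1}$-coefficient of the polynomial $B_n N^{n-1}_t v$ equals $B_n\cdots B_1 v\neq 0$), Proposition~\ref{affine winding} yields length tending to $\pi$ as $T\to\infty$. The vertical segment is the image of the $(n-1)$-winding curve $t\mapsto \widehat{N^{n-1}_t v}$ under the projective diffeomorphism $\widehat{A_n+TB_n}$ of $\Pp^1$; since the winding hypothesis forces $A_n+tB_n\in\GL_2(\R)$ for all $t\in\R$, this diffeomorphism is orientation-preserving and therefore preserves winding number, and since the image curve is again positively winding, its length equals $\pi$ times its winding number, namely $(n-1)\pi$. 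Summing yields $\ell_\R(M^n_t v) = \pi + (n-1)\pi = n\pi$.

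The main technical point is managing the degenerate behavior of $\Gamma$ near the finitely many zeros of $(A_n+sB_n)N^{n-1}_t v$ and at parameter infinity. The zero set in $\R^2$ is a one-dimensional algebraic subvariety that meets any generic line in finitely many points, each of which is a point of continuous extension of the projectivization; thus for a generic large $T$, the staircase segments and the straight-line homotopy between diagonal and staircase intersect this set only at isolated points where $\Gamma$ extends continuously, so the homotopy invariance of the lift-length applies. The control at parameter infinity is provided by the assumption $B_n\cdots B_1 v\neq 0$, which ensures that all four corners of the rectangle $[-T,T]^2$ converge, as $T\to\infty$, to the common projective point $\widehat{B_n\cdots B_1 v}$, so the lengths of the diagonal and staircase paths converge to $\ell_\R(M^n_t v)$ and $n\pi$ respectively.
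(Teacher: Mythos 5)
Your overall strategy (induction plus a homotopy of the diagonal to a staircase in a two-parameter family, then letting $T\to\infty$) matches the paper's, but the proposal has a genuine gap in the treatment of the vertical segment. You claim that the vertical segment $t\mapsto\Gamma(T,t)=\widehat{(A_n+TB_n)\,N^{n-1}_t\,v}$, $t\in[-T,T]$, has length exactly $(n-1)\pi$ because $\widehat{A_n+TB_n}$ is an orientation-preserving projective diffeomorphism that ``preserves winding number.'' This is false for finite $T$: a projective diffeomorphism preserves the degree of \emph{closed} curves, but it does not preserve the lift-length of an arc, and for finite $T$ the curve $t\mapsto\widehat{N^{n-1}_t\,v}$ restricted to $[-T,T]$ is an arc of length strictly less than $(n-1)\pi$. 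What is true is that the vertical length tends to $(n-1)\pi$ as $T\to\infty$; but proving this is exactly where the difficulty lies, because one must control the distortion of $\widehat{A_n+TB_n}$ on the small endpoint arcs. When $\rank B_n=2$ this is easy since $\widehat{A_n+TB_n}\to\widehat{B_n}$ in $C^1$; but when $\rank B_n=1$ the maps $\widehat{A_n+TB_n}$ degenerate as $T\to\infty$ (they collapse $\Pp^1\setminus\{\widehat{\Ker B_n}\}$ onto $\widehat{\mathrm{range}\,B_n}$), and one must invoke $B_n\cdots B_1v\neq0$ to guarantee that the limiting endpoint $\widehat{B_{n-1}\cdots B_1 v}$ avoids $\widehat{\Ker B_n}$ so the distortion on the relevant endpoint arc actually tends to zero. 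This is precisely the subtlety that forces the paper to distinguish $\rank B_n=1$ from $\rank B_n=2$ and to run a separate $\varepsilon$-argument in the rank-one case; your final paragraph gestures at it (``all four corners converge\ldots so the lengths\ldots converge'') but does not supply the estimate, so as written the argument does not close.

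A secondary but telling confusion: the worry about the ``finitely many zeros of $(A_n+sB_n)N^{n-1}_tv$'' and the ``one-dimensional algebraic subvariety'' is a red herring. By the winding hypothesis each factor $A_i+tB_i$ takes values in $\GL_2(\R)$ for all $t\in\R$, so $N^{n-1}_t$ is invertible for every $t$, hence $N^{n-1}_tv\neq0$, hence $(A_n+sB_n)N^{n-1}_tv\neq0$ for all $(s,t)\in\R^2$. The map $\Gamma$ is globally continuous on $\R^2$, there is no zero locus to avoid, and no perturbation of $T$ is needed. The genuine difficulty is entirely at parameter infinity, not in the interior of the square.
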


\begin{proof}
	The proof is made by induction over $n$. 
	
	The case $n=1$ follows by Proposition~\ref{affine winding}.
	
	Assume now that the induction hypothesis holds for a product of $n-1$ factors.  Then if $M^{n-1}_t:= (A_{n-1}+t \,B_{n-1})\,\cdots \, (A_1+t \, B_1)$, we have  \, $\ell_\R(M^{n-1}_t\, v)=(n-1)\,\pi$.
	For  $M^n_t:=(A_n+t\, B_n)\, M_t^{n-1}$  we define  $\Gamma\colon ]-\infty, +\infty[^2\to\Pp^1$ by
	$$\Gamma(t,s):= \widehat{ (A_n+t\,  B_n)}\,   \widehat{M_s^{n-1}}\hat v . $$ 
	Consider the product matrices  $B^{n-1}=B_{n-1}\, \cdots \, B_1$ and $B^n=B_n\,    B^{n-1}$. By assumption this matrix has either rank $1$ or  $2$.
	
	If $\rank(B_n)=2$ then $\Gamma$ extends continuously to  $\bar \Gamma\colon [-\infty, +\infty]^2\to \Pp$ and  the closed curve $\dot\R \ni t\mapsto \bar \Gamma(t,t)=\hat{M}^n_t\, \hat{v}\in\Pp^1$ is  homotopic to the concatenation of two closed curves,
	one of degree $1$,
	$\Gamma_1\colon \dot\R\ni t\mapsto \Gamma(t,-\infty)\in\Pp^1$, 
	and the other of degree $n-1$,
	$\Gamma_2\colon \dot \R\ni s\mapsto \Gamma(+\infty, s)$. Note that
	writing $C_n(t):=A_n+t\, B_n$,  we have
	$$\Gamma_1(t):= \Gamma(t,-\infty) =\hat C_n(t) \, \lim_{s\to-\infty} \hat M^{n-1}_s\, \hat v =\hat C_n(t)\hat B^{n-1}\, \hat v, $$
	the last equality holds because $B^{n-1}\,v\neq 0$.	 
	Likewise, since $B_n$ is invertible,
	$$\Gamma_2(s):=\Gamma(+\infty, s)=\lim_{t\to \infty} \hat C_n(t)\, \hat M^{n-1}_s\, \hat v =\hat B_n\, \hat M^{n-1}_s\, \hat v . $$
	By Proposition~\ref{affine winding}, $\deg(\Gamma_1)=1$, while
	by induction hypothesis we have $\deg(\Gamma_2)=n-1$.
	Hence by continuity of $\bar \Gamma$ in the square 	$[-\infty, +\infty]^2$,  
	$\hat M^n_t\, \hat v=\Gamma(t,t)$ is homotopic to
	the concatenation of $\Gamma_1(t)$ with $\Gamma_2(s)$
	which implies that $\deg_{t\in\dot\R}(\hat M_t\, \hat v)=1+(n-1)=n$.
	Thus we have $\ell_{\R}(M_t\, v)=n\,\pi$.

	Consider now the case $\rank(B_n)=1$.
	 By assumption, we have that  $\hat B^n\, \hat v = \lim_{s\to\pm\infty} \hat B_n\,\hat M^{n-1}_s\,\hat v$ is well-defined.
	By induction hypothesis the map $\dot{\R}\to \Pp^1$, $s\mapsto \hat{M}^{n-1}_s\hat{v}$, is a closed curve of  degree $n-1$. Hence, there are exactly $n-1$ elements $s^\ast_i\in ]-\infty,\infty[$ such that
	$B_n  \,  M^{n-1}_{s^\ast_i} \,  v=0$ for $i=1\ldots n-1$ and  we can extend $\Gamma(t,s)$ continuously to the set $[-\infty, +\infty]^2\setminus \{(\pm \infty,s_i^\ast): i=1\ldots n-1\}$. We claim that the closed curve $\dot\R \ni t\mapsto \Gamma(t,t)=\hat{M}^n_t\, \hat{v}\in\Pp^1$ is   homotopic to the concatenation of two closed curves,
	one of degree $1$ and the other of degree $n-1$.
	This implies  that $\dot \R \ni t\mapsto \Gamma(t,t)$ has degree $n$, and whence  $\ell_{\R}(M_t^n\, v)=n\,\pi$.

	The first of these curves,  $\Gamma_1 \colon \dot \R \to \mathbb{P}^1$,  is the same as above
	\begin{align*}
		\Gamma_1(t) &:= \Gamma(t,-\infty) = \hat{C}_n(t)\,\lim_{s\to -\infty} \hat{M}^{n-1}_s\hat{v} = \hat C_n(t)\, \hat B^{n-1}\,\hat v .
	\end{align*}
	It is a well defined and continuous curve with degree $1$. 
	
	The second curve $\Gamma_2\colon \dot \R\to \Pp^1$  cannot be 
	$\Gamma_2(s):=\Gamma(+\infty, s)$, because of the discontinuities at $s=s_i^\ast$.
	Fix a small number  $\varepsilon>0$ and choose $t^\ast\in]-\infty,+\infty[$ large enough so that the curve $\Gamma_1\vert_{[t^\ast,+\infty]}$ has length bounded by $\varepsilon$ and
	$\abs{s_i^\ast} <t^\ast$ for $i=1,\ldots, n-1$. Define $$\Gamma_{3}\colon[-\infty,-t^\ast]\to\mathbb{P}^1\text{ by }\Gamma_{3}(t):=\Gamma_1|_{[-\infty,-t^\ast]}(-t,-\infty)=\Gamma(-t,-\infty),$$  $$\Gamma_{4}\colon[-\infty,+\infty]\to\mathbb{P}^1\text{ by }\Gamma_{4}(s):=\Gamma(t^\ast,s)=\hat C_n(t^\ast)\, \hat M^{n-1}_s \hat v$$ and $$\Gamma_{5}\colon [t^\ast,+\infty]\to\mathbb{P}^1\text{ by }\Gamma_{5}(t):=\Gamma(t,+\infty).$$
	Finally let $\Gamma_2$ be the concatenation of $\Gamma_3$, $\Gamma_4$ and $\Gamma_5$  in this order.
	Let $\tilde{\Gamma}_i$ be the liftings of these curves for $i=1,\ldots, 5$. On one hand the   procedure for the case of $n=1$ implies that $\ell_{\mathbb{R}}(\tilde\Gamma_1)=\pi$. On the other hand, by the assumption on $t^\ast$ we get that $\ell_{[-\infty,-t^\ast]}(\tilde \Gamma_{3})<\varepsilon$ and   $\ell_{[t^\ast,+\infty]}(\tilde \Gamma_{5})<\varepsilon$, because they both match  the same arc of $\Gamma_1$. Finally, by the induction hypothesis, $\ell_{\mathbb{R}}(\tilde \Gamma_{4})=\ell_{\mathbb{R}}(M_s^{n-1}v)=(n-1)\pi
	$.
	Since $\Gamma(t,t)$ is homotopic to the concatenation of the curves  $\Gamma_1$, $\Gamma_3$, $\Gamma_4$ and $\Gamma_5$, we get  
	$$\pi-2\varepsilon+(n-1)\pi\leq \ell_{\R}(M^n_tv)\leq\pi+2\varepsilon+(n-1)\pi .$$
	Because the closed curve $\dot\R\ni t\mapsto \hat M_t^n\, \hat v\in\Pp^1$ does not depend on $\varepsilon$ and its length  is a multiple of $\pi$, we conclude that $\ell_{\R}(M^n_tv)=n\pi$.
\end{proof}

\subsection{Trace derivative}
In this subsection we establish some properties
about the derivatives of  the trace of
winding matrix curves.

\begin{proposition}
	\label{non zero trace prop}
	Let $I\ni t\mapsto M_t\in \SL_2(\R)$ be a  smooth curve with the  winding property.
	For any  $t\in\R$, if $\abs{ \tr(M_{t}) }<2$ then 
	$\frac{d}{dt}\left[ \,\tr (M_t)\, \right] \neq 0$.
\end{proposition}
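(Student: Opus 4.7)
The plan is to use the fact that $M_t\in\SL_2(\R)$ has constant determinant, so the associated ``logarithmic derivative'' $X:=M_t^{-1}\dot M_t$ is trace-free, as is the shift $Y:=M_t-\tfrac{1}{2}(\tr M_t)\,I$. Writing $\dot M_t=M_t X$ and using $\tr X=0$,
\begin{equation*}
\frac{d}{dt}\tr(M_t)=\tr(\dot M_t)=\tr(M_t X)=\tr(YX)+\tfrac{1}{2}(\tr M_t)\tr(X)=\tr(YX),
\end{equation*}
so the proposition reduces to showing $\tr(YX)\neq 0$.

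Next, I would translate both hypotheses into statements about the symmetric bilinear form $(A,B)\mapsto\tr(AB)$ on the $3$-dimensional space of trace-free $2\times 2$ real matrices; a direct computation in the standard basis shows this form has Lorentzian signature $(2,1)$. By Cayley--Hamilton applied to the trace-free $Y$, $\tr(Y^2)=-2\det Y=-2\bigl(1-\tfrac{1}{4}(\tr M_t)^2\bigr)$, so the hypothesis $\abs{\tr M_t}<2$ gives $\tr(Y^2)<0$, i.e.\ $Y$ is \emph{timelike}. For the winding hypothesis, the identity $\det M_t=1$ simplifies the defining quadratic form to
\begin{equation*}
Q_{M_t}(v)=(M_tv)\wedge(\dot M_t v)=(\det M_t)\,v\wedge(Xv)=v\wedge(Xv),
\end{equation*}
and writing $X=\left(\begin{smallmatrix}\alpha & \beta\\ \gamma & -\alpha\end{smallmatrix}\right)$ its associated symmetric matrix is $\tilde S=\left(\begin{smallmatrix}\gamma & -\alpha\\ -\alpha & -\beta\end{smallmatrix}\right)$. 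Semidefiniteness with an eigenvalue bounded away from zero (Definition~\ref{winding def}) forces $\det\tilde S=-(\alpha^2+\beta\gamma)\geq 0$ and $\tilde S\neq 0$; equivalently $\tr(X^2)=2(\alpha^2+\beta\gamma)\leq 0$ and $X\neq 0$, so $X$ is \emph{non-spacelike and nonzero}.

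The conclusion is then the standard Lorentzian fact that the orthogonal complement of a timelike vector in a signature $(2,1)$ space is positive definite, hence contains no nonzero non-spacelike vectors. Therefore $X\notin Y^\perp$, i.e.\ $\tr(YX)\neq 0$, finishing the argument. The main obstacle I anticipate is purely conceptual: recognising the right reformulation. Once the winding hypothesis is encoded as ``$X$ non-spacelike and nonzero in $\mathrm{sl}_2(\R)$'' and the elliptic hypothesis $\abs{\tr M_t}<2$ as ``$Y$ timelike in $\mathrm{sl}_2(\R)$'', the rest is a one-line Lorentzian orthogonality argument.
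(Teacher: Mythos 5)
Your proof is correct, and it takes a genuinely different route from the paper's. The paper's argument is analytic and dynamical: it introduces a lemma expressing $\arccos(\tfrac{1}{2}\tr M_t)$ as $\int_{\Su^1}\measuredangle(M_t v,v)\,d\mu_{M_t}(v)$ for a probability measure $\mu_{M_t}$ invariant under the elliptic action, established via unique ergodicity of the corresponding circle rotation; it then differentiates under the integral, uses the winding assumption to show the integrand's $t$-derivative has a constant nonzero sign, and concludes via $\tfrac{d}{dt}\tr M_t = 2\sin\theta(t)\,\theta'(t)\neq 0$. Your argument replaces all of this with linear algebra on $\mathfrak{sl}_2(\R)$ equipped with the trace form of signature $(2,1)$: the reduction $\tfrac{d}{dt}\tr M_t=\tr(YX)$ is exact; ellipticity $\abs{\tr M_t}<2$ is $\tr(Y^2)=-2\bigl(1-\tfrac{1}{4}(\tr M_t)^2\bigr)<0$ (timelike $Y$); the winding semidefiniteness together with $\tilde S\neq 0$ is, via $\det\tilde S=-(\alpha^2+\beta\gamma)\geq 0$ and $X\neq 0$, exactly $\tr(X^2)\leq 0$ with $X\neq 0$ (nonzero non-spacelike $X$); and the conclusion follows since $Y^\perp$ is positive definite. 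Both proofs are complete, but yours is more elementary (no measure theory, no unique ergodicity, no approximation of rational rotation angles by irrational ones) and it isolates the underlying algebraic mechanism cleanly: ellipticity and winding live in complementary causal cones of the Lorentzian form, so they cannot be trace-orthogonal. One minor point worth stating explicitly in a polished version: nondegeneracy of the trace form on $\mathfrak{sl}_2(\R)$ is what guarantees $\mathfrak{sl}_2(\R)=\R Y\oplus Y^\perp$ with $Y^\perp$ carrying the full positive part of the signature — but this is immediate since the form is (up to a constant) the Killing form of the semisimple algebra, or can be read off the explicit $\mathrm{diag}(2,2,-2)$ matrix you compute.
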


\begin{proof}
Exclusively for the purpose of this proof we introduce the following non-oriented angle between non-collinear vectors $v,w\in\R^2$, defined by
$$ \measuredangle(v,w):=\arccos\left(\frac{v\cdot w}{\norm{v}\, \norm{w}} \right)\in ]0,\pi[, $$
 i.e., in terms of some Euclidean product in $\R^2$.

\begin{lemma}
	Given $A\in\SL_2(\R)$ elliptic, i.e., $\abs{\tr (A)}<2$,
	there exists a smooth measure $\mu_A\in \Prob(\Su^1)$ such that
	$$  \arccos\left(\frac{1}{2}\,\tr A \right) = \int_{\Su^1} \measuredangle(A\,v,v)\, d\mu_A(v) .$$
	Moreover this integral does not depend on  the Euclidean product  in $\R^2$.
\end{lemma}

\begin{proof}
	Write $A=M\, R_\alpha\, M^{-1}$ for some $M\in\SL_2(\R)$  and where 
	$R_\alpha$ is the angle $\alpha$ rotation with
	$\tr(A)=2\,\cos\alpha$.
	Let $\hat M:\Su^1\to \Su^1$ be the projective action
	induced by $M$ on $\Su^1$, i.e., $\hat M (v):=M\, v/\norm{M\, v}$. Next define $\mu_A=\hat M_\ast m$ where $m$ denotes the normalized Riemannian measure
	on $\Su^1$. Then $\hat A:\Su^1\to\Su^1$ is a circle homeomorphism which preserves the measure $\mu_A$, i.e.,  $\hat A_\ast\mu_A=\mu_A$.
	The angle $\measuredangle$ is a metric on $\Su^1$
	where the circle $\Su^1$ has diameter $\pi$ and length $2\pi$ but in general the rotation angle
	$\measuredangle(A\,v, v)$ is not constant for this metric.  Assume $\alpha$ is irrational$\pmod{2\pi}$.
	Then by the unique ergodicity of $\hat A$,
	$$\int_{\Su^1} \measuredangle(  A\, v,  v)\, d\mu_A(v) =\lim_{n\to\infty} \frac{1}{n}\,\sum_{i=0}^{n-1}
	\measuredangle(  A^{i+1} \, v,  A^i\, v) =\alpha $$
	measures the rotation number  of   $\hat A:\Su^1\to\Su^1$.
	For $\alpha$  rational$\pmod{2\pi}$ the result follows by continuity.
\end{proof}

If $\abs{ \tr (M_{t_0}) }<2$ then 
$M_t$ is elliptic for all $t$ in a small interval around $t_0$. Hence, for such $t$,  $\tr(M_t)=2\,\cos\theta(t)$ where 
$$ \theta(t)=\int_{\Su^1}\measuredangle(M_t\, v, v)\, d\mu_t(v)  $$
and $\mu_t\in\Prob(\Su^1)$ is the unique probability measure
invariant under the rotating  action of  $M_t$.
Because $M_t$ is elliptic, the oriented angle from $v$ to $M_t\, v$ is either always positive (for all $v$) or else
always negative.
By the positive winding assumption,
the curve
$M_t\, v/\norm{M_t\, v}$ rotates anti-clock wisely around the origin with positive speed. Hence, by ellipticity,
$$ \frac{d}{dt} \left[\, \measuredangle(M_t\, v, v)\, \right]_{t=t_0} \neq 0  $$
with a constant sign independent of the unit vector $v$.

Next choose an Euclidean product in $\R^2$ that makes
$M_{t_0}$ an orthogonal rotation and consider the associated angle $\measuredangle(\cdot,\cdot)$.
The map $v\mapsto \measuredangle(M_{t_0}\, v,v)$ is constant equal to $\theta(t_0)$, and because of this
$$  \theta'(t_0)= \int_{\Su^1}
\frac{d}{dt} \left[\, \measuredangle(M_t\, v, v)\, \right]_{t=t_0} \, d\mu_{t_0}(v) \neq 0 . $$

Therefore, since $0<\theta(t_0)<\pi$, one has $\sin\theta(t_0)>0$ and  
$$ \frac{d}{dt}\left[ \,\tr (M_t ) \, \right]_{t=t_0}
=2\, \sin \theta(t_0)\,  \theta'(t_0) \neq 0 ,  $$
which concludes the proof of Proposition~\ref{non zero trace prop}.
\end{proof}

\begin{definition}
A smooth function $f\colon\R\to\R$ is called \emph{log-concave}, respectively \emph{strictly log-concave}\,  if 
$$f(t)  f''(t)-(f'(t))^2\leq 0\quad \text{ resp.} \quad  f(t) f''(t)-(f'(t))^2< 0\quad  \forall \; t\in\R .$$
\end{definition}

\begin{remark}
\label{log-concave=>Morse function}
If all zeros of the function $f$ are isolated then $f$ is log-concave if and only if $\log \abs{f}$ is a concave function. Likewise, $f$ is strictly log-concave if and only if $\log \abs{f}$ is a strictly concave function. Notice that
$$ \frac{d^2}{dt^2} \log\abs{f(t)} = \frac{f(t)  f''(t)-(f'(t))^2}{f(t)^2 } . $$
A strictly log-concave function is always a Morse function, i.e., all its critical points are non-degenerate, and take strictly positive (negative) values at local maxima (minima).
\end{remark}

\begin{proposition}
\label{prop log-concave}
Consider a curve of the form 
$$ M^n_t=(A_n+t\, B_n) \, \cdots\, (A_2+t\, B_2) \, (A_1+t\, B_1)$$
where each factor $A_j+t\, B_j$ takes values in $\SL_2(\R)$
and is positively winding. Then
\begin{enumerate}
	\item Given $v, w\in\Su^1$, if  $\langle B_n\, \cdots \, B_1\, v, \, w \rangle \neq 0$ then $\R\ni t\mapsto \langle M_t^n\, v, \, w \rangle$ is a strictly  log-concave  polynomial of degree $n$ with $n$ simple real roots.
	\item If $B_n\, \cdots \, B_1\neq 0$ then  $\R\ni t\mapsto \tr(M^n_t)$ is a strictly  log-concave  polynomial of degree $n$  with $n$ simple real roots.
\end{enumerate}

\end{proposition}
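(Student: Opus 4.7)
The plan is to reduce both items to the fact that a degree-$n$ polynomial with $n$ distinct real roots is automatically strictly log-concave. Writing $f(t)=c\prod_{i=1}^n(t-r_i)$ with distinct $r_i$, one has $(\log|f|)''(t)=-\sum_{i=1}^n(t-r_i)^{-2}<0$ on $\R\setminus\{r_1,\ldots,r_n\}$, hence $ff''-(f')^2<0$ there; at each simple root $r_i$, $f(r_i)=0$ but $f'(r_i)=c\prod_{j\neq i}(r_i-r_j)\neq 0$, so $ff''-(f')^2=-(f'(r_i))^2<0$ as well. The whole task is therefore to produce $n$ distinct real roots of a degree-$n$ polynomial in each case.

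For item (1), I would first expand $M_t^n=(A_n+tB_n)\cdots(A_1+tB_1)$ and observe that $f(t):=\langle M_t^n v,w\rangle$ is a polynomial of degree at most $n$ in $t$, with leading coefficient $\langle B_n\cdots B_1 v,w\rangle$; by hypothesis this is nonzero, so $\deg f=n$. Real zeros are then counted by the winding geometry: Proposition~\ref{polynomial winding} ensures that, since $B_n\cdots B_1 v\neq 0$, the projective curve $t\mapsto \widehat{M_t^n v}\in\Pp^1$ has total length exactly $n\pi$; as $t\to\pm\infty$ the curve limits to the common endpoint $\widehat{B_n\cdots B_1 v}$ (the top-order term in $t$ dominates and the projective class is insensitive to sign), so the map descends to a closed loop $\dot\R\to\Pp^1$ of degree $n$. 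Corollary~\ref{lemma winds n times} then guarantees every point of $\Pp^1$ is attained at least $n$ times; applied to the unique $\hat u\in\Pp^1$ perpendicular to $w$, the hypothesis $\langle B_n\cdots B_1 v,w\rangle\neq 0$ is precisely $\hat u\neq \widehat{B_n\cdots B_1 v}$, so none of the $n$ preimages sits at infinity and each yields a real zero of $f$. A degree-$n$ polynomial has at most $n$ zeros, so these are all of them, all simple.

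For item (2), the polynomial is $p(t):=\tr(M_t^n)$, of degree at most $n$ with leading coefficient $\tr(B_n\cdots B_1)$. Simplicity of each real zero is automatic from Proposition~\ref{non zero trace prop}: by Proposition~\ref{prods of pos winding are pos winding} the product $M_t^n$ is itself positively winding in $\SL_2(\R)$, and at any zero $t_0$ of $p$ one has $|\tr M_{t_0}^n|=0<2$, hence $\frac{d}{dt}\tr(M_t^n)|_{t_0}\neq 0$. The real task is to produce $n$ real zeros. My strategy would be to exploit a Herglotz-type extension: for $t$ in the open upper half-plane $\mathbb{H}^+$, positive winding of each factor $A_i+tB_i\in\SL_2(\R)$ forces the associated complex M\"obius action on $\widehat{\C}$ to send $\mathbb{H}^+$ into itself, and this property survives composition, so $\widehat{M_t^n}$ maps $\mathbb{H}^+$ into $\mathbb{H}^+$. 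Its attracting fixed point then lies strictly inside $\mathbb{H}^+$ with associated eigenvalue of modulus $|\lambda|>1$; writing $\tr(M_t^n)=\lambda+\lambda^{-1}$, a zero would force $\lambda=\pm i$ and hence $|\lambda|=1$, a contradiction. The analogous argument in $\mathbb{H}^-$ rules out nonreal zeros there too, so all zeros of $p$ are real. Combined with $\deg p=n$ (from the non-vanishing leading coefficient) and the simplicity above, $p$ has exactly $n$ simple real roots.

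The main obstacle lies in item (2): upgrading the real winding condition to the complex half-plane invariance needed for the Herglotz argument requires careful tracking of the imaginary part of $\widehat{(A+tB)v}$ when both $t$ and the direction $v$ are deformed into the complex domain. The derivative formula in Proposition~\ref{derivative At v/norm} captures the infinitesimal Herglotz behaviour in the real parameter, but promoting this to a joint complex invariance that survives an $n$-fold composition, and reconciling the natural degree hypothesis $\tr(B_n\cdots B_1)\neq 0$ with the stated weaker assumption $B_n\cdots B_1\neq 0$, is the delicate point.
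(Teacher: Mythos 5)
Your item (1) is correct and follows essentially the paper's own route: both obtain $\ell_\R(M_t^n\,v)=n\pi$ from Proposition~\ref{polynomial winding}, conclude that the projective curve crosses $w^\perp$ at $n$ distinct real parameters, and read off strict log-concavity from the factorization into $n$ distinct real linear factors. (For the root count at a single $\hat v$ the right reference is Lemma~\ref{lemma 1 ell_I}; Corollary~\ref{lemma winds n times} is the statement quantified over all $\hat v$.)

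For item (2) you depart from the paper, and the two gaps you flag are genuine. The paper picks an orthonormal basis $\{e_1,e_2\}$ with $\langle B^n e_i,e_i\rangle\neq 0$, applies item (1) to the diagonal polynomials $f_i(t)=\langle M_t^n e_i,e_i\rangle$, proves that the roots of $f_1$ and $f_2$ interlace by tracking the positively oriented moving frame $\{M^n_t e_1, M^n_t e_2\}$, and then argues that $\tr(M^n_t)/2=(f_1+f_2)/2$ acquires a zero between each matched pair of roots. Your Herglotz strategy instead tries to exclude nonreal roots of $\tr(M^n_t)$ directly. The first gap is that the half-plane invariance of $\widehat{M_t^n}$ for complex $t\in\mathbb{H}^+$ does not follow formally from the real-parameter derivative formula in Proposition~\ref{derivative At v/norm}; it is a separate claim about the imaginary part of the complexified slope map and would need its own proof. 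The second gap is not a mere bookkeeping nuisance: a Herglotz argument can only exclude nonreal roots, so it has no way to compensate if the degree of $\tr(M^n_t)$ drops below $n$; and the leading coefficient is $\tr(B_n\cdots B_1)$, which can vanish while $B_n\cdots B_1\neq 0$. In the $\SL_2$ setting of this proposition each $E_j=A_j^{-1}B_j$ is nilpotent, and already $n=1$, $A_1=I$ with $B_1$ strictly upper triangular gives $\tr M_t\equiv 2$. You rightly singled this out as ``the delicate point,'' and it is left unresolved in your sketch. For what it is worth, the paper's interlacing argument does not visibly address the same possible cancellation of leading coefficients in $f_1+f_2$ either, and only item (1) of this proposition is invoked in the proof of Theorem~\ref{main}.
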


\begin{proof}
Let $B^n:= B_n\, \cdots \, B_1$ and choose unit vectors $v,w\in\R^2$ such that $\langle B^n\, v, w\rangle$ $\neq 0$. Then the function
$f(t):= \langle M^n_t\, v, w \rangle$ 	
is strictly log-concave. Indeed, notice that $f(t)$ is a polynomial of degree $n$ with leading coefficient
$\langle B^n\, v, w\rangle\neq 0$.
By Proposition~\ref{polynomial winding},
$\ell_\R(M^n_t v)=n\,\pi$ and hence
the polynomial $f(t)=\langle M^n_t\, v, w \rangle$ must have $n$ distinct roots $t_1<t_2<\ldots < t_n$ which correspond to the values of the parameter $t\in\R$ where $M_t^n v$ crosses the line $w^\perp$. Hence
$f(t)=\langle B^n\, v, w\rangle\, \prod_{j=1}^n (t-t_j)$ and 
$$ \log \abs{f(t)} = \log\abs{\langle B^n\, v, w\rangle} + \sum_{j=1}^n \log \abs{t-t_j} $$
is strictly concave because it is a sum of the $n$ strictly concave functions $\log \abs{t-t_j}$. This shows that $f(t)$ is strictly log-concave. This concludes the proof of item (1).

To prove (2),  let  $\{e_1, e_2\}$ be an orthonormal basis  such that
$\langle B^n\, e_i, e_i\rangle\neq 0$ for $i=1,2$, and consider  the (half) trace function $f\colon\R\to\R$,
$$f(t):=\frac{\tr(M^n_t)}{2}=\frac{1}{2}\, \langle M_t^n\, e_1, e_1\rangle+\frac{1}{2}\, \langle M_t^n\, e_2, e_2\rangle =\frac{f_{1}(t)+ f_{2}(t)}{2}, $$
where each $f_i(t):=\langle M_t^n e_i,\, e_i\rangle$ 
is a strictly log-concave polynomial of degree $n$ by item (1),
for $i=1,2$.

Let $t_1<t_2<\ldots < t_n$ be the roots of $f_{1}(t)$ 
and $s_1<s_2<\ldots < s_n$ be the roots of $f_{2}(t)$. 
These roots are interlaced in the sense that
$$ \max\{t_{i-1}, s_{i-1}\}<\min\{t_{i}, s_{i}\}\quad \forall i=2,\ldots, n .$$
Otherwise we would have  
$t_{i-1}<t_i\leq s_{i-1}<s_i$ or 
$s_{i-1}\leq s_{i}<t_{i-1}\leq t_i$.
Keep in mind that the frame $\{M^n_t e_1, M^n_t e_2\}$ is moving anti-clockwisely while both its vectors maintain a positive orientation.
In the first case, as $t$ varies from $t=t_{i-1}$ to $t=t_i$ the vector $M^n_t e_1$ crosses twice the line $e_1^\perp$ while the second vector $M^n_t e_2$ is kept from crossing $e_2^\perp$. Hence 
$\measuredangle(M^n_{t_{i-1}} e_1, M^n_{t_{i-1}} e_1)>\pi$ 
while $\measuredangle(M^n_{t_{i-1}} e_2, M^n_{t_{i-1}} e_2)<\pi$. This is impossible because at some intermediate time the positive orientation of the frame would break. The second case is completely analogous.

\begin{figure}[h]
	\begin{center}
		\includegraphics*[width=12cm]{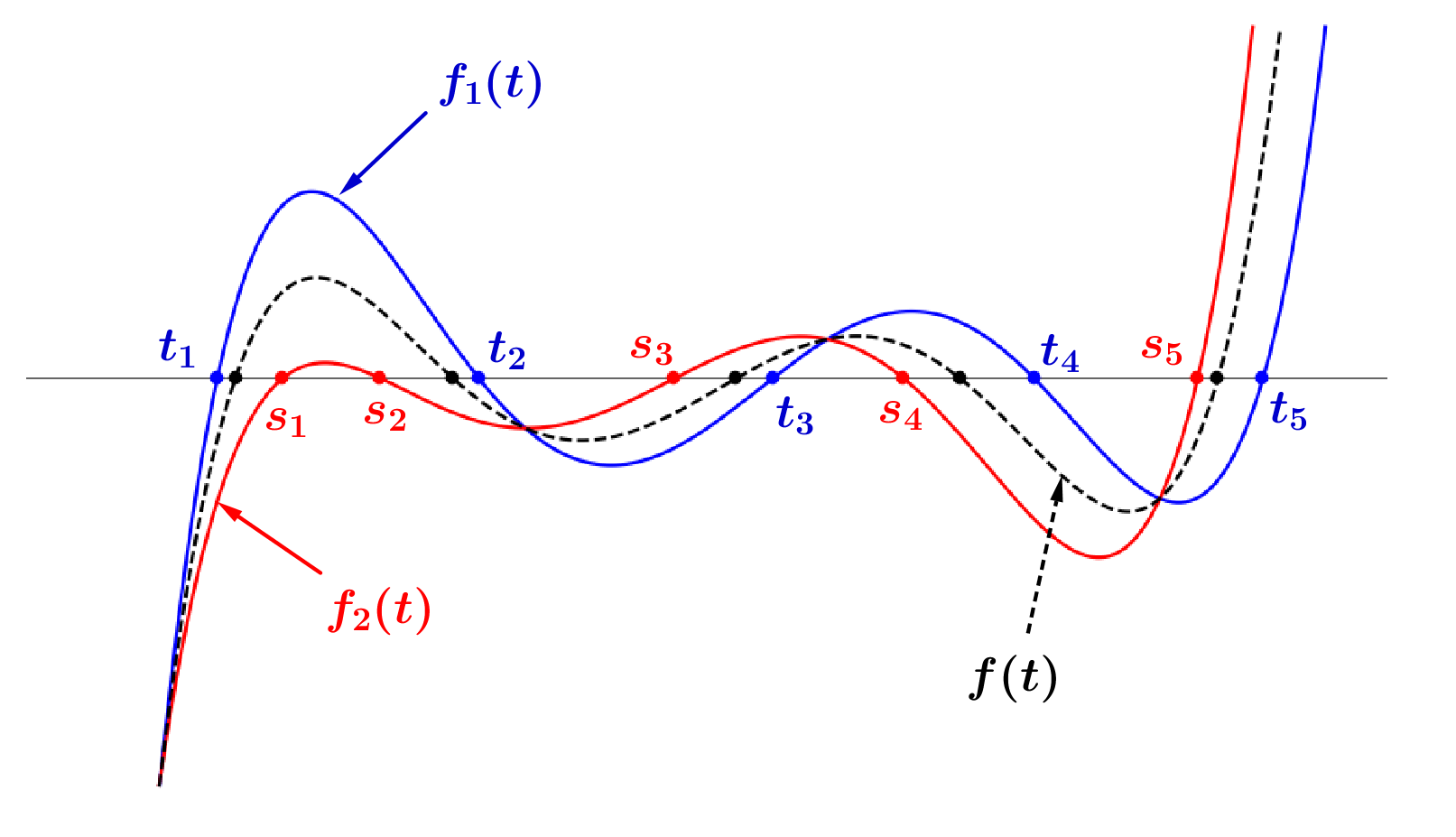}
	\end{center}
	\caption{The log concave functions $f_{1}(t)$,
	$f_{2}(t)$ and $f(t)$. }
	\label{interlace pic}
\end{figure}	
Finally, since $f(t)$ is a convex combination of $f_{1}(t)$ and $f_{2}(t)$, the function $f(t)$ has at least $n$ zeros, one between $t_i$ and $s_i$ for every $i=1,\ldots, n$. See Figure~\ref{interlace pic}.
Arguing as above we derive that being a polynomial of degree $n$, $f(t)$ must be strictly log-concave.
\end{proof}

By Remark~\ref{log-concave=>Morse function}
and Proposition~\ref{non zero trace prop},
the graph of $f(t):=\tr[ A_t^n(x) ]$ completely crosses $n$ times the  open horizontal strip $S:=\{(t,s)\in\R^2\colon -2<s<2 \}$, with local maxima  and  minima outside $S$.

\section{Proof of the main theorem}\label{mainproof}
In this section we 
prove Theorem~\ref{main}.

Assume  $T:X\to X$ is a homeomorphism on a compact metric space $X$
that preserves an ergodic measure $\mu\in\Prob(X)$ and let $A_t:X\to \GL_2(\R)$ be a family of cocycles of the form $A_t(x)=A(x)+tB(x)$ indexed in  $t\in \R$ and satisfying the assumptions 1-4.

By Assumption 4 there exists a continuous invariant decomposition
$\R^2=E_0(x)\oplus E_\infty(x)$,  where the  sub-bundle $E_0$ is associated with the top (finite) Lyapunov exponent and $E_\infty$ is associated with the second Lyapunov exponent, possibly $-\infty$.
The sub-bundles $E_0$ and $E_\infty$ determine   continuous functions  $\hat e_0:X\to \Pp^1$ and $\hat e_\infty:X\to \Pp^1$ respectively.
Consider also the adjoint cocycle
$B^\ast(x):=B(T^{-1} x)^t$
over the base map $T^{-1}\colon X\to X$, which shares with $B$ the same Lyapunov exponents
$L_1(B^\ast)=L_1(B)>L_2(B)=L_2(B^\ast)$. 
Let $\hat e_0^\ast:X\to \Pp^1$ and $\hat e_\infty^\ast:X\to \Pp^1$ denote the corresponding continuous functions associated with its dominated splitting decomposition.

\begin{lemma}
\label{L1(B) limit}
For every $x\in X$,
if $v\notin \hat e_\infty(x)$ and $w\notin \hat e_\infty^\ast(x)$ then 
$$L_1(B)=\lim_{n\to\infty} \frac{1}{n}\, \log \abs{ \langle B^n(x)\, v, w \rangle} .$$ 
\end{lemma}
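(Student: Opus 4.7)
The plan is to use the dominated splitting to reduce the bilinear form $\langle B^n(x) v, w\rangle$ to the one-dimensional cocycle of $B$ on the continuous dominant line bundle $E_0$, and then to compute its growth rate using the rigidity supplied by Assumption~\ref{dom-split}.

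\emph{Interpreting the hypotheses via the dual splitting.} A direct computation with $\langle B(x)u,v\rangle=\langle u,B(x)^t v\rangle$ shows that $B^\ast(x)=B(T^{-1}x)^t$ preserves the annihilator bundles $E_0^\perp$ and $E_\infty^\perp$, and a singular-value comparison identifies the dominated splitting of $B^\ast$ as $\hat e^\ast_0(x)=E_\infty(x)^\perp$ and $\hat e^\ast_\infty(x)=E_0(x)^\perp$. Hence the hypothesis $v\notin\hat e_\infty(x)$ says $v$ has non-zero $E_0(x)$-component, while $w\notin\hat e^\ast_\infty(x)$ says $\langle w,e_0(x)\rangle\neq 0$ for any non-zero $e_0(x)\in E_0(x)$.

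\emph{Reduction to a scalar cocycle on $E_0$.} Decompose $v=v_0+v_\infty$ along $E_0(x)\oplus E_\infty(x)$, so $v_0\neq 0$. By invariance $B^n(x)v_0\in E_0(T^n x)$ and $B^n(x)v_\infty\in E_\infty(T^n x)$, and dominated splitting provides constants $C>0$, $\lambda\in(0,1)$ with $\|B^n(x)v_\infty\|\le C\lambda^n\|B^n(x)v_0\|$. Let $u_0(y)$ denote any unit vector in $E_0(y)$ (well-defined up to sign); the function $|\alpha(y)|:=\|B(y)u_0(y)\|$ is continuous and positive on the compact space $X$. Then $B^n(x)v_0$ is a scalar multiple of $u_0(T^n x)$ with $\|B^n(x)v_0\|=\|v_0\|\prod_{k=0}^{n-1}|\alpha(T^k x)|$, so
\[
\bigl|\langle B^n(x)v,w\rangle\bigr| = \|v_0\|\Bigl(\prod_{k=0}^{n-1}|\alpha(T^k x)|\Bigr)\bigl|\langle u_0(T^n x),w\rangle\bigr|\cdot\bigl(1+O(\lambda^n)\bigr).
\]

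\emph{Passing to the limit.} Taking $\tfrac{1}{n}\log$ splits the expression into the Birkhoff sum $\tfrac{1}{n}\sum_{k=0}^{n-1}\log|\alpha(T^k x)|$, the one-term remainder $\tfrac{1}{n}\log|\langle u_0(T^n x),w\rangle|$, and an $o(1)$ error from $1+O(\lambda^n)$. Since $\log|\alpha|\in C(X)$, Birkhoff gives convergence of the first average to $\int\log|\alpha|\,d\mu$, and evaluating the Oseledets limit on $u_0(x)\in E_0(x)\setminus E_\infty(x)$ identifies this integral with $L_1(B)$. The remainder term has the form $\tfrac{1}{n}\log|g(T^n x)|$ for the continuous function $g(y):=\langle u_0(y),w\rangle$, which vanishes only on the closed set $\{y:E_0(y)\perp w\}$; boundedness of $g$ from above and a Borel--Cantelli estimate for orbits that approach its zero set show this term is $o(1)$.

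\emph{Main obstacle: from $\mu$-a.e.\ to every $x$.} The preceding steps deliver the identity for $\mu$-a.e.\ $x\in X$, and this is the genuinely hard step to upgrade, since Birkhoff averages of continuous functions need not converge uniformly without unique ergodicity of $(T,\mu)$. The mechanism I would use is the extra rigidity supplied by dominated splitting: the projective cocycle $\hat F$ uniformly contracts $\mathbb{P}^1\setminus\{\hat e_\infty(\cdot)\}$ onto the continuous graph $\{\hat e_0(\cdot)\}$ at exponential rate, and the scalar cocycle $\alpha$ is continuous and everywhere non-vanishing. Combining these with the semicontinuity of Kingman's subadditive cocycle and a shadowing/comparison argument — transferring the a.e.\ limit along $E_0$ to nearby (and ultimately arbitrary) points — is what promotes the conclusion from $\mu$-a.e.\ $x$ to every $x\in X$. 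This is precisely where Assumption~\ref{dom-split} is used at full strength.
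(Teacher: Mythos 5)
Your route is genuinely different from the paper's. The paper expands $v$ in the singular basis of $B^n(x)$ and writes $\langle B^n(x)v,w\rangle$ as $\norm{B^n(x)}$ times two coefficients, $\langle v,\medir(B^n(x))\rangle$ and $\langle w,\medir((B^\ast)^n(x))\rangle$, each of which (by the paper's identification of the limits of the singular directions) converges to a fixed non-zero number anchored at the base point $x$, namely $\langle v,\hat e_0^\ast(x)\rangle$ and $\langle w,\hat e_0(x)\rangle$, plus a second term carrying $\conorm(B^n(x))$ which is exponentially dominated; the hypotheses on $v$ and $w$ enter only to make those two limits non-zero. You instead decompose $v$ along the invariant splitting $E_0(x)\oplus E_\infty(x)$ and reduce to the scalar cocycle $\alpha$ on $E_0$. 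That part is sound: the Birkhoff average of $\log|\alpha|$ does converge to $\int\log|\alpha|\,d\mu=L_1(B)$ for $\mu$-a.e.\ $x$, and it is a legitimate alternative computation of the leading order.

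The gap is in how you dispose of the orbit-dependent factor $\langle u_0(T^nx),w\rangle$, which your decomposition produces and which the paper's choice of basis is designed to avoid. First, the identity $|\langle B^n(x)v,w\rangle|=\|v_0\|\bigl(\prod_k|\alpha(T^kx)|\bigr)\,|\langle u_0(T^nx),w\rangle|\,(1+O(\lambda^n))$ is false as written: the $E_\infty$-contribution is an \emph{additive} error of size $O(\lambda^n)\,\|B^n(x)v_0\|$, and it becomes the dominant term exactly when $|\langle u_0(T^nx),w\rangle|\lesssim\lambda^n$, i.e.\ when the orbit comes exponentially close to the closed set $\{y\colon E_0(y)\perp w\}$; along such times $\frac{1}{n}\log|\langle B^n(x)v,w\rangle|$ can drop toward $L_2(B)$ and the limit can fail. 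Second, your proposed control of $\frac{1}{n}\log|\langle u_0(T^nx),w\rangle|$ by ``Borel--Cantelli'' is not available from the standing assumptions: it would require $\sum_n\mu\bigl(\{y:|\langle u_0(y),w\rangle|<e^{-\epsilon n}\}\bigr)<\infty$, a quantitative transversality of the distribution of $E_0$ relative to $w^\perp$ that nothing guarantees; the alternative route (Birkhoff applied to $\log|\langle u_0(\cdot),w\rangle|$) needs that function to be $\mu$-integrable, which is likewise not given. Third, the closing paragraph on promoting the conclusion from $\mu$-a.e.\ $x$ to every $x$ is a statement of intent rather than an argument --- note that the paper's own proof only establishes the $\mu$-a.e.\ version and that is all the proof of Theorem~\ref{main} uses, so the honest repair is to prove and invoke the lemma for $\mu$-a.e.\ $x$ and, for the leading term, to follow the paper in pairing $w$ with a singular direction that converges at the fixed point $x$ rather than with $u_0$ along the orbit.
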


\begin{proof}
We relate the  directions $\hat e_0(x)$, $\hat e_\infty(x)$, $\hat e_0^\ast(x)$ and $\hat e_\infty^\ast(x)$  with the singular vectors of the matrices $B^n(x)$ and $(B^\ast)^n(x)$.
See the definitions in Subsection~\ref{appendix:projective analysis}.

To simplify notations we set
\begin{align*}
	\medir_n(x)  := \medir(B^n(x)) \quad \text{ and }\quad 
	\ledir_n(x) := \ledir(B^n(x)) 
\end{align*}
as well as
\begin{align*}
	\medir_n^\ast(x) & = \medir^\ast(B^n(T^{-n} x))
	= \medir\left(  (B^{\ast})^n(x) \right)  \\
	\ledir_n^\ast(x) & = \ledir^\ast(B^n(T^{-n} x)) = \ledir\left(  (B^{\ast})^n(x) \right) \\
\end{align*}
Since $L_1(B)>L_2(B)$, all these four sequences converge  $\mu$-almost surely respectively to
$\hat e_0^\ast(x)$, $\hat e_\infty(x)$,
$\hat e_0(x)$ and $\hat e_\infty^\ast(x)$.
Moreover  
$$\hat e_0^\ast(x)=\hat e_\infty(x)^\perp\quad \text{ and }\quad \hat e_\infty^\ast(x)=\hat e_0(x)^\perp .$$
 See Chapter 4 of \cite{LLE}.
Given unit vectors $v, w\in \Su^1$,
$$v= \langle v, \medir_n(x) \rangle\,\medir_n(x) +
\langle v, \ledir_n(x) \rangle\,\ledir_n(x),  $$
and whence
$$B^n(x)\,v = \norm{B^n(x)}\,\langle v, \medir_n(x) \rangle\,\medir_n^\ast(x) +
\conorm(B^n(x))\, \langle v, \ledir_n(x) \rangle\,\ledir_n^\ast(x)  $$
which implies that  $\langle B^n(x)\,v, w\rangle$ is equal to 
$$\langle v, \medir_n(x) \rangle\, \langle w, \medir_n^\ast (x) \rangle\, \norm{B^n(x)} +
\langle v, \ledir_n(x) \rangle\, \langle w, \ledir_n^\ast (x) \rangle\, \conorm(B^n(x))  . $$
Note that as $n$ large, the first term dominates.

Since  $v\neq \hat e_\infty(x)=\hat e_0^\ast(x)^\perp$,  
$$ \lim_{n\to +\infty}\frac{1}{n}\,\log \abs{
	\langle v, \medir_n(x) \rangle } =
\lim_{n\to +\infty}\frac{1}{n}\,\log \abs{ \underbrace{
		\langle v, \hat e_0^\ast(x) \rangle }_{\neq 0}  }  = 0  .$$
Analogously, since  $w\neq \hat e_\infty^\ast(x)=\hat e_0(x)^\perp$,
$$ \lim_{n\to +\infty}\frac{1}{n}\,\log \abs{
	\langle w, \medir_n^\ast(x) \rangle } =
\lim_{n\to +\infty}\frac{1}{n}\,\log \abs{
	\underbrace{\langle w, \hat e_0(x) \rangle}_{\neq 0} }  = 0 . $$

Taking absolute values, logarithms, dividing by $n$, and the limit as $n\to +\infty$, we have  $\mu$-almost surely
\begin{align*}
\lim_{n\to\infty} \frac{1}{n}\, \log \abs{ \langle B^n(x)\, v, w \rangle} &= \lim_{n\to\infty} \frac{1}{n}\, \log \norm{ B^n(x)} = L_1(B)  ,
\end{align*}
which concludes the proof.
\end{proof}

\begin{lemma}
\label{bad winding directions}
For every $x\in X$ there exists a countable set of (bad) directions $\Bscr_x\subset \Su^1$ such that for any $v \in \Su^1\setminus \Bscr_x$, $w\in\Su^1$ and $n\in\N$ the function
$f(t):=\langle A_t^n(x)\, v, w \rangle$ is a polynomial of degree $n$.
\end{lemma}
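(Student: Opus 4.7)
My plan is to expand $A_t^n(x)$ explicitly as a polynomial in $t$ with matrix coefficients and identify its leading term. Since $A_t(y) = A(y) + tB(y)$ is affine in $t$, one has
$$A_t^n(x) = \prod_{j=n-1}^{0}\bigl(A(T^j x) + t\, B(T^j x)\bigr) = \sum_{k=0}^{n} t^k \, C_k^{(n)}(x),$$
where $C_k^{(n)}(x)$ is the sum, over all subsets $S\subset\{0,\ldots,n-1\}$ of size $k$, of the ordered product of $B(T^j x)$ for $j\in S$ and $A(T^j x)$ for $j\notin S$. The top coefficient is
$$C_n^{(n)}(x) = B(T^{n-1}x)\,\cdots\,B(Tx)\,B(x) = B^n(x).$$
Consequently $f(t) = \langle A_t^n(x) v, w\rangle$ is a polynomial in $t$ of degree at most $n$ whose leading coefficient is $\langle B^n(x) v, w\rangle$.

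Next I would construct $\Bscr_x$. By Assumption~\ref{dom-split}, $B$ has dominated splitting and $L_1(B)>L_2(B)\geq-\infty$, which in particular rules out $B^n(x)=0$ for any $n$; hence each $\ker B^n(x)$ is either $\{0\}$ or a single line in $\R^2$. I set
$$\Bscr_x := \bigcup_{n\geq 1}\bigl(\ker B^n(x)\cap \Su^1\bigr),$$
a countable union of sets with at most two antipodal points each, and hence countable. If $B(x)\in \GL_2(\R)$ for all $x$ then $\Bscr_x=\emptyset$; in the rank one case the kernels $\ker B^n(x)$ are the preimages, under the iterates of $B$ starting at $x$, of the degenerate $E_\infty$-directions, so they still contribute at most two directions per $n$.

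For $v\in\Su^1\setminus\Bscr_x$ we have $B^n(x)v\neq 0$ for every $n\in\N$, so the $\R^2$-valued polynomial $t\mapsto A_t^n(x)v$ has a nonzero leading coefficient $B^n(x)v$, i.e.\ exact degree $n$. Pairing with any $w\in\Su^1$ produces the scalar polynomial $f(t)$ whose $t^n$-coefficient $\langle B^n(x)v,w\rangle$ is nonzero unless $w\in (B^n(x)v)^\perp$, a single line depending on $(v,n)$. The only delicate point is this $w$-dependence of the leading coefficient: it is handled either by implicitly excluding the codimension-one bad directions of $w$ (harmless in the intended uses, such as Proposition~\ref{prop log-concave}), or by recording that the essential content of the statement is that the vector coefficient $B^n(x)v$ does not vanish, for which the countability of $\Bscr_x$ above is precisely what is needed.
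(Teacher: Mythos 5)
Your proof is correct and reaches the conclusion by a more elementary and transparent route than the paper's. You expand $A_t^n(x)$ as a polynomial in $t$, identify the leading matrix coefficient $B^n(x)$ directly, and set $\Bscr_x=\bigcup_{n\geq 1}\bigl(\ker B^n(x)\cap\Su^1\bigr)$; your appeal to Assumption~\ref{dom-split} to rule out $B^n(x)=0$ is correct (the restriction $B^n(x)\vert_{E_0(x)}$ is an isomorphism onto $E_0(T^nx)$), so each $n$ contributes at most a pair of antipodal points and $\Bscr_x$ is countable. The paper takes a less direct route: it sets $E:=A^{-1}B$, defines $\Bscr_x$ as the set of $v$ for which some $A^j(x)v$ is an eigenvector of $E(T^jx)$, and then argues through Propositions~\ref{prods of pos winding are pos winding} and~\ref{polynomial winding}. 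Since the hypothesis that Proposition~\ref{polynomial winding} actually needs is exactly $B^n(x)v\neq 0$, your construction of $\Bscr_x$ is the one most directly tied to that condition, and it dovetails cleanly with the downstream requirement $\langle B^n(x)v,w\rangle\neq 0$ in Proposition~\ref{prop log-concave}. You are also right to flag the $w$-dependence: as literally stated the lemma cannot hold for $w\perp B^n(x)v$, an imprecision the paper's proof shares. The intended content is that the vector coefficient $B^n(x)v$ never vanishes; the choice of $w$ making $\langle B^n(x)v,w\rangle\neq 0$ is arranged separately in the proof of Theorem~\ref{main}, where Lemma~\ref{L1(B) limit} is invoked for a basis avoiding $\hat e_\infty(x)$ and $\hat e_\infty^\ast(x)$.
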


\begin{proof}
	Consider $E:X\to \Mat_2(X)$ defined by
	$E(x):=A(x)^{-1}\, B(x)$. The winding assumption implies that  $E(x)$ has either complex, non real eigenvalues or else it has a single real eigenvalue with multiplicity two, for all $x\in X$. See Propositions~\ref{pos winding charact} and \ref{pos winding charact, case Delta=0}  in  Subsection~\ref{subsection: Conditions to gurantee 1-3} of the Appendix.

	Given $x\in X$ consider the countable set,
	$$ \Bscr_{x}:=\left\{v\in \Su^1  \colon \text{for some } j\geq 0,\, A^j(x)\, v\,  \text{ is an eigenvector of}\, E(T^j x) \,\right\} .$$
	If $v\in   \Bscr_x^\complement$ then the smooth curve $A_t(T^j x)=A(T^j x)+t B(T^j x)$ is positively winding 
	(for all $j\geq 0$) and whence by Proposition~\ref{prods of pos winding are pos winding}, see also Proposition~\ref{polynomial winding}, the curve
	$A_t^n(x)$ is positively winding and $f(t):=\langle A_t^n(x)\, v, w \rangle$ is a polynomial of  degree $n$.
\end{proof}

Consider the Lebesgue-Stieltjes measure $d\rho$ associated with the continuous   non-decreasing function (see Proposition~\ref{rho continuity and monotonicity})  $\rho(t):=\rho(A_t, A_{t_0} )$.

\begin{lemma}
	\label{Johnson trivial half}
	If the cocycle $A_t$ has dominated splitting over some open interval $I\subseteq \R$ then	the function $\rho(t)$ is constant on $I$.
\end{lemma}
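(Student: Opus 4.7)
\noindent\emph{Plan of proof.} My strategy is to use the invariant section afforded by the dominated splitting to rewrite $\rho(A_t)$ as the $\mu$-integral of an integer-valued function that depends continuously on $t$; the integrality combined with the connectedness of $I$ will then force the integrand, and hence $\rho(A_t)$, to be constant.

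By the standard persistence theory for dominated splittings, for each $t\in I$ the cocycle $A_t$ admits a continuous invariant decomposition $\R^2=E_0^t(x)\oplus E_\infty^t(x)$ that varies continuously in $(t,x)\in I\times X$. Letting $\hat v_t(x)\in\Pp^1$ be the projective point of $E_0^t(x)$, the map $(t,x)\mapsto\hat v_t(x)$ is continuous, and the $\hat F_t$-invariance $\hat A_t(x)\,\hat v_t(x)=\hat v_t(Tx)$ holds. The graph measure $\nu_t:=(\mathrm{id},\hat v_t)_\ast\mu$ is thus $\hat F_t$-invariant with marginal $\mu$, and Proposition~\ref{rot number via stationary measures} yields
$$\rho(A_t)=\frac{1}{\pi}\int_X \measuredangle(A_t(x)\,\hat v_t(x),\hat v_t(x))\,d\mu(x).$$

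The key step is to rewrite the integrand using the lift $\tilde G$ of Proposition~\ref{tilde G}. Fix $t_0\in I$; by continuity and compactness, one finds a neighborhood $J\subset I$ of $t_0$ together with a finite open cover $\{U_i\}$ of $X$ such that for each $t\in J$ the map $\hat v_t|_{U_i}$ admits a lift $\tilde v_t^{(i)}\colon U_i\to\R$ that is jointly continuous in $(t,x)\in J\times U_i$. Fix once and for all a measurable partition $\{E_i\}$ of $X$ with $E_i\subset U_i$, and set $\tilde v_t(x):=\tilde v_t^{(i)}(x)$ whenever $x\in E_i$; this produces a bounded measurable lift $\tilde v_t\colon X\to\R$ of $\hat v_t$ which, for every fixed $x$, is continuous in $t\in J$. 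Because $\tilde G(x,t,\tilde v_t(x))$ and $\tilde v_t(Tx)$ are two lifts of the same projective point $\hat v_t(Tx)=A_t(x)\hat v_t(x)$, their difference lies in $\pi\Z$, and the formula
$$k(x,t):=\tfrac{1}{\pi}\bigl[\tilde G(x,t,\tilde v_t(x))-\tilde v_t(Tx)\bigr]$$
defines an integer. Decomposing
$$\measuredangle(A_t(x)\hat v_t(x),\hat v_t(x))=\tilde G(x,t,\tilde v_t(x))-\tilde v_t(x)=\bigl[\tilde v_t(Tx)-\tilde v_t(x)\bigr]+\pi\,k(x,t)$$
and integrating against $\mu$, the coboundary $\tilde v_t(Tx)-\tilde v_t(x)$ vanishes by $T$-invariance (using boundedness of $\tilde v_t$), leaving $\rho(A_t)=\int_X k(x,t)\,d\mu(x)$.

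For $\mu$-a.e.\ $x$, the continuity of $\tilde G$ in $(t,v)$ provided by Proposition~\ref{tilde G}, combined with the continuity of $\tilde v_t(x)$ and $\tilde v_t(Tx)$ in $t$, makes $t\mapsto k(x,t)$ continuous on $J$; being $\Z$-valued on the connected interval $J$, it is forced to be constant. Therefore $\rho(A_t)=\int k(x,t_0)\,d\mu=\rho(A_{t_0})$ for every $t\in J$, which shows that $\rho$ is locally constant on $I$; the connectedness of $I$ then gives constancy on all of $I$. The principal technical hurdle is the construction of the lift $\tilde v_t$: since $\hat v_t\colon X\to\Pp^1$ need not be null-homotopic, a global continuous lift need not exist, and one must patch local continuous lifts via a fixed measurable partition in order to obtain a lift that is merely measurable in $x$ but remains continuous in $t$ for each fixed $x$.
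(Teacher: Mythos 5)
Your proof is correct, and it takes a genuinely different route from the paper's. The paper works directly with the limit formula of Proposition~\ref{rotation number full measure set}: it writes $\rho(A_t)-\rho(A_s)=\lim_n\frac{1}{\pi n}\,\measuredangle(A_t^n(x)v,A_s^n(x)v)$, observes that dominated splitting forces the orbits $A_t^n(x)\hat v$ and $A_s^n(x)\hat v$ to shadow the continuous unstable section $\hat e_u(T^nx,\cdot)$, and deduces that the cumulative angle stays uniformly bounded in $n$, so the normalized difference vanishes. You instead invoke Proposition~\ref{rot number via stationary measures} with the graph measure of the unstable section, strip off a $T$-coboundary, and represent $\rho(A_t)$ as the $\mu$-integral of an integer-valued function $k(\cdot,t)$; the winding count $k(x,t)$ is continuous in $t$ for each fixed $x$, hence locally constant in $t$, and so is $\rho$. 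The two arguments exploit the same structural fact---continuity of the invariant unstable section in $(t,x)$---but yours makes the topological interpretation of $\rho$ as a winding count explicit and avoids any asymptotic estimate on $\measuredangle(A_t^n v, A_s^n v)$, at the cost of constructing a measurable-in-$x$, continuous-in-$t$ lift $\tilde v_t$ of the section, which is indeed the delicate step you rightly flag; the paper's proof is shorter precisely because it bypasses that construction.
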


\begin{proof}
	Let $I\subset \R$ be an open interval.
	Assume that  $A_t$ has dominated splitting for all $t\in I$ and consider the unstable or dominating direction $\hat e_{u}(x,t)\in\Pp^1$ of   $A_t$.
	By splitting domination, the map $\hat e_u\colon X\times I\to \Pp^1$ is continuous. Given $t>s$ in $I$, $x\in\Omega_t\cap\Omega_s$ (see Proposition~\ref{rotation number full measure set} for the definition of the sets $\Omega_t$ and $\Omega_s$) and some appropriate $\hat v\in\mathbb{P}^1$,	
	\begin{align*}
	\rho(A_t)-\rho(A_s) &=\lim_{n\to\infty} \frac{1}{\pi n}\, \measuredangle(A_t^n(x)\, v, v) - \frac{1}{\pi n}\, \measuredangle(A_s^n(x)\, v, v) \\
	&=\lim_{n\to\infty} \frac{1}{\pi n}\, \measuredangle(A_t^n(x)\, v,  A_s^n(x)\, v) \\
	&=\lim_{n\to\infty} \frac{1}{\pi n}\, \measuredangle(\hat e_{u}(T^n x,t), \hat e_u(T^n x,s)) =0 ,
	\end{align*} 
because the angle  $\measuredangle(\hat e_{u}(x,t), \hat e_u(x,s))$ is uniformly  bounded,
	which proves that $\rho$ is constant over $I$.
\end{proof}

\begin{lemma}
	\label{compact support}
The cocycle $A_t$ has  dominated splitting  outside some compact interval $[-a,a]$. In particular, the support of the measure  $d\rho$ is compact.
\end{lemma}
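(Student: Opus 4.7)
The strategy is to show that $A_t$ inherits a dominated splitting from $B$ for $|t|$ sufficiently large, and then conclude via Lemma~\ref{Johnson trivial half}. The starting point is the polynomial expansion
\[
A_t^{n_0}(x) = t^{n_0}\, B^{n_0}(x) + R(x, t),
\]
with $R$ a matrix-valued polynomial of degree at most $n_0 - 1$ in $t$ whose coefficients are uniformly bounded on the compact space $X$; hence $t^{-n_0}\, A_t^{n_0}(x) = B^{n_0}(x) + O(|t|^{-1})$ uniformly in $x \in X$. Since scaling by a non-zero constant does not affect projective or conic dynamics, $\hat A_t^{n_0}$ becomes close to $\hat B^{n_0}$ for large $|t|$, which is the mechanism for transferring hyperbolic-type features from $B$ to $A_t$.

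Using Assumption~\ref{dom-split}, I would fix continuous unit sections $v_0(x) \in E_0(x)$ and $v_\infty(x) \in E_\infty(x)$, express $B^{n_0}$ in these bases as a matrix close to $\mathrm{diag}(\mu_0(x), \mu_\infty(x))$ with $|\mu_\infty/\mu_0| \leq \tau < 1$ uniformly in $x$ (in the rank-$1$ case, $\mu_\infty \equiv 0$), and introduce the continuous cone field
\[
C_\epsilon(x) := \{ a\, v_0(x) + b\, v_\infty(x) \colon |b| \leq \epsilon |a|\}.
\]
A direct calculation would then show that for $\epsilon$ small and $|t|$ sufficiently large---uniformly in $x$---one has $A_t^{n_0}(x)\, C_\epsilon(x) \subset C_\epsilon(T^{n_0} x)$ strictly; the symmetric statement, applied to the adjoint cocycle, yields invariance of a dual cone around $E_\infty$ under the backward dynamics. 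This delivers dominated splitting for $A_t^{n_0}$, and by the standard fact that dominated splitting of an iterate passes to the original cocycle, also for $A_t$ itself. Choose $a > 0$ so that this works for all $|t| > a$.

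Finally, Lemma~\ref{Johnson trivial half} applied on each of the intervals $(-\infty, -a)$ and $(a, +\infty)$ shows that $\rho$ is constant there, so $\mathrm{supp}(d\rho) \subset [-a, a]$ is compact. The main obstacle will be the cone-invariance estimate at rank-$1$ points of $B$, where $\mu_\infty \equiv 0$ and the contraction in the $v_\infty$ direction must be read off entirely from controlling the $O(|t|^{-1})$ off-diagonal perturbation against the $\mu_0$-expansion. A secondary technicality is the transfer of dominated splitting from $A_t^{n_0}$ to $A_t$, most cleanly handled by working in an adapted Lyapunov norm that converts the $n_0$-step domination of $B$ into one-step domination, after which the same cone argument can be run directly at the level of $A_t$ and produces a continuous $A_t$-invariant splitting without passing to an iterate.
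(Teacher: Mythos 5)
Your proposal is correct and is essentially the same argument as the paper's, just spelled out at a finer level of detail. The paper simply observes that a strictly invariant cone field adapted to $B$ is also strictly invariant under $tB$ (conical invariance is scale-free) and hence, by robustness of strict cone inclusions under $C^0$-small perturbations and the fact that $t^{-1}A_t = t^{-1}A + B \to B$ uniformly on $X$ as $|t|\to\infty$, also under $A_t$ for $|t|$ large; Lemma~\ref{Johnson trivial half} then gives $\supp(d\rho)\subset[-a,a]$. You reach the same conclusion by the equivalent observation $t^{-n_0}A_t^{n_0}\to B^{n_0}$, and your closing remark about replacing the $n_0$-step domination by a one-step domination via an adapted Lyapunov norm is precisely the device that makes the paper's one-line cone argument rigorous without passing to iterates. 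One small inaccuracy: in the sections $v_0,v_\infty$, the matrix of $B^{n_0}(x)$ from $x$ to $T^{n_0}x$ is exactly $\mathrm{diag}(\mu_0(x),\mu_\infty(x))$, not merely close to it, since the decomposition $E_0\oplus E_\infty$ is $B$-invariant; this only makes your estimate easier.
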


\begin{proof}
Any continuous family of cones $\{C_x\}_{x\in X}$
adapted to the cocycle $B$, which has dominated splitting,
is shared by the cocycles $t\, B$ and $A_t=A+t\, B$ for all 
$t$ with large enough absolute value, i.e., $|t|>a$.
It follows that $A_t$ has dominated splitting for
all  $|t|>a$ and whence by Lemma~\ref{Johnson trivial half},
$\supp(d\rho)\subset [-a,a]$.	
\end{proof}

%

\bigskip

\begin{proof}[Proof of Theorem~\ref{main}]
By Lemma~\ref{compact support} the non-decreasing function $\rho(t)=\rho(A_t)$ has compact support
contained in some interval $[-a,a]$.

Fixing $t\in\C$, for $\mu$-almost every $x\in \Omega$ (depending on $t$)
\begin{align*}
	L_1(A_t) &=\lim_{n\to\infty} \frac{1}{n}\,\log \norm{A^n_t(x)} \\
	&\quad =\lim_{n\to\infty} \frac{1}{n}\,\log \max_{i,j=1,2}\abs{\langle A^n_t(x)\, e_i, e_j \rangle}\\
	&\quad =\max_{i,j=1,2} \left[ \lim_{n\to\infty} \frac{1}{n}\,\log \abs{\langle A^n_t(x)\, e_i, e_j \rangle}\right],
\end{align*}
where $\{e_1, e_2\}$ is any basis of $\R^2$.

On the other hand we are going to prove that for $\mu$-almost $x\in X$, taking an appropriate basis $\{e_1, e_2\}$ of $\R^2$ (depending on $x$), for all $i,j=1,2$ and $t\in \C\setminus \R$,
$$  \lim_{n\to\infty} \frac{1}{n}\,\log \abs{\langle A^n_t(x)\, e_i, e_j \rangle} = L_1(B)+\int \log |t-s|\, d\rho(s) .$$
Hence the identity~\eqref{Thouless ID} holds for all $t\in\C\setminus \R$.

Indeed this enough by the following argument.
The function $t\mapsto L_1(A_t)$ is upper semi-continuous
and since  
$$u_{n}\colon \C\to\R, \quad u_{n}(t):=  \frac{1}{n}\,\int_X \log \norm{A^n_t(x)}\, d\mu(x) $$
 is a family of  subharmonic functions,
uniformly bounded  from above. Note that  $\{u_{2^j}\}_{j\geq1}$ is a convergent and decreasing subsequence of  subharmonic functions, so the limit function $L_1(A_t)=\lim_{n\to+\infty} u_{n}(t)$ is a subharmonic function. The right-hand-side in~\eqref{Thouless ID} is also a subharmonic function since it is an average of the subharmonic functions   $v_s(t):=\log |t-s|$.
Finally, because these two subharmonic agree Lebesgue almost everywhere, they must coincide everywhere,
 see~\cite[Theorem 1.1]{CraigSimon}. This proves that~\eqref{Thouless ID}
holds for all $t\in\C$.

To finish the proof of Theorem~\ref{main} we establish the previous  claim. 

Consider the full measure set $\Omega=\cap_{\beta\in\Q} \Omega_\beta \subset X$, where each $\Omega_\beta$ is the full measure set  in
Proposition~\ref{rotation number full measure set} associated with 
$\beta\in\Q$. Taking  $x\in\Omega$, 
 and an orthonormal basis $\{e_1,e_2\}$ of $\R^2$ consisting of vectors which do  not match the dominated directions $\hat e_\infty(x)$ and $\hat e_\infty^\ast(x)$ of the cocycle  $B$ and its adjoint $B^\ast$ at $x$,
 by Lemma~\ref{L1(B) limit}
 we have
$$L_1(B)=\lim_{n\to\infty} \frac{1}{n}\, \log \abs{ \langle B^n(x)\, e_i, e_j \rangle} \quad \forall\, i,j=1,2.$$ 
We can also take this basis $\{e_1,e_2\}$  outside the countable set $\Bscr_x$ of Lemma~\ref{bad winding directions}
so that the functions
$f_{i,j}(t):= \langle A_t^n(x)\, e_i, e_j \rangle$ are 
polynomials of degree $n$ by item (1) of Proposition~\ref{prop log-concave}. By the same item, each of these functions has exactly $n$ roots, denoted by
$t_1(i,j)<t_2(i,j)<\cdots <t_n(i,j)$.
Hence $\langle B^n(x)\, e_i, e_j \rangle$ is the leading coefficient of $f_{i,j}(t)$
and we have

\begin{align*}
	\lim_{n\to\infty} \frac{1}{n}\,\log \abs{f_{i,j}(t)}  &=
	\lim_{n\to\infty} \frac{1}{n}\,\log \abs{\langle B^n(x)\, e_i, e_j \rangle\, \prod_{k=1}^n (t-t_k(i,j))}\\
	&=
	\lim_{n\to\infty} \left[\, \frac{1}{n}\,\log \abs{\langle B^n(x)\, e_i, e_j \rangle}+\frac{1}{n}\,\sum_{k=1}^n \log\abs{t-t_k(i,j)}\, \right]  \\
	&=
	L_1(B) +\lim_{n\to\infty}  \frac{1}{n}\,\sum_{k=1}^n \log\abs{t-t_k(i,j)}  .
\end{align*}
Therefore, it is enough proving now that  
\begin{equation}
\label{lim sum log |t-tk|}
\lim_{n\to \infty}\frac{1}{n}\, \sum_{k=1}^n \log |t-t_k| = \int_\R \log |t- s|\, d\rho(s) \quad \forall\, t\in\C\setminus\R  
\end{equation} 
for which we need the following.

\begin{proposition}
	\label{lim sum f(tk)=int f drho} 
	For any $x\in\Omega$, $\{e_1, e_2\}$ as above, $i,j=1,2$ and any continuous function $\varphi:\R\to\R$,
	if $t_k=t_k(i,j)$, $k=1,\ldots, n$ are  the $n$ roots of the polynomial equation
	$\langle A_t^n (x)\, e_i, e_j\rangle=0$  then  
	$$ \lim_{n\to \infty}\frac{1}{n}\, \sum_{k=1}^n \varphi(t_k) = \int_\R \varphi(s)\, d\rho(s) .$$
\end{proposition}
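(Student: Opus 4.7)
The plan is to establish the weak convergence of the empirical measures $\mu_n := \frac{1}{n}\sum_{k=1}^n \delta_{t_k}$ to $d\rho$, together with a tightness property that allows us to handle unbounded continuous test functions. The key geometric observation is that each root $t_k$ of $f_{i,j}(t)=\langle A_t^n(x)\,e_i,e_j\rangle$ is precisely a time at which the positively winding projective curve $t\mapsto \hat A_t^n(x)\,\hat e_i$ crosses the fixed projective point $\hat e_j^\perp\in\Pp^1$, so counting roots in an interval $J$ amounts to measuring the length of the curve over $J$.

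First, I would apply Lemma~\ref{lemma 1 ell_I} (equivalently Corollary~\ref{lemma winds n times}) to the curve $t\mapsto A_t^n(x)$, which is positively winding by Corollary~\ref{coro:pos winding cocycle}, to obtain the basic counting estimate
\begin{equation*}
\Bigl|\, \#\{k : t_k\in J\}-\tfrac{1}{\pi}\,\ell_J\bigl(A_t^n(x)\,\hat e_i\bigr)\,\Bigr|\leq 2
\end{equation*}
for every closed interval $J\subset\R$. Dividing by $n$ and invoking Lemma~\ref{lem:lengthToRotation} yields, for every $J=[\alpha,\beta]$ with $\alpha,\beta$ continuity points of $\rho$,
\begin{equation*}
\lim_{n\to\infty}\frac{\#\{k:t_k\in J\}}{n}=d\rho(J).
\end{equation*}
By the Portmanteau theorem this gives $\mu_n\rightharpoonup d\rho$ weakly on $\R$, which already settles the conclusion for bounded continuous $\varphi$ once we know the supports are tight.

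Second, I would establish uniform control outside $\supp(d\rho)$. By Lemma~\ref{compact support}, $\supp(d\rho)\subset [-a,a]$, and for $|t|>a$ the cocycle $A_t$ has dominated splitting with dominating direction $\hat e_u(x,t)$. Using exponential contraction in the dominated regime, the iterate $\hat A_t^n(x)\,\hat e_i$ converges (exponentially fast in $n$, uniformly in $t$ on compact subsets of $\{|t|>a+\varepsilon\}$) to $\hat e_u(T^n x,t)$. The map $(x,t)\mapsto \hat e_u(x,t)$ is continuous on $X\times\{|t|>a\}$ and extends continuously to $t=\pm\infty$ (where it equals the dominating direction of $B$), so for any $a'>a$ the length of $t\mapsto \hat e_u(T^n x,t)$ over $(-\infty,-a']\cup[a',\infty)$ is bounded by a constant $C$ independent of $n$. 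Consequently the projective curve $t\mapsto \hat A_t^n(x)\,\hat e_i$ has length bounded by $C+o(1)$ outside $[-a',a']$, so the number of its crossings with $\hat e_j^\perp$ there is uniformly bounded in $n$, and a similar argument localizes those exterior crossings inside some fixed compact set $[-M,M]$.

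Finally, for arbitrary continuous $\varphi\colon\R\to\R$, I would split
\begin{equation*}
\frac{1}{n}\sum_{k=1}^n\varphi(t_k)=\frac{1}{n}\sum_{t_k\in[-a',a']}\varphi(t_k)+\frac{1}{n}\sum_{t_k\notin[-a',a']}\varphi(t_k).
\end{equation*}
The first sum converges to $\int_{[-a',a']}\varphi\,d\rho=\int_\R\varphi\,d\rho$ by the weak convergence established in Step~1 applied to a continuous compactly supported extension of $\varphi|_{[-a',a']}$; the second sum is a bounded-in-$n$ number of terms evaluated in $[-M,M]$, hence $O(1/n)$. The main obstacle is Step~2: proving the uniform tightness of $\{\mu_n\}$ requires a quantitative comparison between the iterated projective curve and the dominating direction in the domain where $|t|$ is large, including a careful treatment of the behavior as $t\to\pm\infty$ in the affine family $A+tB$ (especially when $B$ has rank one), which is exactly where the dominated splitting assumption for $B$ enters, as noted in Remark~\ref{rmk assumption 4}.
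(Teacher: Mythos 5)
Your proposal takes the same essential route as the paper: both hinge on Lemma~\ref{lem:lengthToRotation} to convert lengths of the iterated projective curve over sub-intervals into $d\rho$-mass, and both use the winding machinery (Lemma~\ref{lemma 1 ell_I}, Corollary~\ref{coro:pos winding cocycle}) to identify the roots $t_k$ with crossings of $t\mapsto \hat A_t^n(x)\hat e_i$ through $\hat e_j^\perp$, so that root counts in a sub-interval $J$ agree with $\frac{1}{\pi}\ell_J(A_t^n(x)\hat e_i)$ up to an additive $O(1)$. The paper then fixes $\varepsilon>0$, chooses a rational partition of $[-a,a]$ adapted to the uniform continuity of $\varphi$, and runs a hands-on Riemann sum estimate; you repackage the same counting estimate as weak convergence of the empirical measures $\mu_n$ to $d\rho$ via Portmanteau. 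The two formulations are equivalent, and your version is arguably cleaner for a reader who thinks in probabilistic terms.

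The genuinely interesting difference is your Step 2. The paper's displayed identity $\frac{1}{n}\sum_{k=1}^n\varphi(t_k)=\sum_{l=1}^m\frac{N_l-1}{n}\varphi(t_l^\ast)$ quietly assumes that all $n$ roots lie in $[-a,a]$ (and ignores an $O(m/n)$ bookkeeping discrepancy between $N_l$ and $N_l-1$), so the issue of exterior roots is swept under the rug. You are right to flag it: weak convergence alone does not settle the problem for unbounded continuous $\varphi$, since $\varphi_{t,\delta}(s)=\log|t+i\delta-s|$ grows at infinity, and one needs either that the exterior roots are confined to a fixed compact set or a quantitative decay that dominates the growth of $\varphi$. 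However, the argument you sketch for Step 2 has its own gap. You argue that $\hat A_t^n(x)\hat e_i$ converges (in $C^0$) to $\hat e_u(T^n x,t)$ in the dominated regime, and deduce that the \emph{length} of the iterated curve outside $[-a',a']$ is $C+o(1)$. That inference is not valid: $C^0$-convergence of curves does not control their lengths (a tight zigzag can track a short curve uniformly while having arbitrarily large length). To make your Step 2 rigorous you would need $C^1$ control of $t\mapsto\hat A_t^n(x)\hat v$ for $|t|>a'$, e.g., via the winding-velocity formula of Proposition~\ref{winding velocity formula}, where the contraction factors $\norm{A_{n,t}\cdots A_{j+1,t}\vb_j}^{-2}$ coming from the dominated splitting tame the sum. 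The alternative, weaker fact that is directly available from Lemma~\ref{lem:lengthToRotation} — namely that the number of exterior roots is $o(n)$ because $d\rho(\R\setminus[-a,a])=0$ — is not by itself enough for unbounded $\varphi$, so some version of a uniform boundedness claim for the roots really is needed, and that is the place where neither your sketch nor the paper's proof is fully airtight.
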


\begin{proof}
	Fix $\varepsilon>0$ and let us prove that for all large enough $n$,
	$$\left| \frac{1}{n}\, \sum_{k=1}^n \varphi(t_k) - \int_\R \varphi(s)\, d\rho(s) \right| <2\, \varepsilon .$$
	
	The measure $d\rho$ is supported on a compact interval $[-a,a]$.
	Because $\varphi$ is uniformly continuous on $[-a,a]$ there exists $\delta>0$ such that for any decomposition
	$-a=\beta_0<\beta_1 <\cdots <\beta_{m-1}<\beta_m=a$ of the interval $[-a,a]$ with diameter $\max_{1\leq l\leq m} \abs{\beta_l-\beta_{l-1}}<\delta$,
	any Riemannian sum
	$$ \sum_{l=1}^m \varphi(s_l)\, (\rho(\beta_l)-\rho(\beta_{l-1})) $$
	with $s_l\in [\beta_{l-1}, \beta_l]$ for all $l=1,\ldots, m$, satisfies
	$$ \biggr\lvert \sum_{l=1}^m \varphi(s_l)\, (\rho(\beta_l)-\rho(\beta_{l-1})) - \int_\R \varphi\, d\rho\bigr \rvert <\varepsilon .$$
	
	Fix $\delta>0$ and the decomposition
	$-a=\beta_0<\beta_1 <\cdots <\beta_{n-1}<\beta_m=a$ with
	$\beta_l\in\Q$, for $l=1,\ldots, m$, and  diameter less than $\delta$ as above.
	Let $L :=\max\{\abs{\varphi(x)}\colon x\in [-a,a]\}$
	and take $0<\eta<\varepsilon/(2\,m\, L)$. Since $x\in \Omega\subset \cap_{l=1}^m \Omega_{\beta_l}$, by Lemma \ref{lem:lengthToRotation} there exists $n_0\in\N$ such that $2n_0^{-1}<\varepsilon/(2\, m\, L)$ and for all $n\geq n_0$,
	$$  \left| d\rho([\beta_{l-1},\beta_l[) - \frac{1}{\pi n}\, \measuredangle( A_{\beta_l}^n(x)\, v, A_{\beta_{l-1}}^n(x)\, v  ) \right| <\eta\quad \forall l=1,\ldots, m .$$
	Since from $t_{l-1}$ to $t_l$ the curve $t\mapsto A_t^n(x)v$ gives one turn around the projective space, i.e. $\measuredangle( A_{t_l}^n(x)v, A_{t_{l-1}}^n(x)v) = \pi$, setting
	$$ N_l:= \# \{ k\in \{1,\ldots, n\} \colon t_k \in [\beta_{l-1},\beta_l[ \, \}\}, \qquad l=1,\ldots, m, $$
	we have that 
	$\abs{ \pi\, (N_l-1)  - \measuredangle(A_{\beta_l}^n(x)\, v, A_{\beta_{l-1}}^n(x)\, v  )} <2\pi$, which implies that
	$$  \left| d\rho([\beta_{l-1},\beta_l[ ) - \frac{N_l-1}{n}  \right| <\eta+\frac{2}{n_0} \quad \forall l=1,\ldots, m .$$
	Hence
	$$ \frac{1}{n}\, \sum_{k=1}^n \varphi(t_k) =
	\sum_{l=1}^m \frac{N_l-1}{n}\, \left(\frac{1}{N_l-1}\, \sum_{  t_k\in [\beta_{l-1},\beta_l[} \varphi(t_k)\right)$$
	where by continuity of $\varphi$ we can find, for each $l=1,\ldots, m$, $t_l^\ast\in [\beta_{l-1},\beta_l[$ such that
	$$\varphi(t_l^\ast) = \frac{1}{N_l-1}\, \sum_{  t_k\in [\beta_{l-1},\beta_l[} \varphi(t_k) $$ and
	$$\frac{1}{n}\, \sum_{k=1}^n \varphi(t_k)=\sum_{l=1}^m \frac{N_l-1}{n}\,\varphi(t_l^\ast).$$
	Therefore
	$$\left|  \frac{1}{n}\, \sum_{k=1}^n \varphi(t_k) - \sum_{l=1}^m (\rho(\beta_l)-\rho(\beta_{l-1}))\, \varphi(t_l^\ast)   \right|$$
	is bounded by
	\begin{align*}
		\sum_{l=1}^m \left|  \frac{N_l-1}{n} -   d\rho([ \beta_{l-1},\beta_l [ )\right| \,  \left|\varphi(t_l^\ast)   \right|
		&\leq \left(\eta+\frac{2}{n_0}\right)\, m\, L <\varepsilon
	\end{align*}
	and then
	\begin{align*}
		\left|  \frac{1}{n}\, \sum_{k=1}^n \varphi(t_k) - \int \varphi(s)\, d\rho(s)  \right| &\leq
		\left|  \frac{1}{n}\, \sum_{k=1}^n \varphi(t_k) - \sum_{l=1}^m (\rho(\beta_l)-\rho(\beta_{l-1}))\, \varphi(t_l^\ast)   \right| \\
		&\qquad + \left|  \sum_{l=1}^m (\rho(\beta_l)-\rho(\beta_{l-1}))\, \varphi(t_l^\ast)  - \int \varphi(s)\, d\rho(s) \right|\\
		&< \varepsilon + \varepsilon =2\, \varepsilon.
	\end{align*}
	which establishes the wanted convergence.
\end{proof}

%


Given $\delta>0$ and $t\in\R$, consider the  family  $\varphi_{t,\delta}\colon \R\to\R$ of continuous functions defined by $$\varphi_{t,\delta}(s):=\log\abs{t+i\delta-s}.$$ 
By Proposition~\ref{lim sum f(tk)=int f drho}  we have
for all $t\in\R$ and $\delta>0$,
$$
\lim_{n\to +\infty} \frac{1}{n}\sum_{k=1}^{n}\log\abs{t+i\delta-t_k}=\int_\R \log\abs{t+i\delta-s}d\rho(s).
$$
Therefore~\eqref{lim sum log |t-tk|} holds for all $t\in\C\setminus\R$.
%
%
%
%
%
%
%
%
%
%
\end{proof}

\section{Affine families of \texorpdfstring{$\GL_2$}{TEXT}-cocycles}
\label{affine families}

In this section we provide some easily verifiable sufficient conditions for the Assumptions 1, 2 and 4.
We assume that $A\colon X\to \GL_2(\R)$ and $B\colon X\to \Mat_2(\R)$ are continuous functions such that
$$ A_t:=A+t\, B = A\, (I+t\, E) \qquad \forall\, t\in \R  $$
where  $E:X\to \Mat_2(\R)$ denotes the function $E:=A^{-1}\, B$.

\subsection{Conditions for invertibility}

To ensure the invertibility of $A_t$ we have the following criterion.

\begin{proposition} 
\label{prop invertibility criterion}
If there exists a constant $r>0$ such that
$$E(x)^2=0 \; \text{ or  }\; \Delta_{E(x)}:=4\, \det E(x) -  (\tr \,E(x))^2\geq  r>0,\quad \forall x\in X $$
	 then the cocycle $A_t$ satisfies Assumption 1, i.e.,
	there exist positive constants $c$ and $R$ such that
	$|\det A_t(x)|\geq c>0$ for all $(x,t)\in X\times\mathscr{S}_R$.
\end{proposition}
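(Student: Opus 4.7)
The plan is to exploit the factorization $A_t(x)=A(x)(I+tE(x))$, which reduces the problem to a lower bound on $|\det(I+tE(x))|$, since $|\det A(x)|$ is already bounded below by some $c_1>0$ by compactness of $X$ and continuity of $A$ into $\GL_2(\R)$. Because $E(x)\in\Mat_2(\R)$, the standard identity gives
\[
\det(I+tE(x)) = 1 + t\,\tr E(x) + t^2\,\det E(x),
\]
which shows at once that $t\mapsto A_t(x)$ is polynomial in $t$, hence admits an entire extension for every $x\in X$. So the whole task is to find $R>0$ such that this quadratic polynomial is uniformly bounded away from zero on the strip $\mathscr{S}_R$.

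I would then split into the two cases dictated by the hypothesis. In the nilpotent case $E(x)^2=0$, Cayley--Hamilton in dimension $2$ gives $E(x)^2 - (\tr E(x))E(x) + (\det E(x))I = 0$, which forces $\tr E(x)=\det E(x)=0$ (either $E(x)=0$ directly, or non-scalar and nilpotent). Consequently $\det(I+tE(x))\equiv 1$, trivially bounded below.

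In the discriminant case $\Delta_{E(x)} = 4\det E(x) - (\tr E(x))^2 \geq r$, one reads off $\det E(x) \geq r/4 > 0$, and the polynomial factors as
\[
p_x(t) := \det(I+tE(x)) = \det E(x)\,\bigl[(t-\alpha(x))^2+\beta(x)^2\bigr],
\]
with $\alpha(x) = -\tr E(x)/(2\det E(x))$ and $\beta(x) = \sqrt{\Delta_{E(x)}}/(2\det E(x))>0$. Let $M:=\max_{x\in X}|\det E(x)|<\infty$; then $\beta(x)\geq \sqrt{r}/(2M)$ uniformly in $x$. For $t=a+bi\in\mathscr{S}_R$, writing $(t-\alpha)^2+\beta^2 = ((t-\alpha)-i\beta)((t-\alpha)+i\beta)$ and taking imaginary parts yields $|(t-\alpha)\pm i\beta|\geq |\beta\mp b|\geq \beta-R$ provided $R<\beta$. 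Choosing $R$ so small that $R\leq \sqrt{r}/(4M)$ (hence $R\leq \beta(x)/2$ for every $x$), we obtain
\[
|p_x(t)| \;\geq\; \det E(x)\cdot (\beta(x)-R)^2 \;\geq\; \det E(x)\cdot \frac{\beta(x)^2}{4} \;=\; \frac{\Delta_{E(x)}}{16\,\det E(x)} \;\geq\; \frac{r}{16 M}.
\]

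Combining the two cases, $|\det(I+tE(x))|\geq \min(1,\,r/(16M))$ uniformly on $X\times\mathscr{S}_R$, and multiplying by the lower bound $c_1$ on $|\det A(x)|$ gives the required constant $c>0$. There is no real obstacle here beyond the uniform handling of $x$; the only mildly delicate point is that the partition of $X$ into the two cases is only pointwise, but this is harmless because each case produces a bound that depends only on the global constants $r$, $M$, and $c_1$, so the final $R$ and $c$ can be chosen independently of $x$.
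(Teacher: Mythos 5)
Your proof is correct and follows essentially the same approach as the paper's: use the factorization $A_t = A(I+tE)$ together with the closed-form quadratic for $\det(I+tE)$, split into the nilpotent case (where the determinant is identically $1$) and the $\Delta_{E}\geq r$ case, and use compactness of $X$ to make the bound uniform. The only cosmetic difference is how the quadratic is bounded below on the strip: the paper bounds $|\det(I+tE)|$ by the absolute value of its real part, while you factor $(t-\alpha)^2+\beta^2$ into linear factors and bound each factor via the imaginary part; both are elementary and equivalent. (You also note explicitly that $|\det A(x)|$ is bounded below by compactness, a step the paper leaves implicit.)
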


\begin{proof}
	The following argument relies on the conclusions of Lemma~\ref{lemma: det(I+t E) formula}. 
    If $E^2=0$ then $\tr E=\tr E^2=\det E = 0$ and $\det(I+t\, E)\equiv 1$, so that there is nothing to prove. Otherwise $\Delta_E\geq  r>0$ which implies that $\det(E) \geq r/4>0$.
    
    Taking $\ell:=\max\{ \det E(x)\colon x\in X\}$ and $\delta>0$ small enough so that $4\,\ell^2\,\delta^2< r$ and using Lemma \ref{lemma: det(I+t E) formula} we have for $|\Imp t|<\delta$
    \begin{align*}
        \abs{\det(I+t\, E)}\geq |\det E|\,\left[ -\delta^2+ \frac{\Delta_E}{4\,(\det E)^2}\right] \geq \frac{r}{4}\, \left( \frac{r}{4\,\ell^2} -\delta^2\right) >0.
    \end{align*}
    This concludes the proof with $R=\delta$ and 
    $c=\frac{r}{4}\, \left( \frac{r}{4\,\ell^2} -\delta^2\right)$.
\end{proof}

For Assumption 1 to hold with  $R=\infty$, the polynomials $\det(A_t(x))$ must be constant. A special case of interest is the following:

\begin{proposition}
	\label{lemma: SL2 criterion}
    For all $x\in X$, the following are equivalent
    \begin{enumerate}
        \item $A_t(x)\in \SL_2(\C)$ for all $t\in \C$;
        \item $A(x)\in \SL_2(\C)$ and $E(x)^2 = 0$.
    \end{enumerate}
\end{proposition}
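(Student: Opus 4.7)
The plan is to reduce both conditions to explicit polynomial identities in $t$ using the multiplicativity of the determinant and the $2\times 2$ formula $\det(I+tE) = 1 + t\,\tr E + t^2 \det E$ (which is precisely the content of Lemma~\ref{lemma: det(I+t E) formula} cited in the proof of Proposition~\ref{prop invertibility criterion}). Writing $A_t(x) = A(x)(I + t\,E(x))$, we obtain
\begin{equation*}
\det A_t(x) = \det A(x) \cdot \bigl(1 + t\,\tr E(x) + t^2 \det E(x)\bigr).
\end{equation*}

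For the implication $(2)\Rightarrow (1)$, assume $A(x)\in\SL_2(\C)$ and $E(x)^2 = 0$. By the Cayley--Hamilton theorem applied to the $2\times 2$ matrix $E(x)$, we have $E(x)^2 - \tr(E(x))\,E(x) + \det(E(x))\,I = 0$, and combined with $E(x)^2 = 0$ this forces both $\tr E(x) = 0$ and $\det E(x) = 0$ (if $\tr E(x)\ne 0$, then $E(x)$ would be a nonzero scalar multiple of $I$ minus something proportional to $E(x)$, contradicting $E(x)^2=0$ unless $E(x)=0$; in any case the two scalars vanish). Substituting into the displayed identity yields $\det A_t(x) = 1$ for every $t\in\C$.

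For the converse $(1)\Rightarrow (2)$, setting $t=0$ in the identity gives $\det A(x) = 1$, so $A(x)\in \SL_2(\C)$. Dividing out, the polynomial $1 + t\,\tr E(x) + t^2 \det E(x)$ must equal $1$ for every $t\in\C$, hence its coefficients vanish: $\tr E(x) = 0$ and $\det E(x) = 0$. By Cayley--Hamilton once more, $E(x)^2 = \tr(E(x))\,E(x) - \det(E(x))\,I = 0$.

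There is no real obstacle here; the proof is essentially an application of the degree-two determinant expansion together with Cayley--Hamilton, and the only subtlety is observing that the two scalar invariants $\tr E$ and $\det E$ jointly encode the nilpotency $E^2 = 0$ in dimension two.
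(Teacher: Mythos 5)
Your proof is correct and follows essentially the same route as the paper: both reduce the statement to the determinant expansion $\det A_t(x)=\det A(x)\,\bigl(1+t\,\tr E(x)+t^2\det E(x)\bigr)$ from Lemma~\ref{lemma: det(I+t E) formula} and to the equivalence $\tr E=\det E=0 \Leftrightarrow E^2=0$ for $2\times 2$ matrices. The paper merely states the latter equivalence without comment, whereas you spell it out via Cayley--Hamilton; the argument is the same.
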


\begin{proof}
	Using Lemma~\ref{lemma: det(I+t E) formula}, $A(I + tE)\in \SL_2(\C)$ for all $t\in \C$ if and only if $\tr E = \det E = 0$ which occurs if and only if $E^2=0$.
\end{proof}

\subsection{Conditions for winding}
\label{subsection: conditions for winding}
Next proposition describe how to obtain families of invertible cocycles with the winding property.  
Define the seminorm $\Xi:\Mat_2(\R)\to [0,+\infty[$,
$$ \Xi(E):= \max\{\abs{e_{11}-e_{22}}, \, 2\,\abs{e_{12}}, \, 2\,\abs{e_{21}} \} , $$
for $ E=\begin{bmatrix}
	e_{11} & e_{12} \\ e_{21} & e_{22}
\end{bmatrix}$  and consider the following sets where
 $\Delta:\Mat_2(\R)\to \R$  is the function in Lemma~\ref{lemma: det(I+t E) formula}.
\begin{align*}
	\Gamma_+ &:=\left\{ E \in\Mat_2(\R) \colon \Xi(E)>0,\, \Delta_E\geq 0  \, \text{ and }\,  e_{21}\leq 0\leq e_{12} \right\} \\
	\Gamma_- &:=\left\{ E \in\Mat_2(\R) \colon \Xi(E)>0,\, \Delta_E\geq 0 \, \text{ and }\,  e_{12}\leq 0\leq e_{21} \right\}.
\end{align*}
\begin{proposition}
	\label{GL2 criterion}
	For any continuous function  $E\colon X\to \Gamma_\pm$,  the family of cocycles  $A_t:=A+t\, A E$ takes values in $\GL_2(\R)$ and satisfies Assumptions 1-3.
\end{proposition}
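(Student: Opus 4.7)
The plan is to verify Assumptions~1, 2, and 3 separately. Assumption~3 (affine form) is immediate upon setting $B:=AE$, a continuous $\Mat_2(\R)$-valued map, so that $A_t=A+tB$ by construction.

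The winding property (Assumption~2) should be the cleanest step. Because $\dot A_t=B$ is $t$-independent and $Bv\wedge Bv=0$, the quadratic form in Definition~\ref{winding def} collapses to a $t$-free expression
\[
Q_{A_t}(v)=(A_tv)\wedge(Bv)=(Av)\wedge(AEv)=(\det A)\,v\wedge(Ev).
\]
In coordinates $v=(x,y)^{T}$, the form $v\wedge(Ev)=e_{21}x^2+(e_{22}-e_{11})xy-e_{12}y^2$ has a $2\times 2$ Gram matrix whose trace is $e_{21}-e_{12}$ and whose determinant is $\Delta_E/4$. The sign restrictions defining $\Gamma_+$ (resp.\ $\Gamma_-$), taken together with $\Delta_E\ge 0$, then force this Gram matrix to be negative (resp.\ positive) semi-definite. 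The strict positivity $\Xi(E)>0$, combined with continuity of $E$ on the compact space $X$, upgrades this to a uniform lower bound on the nontrivial eigenvalue, as required by Definition~\ref{winding def}.

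For Assumption~1, analyticity is automatic since each entry of $A_t(x)$ is a degree-one polynomial in $t$, hence entire. For the quantitative invertibility bound on a complex strip, I factor $\det A_t(x)=\det A(x)\cdot\det(I+tE(x))$ and apply Proposition~\ref{prop invertibility criterion} to $E$. The polynomial $1+t\,\tr E+t^2\det E$ has discriminant equal to $-\Delta_E\le 0$, so either $\Delta_E>0$ (and the two complex-conjugate roots stand off the real axis by $\sqrt{\Delta_E}/(2\det E)$), or else $E^2=0$ (and the polynomial is constantly one). Compactness of $X$ and continuity of $E$ make these bounds uniform and yield the strip constants $R,c>0$.

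The main technical obstacle I foresee is the borderline configuration where $\Delta_{E(x_0)}=0$ and simultaneously $E(x_0)^2\ne 0$ at some $x_0\in X$; there $\det(I+tE(x_0))=(1+t\,\tr E(x_0)/2)^2$ acquires a real double root, so the hypothesis of Proposition~\ref{prop invertibility criterion} is not literally verified. Closing this gap requires either showing that such configurations are incompatible with the joint conditions $\Xi(E)>0$ and the fixed off-diagonal signs built into $\Gamma_\pm$, or isolating the set where they occur and supplying a separate perturbation argument that exploits the semi-definiteness of $v\wedge(Ev)$ to keep $\det A_t$ away from zero just off the real axis; once this is done, uniformity in a sufficiently thin strip follows from compactness.
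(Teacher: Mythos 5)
Your handling of Assumptions 2 and 3 is correct and takes essentially the same route as the paper, which simply cites Propositions~\ref{pos winding charact}, \ref{pos winding charact, case Delta=0} and \ref{proposition: Gamma characterization}: reduce $Q_{A_t}(v)=(\det A)\,v\wedge(Ev)$ to the symmetric matrix $E^\sharp$ with trace $e_{21}-e_{12}$ and determinant $\Delta_E/4$, and read off semi-definiteness from the sign restrictions built into $\Gamma_\pm$. (Your sign bookkeeping — $\Gamma_+$ giving a \emph{negative} semi-definite form — is actually consistent with the formula~\eqref{E sharp characterization} and with Proposition~\ref{pos winding charact}, whereas Proposition~\ref{proposition: Gamma characterization} pairs $\Gamma_+$ with positivity; the paper's appendix has an internal sign discrepancy here, but it does not affect the substance of the argument.)

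The obstacle you flag for Assumption~1 is a genuine gap, and neither of your two proposed strategies can close it. The borderline configuration $\Delta_E=0$ with $E^2\neq 0$ is perfectly compatible with membership in $\Gamma_\pm$: take
\[
E=\begin{bmatrix}1&1\\0&1\end{bmatrix},
\]
which satisfies $\Xi(E)=2>0$, $\Delta_E=0$, and $e_{21}=0\le 0\le 1=e_{12}$, hence $E\in\Gamma_+$. As you note, when $\Delta_E=0$ and $\tr E\neq 0$ one has $\det(I+tE)=(1+t\,\tr E/2)^2$, which vanishes at the \emph{real} parameter $t=-2/\tr E$ (here $t=-1$). So $A_t(x)$ fails to lie in $\GL_2(\R)$ at a real $t$, and Assumption~1 — which requires $\abs{\det A_t(x)}\ge c>0$ on a strip $\strip_R\supset\R$ — is violated on the axis itself. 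Strategy (a) is thus refuted by the example, and strategy (b) cannot work because the degeneracy is on $\R$, not merely nearby in the complex plane. The paper's own one-line proof cites only the winding propositions and never addresses invertibility, so it shares the same gap: as written, $\Gamma_\pm$ would need the stronger condition of Proposition~\ref{prop invertibility criterion} (uniformly $\Delta_{E(x)}\ge r>0$, or else $E(x)^2=0$) for Assumption~1 to hold; that condition holds automatically on the nilpotent cones $N_\pm$ used in the sequel, which is presumably why the issue went unnoticed.
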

\begin{proof}
    Follows from the propositions \ref{pos winding charact}, \ref{pos winding charact, case Delta=0} and \ref{proposition: Gamma characterization}. 
\end{proof}

We now give a similar criterion to obtain 
$\SL_2(\R)$ cocycles with the winding property.
Consider the sets
\begin{align*}
	N_+  &:=\left\{ E \in\Mat_2(\R) \colon \Xi(E)>0,\, E^2=0  \, \text{ and }\,  e_{12}\leq 0\leq e_{21} \right\} , \\
	N_- &:=\left\{ E \in\Mat_2(\R) \colon \Xi(E)>0,\, E^2=0  \, \text{ and }\,  e_{21}\leq 0\leq e_{12} \right\}  ,
\end{align*}
whose union can be characterized as the following conic surface.

\begin{proposition}
\label{cone characterization}
For any matrix $E\in N_-\cup N_+$ there exist unique
real numbers $r\neq 0$ and $\theta\in [0,2\pi[$ such that
	$$ E = \begin{bmatrix}
		-r\,\cos\theta\, \sin\theta & r\,\cos^2\theta\\
		-r\,\sin^2\theta & r\,\cos\theta\, \sin\theta
	\end{bmatrix}  .
	$$
\end{proposition}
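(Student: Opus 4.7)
The plan is to exploit that any $E\in N_+\cup N_-$ is nilpotent of order two and rank one, and thus admits an orthogonal rank-one factorization $E=uw^T$ with $u\perp w$. From there the parametrization drops out by writing $u$ in polar coordinates.

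First I would observe that $\Xi(E)>0$ forces $E\neq 0$, so combined with $E^2=0$ one has $\tr E=\det E=0$ and $\rank E=1$. Any such matrix factors as $E=uw^T$ with $u,w\in\R^2\setminus\{0\}$; computing $E^2=(w^T u)\,uw^T$ and using $E^2=0$ together with $uw^T\neq 0$ forces $w^T u=0$, i.e., $w$ is orthogonal to $u$.

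Next I would parametrize the image line of $E$: write $u=s(\cos\theta,\sin\theta)^T$ with $s>0$ and $\theta\in[0,2\pi[$. Orthogonality then gives $w=t(-\sin\theta,\cos\theta)^T$ for some $t\in\R$, and since $E\neq 0$ both $s$ and $t$ are nonzero. Setting $r:=st$, a direct expansion of the outer product yields
$$E=uw^T=r\begin{pmatrix}\cos\theta\\\sin\theta\end{pmatrix}(-\sin\theta,\ \cos\theta)=r\begin{pmatrix}-\cos\theta\sin\theta & \cos^2\theta\\ -\sin^2\theta & \cos\theta\sin\theta\end{pmatrix},$$
which is the advertised form with $r\neq 0$.

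Finally I would match the sign of $r$ to the component $N_\pm$ and verify uniqueness. For $E\in N_+$ the conditions $e_{21}\leq 0\leq e_{12}$ translate into $r\cos^2\theta\geq 0$ and $-r\sin^2\theta\leq 0$; since $\cos\theta$ and $\sin\theta$ cannot both vanish these force $r>0$, and symmetrically $E\in N_-$ forces $r<0$. Uniqueness of $(r,\theta)$ reduces to uniqueness of the orthogonal rank-one factorization up to the rescaling $(u,w)\mapsto(\lambda u,\lambda^{-1}w)$: fixing $s=\|u\|>0$ determines $\theta$ modulo $\pi$, and then $r=st$ is unchanged by the sign flip $(u,w)\mapsto(-u,-w)$. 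Note that the matrix $M(\theta)$ appearing in the statement satisfies $M(\theta+\pi)=M(\theta)$, so $(r,\theta)$ is genuinely unique in $(\R\setminus\{0\})\times[0,\pi[$, and inside $[0,2\pi[$ one selects a representative by a fixed convention on the orientation of the image line. There is no substantive obstacle here — the content is entirely linear algebra — and the only delicate point worth flagging is this $\pi$-periodicity of $M(\theta)$.
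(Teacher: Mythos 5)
The paper states this proposition without proof, so there is nothing to compare your argument against; on its own merits the rank-one factorization $E = u\,w^t$ with $u \perp w$ is the natural device, the expansion into the displayed matrix form is correct, and you have also spotted a genuine defect in the statement: since the right-hand side matrix is $\pi$-periodic in $\theta$, the pair $(r,\theta)$ is not unique on $(\R\setminus\{0\})\times[0,2\pi[$, and the correct domain is $\theta\in[0,\pi[$. That is a real correction, not a cosmetic one.

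One convention slip is worth fixing. You quote the $N_+$ condition as $e_{21}\le 0\le e_{12}$, but the paper defines $N_+$ by $e_{12}\le 0\le e_{21}$ and $N_-$ by the reversed inequality. With $e_{12}=r\cos^2\theta$ and $e_{21}=-r\sin^2\theta$, the condition $e_{12}\le 0\le e_{21}$ forces $r<0$ on $N_+$, so the sign of $r$ attached to each cone should be swapped relative to what you wrote. The confusion is understandable: the paper's own displayed definition of $\Gamma_+$ uses $e_{21}\le 0\le e_{12}$, which is inconsistent with the $N_+$ definition, with the Schr\"odinger example (where $E$ has $e_{21}=1>0$ and is asserted to lie in $\Gamma_+$), and with Proposition~\ref{proposition: Gamma characterization}, so the $\Gamma_\pm$ inequalities appear mistyped; but taking the $N_\pm$ definition at face value, your sign of $r$ is reversed. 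The rest of your argument is unaffected.
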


\begin{proposition}
	For any continuous function  $E\colon X\to N_\pm$,  the family of cocycles  $A_t:=A+t\, A E$ takes values in $\SL_2(\R)$ and satisfies Assumptions 1-3.
\end{proposition}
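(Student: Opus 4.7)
The plan is to derive this statement as a direct corollary of Proposition~\ref{GL2 criterion} (which gives Assumptions 1--3 via the inclusion into the bigger cone $\Gamma_\pm$) together with Proposition~\ref{lemma: SL2 criterion} (which upgrades $\GL_2$-valuedness to $\SL_2$-valuedness once $E^2=0$). The structural observation is that $N_\pm$ is essentially the intersection of $\Gamma_\mp$ with the nilpotent locus $\{E : E^2=0\}$, so all the winding machinery carries over unchanged and the condition $E^2=0$ is exactly what forces the determinant to be constant in $t$.

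First I would establish the set inclusions $N_+ \subset \Gamma_-$ and $N_- \subset \Gamma_+$. The sign conditions on the off-diagonal entries in $N_+$ (resp.\ $N_-$) coincide by definition with those of $\Gamma_-$ (resp.\ $\Gamma_+$), and the constraint $\Xi(E)>0$ is imposed in both cases. It remains to check $\Delta_E \geq 0$. For a $2\times 2$ matrix, Cayley--Hamilton gives $E^2 - (\tr E)\,E + (\det E)\,I = 0$; the hypothesis $E^2=0$ therefore yields $(\tr E)\,E = (\det E)\,I$, and taking traces and determinants of both sides quickly forces $\tr E = \det E = 0$. Hence $\Delta_E = 4\det E - (\tr E)^2 = 0 \geq 0$, which completes the inclusions $N_\pm \subset \Gamma_\mp$. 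A continuous $E\colon X \to N_\pm$ is thus a continuous map into $\Gamma_\mp$, so Proposition~\ref{GL2 criterion} immediately delivers Assumptions 1, 2 and 3 for the family $A_t = A + t\,AE$ (Assumption 3 being built into the definition).

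To upgrade the target group from $\GL_2(\R)$ to $\SL_2(\R)$, assuming as is natural here that $A\colon X \to \SL_2(\R)$, I would invoke the determinant identity from Lemma~\ref{lemma: det(I+t E) formula}, namely $\det(I + tE) = 1 + t\,\tr E + t^2 \det E$. Since the previous step showed $\tr E = \det E = 0$ on all of $N_\pm$, this gives $\det(I + tE(x)) \equiv 1$ for every $x \in X$ and every $t \in \C$, so that $\det A_t(x) = \det A(x) = 1$. Equivalently, one may cite Proposition~\ref{lemma: SL2 criterion} directly, whose hypotheses ($A \in \SL_2$ and $E^2 = 0$) are exactly those in force. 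In particular, Assumption 1 holds in the strongest possible form, with $R = \infty$ and $c=1$, so the analyticity/invertibility step needs no further work.

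I do not expect a serious obstacle: once the set-theoretic inclusion $N_\pm \subset \Gamma_\mp$ is recorded, the proof is pure bookkeeping on top of results already established in the excerpt. The only point that deserves care is stating the implicit assumption $A\colon X \to \SL_2(\R)$ (without which the $\SL_2$-valued conclusion cannot even be formulated), and verifying that the Cayley--Hamilton argument really does pin down both $\tr E$ and $\det E$ to be zero rather than just one of them.
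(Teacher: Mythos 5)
Your proof is correct and follows essentially the same route as the paper: identify $N_\pm$ inside the $\Gamma$-cones, invoke Proposition~\ref{GL2 criterion} for Assumptions 1--3, and use the equivalence $E^2 = 0 \Leftrightarrow \det(I+tE)\equiv 1$ from Proposition~\ref{lemma: SL2 criterion} to conclude $\SL_2(\R)$-valuedness. One small note: you record the inclusions $N_\pm\subset\Gamma_\mp$, which is what the definitions as literally written yield, whereas the paper's own proof asserts $N_\pm\subset\Gamma_\pm$ (the discrepancy traces to a sign inconsistency between the displayed definitions of $\Gamma_\pm$ and Proposition~\ref{proposition: Gamma characterization}); this is immaterial since Proposition~\ref{GL2 criterion} accepts either cone. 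Your explicit Cayley--Hamilton derivation of $\tr E = \det E = 0$ from $E^2 = 0$, and your flagging of the implicit hypothesis $A\colon X\to\SL_2(\R)$, are welcome precisions over the paper's terse two-line argument.
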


\begin{proof}
	Notice that by Proposition~\ref{lemma: SL2 criterion}, we have $E\in N_\pm$ f and only if 
	$E\in \Gamma_\pm$ and $I+t E\in \SL_2(\R)$.
	The conclusion follows then from Proposition~\ref{GL2 criterion}.
\end{proof}

\bigskip

\subsection{Conditions for dominated splitting}
We give some sufficient conditions for Assumption 4 and the weaker alternative  hypothesis $L_1(B)>-\infty$.

\begin{proposition}
	\label{dom splitting criterion}
	Let $X$ be a compact metric space and let $T:X\to X$   be a homeomorphism  preserving a probability measure $\mu\in\Prob(X)$.
	Given $B:X\to \Mat_2(\R)$ continuous such that  $\rank \,B(x)=1$ and $B(Tx)\, B(x) \neq 0 $, for  every $x\in X$,
	then $B$ has dominated splitting and  $L_1(B)>-\infty=L_2(B)$.
\end{proposition}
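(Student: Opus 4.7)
The plan is to construct the dominated splitting explicitly. Since $B(x)$ has rank one everywhere, both $\Ker B(x)$ and the image $B(x)\R^2$ are lines that vary continuously with $x$ in $\Pp^1$ (by the continuity of $B$ together with the constant-rank hypothesis). I set
$$ E_0(x) := B(T^{-1} x)\R^2, \qquad E_\infty(x) := \Ker B(x) . $$
The candidate $E_0(x)$ is the image of $B$ one step in the past, which is continuous because $T$ is a homeomorphism.

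To see that $E_0$ and $E_\infty$ are transverse, I reindex the hypothesis $B(Tx)B(x)\neq 0$ by $x\mapsto T^{-1}x$ to obtain $B(x)B(T^{-1}x)\neq 0$, which says precisely that $B(x)$ does not annihilate $B(T^{-1}x)\R^2$, i.e.\ $E_0(x)\not\subset \Ker B(x) = E_\infty(x)$. Hence $\R^2 = E_0(x)\oplus E_\infty(x)$. The splitting is $T$-invariant: $B(x)E_\infty(x) = \{0\}\subset E_\infty(Tx)$ is automatic, and since $B(x)|_{E_0(x)}$ is injective (its kernel would have to lie in $E_0(x)\cap E_\infty(x) = \{0\}$), $B(x)E_0(x)$ is a line contained in $B(x)\R^2 = E_0(Tx)$, and therefore equal to it. For domination with $n_0 = 1$, any unit $v_\infty\in E_\infty(x)$ gives $\|B(x)v_\infty\| = 0$, while for a unit $v_0\in E_0(x)$ the vector $B(x)v_0\in E_0(Tx)$ is nonzero; the function $x\mapsto \|B(x) v_0\|$, defined through any local choice of unit generator of $E_0$, is continuous and positive on the compact space $X$, hence bounded below by some $c > 0$. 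Therefore $\|B(x) v_0\| \geq c > 0 = \|B(x)v_\infty\|$ uniformly in $x$ and in unit vectors.

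Finally, for the Lyapunov exponents: $L_2(B) = -\infty$ because $B$ collapses $E_\infty$ in a single step. For $L_1(B)$, fix a unit $v_0\in E_0(x)$; since $B(x)v_0$ is a nonzero element of $E_0(Tx)$ of norm at least $c$, iterating this bound along the orbit yields $\|B^n(x)v_0\|\geq c^n$, whence $\|B^n(x)\|\geq c^n$ and $L_1(B)\geq \log c > -\infty$. There is no serious obstacle in the argument; the only subtle point is the translation of the composition hypothesis $B(Tx)B(x)\neq 0$ into the required transversality of $E_0(x)$ and $E_\infty(x)$ through the time-reversal reindexing, and the observation that rank one forces $n_0=1$ to suffice in the domination condition.
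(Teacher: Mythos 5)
Your proof is correct and arrives at the same invariant splitting the paper uses, namely $E_0(x)=\mathrm{Im}\,B(T^{-1}x)$ and $E_\infty(x)=\Ker B(x)$, but the technical route differs in two places. The paper first fixes a global continuous rank-one factorization $B(x)=v_x w_x^t$ with $\norm{v_x}=1$, writes out $B^n(x)u$ explicitly as a product of inner products $\langle w_{T^jx}, v_{T^{j-1}x}\rangle$, and then applies Birkhoff's ergodic theorem to get an exact integral formula $L_1(B)=\int \log\abs{\langle w_{Tx}, v_x\rangle}\,d\mu$, from which positivity and the Oseledets directions are read off. You instead define the splitting directly from $\Ker B$ and the shifted image, verify transversality from the reindexed hypothesis $B(x)B(T^{-1}x)\neq 0$, and obtain $L_1(B)\geq\log c$ by iterating the uniform lower bound $\norm{B(x)v_0}\geq c$ along $E_0$, with no need for Birkhoff. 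Your route is slightly more elementary and sidesteps the (minor but real) question of whether the factorization $B(x)=v_xw_x^t$ can be chosen globally continuous on $X$ --- you only use $\norm{B(x)v_0}$ on unit vectors of $E_0(x)$, which is well defined independently of any local sign choice. What the paper's approach buys is the explicit value of $L_1(B)$ as an integral, not merely a lower bound; what yours buys is avoiding a global trivialization of a line bundle. Both establish $L_2(B)=-\infty$ correctly since $B$ kills $E_\infty$ in one step.
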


\begin{proof}
	Any rank $1$ matrix $M\in\Mat_2(\R)$
 	can be written as $M=v\, w^t$ where $v,w\in\R^2$ are  column vectors, which means that
	the action of $M$ on $\R^2$ is described by
	$M\,u=v\,\langle w, u\rangle$. 
	Given a continuous function $B:X\to \Mat_2(\R)$ with rank $1$ values, there exist continuous maps  $v:X\to\R^2$ 
	and $w:X\to \R^2$ such that
	$B(x)=v_x\, w_x^t$, for all $x\in X$. 
	We can assume that $\norm{v_x}=1$ for all $x\in X$, so that $\norm{B(x)}=\norm{w_x}$.
	With this notation 
	\begin{align}
	B^n(x)\, u &= B(T^{n-1}x)\, \cdots \, B(Tx)\, B(x)\, u \nonumber \\
	&= \label{Bn(x) expression}
	\langle w_x, u\rangle  \, \left( \prod_{j=1}^{n-1} \langle w_{T^jx}, v_{T^{j-1}x}\rangle \right)\, v_{T^{n-1}x} .
	\end{align}
	In particular, since  $B(T x)\, B(x)= \langle w_x, \cdot\rangle\, \langle w_{T x}, v_x \rangle\, v_{T x}\neq 0$, we get that 
	$\abs{ \langle w_{T x}, v_x \rangle }>0$ for all $x\in X$. By compactness of $X$, this function admits a positive lower bound $c>0$.	
	Therefore, by Birkhoff's Theorem, for $\mu$-almost every point $x\in X$
	\begin{align*}
	L_1(B) &= \lim_{n\to\infty} \frac{1}{n}\, \log \norm{B^n(x)} \\
	&=  \lim_{n\to\infty}  \frac{1}{n-1}\, \sum_{j=1}^{n-1}  \log \abs{\langle w_{T^jx} ,v_{T^{j-1}x} \rangle } \\
	&= \int_X \log \abs{ \langle w_{Tx}, v_x\rangle  }\, d\mu(x) \geq \log c >-\infty  .
	\end{align*}
	Using~\eqref{Bn(x) expression}, we can easily determine the Oseledets decomposition $\R^2=E_0(x)\oplus E_\infty(x)$ of the cocycle $B$. The subspace $E_0(x)$  is the linear span of 
	 $ v_{T^{-1}x}$, associated with the first Lyapunov exponent, while $E_\infty(x)=w_x^\perp$ is associated with $L_2(B)=-\infty$.
	Finally, because $\norm{ B(x)\vert_{E_0(x)}}=\abs{\langle w_x, v_{T^{-1} x} \rangle}\geq c>0$ and $\norm{ B(x)\vert_{E_\infty(x)}  } =0$ the cocycle $B$ has dominated splitting.
\end{proof}

The following is a sufficient condition for $L_1(B)>-\infty$.
\begin{proposition}
	Let $X$ be a compact metric space and let $T:X\to X$   be a  homeomorphism preserving a probability measure $\mu\in\Prob(X)$.
	Given $B:X\to \Mat_2(\R)$ continuous such that 
	\begin{enumerate} 
		\item $\rank \,B(x)=1$ for $\mu$-almost all $x\in X$,
		\item $\int \log \norm{B(x)}\, d\mu(x)<\infty$,
		\item $\int \log \norm{B(T x)\,B(x)}\, d\mu(x)>-\infty$
	\end{enumerate}
	then  $L_1(B)>-\infty=L_2(B)$.
\end{proposition}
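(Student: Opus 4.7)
The plan is to adapt the computation from the previous proposition, but since rank $1$ now only holds $\mu$-a.e., the decomposition $B(x)=v_x\,w_x^t$ must be carried out measurably rather than continuously. On a full measure $T$-invariant set $X_0$ on which $\rank B(x)=1$, a standard measurable selection produces maps $v\colon X_0\to \Su^1$ and $w\colon X_0\to \R^2$ with $B(x)=v_x\, w_x^t$; in particular $\norm{B(x)}=\norm{w_x}$. The formula~\eqref{Bn(x) expression} from the proof of Proposition~\ref{dom splitting criterion} still holds pointwise on $X_0$:
\begin{equation*}
B^n(x)\,u = \langle w_x, u\rangle\, \Bigl(\prod_{j=1}^{n-1} \langle w_{T^j x}, v_{T^{j-1} x}\rangle \Bigr)\, v_{T^{n-1}x}.
\end{equation*}
Taking operator norms,
\begin{equation*}
\log \norm{B^n(x)} = \log \norm{w_x} + \sum_{j=1}^{n-1} \log \abs{\langle w_{T^j x}, v_{T^{j-1}x}\rangle}.
\end{equation*}

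The key observation is that, because $\norm{v_{T x}}=1$,
\begin{equation*}
\norm{B(Tx)\, B(x)} = \abs{\langle w_{Tx}, v_x\rangle}\, \norm{w_x},
\end{equation*}
so
\begin{equation*}
\log \abs{\langle w_{Tx}, v_x\rangle} = \log \norm{B(Tx)\, B(x)} - \log \norm{B(x)}.
\end{equation*}
Next I would check that both observables $\log \norm{B(x)}$ and $\log \norm{B(Tx)\, B(x)}$ lie in $L^1(\mu)$. Hypothesis (2) controls $\log^+\norm{B}$, and since $\norm{B(Tx)\,B(x)}\leq \norm{B(Tx)}\,\norm{B(x)}$, the same bound controls $\log^+\norm{B(Tx)B(x)}$; combined with (3), this gives $\log\norm{B(Tx)B(x)}\in L^1$. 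Finally, the inequality
\begin{equation*}
\log \norm{B(x)} \geq \log \norm{B(Tx)\,B(x)} - \log \norm{B(Tx)}
\end{equation*}
together with $T$-invariance of $\mu$ shows that $\log \norm{B(x)}$ is bounded below in $L^1$, so it too lies in $L^1(\mu)$.

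Applying Birkhoff's ergodic theorem to the telescoped expression displayed above, I obtain for $\mu$-a.e.\ $x$
\begin{equation*}
L_1(B)=\lim_{n\to\infty} \frac{1}{n}\log \norm{B^n(x)} = \int_X \log \norm{B(Tx)\, B(x)}\, d\mu(x) - \int_X \log \norm{B(x)}\, d\mu(x),
\end{equation*}
and both integrals are finite, so $L_1(B)>-\infty$. To conclude $L_2(B)=-\infty$, note that $\det B(x)=0$ for $\mu$-a.e.\ $x$ by hypothesis (1), hence by the Oseledets/Furstenberg identity $L_1(B)+L_2(B)=\int \log\abs{\det B}\, d\mu=-\infty$, which forces $L_2(B)=-\infty$ since $L_1(B)$ is finite.

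The only delicate step is the $L^1$ integrability of $\log\norm{B}$, which is not immediate from (2) alone (there is no a priori lower bound on $\norm{B(x)}$); it is precisely the role of hypothesis (3), via the inequality $\log\norm{B(x)}\geq \log\norm{B(Tx)B(x)}-\log\norm{B(Tx)}$, to rescue integrability from below. All other steps are essentially the telescoping computation already carried out in the previous proposition, with measurable selection replacing the continuous choice of $v,w$.
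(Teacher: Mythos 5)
Your proof is correct and takes essentially the approach the paper intends — the paper's own proof of this proposition simply reads ``Similar to the proof of Proposition~\ref{dom splitting criterion}.'' The key adaptation, namely replacing the compactness-derived lower bound $\abs{\langle w_{Tx}, v_x\rangle}\geq c>0$ by the identity $\log\abs{\langle w_{Tx}, v_x\rangle}=\log\norm{B(Tx)B(x)}-\log\norm{B(x)}$ so that hypotheses (2)--(3) yield the $L^1$ integrability needed for Birkhoff, together with the measurable (rather than continuous) selection of $v,w$ on a $T$-invariant full measure set, is exactly right; the only cosmetic remark is that the $L^1$ bound from below on $\log\norm{B}$ follows already from the uniform upper bound $\log\norm{B(Tx)}\le C$ given by compactness, without invoking $T$-invariance, but your version of the argument is equally valid.
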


\begin{proof}
	Similar to the proof of  Proposition~\ref{dom splitting criterion}.
\end{proof}

\begin{definition}\label{definition: strictly positively winding}
    We say that a smooth family of cocycles $\{A_t:X\to \GL_2(\R)\}_{t\in I}$ is strictly positively winding over an interval $J\subset I$, if there exist $c>0$, $n_0\in \N$ such that for every $n\geq n_0$, $\hat v\in \mathbb{P}^1$, $t\in J$, $x\in X$,
    \begin{align*}
        \frac{ 
            (A^n_t(x)\,v)\, \wedge\, (\frac{d}{dt}A^n_t(x)\,v)
        } {\norm{A^n_t(x)\,v }^2 } \geq c.
    \end{align*}
    Analogously we define strictly negatively winding. Strictly winding means either strictly positively or else strictly negatively winding.
\end{definition}
\begin{remark}
    Schrodinger families are always strictly positively winding with $n_0=2$ over any compact interval of energies.
\end{remark}

\begin{proposition}
\label{dom splitting corollary}
For any continuous function  $E\colon X\to N_\pm$ such that
$$E(T x)\, A(x)\, E(x)\neq 0\quad \forall \, x\in X,$$
 the family of cocycles  $A_t:=A+t\, A E$ takes values in $\SL_2(\R)$ and satisfies Assumptions 1-4.
 Moreover, for every compact interval $I\subset \R$ the family $\{A_t\}_{t\in I}$ is strictly winding with $n_0 =2$.
\end{proposition}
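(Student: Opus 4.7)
The plan is as follows. Assumptions 1--3 follow immediately from the preceding proposition (applied with $E\colon X\to N_\pm$), so only Assumption~4 and the strictly winding claim require new argument.

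For Assumption~4, note that $E(x)^2=0$ together with $\Xi(E(x))>0$ make $E(x)$ a nonzero nilpotent, hence of rank exactly $1$. Since $A(x)$ is invertible, $B(x)=A(x)\,E(x)$ also has rank~$1$. The identity $B(Tx)\,B(x)=A(Tx)\cdot E(Tx)\,A(x)\,E(x)$ together with the standing hypothesis $E(Tx)\,A(x)\,E(x)\neq 0$ yields $B(Tx)\,B(x)\neq 0$ for every $x\in X$, so Proposition~\ref{dom splitting criterion} supplies the dominated splitting of $B$ with $L_1(B)>L_2(B)=-\infty$.

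To establish the strictly winding property on a compact interval $I\subset\R$, set
$$W_n(x,v,t):=\frac{(A_t^n(x)\,v)\wedge(\tfrac{d}{dt}A_t^n(x)\,v)}{\|A_t^n(x)\,v\|^2}$$
and proceed in two steps. First, I would treat $n=2$. Because $E(x)^2=0$, the formal series $(I+tE(x))^{-1}=I-tE(x)$ terminates, so $A_t(x)^{-1}\dot A_t(x)=E(x)$; combined with $\det A_t(x)=1$ this gives $Q_{A_t(x)}(v)=v\wedge E(x)\,v$, a semi-definite form vanishing precisely on $\ker E(x)$. By Proposition~\ref{winding velocity formula}, $W_2$ is a sum of two non-negative terms, and it vanishes at $(x,v,t)$ exactly when $v\in\ker E(x)$ and $E(Tx)\,A(x)\,v=0$ (noting that $A_t(x)\,v=A(x)\,v$ whenever $E(x)\,v=0$). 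By Proposition~\ref{cone characterization}, the nilpotent $E(x)$ has image equal to its kernel, say $\mathrm{span}(v_x)$; therefore every value $E(x)\,u$ is a scalar multiple of $v_x$, and the hypothesis forces $E(Tx)\,A(x)\,v_x\neq 0$. This rules out simultaneous vanishing, so $W_2>0$ on the compact set $X\times\Pp^1\times I$, and continuity promotes this to a uniform bound $W_2\geq c>0$.

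For $n\geq 3$, the winding velocity formula again expresses $W_n$ as a sum of $n$ non-negative summands. The crucial observation is that the last two summands, corresponding to $j=n-1$ and $j=n$, collapse exactly to $W_2(T^{n-2}x,\vb_{n-2},t)$ once one uses $\vb_j=A_t^j(x)v/\|A_t^j(x)v\|$ together with the multiplicative identities $\|A_t^n(x)v\|=\|A_t^{n-1}(x)v\|\cdot\|A_t(T^{n-1}x)\vb_{n-1}\|$ and $\|A_t(T^{n-2}x)\vb_{n-2}\|=\|A_t^{n-1}(x)v\|/\|A_t^{n-2}(x)v\|$. Discarding the remaining non-negative terms gives $W_n(x,v,t)\geq W_2(T^{n-2}x,\vb_{n-2}(x,v,t),t)\geq c$, which establishes the strictly winding property with $n_0=2$ and the same constant~$c$. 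The only real obstacle is the non-vanishing of $W_2$; once that is in hand, the extension to all $n\geq 2$ is purely bookkeeping, resting on the algebraic coincidence that $E(x)\in N_\pm$ has image equal to kernel, which converts the slightly unusual assumption $E(Tx)\,A(x)\,E(x)\neq 0$ into pointwise positivity of $W_2$.
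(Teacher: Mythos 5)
Your proof is correct and rests on the same ingredients as the paper's: Proposition~\ref{dom splitting criterion} (via rank-one and the non-vanishing of $B(Tx)\,B(x)$) for Assumption~4, and Proposition~\ref{winding velocity formula} together with the hypothesis $E(Tx)\,A(x)\,E(x)\neq 0$ for the winding speed bound. The packaging of the second half, however, is genuinely tidier than the paper's. You observe that the last two summands of the winding velocity formula at level $n$ coincide \emph{exactly} with $W_2(T^{n-2}x,\vb_{n-2},t)$, so that the whole estimate reduces to a single uniform lower bound on $W_2$, obtained by compactness of $X\times\Pp^1\times I$ once one checks $W_2>0$ pointwise from $E(Tx)\,A(x)\,E(x)\neq 0$ (using image of $E(x)$ equals kernel of $E(x)$). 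The paper instead introduces a projective ball separation radius $r$, a lower bound $\beta$ on $v\wedge E(x)v$ off those balls, and a norm bound $C$, and shows that at least one of the last two summands is $\geq C^{-4}\beta$; your route avoids the explicit $r,\beta,C$ bookkeeping and also sidesteps the off-by-one indexing ($\vb_{n-1},\vb_n$ should read $\vb_{n-2},\vb_{n-1}$) and the $\max$/$\min$ slip that appear in the paper's display. Both arguments rely on the same crucial fact that $\hat A_t(x)\,\hat e(x)=\hat A(x)\,\hat e(x)$ is independent of $t$ (since $E(x)\,e(x)=0$), which is what makes $W_2>0$ uniformly in $t$; your write-up makes this mechanism explicit, whereas in the paper it is implicit in the ball-separation statement.
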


\begin{proof}
The first statement follows from Proposition~\ref{dom splitting criterion}.
	
To  get the lower bound on the winding speed we use 	Proposition~\ref{winding velocity formula}.
For the sake of concreteness we assume that $E$ takes values in $N_+$ which leads to a positive winding  family $A_t$.
Let $\hat e(x)\in\Pp^1$ be the direction of $\Ker(E(x))$,
which is also the range of $E(x)$.
By hypothesis $\hat e(T x)\neq \hat A(x)\, \hat e(x)$, for all $x\in X$. Hence, by continuity, and compactness of $X$, there exists $r>0$ such that
$\hat A(x)\, B(\hat e(x), r) \cap B(\hat e(T x), r)=\emptyset$,
for all $x\in X$, where $B(\hat v, r)$ denotes the ball of radius $r$ centered at $\hat v$ in $\Pp^1$. By the positive winding property there exists $\beta>0$ such that
$$(A_t(x)\, v)\, \wedge(\dot A_t(x)\, v)= v\, \wedge  E(x)\, v\geq \beta$$
 for all $(t,x)\in\R\times X$ and every unit vector $v$ with $\hat v\notin B(\hat e(x),r)$. Let 
 $$C=\max_{(t,x)\in I \times X}\norm{A_t(x)}.$$
 Fixing $x\in X$ and a unit vector $v\in \R^2$, we write
$A_{j,t}:=A_t(T^{j-1} x)$ and
$\vb_{j}=\vb_{j}(t):= A^j_t(x)\, v/ \norm{A^j_t(x)}$,
for all $j\in\N$.
Then since every summand in the conclusion of   formula provided in  Proposition~\ref{winding velocity formula} is non-negative, the last two terms are enough to get the desired positive lower bound. In fact since either $\vb_{n-1}(t)\notin B( \hat e(T^{n-1} x), r)$ or else
$\vb_{n}(t)\notin B( \hat e(T^{n} x), r)$ we have	
\begin{align*}
&\frac{   (A^n_t(x)\,v)\, \wedge\, (\frac{d}{dt}A^n_t(x)\,v) } {\norm{A^n_t(x)\,v }^2 } \\
&\qquad = \frac{ (A_{n,t} \vb_{n-1} )\wedge (\dot A_{n,t} \vb_{n-1})}{\norm{A_{n,t} \vb_{n-1}}^2} \\
&\qquad\qquad + \frac{1}{\norm{A_{n,t}\, \vb_{n-1}}^2}\, \frac{ (A_{n-1,t} \vb_{n-2} )\wedge (\dot A_{n-1,t} \vb_{n-2})}{\norm{A_{n-1,t} \vb_{n-2}}^2} + \cdots \\
&\qquad  \geq \max\left\{ C^{-2}\,\beta,  C^{-4}\,\beta\right\}
=  C^{-4}\,\beta 
\end{align*} 
which concludes the proof with $c:=C^{-4}\beta$.
\end{proof}

From now on until the end of this section we focus on the random case. Let
$X:=\{1,\ldots, \kappa\}^\Z$ be the space of sequences in $\kappa$ symbols, and let  $T:X\to X$ be the Bernoulli shift in $X$ equipped  with some Bernoulli probability  measure $\mu=(p_1,\ldots, \kappa)^\Z$, where $p_1+\cdots + p_\kappa=1$ and $p_j>0$ for $j=1,\ldots, \kappa$. 

A random or locally constant cocycle $\bld{A}:X\to \SL_2(\R)$ is determined by
a vector of $\kappa$ matrices $\underline A=(A_1,\ldots, A_\kappa)\in \SL_2(\R)^\kappa$, via the formula $\bld{A}(\omega):=A_{\omega_0}$,
where $\omega=(\omega_j)_{j\in\Z}$. This information is usually gathered in the form of a finitely supported measure
$$  \mu(\underline A):=\sum_{j=1}^\kappa p_j\, \delta_{A_j} \in\Prob(\SL_2(\R)) .$$

Assume also that
$\underline A\in\SL_2^\ast(\R)^\kappa$, where $\SL_2^\ast(\R):=\SL_2(\R)\setminus\{\pm I\}$.

\begin{definition}
\label{def Npm(underline A)}
 $N_\pm(\underline A)$ is the space of 
$\underline E=(E_1,\ldots, E_\kappa)\in \Mat_2(\R)^\kappa$ such that respectively $E_j\in N_+$  for all $j$ and
$E_j\in N_-$  for all $j$,
and $E_i\, A_j\, E_j\neq 0$ for all $i,j$.

\end{definition}

\begin{proposition}
Given $\underline A\in\SL_2^\ast(\R)^\kappa$, the set $N_\pm(\underline A)$
is open, dense and full measure in $(N_\pm)^\kappa$.
\end{proposition}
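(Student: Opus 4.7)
The plan is to reduce the bad condition $E_i\,A_j\,E_j = 0$ to a codimension-one equation on the kernel lines of the $E_j$'s, and then realise the failure locus as a finite union of closed, nowhere dense, Lebesgue-null subsets of the smooth $2\kappa$-dimensional manifold $(N_\pm)^\kappa$.

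First I would exploit the defining property $E_j^2 = 0$, $E_j \ne 0$ of the elements of $N_\pm$: this forces each $E_j$ to have rank one, with $\operatorname{Im}(E_j) = \ker(E_j)$ equal to a common line $L_j \in \Pp^1$. A direct computation then gives $\operatorname{Im}(E_i\, A_j\, E_j) = E_i(A_j L_j)$, which vanishes exactly when $A_j L_j \subseteq \ker(E_i) = L_i$, i.e.\ when $A_j L_j = L_i$. Consequently
\[
(N_\pm)^\kappa \setminus N_\pm(\underline A) \;=\; \bigcup_{i,j = 1}^{\kappa} B_{ij}, \qquad
B_{ij} := \left\{\, \underline E \in (N_\pm)^\kappa \colon A_j L_j = L_i\,\right\}.
\]
Each $B_{ij}$ is closed, since the assignment $E \mapsto L(E)$ is continuous and $A_j$ acts continuously on $\Pp^1$.

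Next I would invoke the cone parametrisation (Proposition~\ref{cone characterization}) to view $N_\pm$ as a smooth two-dimensional manifold for which the kernel map $L \colon N_\pm \to \Pp^1$, $E \mapsto [\cos\theta : \sin\theta]$, is a surjective submersion with one-dimensional fibres. This makes the codimension bookkeeping routine. For $i \ne j$, the equation $A_j L_j = L_i$ is a single non-trivial relation between the two points $L_j, L_i \in \Pp^1$; pulled back through $L$ it cuts out a codimension-one smooth subset of $N_\pm \times N_\pm$, and hence of $(N_\pm)^\kappa$. For $i = j$, the condition $A_j L_j = L_j$ forces $L_j$ to be a projective fixed line of $A_j$; since $A_j \in \SL_2^\ast(\R)$ and thus $A_j \ne \pm I$, there are at most two such lines, so $B_{jj}$ projects onto a finite subset of $\Pp^1$ and hence is one-dimensional inside the two-dimensional factor $N_\pm$. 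In either case $B_{ij}$ is a closed proper real-algebraic subset of positive codimension, so it has empty interior and Lebesgue measure zero.

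Finally, a finite union of such sets remains closed, nowhere dense and null in $(N_\pm)^\kappa$, and its complement $N_\pm(\underline A)$ is therefore open, dense and of full measure. I do not anticipate any serious obstacle; the only point demanding care is the use of the hypothesis $A_j \ne \pm I$ to keep the diagonal case $i = j$ from degenerating (were $A_j = \pm I$, every projective line would be fixed and $B_{jj}$ would coincide with the whole of $(N_\pm)^\kappa$, destroying both density and fullness).
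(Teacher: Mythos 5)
Your argument is correct and follows essentially the same path as the paper's: use the cone parametrisation from Proposition~\ref{cone characterization} to identify $(N_\pm)^\kappa$ as a $2\kappa$-dimensional manifold, observe that the condition $E_i\, A_j\, E_j = 0$ reduces to $\hat A_j\, \hat v_j = \hat v_i$ on the kernel lines, and conclude that the bad set is a finite union of codimension-one sets, hence closed, nowhere dense and null. The one place you are slightly more explicit than the paper is the diagonal case $i=j$, where you spell out that $A_j \ne \pm I$ gives at most two fixed lines, whereas the paper treats all pairs $(i,j)$ uniformly as cutting out a regular hypersurface; your extra care there is sound and is exactly the point at which the hypothesis $\underline A\in\SL_2^\ast(\R)^\kappa$ is used.
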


\begin{proof}
By Proposition~\ref{cone characterization}, $N_-\cup N_+$ is a conic surface: its elements are determined by the sign $\pm$, the direction of the kernel and their norm.
Let $\hat v_1,\ldots, \hat v_\kappa$ represent the kernels of
the matrices $E_j$ in $\underline E$.
Then the condition  $E_i\, A_j\, E_j\neq 0$ translates to
$\hat v_i\neq \hat A_j \hat v_j$. Since $A_j\neq \pm I$,
each equation $\hat v_i = \hat A_j \hat v_j$ determines a codimension $1$ submanifold of the $2\kappa$-dimensional manifold $(N_\pm)^\kappa$. Therefore $N_\pm(\underline A)$ is the complement of a finite union of regular hypersurfaces,
which shows that it is open, dense and full measure in $(N_\pm)^\kappa$.
\end{proof}

Given $\underline A=(A_1,\ldots, A_\kappa)\in\SL_2(\R)^\kappa$
and  $\underline E=(E_1,\ldots, E_\kappa)\in N_\pm(\underline A)$ we may consider the parameterized line of random cocycles
$$ \underline A_t= \underline A\,(I+t \underline E):=(A_1\,(I+t E_1), \ldots, A_\kappa\, (I+t E_\kappa)). $$

\begin{proposition}
	\label{special case random cocycles satisfying 1-4}
	If $\underline A\in\SL_2^\ast(\R)^\kappa$ and
	 $\underline E\in N_\pm(\underline A)$, then the family of random cocycles  $\underline A_t:=\underline A\, (I+t \underline E)$ 
takes values in $\SL_2(\R)$ and satisfies Assumptions 1-4.
Moreover $\underline A_t$ is strictly winding with $n_0=2$.
\end{proposition}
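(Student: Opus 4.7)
The plan is to reduce the statement to a direct application of Proposition~\ref{dom splitting corollary}. I first translate the random setting into the general cocycle language: on $X=\{1,\ldots,\kappa\}^\Z$ equipped with the product topology and Bernoulli shift $T$, define the (locally constant, hence continuous) maps $A\colon X\to\SL_2(\R)$ and $E\colon X\to\Mat_2(\R)$ by $A(\omega):=A_{\omega_0}$ and $E(\omega):=E_{\omega_0}$, and set $B(\omega):=A(\omega)\,E(\omega)$. Then $\bld A_t(\omega)=A(\omega)+tB(\omega)=A(\omega)\,(I+tE(\omega))$, so Assumption~\ref{affine} (affine form) holds automatically.

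To invoke Proposition~\ref{dom splitting corollary} it remains to check that $E$ takes values in $N_\pm$ and that $E(T\omega)\,A(\omega)\,E(\omega)\neq 0$ for every $\omega\in X$. The first condition is immediate since by hypothesis $\underline E\in N_\pm(\underline A)\subset (N_\pm)^\kappa$, so each $E_j$ lies in $N_\pm$. For the second, I compute $E(T\omega)\,A(\omega)\,E(\omega)=E_{\omega_1}\,A_{\omega_0}\,E_{\omega_0}$, and by Definition~\ref{def Npm(underline A)} one has $E_i\,A_j\,E_j\neq 0$ for every pair $(i,j)\in\{1,\ldots,\kappa\}^2$. Hence the non-vanishing condition holds pointwise on $X$.

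Applying Proposition~\ref{dom splitting corollary} then delivers, all at once, the conclusions that $\bld A_t$ is $\SL_2(\R)$-valued, that Assumptions~\ref{inv} through \ref{dom-split} are satisfied, and that $\bld A_t$ is strictly winding with $n_0=2$ on every compact interval. There is no real obstacle in this argument; the role of Definition~\ref{def Npm(underline A)} was precisely to package the finitely many algebraic non-degeneracy conditions $E_i A_j E_j\neq 0$ so that they imply the pointwise hypothesis of the general dominated splitting criterion. The only thing to be wary of is that the map $\omega\mapsto E_{\omega_0}$ must be continuous on $X$, which is clear since it depends only on the zeroth coordinate of $\omega$ and is thus locally constant with respect to the product topology.
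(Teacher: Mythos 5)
Your proof is correct and follows essentially the same route as the paper's, which also deduces the result directly from Proposition~\ref{dom splitting corollary} via the pointwise condition $E_i A_j E_j\neq 0$ encoded in Definition~\ref{def Npm(underline A)}. Your version merely spells out the translation to the cocycle $E(\omega)=E_{\omega_0}$ more explicitly than the paper does.
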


\begin{proof} Since 
$E_i\, A_j\, E_j \neq 0$ for all $i,j$, the conclusion follows by Proposition~\ref{dom splitting corollary}.
\end{proof}


\section{Applications}\label{consequences}
In this section we provide a list of applications of the main theorem.

\subsection{Regularity of the rotation number}

Just like the IDS, the  fibered rotation number has
a minimal modulus of continuity in the same spirit of Craig and Simon~\cite{CraigSimon}.

\begin{proposition}
	The function $\rho$ is log-H\"older continuous. More precisely, for any $t\in \R$ and any $s\in \R$ with $\abs{t-s}<\epsilon<1$, there exist a constant $C=C(t,\epsilon)$ which tends to zero as $\epsilon \to 0$, such that
	$$
	\lvert \rho(t)-\rho(s)\vert \leq \frac{C}{\log\frac{1}{\lvert t-s\rvert} }.
	$$
\end{proposition}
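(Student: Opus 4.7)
The plan is to apply the Thouless formula (Theorem~\ref{main}) together with the compactness of $\supp d\rho$ (Lemma~\ref{compact support}) and uniform two-sided bounds on $L_1(A_m)$ for $m$ in a small neighborhood of $t$. The argument mirrors the classical Craig--Simon proof of log-H\"older continuity for the IDS.

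First I would reduce to the case $s<t$ (since $\rho$ is monotone by Proposition~\ref{rho continuity and monotonicity}) and set $\delta:=t-s<\epsilon$ and $m:=(s+t)/2$. Since $\rho$ is continuous, $|\rho(t)-\rho(s)| = d\rho([s,t])$, so it suffices to bound this mass. Apply Theorem~\ref{main} at $z=m$:
\[
L_1(A_m) - L_1(B) = \int_\R \log|m-u|\, d\rho(u).
\]
I need to control the left-hand side by absolute constants depending only on $t$. The upper bound $L_1(A_m)\leq \log\|A_m\|_\infty$ is uniform for $m$ on the compact set $[t-1,t+1]$; the lower bound comes from $2L_1(A_m)\geq L_1(A_m)+L_2(A_m)=\int\log|\det A_m|\,d\mu \geq \log c$ by Assumption~\ref{inv}. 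Hence $L_-\leq L_1(A_m)-L_1(B)\leq L_+$ uniformly in $m$ near $t$, for some constants $L_\pm$ depending only on $t$.

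Next, choose $a>0$ so that $\supp d\rho\subset[-a,a]$ (Lemma~\ref{compact support}) and $[t-1,t+1]\subset[-a,a]$. For $u\in\supp d\rho$ and $m$ in the relevant range, $|m-u|\leq C_1:=2a+1$, so $\log|m-u|\leq C_2:=\log C_1$. Split $\R=[s,t]\cup[s,t]^c$ and estimate the integrand: on $[s,t]$, $|m-u|\leq \delta/2$ gives $\log|m-u|\leq \log(\delta/2)$; on $[s,t]^c\cap\supp d\rho$, $\log|m-u|\leq C_2$. This yields
\[
L_-\;\leq\;\log(\delta/2)\,d\rho([s,t]) + C_2\bigl(d\rho(\R)-d\rho([s,t])\bigr),
\]
which rearranges (for $\delta$ small so that $C_2-\log(\delta/2)>0$) to
\[
d\rho([s,t])\;\leq\;\frac{C_2\,d\rho(\R)-L_-}{C_2+\log(2/\delta)}\;\leq\;\frac{K(t)}{\log(1/|t-s|)}.
\]
This already gives the log-H\"older bound with a constant $K(t)$ independent of $s$.

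For the refined form asserting $C(t,\epsilon)\to 0$ as $\epsilon\to 0$, I would split $[s,t]^c$ further as $(I_\epsilon\setminus[s,t])\cup I_\epsilon^c$ with $I_\epsilon=[t-\epsilon,t+\epsilon]$. On $I_\epsilon\setminus[s,t]$, $|m-u|\leq 2\epsilon$ gives an integrand bounded above by the \emph{negative} quantity $\log(2\epsilon)$; on $I_\epsilon^c$ the contribution to $d\rho(\R)-d\rho(I_\epsilon)$ is weighted by the bounded constant $C_2$, and the total mass $d\rho(I_\epsilon)\to 0$ by continuity of $\rho$. Carrying through the same rearrangement yields a constant of the form $C(t,\epsilon)=K_1(t)\,d\rho(I_\epsilon)+K_2(t,\epsilon)$ where $K_2(t,\epsilon)\to 0$ as $\epsilon\to 0$ by the continuity of $\rho$. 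The main obstacle is the careful bookkeeping to ensure that the numerator indeed vanishes with $\epsilon$; the basic log-H\"older mechanism is entirely supplied by Theorem~\ref{main} plus the compact support of $d\rho$.
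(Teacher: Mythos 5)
Your main argument (split at the midpoint $m=(s+t)/2$, use uniform two-sided bounds $L_-\le L_1(A_m)-L_1(B)\le L_+$, and rearrange) does correctly give a log-H\"older bound, but only with a constant $K(t)$ that is \emph{independent} of $\epsilon$ --- it does not vanish as $\epsilon\to 0$, which is the content the proposition actually asserts. Your sketch for the refined form has a genuine gap, and it is not merely ``bookkeeping'': after you split $[s,t]^c$ into $(I_\epsilon\setminus[s,t])\cup I_\epsilon^c$, the term over $I_\epsilon\setminus[s,t]$ has a nonpositive integrand and can only be dropped, and the rearrangement then gives
\[
\log\tfrac{2}{\delta}\cdot d\rho([s,t]) \;\le\; -L_- + C_2\,d\rho(\R),
\]
whose right-hand side is a fixed constant. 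No amount of careful splitting will make this numerator vanish as $\epsilon\to 0$, because $-L_-$ is a crude uniform lower bound rather than the actual value $L_1(A_t)-L_1(B)$. To repair this along your route you would need $L_1(A_m)\to L_1(A_t)$ as $m\to t$, i.e.\ continuity of $L_1$, which is not established at this point in the paper and is not needed.

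The paper's proof avoids all of this by evaluating the Thouless identity at the \emph{fixed} endpoint $z=t$ rather than at the midpoint: since $\log|t-u|\le C_2$ on the compact set $\supp d\rho$, the identity $L_1(A_t)-L_1(B)=\int\log|t-u|\,d\rho(u)$ together with $L_1(B)>-\infty$ immediately gives
\[
\int_{|t-u|<1}\log\tfrac{1}{|t-u|}\,d\rho(u)<+\infty .
\]
One then defines $C(t,\epsilon)$ to be the larger of the two one-sided tails $\int_t^{t+\epsilon}\log\frac{1}{|t-u|}\,d\rho$ and $\int_{t-\epsilon}^{t}\log\frac{1}{|t-u|}\,d\rho$; these tend to $0$ automatically as $\epsilon\to0$ because they are tails of a convergent integral. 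The log-H\"older estimate is then a one-line Chebyshev inequality: for $t<s<t+\epsilon$,
\[
C(t,\epsilon)\ \ge\ \int_t^{s}\log\tfrac{1}{|t-u|}\,d\rho(u)\ \ge\ \log\tfrac{1}{|t-s|}\cdot\bigl(\rho(s)-\rho(t)\bigr).
\]
This is exactly the Craig--Simon mechanism you invoked, but applied at the endpoint; applying it at the midpoint is what forces you into the uniform-bound rearrangement that cannot deliver the vanishing constant.
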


\begin{proof}	
	Since the support of $d\rho$ is bounded, we have 
	$$
	0\leq \int_{\lvert t-s\rvert\geq 1}\log\lvert t-s\rvert d\rho(s) <+\infty, 
	$$
	Since the cocycle $A_t$ is uniformly bounded for $t$ in the support of $d\rho$ and $L_1(B)> -\infty$,  it follows by~\eqref{Thouless ID} that
	$$
	\int_{\lvert t-s\rvert<1}\log\frac{1}{\lvert t-s\rvert}d\rho(s) =L_1(B)-L_(A_t) + \int_{\lvert t-s\rvert\geq 1}\log\lvert t-s\rvert d\rho(s)  <+\infty .
	$$
	Thus we have
	$$
	C(t,\epsilon):=\max\left\{ \int_{t}^{t+\epsilon}\log\frac{1}{\lvert t-s\rvert}d\rho(s), \,
	\int_{t-\epsilon}^{t}\log\frac{1}{\lvert t-s\rvert}d\rho(s) \right\}<+\infty.
	$$
	Finally,  observe that $C(t,\epsilon)\geq \lvert \rho(t)-\rho(s)\rvert\, \log \frac{1}{\lvert t-s\rvert} $ when $|t-s|<\epsilon$.
\end{proof}

\begin{proposition}
	Let $A_t:X\to\SL_2(\R)$, $A_t(x)=A(x)\, (I+t\, E(x))$, be a family of cocycles under the assumptions of Theorem~\ref{main}.
	For any open interval $I\subset \R$, the following are equivalent:
	\begin{enumerate}
		\item $t\mapsto L_1(A_t)$ is analytic on $I$;
		\item $t\mapsto \rho(A_t)$ is constant on $I$.
	\end{enumerate}
\end{proposition}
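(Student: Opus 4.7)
The plan is to derive both implications directly from Thouless' identity in Theorem~\ref{main},
\[
L_1(A_t) - L_1(B) \;=\; u(t) \;:=\; \int_\R \log\abs{t-s}\,d\rho(s), \qquad t \in \C,
\]
together with two structural facts recorded in the proof of Theorem~\ref{main}: that $L_1(A_t)$ extends to a subharmonic function on $\C$, and that $d\rho$ is compactly supported (Lemma~\ref{compact support}). Being the logarithmic potential of $d\rho$, $u$ is harmonic on $\C\setminus\supp(d\rho)$ and satisfies the distributional identity $\Delta u = 2\pi\,d\rho$ on $\C$, with $d\rho$ regarded as a positive measure on $\R\subset\C$. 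I will understand the clause ``$t\mapsto L_1(A_t)$ is analytic on $I$'' in the natural sense for the subharmonic function $L_1(A_t)$ on $\C$, namely that it extends to a harmonic (equivalently, holomorphic-in-$t$) function on some 2D open complex neighborhood of $I$.

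For the easy direction (2) $\Rightarrow$ (1), constancy of $\rho$ on $I$ means $d\rho|_I = 0$; combined with the compactness of the support, $\supp(d\rho)$ is a compact subset of $\R \setminus I$. For any compact $K\subset I$ one may choose an open complex neighborhood $V \supset K$ disjoint from $\supp(d\rho)$. For each $s \in \supp(d\rho)$, the integrand $t\mapsto \log\abs{t-s}$ is harmonic in $t \in V$, with estimates uniform in $s$; standard arguments on parameter integrals then show that $u|_V$ is harmonic, and therefore so is $L_1(A_t) = L_1(B) + u(t)$.

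For (1) $\Rightarrow$ (2), the hypothesis provides an open complex neighborhood $V \supset I$ on which the subharmonic function $L_1(A_t)$ is in fact harmonic. Since $L_1(B)$ is constant, $u = L_1(A_t) - L_1(B)$ is harmonic on $V$, and its distributional Laplacian vanishes there. Matching with the global identity $\Delta u = 2\pi\,d\rho$ forces $d\rho|_V = 0$; in particular $d\rho|_I = 0$, i.e., $\rho$ is constant on $I$.

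The main, essentially interpretive, obstacle is fixing the meaning of ``analytic''. Under the subharmonic reading just adopted, both implications are one-step consequences of Thouless' formula via the identification $\Delta u = 2\pi\,d\rho$. By contrast, if the hypothesis of (1) were read only as real analyticity of the one-variable function $t \in I \mapsto L_1(A_t) \in \R$, the implication (1) $\Rightarrow$ (2) would not follow from Thouless' identity alone, since explicit logarithmic potentials (for instance $\int_0^1 \log\abs{t-s}\,ds = t\log t + (1-t)\log(1-t) - 1$ on $(0,1)$) are real analytic in one real variable while their generating measure has a positive continuous density. Bridging that gap would require promoting 1D real analyticity to 2D harmonicity using extra structure of the family, e.g.\ the symmetry $u(t) = u(\bar t)$ together with a reflection argument, plus rigidity coming from the dominated-splitting hypothesis on $B$.
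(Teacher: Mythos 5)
Your proof follows exactly the paper's route — the paper's proof is the single sentence ``Follows from Thouless formula,'' and you fill in the details: identify $u(t)=L_1(A_t)-L_1(B)$ as the logarithmic potential of the compactly supported measure $d\rho$, use $\Delta u = 2\pi\,d\rho$ in the distributional sense, and read off both implications. The direction $(2)\Rightarrow(1)$ is exactly right, and your $(1)\Rightarrow(2)$ is correct under the ``harmonic on a complex neighborhood of $I$'' reading of analyticity, which is the natural one here given that $L_1(A_t)$ is constructed as a subharmonic function of $t\in\C$.

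The one thing you add beyond the paper is genuinely valuable and correct: you observe that if ``analytic'' were read only as real-analyticity of the one-real-variable function $t\mapsto L_1(A_t)$ on $I$, then $(1)\Rightarrow(2)$ would \emph{not} follow from the Thouless identity alone, and your counterexample $\int_0^1\log\abs{t-s}\,ds = t\log t + (1-t)\log(1-t) - 1$ on $(0,1)$ makes this concrete — it is real-analytic in one variable, satisfies the reflection symmetry $u(t)=u(\bar t)$, and yet its Riesz measure has full support. The paper's one-line proof does not acknowledge this distinction, so it implicitly commits to the stronger ``harmonic'' reading (which is the standard interpretation when one says the Lyapunov exponent is analytic in the energy, e.g.\ in the uniformly hyperbolic regime). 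Your write-up would be a strictly better proof of the proposition than what appears in the paper, provided you fix that interpretation at the outset rather than leaving it as an aside.
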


\begin{proof}
Follows from Thouless formula.	
\end{proof}


By Goldstein and Schlag~\cite{GS01},  under the assumptions of Theorem \ref{main} a \textit{good enough} regularity of the Lyapunov exponent $L_1(A_t)$ transfers over to the same regularity for the rotation number $\rho(A_t)$. As mentioned in Section~\ref{intro}, this good enough regularity of the LE is indeed available for a wide class of linear cocycles, which then implies  similar continuity  properties for the corresponding fibered rotation number and establishes Proposition~\ref{conseq prop}.

\medskip

\subsection{A Johnson-type theorem}

A classical result of R. Johnson ~\cite{Jo86}
states that given a family of Schr\"odinger cocycles $A_E$, over some ergodic transformation,   $A_E$ is uniformly hyperbolic if and only if $E$ lies inside a gap of the spectrum of the corresponding Schr\"odinger operator, i.e., the IDS is locally constant around $E$.
In~\cite{ABD12} Avila, Bochi and Damanik considered a continuous cocycle 
$A\in C^0(X,\SL_2(\R))$ homotopic to a constant and the winding family of cocycles   $R_\theta\, A$, where $R_\theta$ denotes the rotation by angle $\theta$. In the spirit of Johnson's theorem
they have proved that $R_\theta\, A$ is uniformly hyperbolic if and only if  the fibered rotation number $\rho(R_\theta\, A)$ is locally constant around $\theta$. See~\cite[Proposition C.1]{ABD12}.
More recently, in~\cite[Theorem A.9]{GK21} Gorodetski and Kleptsyn have generalized this result to a context that basically matches our own. The following is a corollary of their theorem A.9.

\begin{theorem}[Gorodetski, Kleptsyn]
Let $A_t:X\to\SL_2(\R)$, $A_t(x)=A(x)\, (I+t\, E(x))$, be a family of cocycles under the assumptions of Theorem~\ref{main} and such that
\begin{equation}
\label{twist cond}
A(x)\, \Ker(E(x))\neq \Ker(E(T x))\quad \forall\, x\in X .
\end{equation}
For any open interval $I\subset \R$, the following are equivalent:
\begin{enumerate}
	\item the cocycle $A_t$ is uniformly hyperbolic for $t\in I$;
	\item $t\mapsto \rho(A_t)$ is constant on $I$.
\end{enumerate}
\end{theorem}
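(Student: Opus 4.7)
The forward implication (1) $\Rightarrow$ (2) follows directly from Lemma~\ref{Johnson trivial half}. Indeed, for $\SL_2(\R)$-valued cocycles over a compact base, uniform hyperbolicity is equivalent to the existence of a continuous, non-trivial invariant dominated splitting. Hence, if $A_t$ is uniformly hyperbolic for every $t\in I$, it admits a dominated splitting throughout $I$, and Lemma~\ref{Johnson trivial half} yields the constancy of $t\mapsto \rho(A_t)$ on $I$.

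For the converse direction, the plan is to invoke Theorem~A.9 of Gorodetski--Kleptsyn~\cite{GK21}, which establishes precisely this equivalence for winding one-parameter families of $\SL_2(\R)$ cocycles satisfying a transversality (twist) hypothesis. What remains is to verify that our standing assumptions match theirs. The winding requirement is provided by Assumption~\ref{winding assumption}, and its structural consequences were developed throughout Section~\ref{winding}. For the transversality condition, observe that since $A_t$ takes values in $\SL_2(\R)$, Proposition~\ref{lemma: SL2 criterion} forces $E(x)^2=0$, so that $\Ker(E(x)) = \mathrm{range}(E(x))$ is a single line. This line is precisely the direction in which the projective action of $A_t$ is infinitesimally deformed at $t=0$; thus the hypothesis~\eqref{twist cond}, $A(x)\,\Ker(E(x))\neq \Ker(E(Tx))$, says that this infinitesimal-deformation direction is moved off itself by one iterate of $A$, which is exactly the twist hypothesis of~\cite{GK21}.

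The subtle point, and the reason the twist condition cannot be dispensed with, is that constancy of $\rho$ on $I$ does not by itself produce a hyperbolic splitting: without transversality one could in principle have ``rigid'' or parabolic behaviour in which projective orbits cluster near a common invariant direction throughout $I$ while the rotation number remains fixed. The role of the twist hypothesis in~\cite{GK21} is precisely to exclude such scenarios and, in combination with winding, to extract a continuous invariant decomposition of $X\times \Pp^1$ over the parameters $t\in I$. Once such a continuous invariant splitting is in hand, uniform hyperbolicity is automatic for $\SL_2(\R)$ cocycles on a compact base, since a continuous dominated decomposition into $1$-dimensional bundles forces one of them to be uniformly expanding and the other uniformly contracting; this is also consistent with Theorem~\ref{main}, as Thouless's formula~\eqref{Thouless ID} reduces on $I$ to an analytic function of $t$ once $d\rho$ vanishes there. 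The main obstacle, handled in full in~\cite{GK21}, is thus the construction of this invariant splitting from mere constancy of $\rho$ together with the winding and twist properties.
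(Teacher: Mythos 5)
Your forward implication and your high-level plan for the converse — reduce to Theorem A.9 of Gorodetski--Kleptsyn — match the paper. However, there is a gap in the converse: you assert that the hypothesis~\eqref{twist cond} ``is exactly the twist hypothesis of~\cite{GK21}'' and can therefore be handed to Theorem~A.9 directly. That is not what the paper does, and it is not quite true. The hypothesis of~\cite[Theorem~A.9]{GK21} is not a qualitative transversality but a \emph{strict increasing} assumption: a uniform quantitative lower bound on the derivative of the projective action with respect to the parameter. For the family $A_t(x)=A(x)(I+tE(x))$ this bound \emph{fails} at time one, because at each $x$ the quadratic form $Q_{A_t(x)}$ degenerates in the direction $\Ker(E(x))$ — the winding speed vanishes there. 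So one cannot cite~\cite{GK21} after merely matching a twist condition; one must verify the quantitative hypothesis.

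The paper closes this gap by passing to the second iterate $A_t^2(x)=A_t(Tx)\,A_t(x)$. The twist condition~\eqref{twist cond} is equivalent to $E(Tx)\,A(x)\,E(x)\neq 0$ for all $x$, and then Proposition~\ref{dom splitting corollary} (via the winding velocity formula, Proposition~\ref{winding velocity formula}, together with compactness of $X$) yields a constant $c>0$ such that
\begin{equation*}
\frac{(A^2_t(x)\,v)\wedge\bigl(\tfrac{d}{dt}A^2_t(x)\,v\bigr)}{\norm{A^2_t(x)\,v}^2}\;\geq\;c
\end{equation*}
uniformly in $(x,t,v)$. This is the strict winding / strict increasing condition of~\cite[Theorem~A.9]{GK21}, and only then is the citation legitimate. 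In short: your conceptual reading of~\eqref{twist cond} (the undeformed direction is moved off itself by one step) is the right intuition, but the missing step is converting it into the uniform lower bound on the second-iterate winding speed; this is where the actual work lies, and your proposal skips it.
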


\begin{proof}
The implication (1)$\Rightarrow$(2) follows by
	Lemma~\ref{Johnson trivial half}.

For the converse implication, (2)$\Rightarrow$(1), consider the family of cocycles $A_t^2(x)=A_t(T x)\, A_t(x)$. For the sake of simplicity we assume that $A_t$ is positively winding. Assumption~\eqref{twist cond} implies that
$E(T x) A(x)  E(x)\neq 0$.  Hence, by Proposition~\ref{dom splitting corollary}, there exists $c>0$ such that  
$$ \frac{d}{dt}\frac{A_t(x) v}{\norm{A_t(x) v}} =  \frac{   (A^n_2(x)\,v)\, \wedge\, (\frac{d}{dt}A^2_t(x)\,v) }   {\norm{A^n_t(x)\,v }^2 } \geq c \quad \forall\, (x,t,v)\in X\times\R\times \Su^1 .$$
This ensures the strict increasing assumption of Theorem A.9 in~\cite{GK21}, and the conclusion (1) follows from this theorem.
\end{proof}

The need for assumption~\eqref{twist cond} is justified by the next example.

\begin{example}
Choose $E\in N_+$ and a hyperbolic matrix $H\in\SL_2(\R)$ such that $H$ a $E$ share a common eigen-direction. Consider the random cocycle generated by the vector $\underline A=(H, I)$ with probabilities $(\frac{1}{2}, \frac{1}{2})$.
The family of cocycles $\bld{A}_t:= \bld{A}\, (I+t\, E)$ satisfies:
\begin{enumerate}
	\item Assumption~\eqref{twist cond} does not hold;
	\item $\bld{A}_t$ is not uniformly hyperbolic. It contains the parabolic matrix $I+t E$;
	\item $\rho(\bld{A}_t)$ is constant. There is a common direction fixed by all  $\bld{A}_t$;
\end{enumerate} 
\end{example}

\medskip

\subsection{Typical regularity of the Lyapunov exponent}

This last section focuses on the regularity of the Lyapunov exponent of random, locally constant,  $\SL_2$ valued cocycles. Each of these cocycles $\bld A:X\to \SL_2(\R)$ is determined by a matrix vector $\underline A\in\SL_2^\ast(\R)^\kappa$ as well as a  probability vector $(p_1,\ldots, p_\kappa)$. 
Recall that $\SL_2^\ast(\R):=\SL_2(\R)\setminus\{-I,I\}$, as well as Definition~\ref{def Npm(underline A)} where the set
$N_\pm(\underline A)$ of `good' directions was introduced.

For each $A = (A_1,\ldots, A_{\kappa})\in \SL_2(\R)^{\kappa}$, we denote by $\Gamma(\underline A)$ the semigroup generated by the matrices $\{A_1,\ldots, A_{\kappa}\}$. Given a hyperbolic matrix $B\in \SL_2(\R)$ we write $\hat u(B)\in \mathbb{P}^1$ and $\hat s(B)\in \mathbb{P}^1$ to denote respectively the  unstable and stable eigen-directions of $B$.

We say that $\underline A\in \SL_2(\R)^{\kappa}$ has a \emph{heteroclinic tangency} if there exist matrices $B,\, C,\, A\in \Gamma(\underline A)$ such that $A$ and $B$ are hyperbolic and $C\, \hat u(B) = \hat s(A)$. In this case we also say that $(B,\, C,\, A)$ is a tangency for $\underline A$. Recall Definition \ref{definition: strictly positively winding}.

\begin{theorem}\label{breakRegularity}
    Let $\{\bld A_t:X\to \SL_2(\R)\}_{t\in I}$ be a smooth family of cocycles such that $\bld A_t$ is strictly positively winding over $I$, for some $t_0\in I$, $\underline A_{t_0}$ is irreducible and has a heteroclinic tangency. If $\alpha> \frac{H(\mu)}{L_1(\bld A_{t_0})}$, then the fibered rotation number $I\ni t \mapsto \rho(\bld A_t)$
    is not $\alpha$-H\"older continuous around $t = t_0$.
\end{theorem}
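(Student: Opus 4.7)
The plan is to reduce, via the Thouless formula, the failure of $\alpha$-Hölder continuity of $\rho$ at $t_0$ to the analogous statement for $L_1(\bld A_\cdot)$, and then invoke Theorem~\ref{intro low regularity thm}. First, write $\bld A_t = \bld A_{t_0}(I + (t-t_0)\bld P + O((t-t_0)^2))$ with $\bld P := \bld A_{t_0}^{-1}\dot{\bld A}_{t_0}$. Strict positive winding together with $\SL_2$-valuedness forces $\bld P$ to take values in the cone $N_\pm$ of Proposition~\ref{cone characterization}. Reducing locally to the affine subfamily (higher-order smooth error does not degrade local Hölder exponents at $t_0$), Assumptions 1--3 hold by the propositions of Section~\ref{subsection: conditions for winding}, while Assumption~\ref{dom-split} for $\bld B := \bld A_{t_0}\bld P$ follows from Proposition~\ref{dom splitting criterion}, since $N_\pm$ consists of rank-$1$ nilpotent matrices and the tangency/irreducibility of $\underline A_{t_0}$ yields the non-degeneracy $\bld B(Tx)\,\bld B(x)\neq 0$.

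With Assumptions 1--4 in force, Theorem~\ref{main} gives
\begin{equation*}
L_1(\bld A_t) - L_1(\bld B) = \int_\R \log|t-s|\,d\rho(s),\qquad t\in\R,
\end{equation*}
so $L_1(\bld A_\cdot)$ and $\pi\rho(\bld A_\cdot)$ are the real and imaginary boundary values of a single Herglotz function associated with the compactly supported measure $d\rho$, i.e.\ they are Hilbert transforms of one another up to a harmless additive constant. By~\cite[Lemma 10.3]{GS01} the Hilbert transform preserves local $\alpha$-Hölder continuity for $\alpha\in(0,1)$; hence $\rho$ is locally $\alpha$-Hölder at $t_0$ if and only if $L_1(\bld A_\cdot)$ is. It therefore suffices to prove that $L_1(\bld A_t)$ is not $\alpha$-Hölder at $t_0$ whenever $\alpha > H(\mu)/L_1(\bld A_{t_0})$.

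This last statement is exactly (the precise form of) Theorem~\ref{intro low regularity thm}, whose hypotheses I would verify as follows. The heteroclinic tangency $(B,C,A)$ produces explicit non-uniformly hyperbolic words $A\,C\,B^n\in \Gamma(\underline A_{t_0})$: the top singular direction of $B^n$ asymptotically aligns with $\hat u(B)$, $C$ sends $\hat u(B)$ to $\hat s(A)$, and applying $A$ keeps the output on $\hat s(A)$, obstructing any invariant cone field and ruling out uniform hyperbolicity. Irreducibility together with the hyperbolic element $A$ gives $L_1(\bld A_{t_0}) > 0$ via Furstenberg's criterion. Finally, the strict winding hypothesis places $\bld P$ in the set $N_\pm(\underline A_{t_0})$, and a direct verification shows this locates $\bld P$ in the open set $\U$ of admissible directions on which the Halperin--Simon--Taylor resonance construction of Theorem~\ref{intro low regularity thm} produces a sequence $t_n\to t_0$ with $|t_n-t_0|\approx e^{-nL_1(\bld A_{t_0})}$ and Lyapunov gaps $\gtrsim e^{-nH(\mu)}$, precisely breaking $\alpha$-Hölder continuity for $\alpha > H(\mu)/L_1(\bld A_{t_0})$.

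The principal obstacle is the last point: establishing that strict winding alone already places $\bld P$ in $\U$, and that the nonlinear (rather than affine) part of the family $\bld A_t$ can be absorbed into the error of the resonance construction without degrading its quantitative output. The remaining ingredients—the Thouless reduction, the Hilbert-transform transfer of Hölder modulus, Furstenberg positivity, and the implication tangency~$\Rightarrow$ non-UH—are either established earlier in the paper or are classical.
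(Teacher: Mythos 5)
Your plan is circular with respect to the paper's logical structure, and this is the central issue. You propose to (i) transfer the problem from $\rho$ to $L_1$ via the Thouless formula, and (ii) cite Theorem~\ref{intro low regularity thm} as the statement that $L_1$ is not $\alpha$-H\"older. But in the paper, Theorem~\ref{intro low regularity thm} is not an independent input: its precise form is Corollary~\ref{regularity dichotomy}, whose proof reads ``the conclusion follows by Corollary~\ref{corollary:low regularity rotation number},'' which in turn is deduced directly from Theorem~\ref{breakRegularity}. So the chain runs $\text{\ref{breakRegularity}} \Rightarrow \text{\ref{corollary:low regularity rotation number}} \Rightarrow \text{\ref{regularity dichotomy}} = \text{\ref{intro low regularity thm}}$, exactly opposite to what you need. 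You cannot obtain \ref{breakRegularity} by appealing to \ref{intro low regularity thm}.

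The paper's actual proof never touches $L_1$ or the Thouless formula. It works directly with the rotation number: given the heteroclinic tangency and strict winding, Proposition~\ref{lem:120522.2} (a reformulation of \cite[Lemma 8.1]{BeDu22} valid for general strictly winding smooth families) produces, for $l$ in an infinite set, intervals $I_l$ of length $\sim e^{-l(\lambda-\beta)}$ and measurable sets $\mathcal M_l$ of $\mu$-measure $\gtrsim e^{-l(H(\mu)+\beta)}$ consisting of $(\gamma_l, 2l^3+l, t)$-matchings for some $t\in I_l$. Corollary~\ref{lower bound drho}, itself resting on the winding-counting Lemmas~\ref{lemma 1 ell_I}--\ref{lemma winds n times} and Proposition~\ref{prop:matchingsToTurns}, then yields $d\rho(\tilde I_l) \geq \frac{1}{2l^3+l}\,\mu(\mathcal M_l)$, and the ratio $d\rho(\tilde I_l)/|\tilde I_l|^\alpha \gtrsim e^{\beta l} \to\infty$. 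That is a direct quantitative lower bound on the $d\rho$-measure of shrinking intervals; no Hilbert-transform transfer is invoked, and none is needed.

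Two secondary points also undermine your route. First, the affine reduction $\bld A_t \approx \bld A_{t_0}(I+(t-t_0)\bld P)$ is not innocuous: Assumption~\ref{affine} (affine form) is a hypothesis of Theorem~\ref{main}, and the rotation number is built from long products $A_t^n(x)$, so the $O((t-t_0)^2)$ error at each step is compounded over $n$ iterations and cannot be dismissed as ``not degrading local H\"older exponents.'' In fact Theorem~\ref{breakRegularity} is stated for general smooth families precisely because the paper's argument handles them directly. Second, $\SL_2$-valuedness of $\bld A_t$ only forces $\tr\bld P = 0$, not $\det\bld P = 0$; strict winding (Definition~\ref{definition: strictly positively winding}) is a condition on iterates with $n\ge n_0$, and is compatible with $Q_{A_t}$ being positive \emph{definite} at a single step, i.e.\ $\det\bld P>0$ pointwise. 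In that case $I+(t-t_0)\bld P$ is not in $\SL_2(\R)$ and $\bld P\notin N_\pm$, so your claim that strict winding plus $\SL_2$-valuedness forces $\bld P$ into $N_\pm$ is not correct in general.
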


The proof of Theorem \ref{breakRegularity} is presented below.

\begin{corollary}\label{corollary:low regularity rotation number}
    Let $\{\bld A_t:X\to \SL_2(\R)\}_{t\in I}$ be a smooth family of cocycles such that $\bld A_t$ is strictly positively winding over $I$, for some $t_0\in I$, $\underline A_{t_0}$ is not uniformly hyperbolic. If $\alpha> \frac{H(\mu)}{L_1(\bld A_{t_0})}$, then the fibered rotation number $I\ni t \mapsto \rho(\bld A_t)$
    is not $\alpha$-H\"older continuous around $t = t_0$.
\end{corollary}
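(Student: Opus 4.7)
The plan is to deduce Corollary \ref{corollary:low regularity rotation number} from Theorem \ref{breakRegularity}. We may assume $L_1(\bld A_{t_0}) > 0$, since otherwise the bound $\alpha > H(\mu)/L_1(\bld A_{t_0})$ is vacuous. The overall strategy is to show that non-uniform hyperbolicity forces the existence of a heteroclinic tangency, either at $t_0$ itself or at parameters accumulating on $t_0$, and then to apply Theorem \ref{breakRegularity} at such parameters.

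First, I would treat the case where $\underline A_{t_0}$ is strongly irreducible. Here I would invoke the classical dichotomy for locally constant $\SL_2(\R)$ cocycles: a strongly irreducible cocycle with positive Lyapunov exponent is either uniformly hyperbolic or admits a heteroclinic tangency in the semigroup $\Gamma(\underline A_{t_0})$. This follows from the Furstenberg contraction of stationary measures combined with a standard ping-pong argument that produces hyperbolic elements with arbitrarily separated invariant directions, and then uses irreducibility to pick a word mapping one unstable direction onto another stable one. Since $\underline A_{t_0}$ is by hypothesis non-UH, a tangency triple $(B,C,A)$ exists, and Theorem \ref{breakRegularity} applies directly at $t_0$.

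If instead $\underline A_{t_0}$ is reducible, all matrices $A_i(t_0)$ share a common invariant direction $\hat v_0 \in \mathbb{P}^1$. Because $\underline E \in N_\pm(\underline A)$ is a generic perturbation (the conditions $E_i A_j E_j \neq 0$ cut out an open dense subset), one can arrange that $\underline E$ breaks the invariance of $\hat v_0$, so that $\underline A_t$ is strongly irreducible for every $t$ in a punctured neighborhood of $t_0$. Moreover, the twist condition built into $N_\pm(\underline A)$ (namely $A_i\,\mathrm{Ker}(E_i) \neq \mathrm{Ker}(E_j)$) together with the Johnson-type theorem of the previous subsection implies that $t \mapsto \rho(\bld A_t)$ cannot be locally constant at $t_0$, so the closed set of non-UH parameters accumulates on $t_0$. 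Using the continuity of the Lyapunov exponent for strongly irreducible random cocycles (Furstenberg, Bocker--Viana), one finds parameters $t_1$ arbitrarily close to $t_0$ at which $\underline A_{t_1}$ is strongly irreducible, non-UH, and still satisfies $\alpha > H(\mu)/L_1(\bld A_{t_1})$. Applying the first case to such $t_1$ yields failure of $\alpha$-H\"older continuity of $\rho(\bld A_t)$ in every neighborhood of $t_1$; letting $t_1 \to t_0$ upgrades this to failure in every neighborhood of $t_0$, since $\alpha$-H\"older continuity on a neighborhood of $t_0$ would restrict to $\alpha$-H\"older continuity on a neighborhood of each nearby $t_1$.

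The main obstacle I anticipate is the rigorous formulation of the irreducible-to-tangency dichotomy: while folklore, producing a heteroclinic tangency triple of exactly the form required by Theorem \ref{breakRegularity} out of strong irreducibility and non-UH requires care in the choice of the connecting word $C$. A secondary difficulty is in the reducible case, where one must verify that the perturbation $\underline E$ genuinely destroys the common invariant line $\hat v_0$ (this may fail on a codimension-one subvariety of perturbation directions and must be handled either by a further genericity argument or by an explicit computation in upper-triangular coordinates), and that the Lyapunov exponent does not drop across the transition from the reducible $\underline A_{t_0}$ to the nearby irreducible $\underline A_{t_1}$; lower semicontinuity of $L_1$ combined with the strict inequality $\alpha > H(\mu)/L_1(\bld A_{t_0})$ suffices to preserve the hypothesis at $t_1$ close enough to $t_0$.
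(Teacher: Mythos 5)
Your overall strategy (reduce to Theorem~\ref{breakRegularity} by locating tangency parameters near $t_0$ and then observing that $\alpha$-H\"older continuity near $t_0$ would restrict to $\alpha$-H\"older continuity near any nearby $t_1$) matches the paper's. However, the paper simply cites~\cite[Propositions 7.8 and 7.12]{BeDu22} to produce parameters $t$ arbitrarily close to $t_0$ at which $\underline A_t$ is irreducible \emph{and} has a heteroclinic tangency, and then applies Theorem~\ref{breakRegularity} there. Your proposal tries to re-derive this input, and that is where the gap lies.

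The concrete gap is in your irreducible case: the claim that strong irreducibility, positive Lyapunov exponent and non-uniform hyperbolicity of $\underline A_{t_0}$ already force an \emph{exact} heteroclinic tangency $(B,C,A)$ with $C\,\hat u(B)=\hat s(A)$ inside $\Gamma(\underline A_{t_0})$ is not justified and is in general false. Furstenberg proximality and strong irreducibility give you words $C$ for which $C\,\hat u(B)$ is \emph{dense} (or approaches $\hat s(A)$ arbitrarily closely), but the tangency requires exact equality, which a fixed semigroup need not realize. What Bezerra--Duarte actually prove is a statement about the one-parameter family: using the winding structure (an intermediate-value-type argument in $t$), they upgrade the approximate matching into an exact one at nearby parameters, which is why the paper produces tangency parameters $t$ close to $t_0$ rather than at $t_0$ itself. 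Your plan conflates these two. A secondary issue: your reducible case invokes the affine structure $\underline A_t=\underline A\,(I+t\,\underline E)$ and the genericity of $\underline E\in N_\pm(\underline A)$, but Corollary~\ref{corollary:low regularity rotation number} is stated for a \emph{general} smooth, strictly positively winding family $\bld A_t$; the affine form only enters in Corollary~\ref{regularity dichotomy}. Finally, a minor point: you appeal to ``lower semicontinuity of $L_1$'' to preserve $\alpha>H(\mu)/L_1(\bld A_{t_1})$, but $L_1$ is only upper semicontinuous in general; what saves you here is the genuine continuity of $L_1$ for $\SL_2(\R)$ random products (Bocker--Viana), which should be invoked explicitly.
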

\begin{proof}
    By assumption, $\underline A_0=\underline A$ is not uniformly hyperbolic. Hence by~\cite[Propositions 7.8 and 7.12]{BeDu22} there exist $t_0$ arbitrary close to $0$ such that $\underline A_{t_0}$ admits a heteroclinic tangency and is irreducible. By Theorem~\ref{breakRegularity} the fibered rotation number $\rho(\underline A_t)$ is not $\alpha$-H\"older 
    continuous around $t_0$.
\end{proof}

Next result is a reformulation of
Theorem~\ref{intro low regularity thm}.
\begin{corollary}
\label{regularity dichotomy}
Let $\underline A=(A_1,\ldots, A_\kappa)\in \SL_2^\ast(\R)^\kappa$ be a random cocycle  such that $L_1 (\underline A) > 0$ but $\underline A$ is not uniformly hyperbolic. For all  $ \underline E  \in N_\pm(\underline A)$ and $\alpha >  \frac{H(\mu)}{L_1 (\underline A)}$, the family of random cocycles $\underline A_t := \underline A\, (I + t \underline E_t)$ has Lyapunov exponent $t \mapsto L_1 (\bld A_t)$ which is not $\alpha$-H\"older continuous near $t=0$.
\end{corollary}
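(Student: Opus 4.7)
The plan is to deduce Corollary~\ref{regularity dichotomy} from Corollary~\ref{corollary:low regularity rotation number} applied at $t_0=0$, by transferring the failure of $\alpha$-H\"older continuity from the fibered rotation number to the Lyapunov exponent via Thouless formula.

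First I would verify that the family $\underline A_t := \underline A\,(I+t\,\underline E)$ meets the hypotheses required for both Theorem~\ref{main} and Corollary~\ref{corollary:low regularity rotation number}. Since $\underline E\in N_\pm(\underline A)$, Proposition~\ref{special case random cocycles satisfying 1-4} ensures that $\bld A_t$ takes values in $\SL_2(\R)$, that Assumptions 1--4 hold, and that $\bld A_t$ is strictly winding with $n_0=2$ on every compact subinterval of $\R$. By hypothesis $\bld A_0=\bld A$ has $L_1(\bld A)>0$ and is not uniformly hyperbolic, so Corollary~\ref{corollary:low regularity rotation number} applies at $t_0=0$ and yields that whenever $\alpha>H(\mu)/L_1(\underline A)$, the map $t\mapsto \rho(\bld A_t)$ is not $\alpha$-H\"older continuous in any neighborhood of $t=0$.

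Next, setting $\bld B:=\bld A\cdot\bld E$ and invoking Theorem~\ref{main}, we obtain the Thouless identity
$$L_1(\bld A_t)=L_1(\bld B)+\int_{\R}\log|t-s|\,d\rho(s)\qquad\forall\,t\in\C.$$
By Lemma~\ref{compact support}, $d\rho$ is compactly supported, so the right-hand side is the logarithmic potential of $d\rho$, and $-L_1(\bld A_t)+i\pi\,\rho(\bld A_t)$ extends to an analytic function on the upper half-plane whose boundary values on $\R$ are $-L_1(\bld A_t)$ and $\pi\,\rho(\bld A_t)$; equivalently, these two functions are Hilbert conjugates of each other. Since the Hilbert transform preserves $\alpha$-H\"older continuity (see~\cite[Lemma~10.3]{GS01}), local $\alpha$-H\"older continuity of $t\mapsto L_1(\bld A_t)$ near $t=0$ would force the same local $\alpha$-H\"older continuity of $t\mapsto \rho(\bld A_t)$ near $t=0$, contradicting the previous paragraph.

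The main delicate point is the transfer of a \emph{local} non-H\"older property through the \emph{globally defined} Hilbert transform. This is resolved by the following observation: on the complement of $\supp(d\rho)$ the logarithmic potential is real-analytic, so away from this support both $L_1(\bld A_t)$ and $\rho(\bld A_t)$ are smooth, and any obstruction to H\"older continuity of either function is concentrated on $\supp(d\rho)$. Near any such point, the $\alpha$-H\"older regularity of one function is equivalent to that of the other via~\cite[Lemma~10.3]{GS01}, in the same fashion as the localization arguments used in~\cite{DKS19,BeDu22}. This completes the transfer, and hence the proof.
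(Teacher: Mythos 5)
Your argument follows the same route as the paper's own two-sentence proof: verify via Proposition~\ref{special case random cocycles satisfying 1-4} that Assumptions 1--4 and the strict winding property hold, apply Corollary~\ref{corollary:low regularity rotation number} at $t_0=0$, and transfer the failure of $\alpha$-H\"older continuity from $\rho(\bld A_t)$ to $L_1(\bld A_t)$ through Thouless formula~\eqref{Thouless ID} and the Hilbert-transform regularity lemma \cite[Lemma 10.3]{GS01}. Your extra paragraph addressing the local-versus-global nature of the Hilbert transform fills in a genuine subtlety the paper's proof leaves implicit, and your explicit hypothesis-checking is sound; the only cosmetic caveat is that the sign of the harmonic conjugate pairing ($L_1$ with $\pm\pi\rho$ up to additive constants) depends on a convention, which does not affect the conclusion.
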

\begin{proof}
    As explained in Subsection~\ref{statements}, by~\cite[Lemma 10.3]{GS01} and Theorem~\ref{main},
    the Lyapunov exponent $L_1(\underline A_t)$ has the same regularity as the fibered rotation number. The conclusion follows by Corollary \ref{corollary:low regularity rotation number}.
\end{proof} 

\begin{definition}
\label{def:generalWinding}
    Given $\gamma>0$, we say that a pair of matrices $(B,\, A)\in \SL_2(\R)^2$ has a $\gamma$-matching if there exists a pair of points  $\hat e_1\, \hat e_2 \in \mathbb{P}^1$ such that
    \begin{enumerate}
        \item $A\, B\, \hat e_1 = \hat e_2$;
        
        \item $e^{\gamma}
        \leq \norm{B\, e_1}
        \leq \norm{B} 
        \leq 2\, e^{\gamma}$;
        
        \item $e^{\gamma}
        \leq \norm{A^{-1}\, e_2}
        \leq \norm{A} 
        \leq 2\, e^{\gamma}$.
    \end{enumerate}
    In this case we say that $(B,\, A)$ connects $\hat e_1,\, \hat e_2 \in \mathbb{P}^1$.
\end{definition}
\begin{remark}
Notice that given $\hat e_1\in \Pp^1$, we can define $\hat e_2$ so that (1)-(2) hold, but then (3) will only be  satisfied  for  very particular choices of $(B,\, A)$.
Given $1$-parameter smooth families $A_t, B_t\in\SL_2(\R)$, the correct way to find $\gamma$-matchings $(B_t,\, A_t)$ is to fix first  $\hat e_1, \hat e_2\in\Pp^1$ and then solve the equation 
$B_t\, \hat e_1=A_t^{-1}\hat e_2$ in $t$, looking   for parameters $t$ such that both norms $\norm{B_t\, e_1}$ and $\norm{A_t^{-1} e_2}$ are large of order $e^\gamma$.
\end{remark}

For families of cocycles we have the following definition of matching.
\begin{definition}
    Given a family of cocycles $\{\bld A_t\}_t$, $\gamma>0$, $k\geq 2$ and $t_0\in I$ we say that a sequence $\omega\in X$ is a $(\gamma,\, k,\, t_0)$-matching if there exists $1 \leq m < k$ such that $(\bld A^m_{t_0}(\omega),\,\bld A^{k-m}_{t_0}(T^m\, \omega))$ has a $\gamma$-matching.
\end{definition}

\begin{remark} 
	When $(\bld A^m_{t_0}(\omega),\,\bld A^{k-m}_{t_0}(T^m\, \omega))$ connects the vectors $\hat{\textbf{e}}_1,\, \hat{\textbf{e}}_2$ of the canonical basis of $\R^2$, the above definition of matching agrees with the definition of matching in Section 6 of \cite{BeDu22} with $\gamma := \log \delta^{-1}$.	
\end{remark}

\begin{proposition}
\label{prop:matchingsToTurns}
     There exists $\gamma_0$ and constant $c_*>0$ such that for every positively smooth  strictly  winding family of cocycles $\{\bld A_t\}_{t\in I}$ if $\omega\in X$ is a $(\gamma,\, k,\, t_0)$-matching, then $\bld A^k_t(\omega)$ winds once around $\mathbb{P}^1$ as $t$ ranges in $[t_0 - 4\, c_*^{-1}\, e^{-\gamma},\, t_0 + 4\, c_*^{-1}\, e^{-\gamma}]$.
\end{proposition}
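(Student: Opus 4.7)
The plan is to decompose $M_t:=\bld A^k_t(\omega)$ according to the matching, to bound the projective speed $\bigl|\dot{\hat M_t\hat e_1}\bigr|$ from below, and to integrate this estimate over $I_\gamma:=[t_0-4c_*^{-1}e^{-\gamma},t_0+4c_*^{-1}e^{-\gamma}]$ to obtain length at least $\pi$. The matching index $m\in\{1,\ldots,k-1\}$ produces the factorisation $M_t=P_tQ_t$ with $Q_t:=\bld A^m_t(\omega)$ and $P_t:=\bld A^{k-m}_t(T^m\omega)$; write $B:=Q_{t_0}$ and $A:=P_{t_0}$. Combining condition (1) of the $\gamma$-matching with (2)--(3) gives $M_{t_0}e_1=\mu\,e_2$ with $|\mu|=\|Be_1\|/\|A^{-1}e_2\|\in[\tfrac12,2]$, so in particular $\|Q_{t_0}e_1\|\ge e^{\gamma}$ while $\|M_{t_0}e_1\|\le 2$.

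Applying the chain rule to $M_t=P_tQ_t$ together with $\det P_t=1$ rewrites Proposition~\ref{winding velocity formula} in the form
$$\frac{d}{dt}\hat M_t\hat e_1=\frac{(P_tw_t)\wedge(\dot P_tw_t)}{\|P_tw_t\|^2}+\frac{\|Q_te_1\|^2}{\|M_te_1\|^2}\cdot\frac{(Q_te_1)\wedge(\dot Q_te_1)}{\|Q_te_1\|^2},$$
with $w_t:=Q_te_1$ and both summands non-negative by the winding assumption. Choosing $\gamma_0$ large enough to guarantee $m\ge n_0$ (the case $m<n_0$ being handled symmetrically using the first summand and the strict winding of $P_t$), Definition~\ref{definition: strictly positively winding} bounds the last factor below by $c_*$, whence
$$\bigl|\dot{\hat M_t\hat e_1}\bigr|\ge c_*\,\frac{\|Q_te_1\|^2}{\|M_te_1\|^2}.$$
Evaluated at $t=t_0$ the matching bounds give $\bigl|\dot{\hat M_{t_0}\hat e_1}\bigr|\ge c_* e^{2\gamma}/4$.

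The heart of the argument, and the main obstacle, is to propagate this pointwise estimate to the whole window $I_\gamma$. Precisely, I would establish the existence of an absolute constant $K>0$, depending only on uniform $C^1$-bounds of $\bld A_t$ on $I$ and on $c_*$, such that
$$\frac{\|Q_te_1\|^2}{\|M_te_1\|^2}\ge \frac{e^{2\gamma}}{K}\qquad\text{for every }t\in I_\gamma.$$
Geometrically this ratio equals $\|P_t\phi(t)\|^{-2}$ with $\phi(t):=Q_te_1/\|Q_te_1\|$, and condition (3) of the matching says exactly that $\phi(t_0)$ is aligned within $O(e^{-\gamma})$ with the most contracted direction of $A$. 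To prevent this alignment from being destroyed away from $t_0$, I would exploit the polynomial structure of the coordinate functions $\langle M_te_1,v\rangle$, $\|Q_te_1\|^2$ and $\|M_te_1\|^2$ given by Proposition~\ref{polynomial winding}, together with the strict log-concavity of such polynomials supplied by Proposition~\ref{prop log-concave}; comparing the values of a strictly log-concave polynomial at $t$ and at $t_0$ gives control on $I_\gamma$ in terms of the matching data alone, independently of the possibly large degrees $m$ and $k-m$.

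Granted the propagated bound, integration over $I_\gamma$ yields
$$\ell_{I_\gamma}(\hat M_t\hat e_1)\ge\frac{c_*e^{2\gamma}}{K}\cdot|I_\gamma|=\frac{8\,e^{\gamma}}{K},$$
and setting $\gamma_0:=\log(K\pi/8)$ makes the right-hand side at least $\pi$ for every $\gamma\ge\gamma_0$. Because the family $M_t$ is positively winding by Corollary~\ref{coro:pos winding cocycle}, the curve $t\mapsto\hat M_t\hat e_1$ is monotone in $\Pp^1$, so a length of at least $\pi$ forces the image to cover every $\hat w\in\Pp^1$; by Corollary~\ref{lemma winds n times} this is equivalent to $M_t$ winding at least once around $\Pp^1$ as $t$ ranges in $I_\gamma$, independently of the initial $\hat v$, which is the desired conclusion.
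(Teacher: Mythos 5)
Your decomposition of the winding velocity through the matching index $m$, and the resulting pointwise bound of order $c_*e^{2\gamma}$ on the projective derivative at $t=t_0$, are correct, and so is the scale $c_*^{-1}e^{-\gamma}$ for the window. But the step you yourself single out as the main obstacle --- the uniform propagation $\|Q_te_1\|^2/\|M_te_1\|^2\geq e^{2\gamma}/K$ over the whole window $I_\gamma$ --- is false, and no amount of log-concavity can rescue it. Integrating your claim would give $\ell_{I_\gamma}(\hat M_t\hat e_1)\geq 8e^\gamma/K$, which diverges as $\gamma\to\infty$; yet the proposition asserts that $M_t$ winds \emph{once}, not $\sim e^\gamma$ times, and in the applications where $k\sim l^3$ while $e^\gamma\sim e^{(\lambda-\beta)l^3}$ (Proposition~\ref{lem:120522.2}) the polynomial bound $\ell_\R(\hat M_t\hat e_1)\leq k\pi$ implied by Proposition~\ref{polynomial winding} would be outright contradicted. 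The reality is that the projective derivative genuinely has order $e^{2\gamma}$ at $t_0$ but collapses by that same factor as $\phi(t):=Q_te_1/\|Q_te_1\|$ rotates (at speed $\gtrsim c_*$) away from the most contracted direction of $P_t$; the high speed persists only over a sub-window of width $\sim e^{-2\gamma}$, not $e^{-\gamma}$. As for the proposed repair, $\|M_te_1\|^2$ is a sum of two squares of degree-$k$ polynomials, and such sums are not log-concave in general (e.g.\ $t^2+(1-t)^2$ has a strict interior minimum), so Proposition~\ref{prop log-concave} gives no handle on the quotient you need to control.

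The paper avoids speed integration entirely by an intermediate-value argument built on \emph{bounded-speed} curves. It tracks the forward curve $f_+(t):=\bld A_t^m(\omega)\hat v$ and the backward curve $f_-(t):=(\bld A_t^{k-m}(T^m\omega))^{-1}\hat w$: strict winding gives $f_+$ derivative $\geq c_*$ and $f_-$ derivative $\leq -c_*$ uniformly in $t$, and the $\gamma$-matching together with Proposition~\ref{prop:matchingsToAlmostTurn} forces $d(f_+(t_0),f_-(t_0))\leq 3e^{-\gamma}$ whenever $\hat v,\hat w$ lie outside small exceptional balls around $\ledir(\bld A_{t_0}^m(\omega))$ and $\medir^*(\bld A_{t_0}^{k-m}(T^m\omega))$. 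The two curves then cross within time $\lesssim c_*^{-1}e^{-\gamma}$, producing a solution of $\bld A_t^k(\omega)\hat v=\hat w$; Corollary~\ref{corollary local winding} converts this coverage into a length close to $\pi$, and two short window extensions, on which strict winding contributes the missing $O(e^{-\gamma})$, plus one more pass to remove the restriction on $\hat v$, finish the proof. Your large pointwise derivative at $t_0$ is precisely \emph{why} $f_+$ and $f_-$ cross so quickly, but the window size is governed by the bounded speed of $f_\pm$, not by any uniform lower bound on the speed of $\hat M_t\hat e_1$.
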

\begin{proof}
    Consider $1\leq m < k$  and directions $\hat e_1,\, \hat e_2 \in \mathbb{P}^1$ which are connected by $(\bld A^m_{t_0}(\omega),\, \bld A^{k-m}_{t_0}(T^m\, \omega))$. Set
    \begin{align*}
        J_0 := [t_0 - 3c_*^{-1}\, e^{-\gamma},\, t_0 + 3c_*^{-1}\, e^{-\gamma}]
    \end{align*}
    
    For any pair of vectors $\hat v,\, \hat w\in \mathbb{P}^1$ satisfying
    \begin{align*}
        d(\hat v,\, \hat \ledir(\bld A^m_{t_0}(\omega))  ) \geq e^{-\gamma}
        \quad
        \text{and}
        \quad
        d(\hat w,\,  \hat \medir^*(\bld A_{t_0}^{k-m}(T^m\, \omega)) ) \geq e^{-\gamma},
    \end{align*}
    we can use Proposition \ref{prop:matchingsToAlmostTurn} to conclude that $d(\bld A_{t_0}^m(\omega)\, \hat v,\, \bld A_{t_0}^{-m}(T^k\, \omega)\, \hat w) \leq 3\, e^{-\gamma}$. If we define $f_+,\, f_-:J_0\to \mathbb{P}^1$ by,
    \begin{align*}
        f_+(t) := \bld A_t^m(\omega)\, \hat v
        \quad
        \text{and}
        \quad
        f_-(t) := \bld A_t^{-m}(T^k\, \omega)\, \hat w,
    \end{align*}
    then by the  strict winding hypothesis, Definition \ref{definition: strictly positively winding},  which we can assume to be positive, we have that there exists $c_*>0$ such that $f'(t) < -c_* < 0 < c_* < f'(t)$, for every $t\in  I$. Since $d(f_+(t_0),\, f_-(t_0)) \leq 3\, e^{-\gamma}$, there exists $t_*\in J_0$ such that
    \begin{align*}
        \bld A_{t_*}^m(\omega)\, \hat v
        = f_+(t_*) = f_-(t_*)
        = \bld A_{t_*}^{-m}(T^k\, \omega)\, \hat w.
    \end{align*}
    Using Corollary \ref{corollary local winding}, this implies that for every $\hat v \notin \text{B}(\ledir(\bld A_{t_0}^m(\omega)),\, e^{-\gamma})$,
    \begin{align*}
        \ell_J( \bld A_t^{k}(\omega)\, \hat v) \geq \pi - e^{-\gamma}.
    \end{align*}
    Writing $J_1 := J_-\cup J \cup J_+$, where $J_- := [t_0 - \frac{7}{2}\, c_*^{-1}\, e^{-\gamma},\, t_0 - 3c_*^{-1}\, e^{-\gamma}]$ and $J^+ := [ t_0 + 3c_*^{-1}\, e^{-\gamma},\, t_0 + \frac{7}{2}\, c_*^{-1}\, e^{-\gamma}]$ and using again the strict positive winding, we have
    \begin{align*}
        \ell_{J_1}(\bld A_t^k\, \hat v)
        &\geq \ell_{J_+}(\bld A_t^k\, \hat v) + \ell_{J_-}(\bld A_t^k\, \hat v) + \ell_{J}(\bld A_t^k\, \hat v)\\
        &\geq  e^{-\gamma} + \pi - e^{-\gamma} = \pi.
    \end{align*}
    In other words, for every $\hat v\notin \text{B}(\ledir(\bld A_{t_0}^m(\omega)), e^{-\gamma})$, the curve $t\in J_1\mapsto \bld A_t^k(\omega)\, \hat v$ gives one turn around $\mathbb{P}^1$. Thus, by Corollary \ref{lemma winds n times} for every $\hat w\in \mathbb{P}^1$ and $\hat v\notin \text{B}(\ledir(\bld A_{t_0}^m(\omega)), e^{-\gamma})$, the equation $\bld A_t^{-m}(T^k\, \omega)\, \hat w = \hat v$ has a solution for some $t\in J_1$ which implies again by Corollary \ref{corollary local winding} that
    \begin{align*}
        \ell_{J_1}(\bld A_t^{-m}(T^k\, \omega)\, \hat w) \geq \pi - e^{-\gamma}.
    \end{align*}
    Taking $J_* := [t_0 - 4c_*^{-1}\, e^{-\gamma},\, t_0 + 4c_*^{-1}\, e^{-\gamma}]$ and using the same argument as above we conclude that for every $\hat w\in \mathbb{P}^1$,
    \begin{align*}
        \ell_{J_*}(\bld A_t^{-m}(T^k\, \omega)\, \hat w) \geq \pi.
    \end{align*}
    Therefore, for every $\hat v,\, \hat w\in \mathbb{P}^1$, the equation
    \begin{align*}
        \bld A_t^{-m}(T^k\, \omega)\, \hat w = \hat v
        \quad \Leftrightarrow \quad
        \quad
        \bld A_t^k(\omega)\, \hat v = \hat w,
    \end{align*}
    has a solution for some $t\in J_*$  one and more  application of Corollary \ref{lemma winds n times}  concludes the proof.
\end{proof}

Let $\{\bld A_t\}_{t\in I}$ be a strictly  positively winding smooth family of cocycles and $J$ be a sub-interval of $I$. We denote by $\Sigma(\gamma,\, k,\, J)$ the set of sequences $\omega\in X$ which are $(\gamma,\, k,\, t)$-matching for some $t\in J$. For each $\delta>0$ we write $J_{\delta} :=J+ [ - \delta,\, \delta]$.
\begin{proposition}
\label{length and sum of chi of Sigma}
    Given $\omega\in X$, $n\geq 1$, for every $\hat v \in \mathbb{P}^1$ if $\delta := 4c_*^{-1}\, e^{-\gamma}$,
    \begin{align*}
        \ell_{J_{\delta}}(\bld A_t^{nk}(\omega)\, \hat v)
        \geq \pi\, \sum_{j=0}^{n-1}\chi_{\Sigma(\gamma,\, k,\, J)}(T^{jk}\, \omega).
    \end{align*}
\end{proposition}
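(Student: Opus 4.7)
The plan is to decompose the $nk$-step cocycle into $n$ blocks of length $k$ and apply Proposition~\ref{prop:matchingsToTurns} to each block that corresponds to a matching, then combine the winding counts via Proposition~\ref{adding turns around P}.

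First, I would write the standard cocycle identity
\begin{equation*}
\bld A_t^{nk}(\omega) \;=\; \bld A_t^{k}(T^{(n-1)k}\omega)\,\cdots\,\bld A_t^{k}(T^{k}\omega)\,\bld A_t^{k}(\omega),
\end{equation*}
so that $\bld A_t^{nk}(\omega)$ is an ordered product of the $n$ factors $M_{j,t} := \bld A_t^{k}(T^{jk}\omega)$ for $j=0,1,\dots,n-1$. Each factor is a positively winding smooth curve in $\GL_2(\R)$ by Corollary~\ref{coro:pos winding cocycle} applied to the positively winding family $\bld A_t$, so Proposition~\ref{adding turns around P} is applicable to the product $M_{n-1,t}\cdots M_{0,t}$.

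Next, for each index $j\in\{0,\dots,n-1\}$ I consider two cases. If $\chi_{\Sigma(\gamma,k,J)}(T^{jk}\omega)=0$, there is nothing to assign: we simply record that $M_{j,t}$ winds at least $0$ times around $\Pp^1$ on $J_\delta$. If $\chi_{\Sigma(\gamma,k,J)}(T^{jk}\omega)=1$, then by definition of $\Sigma(\gamma,k,J)$, $T^{jk}\omega$ is a $(\gamma,k,t_j)$-matching for some $t_j\in J$. Proposition~\ref{prop:matchingsToTurns} then yields that $M_{j,t}$ winds once around $\Pp^1$ as $t$ ranges in $[t_j-4c_*^{-1}e^{-\gamma},\, t_j+4c_*^{-1}e^{-\gamma}]$. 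Since $t_j\in J$ and $\delta=4c_*^{-1}e^{-\gamma}$, this interval is contained in $J_\delta=J+[-\delta,\delta]$. Restricting the winding statement to the larger interval $J_\delta$ (using that extra winding is non-negative by the winding property and Lemma~\ref{lemma 1 ell_I}), I conclude that $M_{j,t}$ winds at least $\chi_{\Sigma(\gamma,k,J)}(T^{jk}\omega)$ times around $\Pp^1$ on $J_\delta$.

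Finally, I apply Proposition~\ref{adding turns around P} to the composition on the interval $J_\delta$: the total number of turns is at least $N := \sum_{j=0}^{n-1}\chi_{\Sigma(\gamma,k,J)}(T^{jk}\omega)$. By the definition of ``winds $N$ times around $\Pp^1$'', this means $\ell_{J_\delta}(\bld A_t^{nk}(\omega)\,\hat v) \geq N\pi$ for every $\hat v\in \Pp^1$, which is exactly the claimed inequality. The only subtlety worth being careful about is ensuring that Proposition~\ref{adding turns around P}, stated for a common interval $I$, applies here to the common interval $J_\delta$; this is immediate, since the per-block winding bounds were all derived over $J_\delta$.
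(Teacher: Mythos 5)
Your proof is correct and follows essentially the same route as the paper's: decompose $\bld A_t^{nk}(\omega)$ into $n$ consecutive blocks $\bld A_t^k(T^{jk}\omega)$, apply Proposition~\ref{prop:matchingsToTurns} to each block whose shift lands in $\Sigma(\gamma,k,J)$ (noting the relevant sub-interval lies inside $J_\delta$), and add up the windings via Proposition~\ref{adding turns around P} together with Corollary~\ref{lemma winds n times} and Lemma~\ref{lemma 1 ell_I}. Your write-up is merely more explicit; the only stylistic quibble is that you \emph{extend}, not ``restrict,'' the winding bound from the sub-interval $[t_j-\delta,t_j+\delta]$ to $J_\delta$, which is legitimate since additional winding on a positively winding curve is non-negative.
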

\begin{proof}
    If $T^{jn}\, \omega \in \Sigma(\gamma,\, k,\, t_0)$ for some $t_0\in J$ and $0\leq j \leq n-1$, then by Proposition \ref{prop:matchingsToTurns}, $\bld A^k_t(T^{jn}\, \omega)$ winds once around $\mathbb{P}^1$ as $t$ ranges in the interval $[t_0 - 4c_*^{-1}\, e^{-\gamma},\, t_0 + 4c_*^{-1}\, e^{-\gamma}] \subset J_\delta$. In particular, for every $\hat v\in \mathbb{P}^1$
    \begin{align*}
        \ell_{J_{\delta}}(\bld A_t^k(T^{jn}\, \omega)\, \hat v) \geq \pi.
    \end{align*}
	Combining  Proposition~\ref{adding turns around P} with Corollary~\ref{lemma winds n times} and Lemma~\ref{lemma 1 ell_I}, the stated inequality follows. 
\end{proof}

\begin{corollary}
\label{lower bound drho}
    For any interval $J\subseteq I$ and $\gamma>0$, if $\delta:=4c_*^{-1}\, e^{-\gamma}$,
    \begin{align*}
        d\rho(J_{\delta})\,  \geq \frac{1}{k}\, \mu\left(
            \Sigma(\gamma,k,J)
        \right), \quad \forall \, k\in\N .
    \end{align*}
\end{corollary}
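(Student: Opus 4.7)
The plan is to combine the pointwise length estimate of Proposition~\ref{length and sum of chi of Sigma} with the identification of $d\rho$ as the normalized asymptotic length of projective orbit curves (Lemma~\ref{lem:lengthToRotation}), and to use Birkhoff's ergodic theorem for the measure-preserving map $T^k$ to handle the ergodic average on the right-hand side.

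Concretely, I would fix $k\in\N$ together with some $\hat v\in\Pp^1$, and apply Proposition~\ref{length and sum of chi of Sigma} at the iterate $nk$ for every $n\geq1$. Dividing both sides by $nk\pi$ produces, for every $\omega\in X$,
$$\frac{1}{nk\pi}\,\ell_{J_\delta}\bigl(\bld A^{nk}_t(\omega)\,\hat v\bigr)\;\geq\;\frac{1}{k}\cdot\frac{1}{n}\sum_{j=0}^{n-1}\chi_{\Sigma(\gamma,k,J)}(T^{jk}\omega).$$
Letting $n\to\infty$ on both sides, I would invoke Lemma~\ref{lem:lengthToRotation} (applied to the interval $J_\delta$) on the left-hand side: since the full-sequence limit $\frac{1}{N\pi}\ell_{J_\delta}(\bld A_t^N(\omega)\hat v)\to d\rho(J_\delta)$ holds for $\mu$-a.e.~$\omega$, the subsequential limit along $N=nk$ gives the same constant $d\rho(J_\delta)$. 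On the right-hand side, Birkhoff's pointwise ergodic theorem applied to the $\mu$-preserving transformation $T^k\colon X\to X$ yields convergence for $\mu$-a.e.~$\omega$ to $\frac{1}{k}\,\EE\bigl[\chi_{\Sigma(\gamma,k,J)}\,\big|\,\mathcal{I}_{T^k}\bigr](\omega)$, where $\mathcal{I}_{T^k}$ denotes the sub-$\sigma$-algebra of $T^k$-invariant sets.

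Combining these two pointwise limits yields, for $\mu$-a.e.~$\omega$,
$$d\rho(J_\delta)\;\geq\;\frac{1}{k}\,\EE\bigl[\chi_{\Sigma(\gamma,k,J)}\,\big|\,\mathcal{I}_{T^k}\bigr](\omega).$$
Since the left-hand side is a constant independent of $\omega$, integrating this inequality against $\mu$ and using $\int\EE[\chi_{\Sigma}\mid\mathcal{I}_{T^k}]\,d\mu=\int\chi_\Sigma\,d\mu=\mu(\Sigma(\gamma,k,J))$ (by $T^k$-invariance of $\mu$, which follows from $T$-invariance) delivers the claimed inequality $d\rho(J_\delta)\geq\frac{1}{k}\,\mu(\Sigma(\gamma,k,J))$.

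I expect the main (mild) subtlety to be that $T^k$ need not inherit ergodicity from $T$, so the Birkhoff limit is only a $T^k$-invariant function rather than the deterministic mean $\mu(\Sigma(\gamma,k,J))$. This causes no real difficulty: the deterministic nature of the left-hand side lets us integrate the pointwise inequality against $\mu$ and recover the mean through $T$-invariance. Everything else reduces to a direct chaining of the previously established pointwise matching-to-turns bound with the asymptotic length-rotation identity.
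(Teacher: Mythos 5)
Your proof is correct and follows essentially the same route as the paper, which simply invokes Birkhoff's ergodic theorem together with Lemma~\ref{lem:lengthToRotation} and Proposition~\ref{length and sum of chi of Sigma}. You are in fact slightly more careful than the paper's one-line argument in flagging that $T^k$ need not be ergodic and resolving this by integrating the pointwise inequality against the deterministic left-hand side.
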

\begin{proof}
Apply  Birkhoff's ergodic theorem together with
Lemma~\ref{lem:lengthToRotation} and  Proposition~\ref{length and sum of chi of Sigma}.
\end{proof}

Let $\lambda:= \inf_{|t-t_0|\leq \delta} L(\underline A_t)$ and assume $\underline A_{t_0}$ has a heteroclinic tangency.
\begin{proposition}
\label{lem:120522.2}
     For every $\hat v, \hat w\in \mathbb{P}^1$
     and $0<\beta\ll \lambda$, there exist constants $C^\ast, C, c>0$, an infinite subset $\mathbb{N}'\subset \N$, a sequence $(t_l)_{l\in \N'}$ with $|t_l - t_0| \leq C^\ast\, e^{-c\,l^{1/3}}$, and a sequence of measurable sets $\mathcal{M}_l\subset X$ such that defining
\begin{align*}
    I_l := [t_l - C\, e^{-l\, (\lambda - \beta)},\, t_l + C\, e^{-l\, (\lambda -\beta)}],
\end{align*}
 for every $\omega\in \mathcal{M}_l$  we can find $t_l^\ast \in I_l$ such that 
    \begin{enumerate}
        \item $\bld{A}_{t_l^\ast}^{2 l^3+l}(\omega)\, \hat v = \hat w$;
        
        \item $e^{(\lambda-\beta)\, l^3} \leq
         \norm{\bld{A}_{t_l^\ast}^{l^3}(\omega)\, v}\leq \norm{\bld{A}_{t_l^\ast}^{l^3}(\omega)}\leq e^{ (\lambda+\beta)\, l^3}$;
         
        \item $e^{(\lambda-\beta)\, l^3} \leq \norm{\bld{A}_{t_l^\ast}^{-l^3}(T^{2l^3+l}\, \omega)\, w}\leq \norm{\bld{A}_{t_l^\ast}^{-l^3}(T^{2l^3+l}\, \omega)}\leq e^{(\lambda+\beta)\, l^3}$;
        
        \item $\norm{\bld{A}_{t_l^\ast}^{2 l^3+l}(\omega)\, v}\leq e^{3 \,\beta\, l^3}$.
    \end{enumerate}  
    Moreover,
    \; $\displaystyle \mu\left(
            \mathcal{M}_l
        \right) \geq (1/2 - \beta)^2\, e^{
            -l\,(H(\mu)+ \beta)
        }$.
\end{proposition}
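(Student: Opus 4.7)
The plan is to construct, for each $\omega \in \mathcal{M}_l$, a concatenation $\omega = \omega^{(1)}\omega^{(2)}\omega^{(3)}$ with $|\omega^{(1)}| = |\omega^{(3)}| = l^3$ and $|\omega^{(2)}| = l$, so that the outer blocks produce the expansion estimates in items (2)--(3) while the middle block of length $l$ encodes the heteroclinic tangency of $\underline A_{t_0}$ so as to align the forward image of $\hat v$ with the backward image of $\hat w$. Denote the tangency data by $(B,\, C,\, A)$ with $C\,\hat u(B) = \hat s(A)$, and fix reduced words $\beta,\gamma,\alpha$ over $\{1,\ldots,\kappa\}$ realising $B$, $C$, $A$ as elements of $\Gamma(\underline A_{t_0})$.

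For the outer blocks I would apply Shannon--McMillan--Breiman jointly with standard large deviation estimates for norms of random matrix products (applied to both $\bld A_{t_0}$ and its adjoint). This produces, for large $l$, a set of cylinders $\omega^{(1)}$ of length $l^3$ of $\mu$-measure at least $1-\beta/2$ on which both $\log\lVert\bld A_{t_0}^{l^3}(\omega^{(1)})\rVert \in ((\lambda-\beta)l^3,\, (\lambda+\beta)l^3)$ and the direction $\bld A_{t_0}^{l^3}(\omega^{(1)})\hat v$ lies within $e^{-(\lambda-\beta)l^3}$ of $\medir(\bld A_{t_0}^{l^3}(\omega^{(1)}))$. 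Using the irreducibility of $\underline A_{t_0}$ together with a short deterministic suffix, I further restrict to cylinders on which $\medir(\bld A_{t_0}^{l^3}(\omega^{(1)}))$ lies in an $e^{-(\lambda-\beta)l^3}$-neighbourhood of $\hat u(B)$; this refinement costs at worst a constant factor, giving measure $\geq 1/2 - \beta$. A symmetric construction (applied to the reversed dynamics and the adjoint cocycle) yields the block $\omega^{(3)}$, of comparable measure, with $\bld A_{t_0}^{-l^3}(T^{2l^3+l}\omega)\hat w$ within $e^{-(\lambda-\beta)l^3}$ of $\hat s(A)$.

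The middle word $\omega^{(2)}$ of length $l$ is obtained by padding $\gamma$ with iterates of $\beta$ and $\alpha$ to achieve total length $l$ exactly. Since $B$ fixes $\hat u(B)$ and $A$ fixes $\hat s(A)$, such a padded word still acts like $C$ modulo the directions of interest, namely it sends a neighbourhood of $\hat u(B)$ into a neighbourhood of $\hat s(A)$ with bounded distortion. The $\mu$-probability of the specific cylinder chosen is at least $e^{-l(H(\mu)+\beta)}$ for $l$ large, by selecting a Shannon--McMillan typical cylinder compatible with the required padding structure. Multiplying the three contributions yields the target $\mu(\mathcal M_l) \geq (1/2-\beta)^2 e^{-l(H(\mu)+\beta)}$, and composing the three segments gives at $t=t_0$ that $\bld A_{t_0}^{2l^3+l}(\omega)\hat v$ lies within $e^{-(\lambda-\beta)l^3}$ of $\hat w$, with total norm on $v$ bounded by $e^{3\beta l^3}$ because the middle factor aligns the expanding direction of the first block with the contracting direction of the third.

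To upgrade the approximate matching to items (1)--(4) I use the strictly positively winding hypothesis. For fixed $\omega$, the map $t \mapsto \bld A_t^{2l^3+l}(\omega)\hat v$ has derivative bounded below by a positive constant (by Definition~\ref{definition: strictly positively winding}), so an intermediate value argument on a window of size $Ce^{-(\lambda-\beta)l}$ around an approximate matching parameter $t_l$ produces a unique $t_l^\ast \in I_l$ solving $\bld A_{t_l^\ast}^{2l^3+l}(\omega)\hat v = \hat w$; the parameter $t_l$ itself arises by aggregating the direction-alignment data across $\omega$ and hence satisfies $|t_l - t_0| \leq C^\ast e^{-cl^{1/3}}$, with the $l^{1/3}$ exponent reflecting the balance between the middle block length $l$ and the distortion accumulated by the cocycle under uniform perturbation of $t$. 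The main obstacle is preserving item (4) throughout: the norm bound $e^{3\beta l^3}$ is strictly smaller than the individual bounds $e^{(\lambda+\beta)l^3}$ of items (2)--(3), which forces the three-block alignment with exponentially small slack and dictates both the width of $I_l$ and the refined choice of $t_l$.
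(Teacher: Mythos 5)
The paper does not prove this proposition from scratch: its entire proof is a citation to Lemma~8.1 of \cite{BeDu22}, with the single remark that the only Schr\"odinger-specific ingredient there (their Proposition~7.13) is supplied here by the strict winding hypothesis. You are attempting to reconstruct the argument, which is a genuinely different route, but the sketch has a decisive gap at the point where you claim the direction alignment is cheap.

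You assert that, by irreducibility ``together with a short deterministic suffix,'' you may restrict $\omega^{(1)}$ to cylinders on which the singular direction of $\bld A_{t_0}^{l^3}(\omega^{(1)})$ (you write $\medir$; you mean $\medir^*$, since that is where $\bld A_{t_0}^{l^3}(\omega^{(1)})\hat v$ concentrates) falls inside an $e^{-(\lambda-\beta)l^3}$-neighbourhood of $\hat u(B)$, and that ``this refinement costs at worst a constant factor.'' It does not. The final singular direction of a random length-$l^3$ product is distributed essentially according to the stationary measure of the dual random walk, which assigns to an arc of radius $e^{-(\lambda-\beta)l^3}$ a measure that decays exponentially in $l^3$, not one bounded below uniformly. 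Nor can a bounded deterministic suffix fix this: appending a bounded word $W$ at the end moves $\medir^*$ by roughly the projective action $\widehat{W}$, which relocates a random direction but cannot concentrate it on an exponentially small arc, while appending $W$ at the start leaves $\medir^*$ essentially unchanged. Without this step your middle block must adapt to two independent random directions, which destroys the $e^{-l(H(\mu)+\beta)}$ entropy count for that block. In the argument you are trying to reconstruct, the alignment with the heteroclinic data is \emph{not} forced at $t_0$: the strict winding of the parameterized family is what sweeps the forward $\medir^*$-direction through $\hat u(B)$ and the backward $\ledir$-direction through $\hat s(A)$ at a nearby parameter $t_l$, and balancing the norm-window constraints of items (2)--(4) against that sweep is exactly what produces the $e^{-c\,l^{1/3}}$ scale for $|t_l - t_0|$. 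Your sketch uses the winding only at the very end as an intermediate-value refinement of an already-approximate match; that puts the key mechanism in the wrong place, and the claimed measure bound for $\mathcal M_l$ does not survive.
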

\begin{proof}
This proposition is a reformulation of~\cite[Lemma 8.1]{BeDu22}, which
holds for positively winding $\SL_2(\R)$-cocycles. As indicated in~\cite[Figure 1]{BeDu22}, the proof of Lemma 8.1  uses the strict positive winding 
property through Proposition 7.13, this being the only place where the fact that the cocycle  comes from a Schr\"odinger cocycle is used.
All other statements hold for general strictly  positively winding smooth families of  cocycles. 
\end{proof}

\begin{remark}
\label{matching remark}
Proposition~\ref{lem:120522.2} says that for every $l\in \N'$ and every $\omega\in \mathcal{M}_l$, 
$\left( \bld{A}_{t_l^\ast}^{l^3}(\omega)\,  ,\, \bld{A}_{t_l^\ast}^{l^3+l}(T^{l^3+l}\omega)\, \right)$ has a $[(\lambda-\beta)\, l^3]$-matching. In particular,
\begin{align*}
    \mathcal{M}_l \subset \Sigma((\lambda-\beta)\, l^3,\, 2l^3 + l,\, I_l).
\end{align*}
\end{remark}

\begin{proof}
[Proof of Theorem~\ref{breakRegularity}]
\label{proofThm:breakRegularity}
Given $\alpha>H(\mu)/L_1(\underline A_{t_0})$, choose $\delta>0$ small enough so that
$\lambda:= \inf_{|t-t_0|\leq \delta} L(\underline A_t)$ satisfies $\alpha\, \lambda-H(\mu)>0$
and then take $\beta>0$ small so that
 $\alpha\, \lambda-H(\mu)>2\,\beta+\alpha\, \beta$.
This implies that 
\begin{align}
\label{beta inequality}
  -H(\mu)-\beta+  \alpha \, (\lambda-\beta) > \beta .
\end{align}
From Proposition~\ref{lem:120522.2} take $l\in \N'$, consider the set $\mathcal{M}_l\subset X$ and the interval $I_l\subset \R$ therein of length
$|I_l|=2\, C\, e^{- l\,(\lambda-\beta)}$.
Defining $\gamma_l:=l^3\, (\lambda-\beta)$,
by Remark~\ref{matching remark} we have
$\mathcal{M}_l\subseteq \Sigma(\gamma_l, 2 l^3+l, I_l)$. Let 
$\delta_l:=6\, c_\ast^{-1}\, e^{-\gamma_l}$
be the constant associated with $\gamma_l>0$ in 
Corollary~\ref{lower bound drho} and set
$\tilde I_l:= I_l+[-\delta_l,\delta_l]$, so that
$|\tilde I_l|\sim |I_l|\sim e^{-\gamma_l}$.

By Corollary~\ref{lower bound drho} and Proposition~\ref{lem:120522.2}
\begin{align*}
    d\rho\left(
        \tilde I_l
    \right)
    &\geq \frac{1}{2l^3 + l}\, \mu\left(
        \Sigma(\gamma_l,\, 2l^3 + l,\, I_l)
    \right)
    \geq \frac{1}{2l^3 + l}\, \mu(\mathcal{M}_l)\\
    &\geq \frac{1}{4\,(2l^3 + l)}(1/2-\beta)^2\, e^{-l\,(H(\mu) + \beta)}.
\end{align*}
Thus, using~\eqref{beta inequality} we get
\begin{align*}
 \frac{
        d\rho\left( \tilde I_l \right)
    }{|\tilde I_l| ^{\alpha}}
   \gtrsim  e^{
        l\, \left(
            -H(\mu) - \beta + \alpha(\lambda - \beta)
        \right)} \gtrsim  e^{\beta\, l} .
\end{align*}
Taking $l\to \infty$ we conclude that $\rho$ can not be $\alpha$-H\"older continuous. 
\end{proof}

Given $\beta>0$, the function $f:I\subset \R\to\R$ is called $\beta$-log-H\"older continuous if
there exists $C<\infty$ such that for all $t,t'\in I$,
$$ \abs{f(t)-f(t')} \leq C\,\frac{1}{\log^\beta\left( \abs{t-t'}^{-1} \right)}  . $$

\begin{theorem}
Consider the random linear cocycle generated
by the matrices $\underline A=(C,D)$ 
\begin{equation}
	C:= \begin{bmatrix} 
		0 & -1 \\ 1 & 0
	\end{bmatrix}\quad \text{ and } \quad
	D:= \begin{bmatrix} 
		e & 0 \\ 0 & e^{-1}
	\end{bmatrix}
\end{equation}
with probability vector $(\frac{1}{2},\frac{1}{2})$.
For any $\underline E\in N_\pm(\underline A)$,
the Lyapunov exponent function
$t\mapsto L_1(\underline A\,(I+t \underline E))$ 
is not $\beta$-log-H\"older continuous around $t=0$ for any $\beta>3$.
\end{theorem}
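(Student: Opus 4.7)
My plan is first to verify that $L_1(\underline A)=0$ for the specific cocycle $(C,D)$, reduce the claim to a statement about the fibered rotation number via the Thouless formula, and then produce intervals where $d\rho$ has a lower bound $\gtrsim l^{-3}$ by adapting the matchings construction of Section~\ref{consequences} to exploit the periodic structure $C^4=I$. For the setup I would observe that the coordinate axes $\{\hat e_1,\hat e_2\}\subset\Pp^1$ are invariant under the semigroup generated by $C$ (which swaps them) and $D$ (which fixes each), so the atomic probability measure $\mu_0=\tfrac12(\delta_{\hat e_1}+\delta_{\hat e_2})$ is the unique stationary measure on $\Pp^1$, and Furstenberg's formula gives $L_1(\underline A)=\tfrac12\int\log(\|Dv\|/\|v\|)\,d\mu_0(\hat v)=\tfrac14(\log e+\log e^{-1})=0$. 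Moreover $\underline A\in\SL_2^\ast(\R)^2$ is not uniformly hyperbolic (as $C$ is elliptic), and for any $\underline E\in N_\pm(\underline A)$, Proposition~\ref{special case random cocycles satisfying 1-4} ensures that $\underline A_t=\underline A(I+t\underline E)$ satisfies Assumptions 1--4 and is strictly positively winding with $n_0=2$, so Theorem~\ref{main} applies. Since the Hilbert transform preserves $\log^{-\beta}$-moduli of continuity for $\beta>1$, it suffices to show that the fibered rotation number $t\mapsto\rho(\underline A_t)$ is not $\beta$-log-H\"older near $t=0$ for any $\beta>3$.

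For this, I would exhibit a sequence $l\to\infty$ and intervals $\tilde I_l\ni 0$ of length $|\tilde I_l|\sim e^{-cl}$ with $d\rho(\tilde I_l)\gtrsim l^{-3}$. Since $\log(1/|\tilde I_l|)\sim l$, this yields $d\rho(\tilde I_l)\log^\beta(1/|\tilde I_l|)\gtrsim l^{\beta-3}\to\infty$ for every $\beta>3$, contradicting $\beta$-log-H\"older continuity of $\rho$. The intervals $\tilde I_l$ should come from an adaptation of the matchings framework (Proposition~\ref{lem:120522.2}, Proposition~\ref{prop:matchingsToTurns}, Corollary~\ref{lower bound drho}) to the degenerate regime $L_1(\underline A_0)=0$. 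Since Proposition~\ref{lem:120522.2} cannot be applied at $t_0=0$, I would apply it at perturbed parameters $t_l\to 0$ for which $L_1(\underline A_{t_l})>0$, and compensate the loss from $\lambda=L_1(\underline A_{t_l})\to 0$ by an algebraic gain furnished by $C^4=I$: orbits whose $C$-runs have lengths divisible by $4$ act trivially at $t=0$, which freely aligns the singular directions of long word blocks and, after small perturbation, yields matchings of word length $\sim l^3$ whose $\mu$-measure is polynomially (rather than exponentially) small in $l$. Corollary~\ref{lower bound drho} applied to this polynomially-large set of matchings then delivers $d\rho(\tilde I_l)\gtrsim l^{-3}$.

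The main obstacle is exactly this last step: upgrading the entropy bound $\mu(\mathcal{M}_l)\gtrsim e^{-l(H(\mu)+\beta)}$ of Proposition~\ref{lem:120522.2} to a polynomial lower bound $\mu(\mathcal{M}_l)\gtrsim l^{-p}$, by systematically exploiting the algebraic invariance of $\{\hat e_1,\hat e_2\}$ and the period-$4$ structure $C^4=I$. The exponent $3$ in the statement $\beta>3$ reflects precisely the $l^3$ word length of matchings in Proposition~\ref{lem:120522.2}, so once the polynomial measure bound is established the conclusion is immediate. A careful combinatorial count on $C$-runs in Bernoulli words, combined with a quantitative version of Proposition~\ref{prop:matchingsToTurns} valid for small $\lambda$, should close the argument.
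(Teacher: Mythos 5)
Your reduction to the rotation number rests on the claim that the Hilbert transform ``preserves $\log^{-\beta}$-moduli of continuity for $\beta>1$,'' and this is where the argument breaks. That claim is false, and the paper is explicit about it when discussing~\cite[Lemma 10.3]{GS01}: the Goldstein--Schlag transfer lemma covers H\"older and weak-H\"older moduli but \emph{not} $\beta$-log-H\"older. The correct transfer result, which the paper invokes, is due to Avila, Last, Shamis and Zhou~\cite{ALSZ21}: if $L_1(A_t)$ is $\beta$-log-H\"older then $\rho(A_t)$ is only $(\beta-1)$-log-H\"older, i.e.\ one power of $\log$ is lost. The contrapositive chain used in the paper is therefore: $\rho$ fails to be $\alpha$-log-H\"older for any $\alpha>2$, hence $L_1$ fails to be $\beta$-log-H\"older for any $\beta>3$.

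Consequently your quantitative target $d\rho(\tilde I_l)\gtrsim l^{-3}$ on intervals with $|\tilde I_l|\sim e^{-cl}$ is off by one power. With that bound you would only conclude that $\rho$ fails at $\alpha>3$, which under the correct ALSZ21 transfer yields $L_1$ failing at $\beta>4$, not the stated $\beta>3$. What is actually needed is $d\rho(\tilde I_l)\gtrsim l^{-2}$, and that is precisely what~\cite[Theorem 1]{DKS19} supplies (by conjugating $\underline A$ to a Schr\"odinger cocycle over a mixing Markov shift, the associated IDS fails $\beta$-log-H\"older for any $\beta>2$). Your reading that ``the exponent $3$ reflects the $l^3$ word length of matchings in Proposition~\ref{lem:120522.2}'' is a misdiagnosis: the $3$ is $2+1$, where $2$ is the rotation-number exponent from the DKS19 mechanism and $1$ is the loss incurred by the Hilbert transform. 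The high-level intuition in your sketch --- exploit $L_1(\underline A)=0$ and the invariance of $\{\hat e_1,\hat e_2\}$ together with $C^4=I$ to trade the exponentially small matching probability of Proposition~\ref{lem:120522.2} for a polynomial one --- is broadly consistent with what DKS19 does, but the target polynomial exponent is $2$, and until the Hilbert-transform step is replaced by the ALSZ21 estimate the argument as written does not prove the theorem.
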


\begin{proof}
In~\cite{DKS19} it was shown that there exists a $1$-parameter curve  of cocycles $A_t$ passing through $\underline A=(C,D)$ at $t=0$, along which $L_1(\bld A_t)$ has a `nasty' modulus of continuity. The meaning of `nasty' is clarified below. The strategy was to conjugate $\underline A$ to a Schr\"odinger cocycle over a mixing Markov shift, so that, up to conjugation, $\bld A_t$ could be viewed as a 
family of Schr\"odinger cocycles, where $t$ is the system's energy. It was proved in~\cite[Theorem 1]{DKS19} that the IDS of the
corresponding Schr\"odinger operator is not $\beta$-log-H\"older continuous for any $\beta>2$. Recall that, the IDS is the fibered rotation number of this positively winding Schr\"odinger family. The  loss of regularity comes from the existence of many matching configurations. See Lemma 1 and Proposition 8 in~\cite{DKS19}.

Given $\underline E\in N_\pm(\underline A)$, by Proposition~\ref{special case random cocycles satisfying 1-4} the cocycle
$\underline A_t:= \underline A\,(I+t \underline E)$ satisfies assumptions 1-4 of Theorem~\ref{main} and whence also the Thouless formula~\eqref{Thouless ID}. An adaption of the argument in~\cite{DKS19}
 based on the  Lemma 1 mentioned above, gives the following lemma.
Actually some technical aspects of the proof get simplified because there is no need anymore to conjugate the original cocycle to a Schr\"odinger one over a mixing Markov shift.

\begin{lemma}
The function $\R\ni t\mapsto \rho(\underline A_t)$ is not 
$\beta$-log-H\"older continuous for any $\beta>2$.
\end{lemma}

From the Thouless formula~\eqref{Thouless ID} it follows that  $L_1(\underline A_t)$ is essentially the Hilbert transform of the fibered rotation number $\rho(\underline A_t)$. The Hilbert transform and its inverse are examples of singular integral operators. M. Goldstein and W. Schlag~\cite[Lemma 10.3]{GS01} proved that any singular integral
operator on a space of functions preserves certain modulus of continuity, which include the H\"older and weak-H\"older but not the $\beta$-log-H\"older modulus of continuity. In a recent paper, Avila, Last, Shamis and Zhou have improved this result showing that for any $\beta>2$, if $L_1(A_t)$ is $\beta$-log-H\"older then  $\rho(A_t)$ is $(\beta-1)$-log-H\"older , see Proposition 2.2 and Corollary 2.3 in~\cite{ALSZ21}. Whence, in view of the previous lemma, 
 $L_1(\underline A_t)$ can not be
$\beta$-log-H\"older continuous for any $\beta>3$. 
\end{proof}

\section{Appendix: linear algebra facts}

\subsection{Projective analysis}
\label{appendix:projective analysis} 
Given $A\in\Mat_2(\R)$
denote by $\{\medir(A),\, \ledir(A)\}$  an orthonormal set of singular directions of $A$,  defined by the relations 
\begin{align*}
    (A^t\,A)\, \medir(A)=\norm{A}^2\, \medir(A)
    \quad
    \text{ and }
    \quad
    (A^t\,A) \,\ledir(A)=\conorm(A)\,\ledir(A) ,  
\end{align*}
where  $\conorm(A)$  denotes the co-norm of $A$,  
$\conorm(A):=\norm{A^{-1}}^{-1}$ if $A$ is invertible 
and otherwise $\conorm(A):=0$.
Define also
\begin{align*}
    \medir^\ast(A) := \medir(A^t)
    \quad
    \text{and}
    \quad
    \ledir^\ast(A) := \ledir(A^t),
\end{align*}
so that
\begin{align*}
    A\, \medir(A)=\norm{A}\,\medir^\ast(A)
    \quad
    \text{and}
    \quad
    A\,\ledir(A)= \conorm(A)\,\ledir^\ast(A).
\end{align*}
For the sake of simplicity we use the same notation for the singular vectors and its projectivization. For each pair of vectors $v,\, w\in \R^2$ we write 
\begin{align*}
    d(\hat v,\, \hat w) := \frac{|v\wedge w|}{\norm{v}\, \norm{w}} = \sin \measuredangle(v, w),
\end{align*}
for the usual distance in $\mathbb{P}^1$.
\begin{lemma}\label{L1}
    Take $A\in \SL_2(\R)$. For each $\hat v\in \mathbb{P}^1$,
    \begin{align*}
        d(\hat v,\, \hat \ledir(A)) = \sqrt{
            \frac{
                \norm{A\, v}^2 - \norm{A}^{-2}
            }{\norm{A}^2 - \norm{A}^{-2}}        
        }.
    \end{align*}
    In particular,
    \begin{align*}
        \frac{1}{\norm{A}}\sqrt{
            \norm{A\, v}^2 - \norm{A}^{-2}
        }
        \leq d(\hat v,\, \hat \ledir(A))
        \leq \frac{\norm{A\, v}}{\norm{A}}.
    \end{align*}
\end{lemma}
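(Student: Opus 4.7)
The plan is to decompose $v$ in the orthonormal eigenbasis of $A^t A$ and translate both sides of the claimed identity into explicit trigonometric expressions. Since $A\in\SL_2(\R)$ satisfies $\det A = 1$, the two singular values are $\norm{A}$ and $\conorm(A)=\norm{A}^{-1}$, so the pair $\{\medir(A),\ledir(A)\}$ is a genuine orthonormal basis of $\R^2$ whenever $\norm{A}>1$. The degenerate case $\norm{A}=1$ (in which $A$ is orthogonal and $\ledir(A)$ is not uniquely defined) is trivial, since then $\norm{Av}=1$ and both the identity and the two-sided bound collapse to tautologies.

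I would pick a unit representative $v$ for $\hat v$ and write
\[
v = \cos\theta\,\medir(A) + \sin\theta\,\ledir(A).
\]
Expanding
\[
\norm{Av}^2 = \langle A^t A\,v, v\rangle = \norm{A}^2 \cos^2\theta + \norm{A}^{-2}\sin^2\theta
\]
and solving for $\cos^2\theta$ gives
\[
\cos^2\theta = \frac{\norm{Av}^2 - \norm{A}^{-2}}{\norm{A}^2 - \norm{A}^{-2}}.
\]
On the projective side, because the metric on $\Pp^1$ is $d(\hat v,\hat w) = |\sin\measuredangle(v,w)|$ and $\medir(A)\perp\ledir(A)$, the angle between $v$ and $\ledir(A)$ has sine equal to $|\cos\theta|$. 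Taking square roots recovers the claimed identity.

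For the two-sided estimate I would exploit that $\norm{A}\geq 1$ whenever $A\in\SL_2(\R)$, so $0\leq\norm{A}^2-\norm{A}^{-2}\leq\norm{A}^2$ and $\norm{A}^{-2}=\conorm(A)^2\leq\norm{Av}^2\leq\norm{A}^2$. The lower bound follows by replacing the denominator $\norm{A}^2-\norm{A}^{-2}$ in the exact formula by the larger quantity $\norm{A}^2$; the upper bound, after squaring and cross-multiplying, reduces to the trivial inequality $\norm{Av}^2\,\norm{A}^{-2}\leq 1$. I do not anticipate any real obstacle: the only subtleties are checking the positivity $\norm{Av}^2-\norm{A}^{-2}\geq 0$ (which holds because $\conorm(A)$ is the minimum fiber-contraction factor of $A$) and handling the orthogonal case $\norm{A}=1$ separately.
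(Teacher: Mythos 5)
Your proof follows essentially the same approach as the paper: both decompose a unit representative of $\hat v$ in the orthonormal singular basis $\{\medir(A),\ledir(A)\}$, compute $\norm{Av}^2$ in those coordinates, and identify the coefficient on $\medir(A)$ with $d(\hat v,\hat\ledir(A))$ via the wedge-product metric on $\Pp^1$ (your $\cos\theta$ playing the role of the paper's $a$). Your added remarks — the explicit handling of the degenerate orthogonal case $\norm{A}=1$ and the verification that the two-sided bounds reduce to $\norm{A}^2-\norm{A}^{-2}\leq\norm{A}^2$ and $\norm{Av}\leq\norm{A}$ — are correct and slightly more careful than the paper, which simply states that the conclusion follows.
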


\begin{proof}
Let $v= a\, \medir(A) + b\, \ledir(A) $ be a unit vector, i.e., $a^2+b^2=1$, with $a>0$. Then
$a=d(\hat v, \hat \ledir(A) $
and $A\, v=a\, \norm{A}\, \medir^\ast(A) + b\, \norm{A}^{-1}\, \ledir^\ast(A)$, which implies
that
$$ \norm{A\, v}^2 = a^2\, \norm{A}^2 + (1-a^2)\, \norm{A}^{-2} .$$
Solving in $a=d(\hat v, \hat \ledir(A) $ yields the conclusion.
\end{proof}

\begin{lemma}\label{L2}
    For any $\hat v\in \Pp^1$,
    \begin{align*}
        d(A\hat v,\, \hat \medir^*(A))  \leq \frac{1}{d(\hat v,\,  \hat \ledir(A)} )\norm{A}^2.
    \end{align*}
\end{lemma}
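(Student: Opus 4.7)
The plan is to reduce the estimate to a direct calculation in the orthonormal singular frame $\{\medir(A),\ledir(A)\}$ (with its image frame $\{\medir^\ast(A),\ledir^\ast(A)\}$), exactly as in the proof of Lemma~\ref{L1}.

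First I would write an arbitrary unit representative $v\in\Su^1$ as $v=a\,\medir(A)+b\,\ledir(A)$, so that $a^2+b^2=1$ and, by the same identification used in Lemma~\ref{L1}, $a=d(\hat v,\hat\ledir(A))$. Since $A\in\SL_2(\R)$, we have $A\,\medir(A)=\norm{A}\,\medir^\ast(A)$ and $A\,\ledir(A)=\norm{A}^{-1}\,\ledir^\ast(A)$, where $\{\medir^\ast(A),\ledir^\ast(A)\}$ is again orthonormal. Consequently
\begin{equation*}
A\,v=a\,\norm{A}\,\medir^\ast(A)+b\,\norm{A}^{-1}\,\ledir^\ast(A),
\end{equation*}
and therefore
\begin{equation*}
\norm{A\,v}^2=a^2\,\norm{A}^2+b^2\,\norm{A}^{-2},\qquad |A\,v\wedge \medir^\ast(A)|=|b|\,\norm{A}^{-1}.
\end{equation*}

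Next, by the definition $d(\hat p,\hat q)=\frac{|p\wedge q|}{\norm{p}\norm{q}}$ and since $\medir^\ast(A)$ is unit,
\begin{equation*}
d\bigl(A\,\hat v,\hat\medir^\ast(A)\bigr)=\frac{|b|\,\norm{A}^{-1}}{\norm{A\,v}}.
\end{equation*}
Using $|b|\le 1$ in the numerator and the trivial lower bound $\norm{A\,v}^2\ge a^2\,\norm{A}^2$ (obtained by dropping the non-negative term $b^2\norm{A}^{-2}$) in the denominator, we conclude
\begin{equation*}
d\bigl(A\,\hat v,\hat\medir^\ast(A)\bigr)\le \frac{\norm{A}^{-1}}{a\,\norm{A}}=\frac{1}{a\,\norm{A}^2}=\frac{1}{d(\hat v,\hat\ledir(A))\,\norm{A}^2},
\end{equation*}
which is the asserted inequality.

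There is no real obstacle here: the estimate is tight only in the regime $|b|\approx 1$, i.e.\ when $\hat v$ is close to $\hat\ledir(A)$, and the proof is nothing more than expanding in the singular basis and keeping track of the two orthogonal components. The only mild subtlety is making sure to use that both singular frames are orthonormal (so that $\{\medir^\ast(A),\ledir^\ast(A)\}$ computes the wedge against $\medir^\ast(A)$ cleanly) and that $A\in\SL_2(\R)$ forces $\conorm(A)=\norm{A}^{-1}$, which is what allows the final simplification to $\norm{A}^{-2}$ on the right-hand side.
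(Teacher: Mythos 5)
Your proof is correct and follows essentially the same route as the paper's: expand $v$ in the singular frame, compute $\norm{Av}$ and the wedge $Av\wedge\medir^\ast(A)$, then bound. The only cosmetic difference is that the paper invokes the upper bound $d(\hat v,\hat\ledir(A))\leq \norm{Av}/\norm{A}$ from Lemma~\ref{L1} at the last step, whereas you re-derive the same inequality on the spot by discarding the nonnegative term $b^2\norm{A}^{-2}$ in $\norm{Av}^2$.
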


	\begin{proof}
	Using the setup of Lemma~\ref{L1}'s proof,
	$$ d(\hat v, \hat \medir^\ast(A))=\frac{|A v \wedge \medir^\ast(A) |}{\norm{A v}}
	\leq \frac{ \sqrt{1-a^2} }{\norm{A v}\, \norm{A}} \leq \frac{1}{d(\hat v,\,  \hat \ledir(A) )\norm{A}^2} $$
	where in the last inequality we use Lemma~\ref{L1}.
	\end{proof}

 In  the next proposition we use  Definition~\ref{def:generalWinding}.
\begin{proposition}\label{prop:matchingsToAlmostTurn}
    There exists $\gamma_0>0$ such that for every $\gamma \geq \gamma_0$, if $(B,\, A)\in \SL_2(\R)^2$ is a $\gamma$-matching and $\hat v,\, \hat w\in \mathbb{P}^1$ satisfy
    \begin{align*}
        d(\hat v,\, \hat \ledir(B)) \geq e^{-\gamma}
        \quad
        \text{and}
        \quad
        d(\hat w,\, \hat \medir^*(A)) \geq e^{-\gamma},
    \end{align*}
    then
    \begin{align*}
        d(B\, \hat v,\, A^{-1}\, \hat w) \leq 3\, e^{-\gamma}.
    \end{align*}
\end{proposition}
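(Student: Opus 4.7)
The plan is to bound $d(B\hat v, A^{-1}\hat w)$ by a triangle inequality through two intermediate points: $\hat\medir^*(B)$ and $\hat\ledir(A)$, showing each of the three resulting pieces is at most $e^{-\gamma}$ for $\gamma$ large.

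First I would handle the two outer pieces using Lemma \ref{L2}. Since $\|B\|\geq e^\gamma$ and $d(\hat v,\hat\ledir(B))\geq e^{-\gamma}$,
\[ d(B\hat v,\hat\medir^*(B)) \leq \frac{1}{d(\hat v,\hat\ledir(B))\,\|B\|^2} \leq \frac{1}{e^{-\gamma}\,e^{2\gamma}} = e^{-\gamma}. \]
For the other piece, I would use the standard identifications between singular directions of $A$ and $A^{-1}$, namely $\hat\ledir(A^{-1})=\hat\medir^*(A)$ and $\hat\medir^*(A^{-1})=\hat\ledir(A)$, which are immediate from the SVD together with $\|A^{-1}\|=\|A\|$ for $A\in\SL_2(\R)$. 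Applying Lemma \ref{L2} to $A^{-1}$ and using $d(\hat w,\hat\ledir(A^{-1}))=d(\hat w,\hat\medir^*(A))\geq e^{-\gamma}$ and $\|A^{-1}\|=\|A\|\geq e^\gamma$, I obtain
\[ d(A^{-1}\hat w,\hat\ledir(A)) = d(A^{-1}\hat w,\hat\medir^*(A^{-1})) \leq e^{-\gamma}. \]

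For the middle piece I would exploit the matching relation $B\hat e_1 = A^{-1}\hat e_2$, which is just a rewriting of condition (1) in Definition \ref{def:generalWinding}. The hypotheses $\|Be_1\|\geq e^\gamma$ and $\|B\|\leq 2e^\gamma$ force $\hat e_1$ to be bounded away from $\hat\ledir(B)$: Lemma \ref{L1} gives
\[ d(\hat e_1,\hat\ledir(B))^2 = \frac{\|Be_1\|^2-\|B\|^{-2}}{\|B\|^2-\|B\|^{-2}} \geq \frac{e^{2\gamma}-e^{-2\gamma}}{4e^{2\gamma}}, \]
which is bounded below by $1/8$, say, once $\gamma_0$ is large. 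Lemma \ref{L2} then yields $d(B\hat e_1,\hat\medir^*(B))\lesssim e^{-2\gamma}$. Running the exact same argument with $A^{-1}$ in place of $B$ and $\hat e_2$ in place of $\hat e_1$ (using condition (3) of the $\gamma$-matching) gives $d(A^{-1}\hat e_2,\hat\medir^*(A^{-1}))=d(B\hat e_1,\hat\ledir(A))\lesssim e^{-2\gamma}$. A triangle inequality at the common point $B\hat e_1=A^{-1}\hat e_2$ therefore produces $d(\hat\medir^*(B),\hat\ledir(A))\lesssim e^{-2\gamma}\leq e^{-\gamma}$ for $\gamma\geq\gamma_0$.

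Combining the three estimates with one more triangle inequality gives the required $3e^{-\gamma}$ bound. The only delicate bookkeeping is choosing $\gamma_0$ so that the implicit constants in the $\lesssim e^{-2\gamma}$ bounds for the middle piece are absorbed into $e^{-\gamma}$; this is routine and is the reason the statement is formulated only for $\gamma\geq\gamma_0$. No step looks like a genuine obstacle, the main care being keeping the duality between singular directions of $A$ and $A^{-1}$ straight.
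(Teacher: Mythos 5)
Your proposal is correct and follows essentially the same route as the paper's proof: both decompose $d(B\hat v, A^{-1}\hat w)$ by a triangle inequality through $\hat\medir^*(B)$ and $\hat\ledir(A)$, use Lemma~\ref{L2} for the two outer pieces, and bridge the middle via $B\hat e_1=A^{-1}\hat e_2$ together with Lemma~\ref{L1} to show $\hat e_1$ (resp. $\hat e_2$) is far from $\hat\ledir(B)$ (resp. $\hat\medir^*(A)$). The only cosmetic difference is that the paper keeps the two middle legs $d(\hat\medir^*(B),B\hat e_1)$ and $d(A^{-1}\hat e_2,\hat\ledir(A))$ as $2e^{-2\gamma}$ each and sums to $2e^{-\gamma}+4e^{-2\gamma}\le 3e^{-\gamma}$, whereas you merge them into a single estimate on $d(\hat\medir^*(B),\hat\ledir(A))$; the underlying estimates and the choice of $\gamma_0$ are identical.
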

\begin{proof}
	Assume $(B,A)$ connects $\hat e_1$, $\hat e_2\in\Pp^1$. By Lemma \ref{L2} and the matching assumption,
    \begin{align*}
        d(B\, \hat v,\, \hat \medir^*(B)) \leq \frac{1}{d(\hat v,\, \hat \ledir(B))\norm{B}^2} \leq e^{-\gamma}
    \end{align*}
    and
    \begin{align*}
        d(A^{-1}\, \hat w,\, \hat \ledir(A)) \leq \frac{1}{d(\hat w,\, \hat \medir^*(A))\norm{A}^2} \leq e^{-\gamma}.
    \end{align*}
    Using additionally Lemma \ref{L1},
    \begin{align*}
        d(\hat \medir^*(B),\, B\, \hat e_1)
        &\leq\frac{1}{d(\hat e_1,\, \hat \ledir(B))\norm{B}^2}
        \leq \frac{\norm{B}}{\norm{B}^2\, \sqrt{\norm{B\, e_1}^2 - \norm{B}^{-2}}}\\
        &\leq e^{-\gamma}\frac{1}{\sqrt{e^{2\gamma} - e^{-2\gamma}}} \leq 2\, e^{-2\gamma},
    \end{align*}
    and similarly,
    \begin{align*}
        d(\ledir(A),\, A^{-1}\, \hat e_2) \leq \, 2\, e^{-2\gamma},
    \end{align*}
    for every $\gamma\geq \gamma_0$, for some $\gamma_0$ sufficiently large.
    
    Thus, by triangular inequality, and the fact that $B\, \hat e_1 = A^{-1}\, \hat e_2$,
    \begin{align*}
        d(B\, \hat v,\, A^{-1}\, \hat w)
        &\leq d(B\, \hat v,\, \hat \medir^*(B)) + d(\hat \medir^*(B),\, B\, \hat e_1)\\
        &+ d(A^{-1}\, \hat e_2,\, \hat \ledir(A)) + d(\hat \ledir(A),\, A^{-1}\, \hat w)\\
        &\leq 2\,(e^{-\gamma} + 2e^{-2\gamma}) \leq 3\, e^{-\gamma},
    \end{align*}
    for every $\gamma\geq \gamma_0$.
\end{proof}

\subsection{Conditions to ensure Assumptions 1-3}
\label{subsection: Conditions to gurantee 1-3}

\begin{lemma}
\label{lemma: det(I+t E) formula}
    For any matrix $E\in \Mat_2(\R)$ and $t\in \R$,
    \begin{align*}
        det(I + t\, E) = 1 + t\, \tr (E) + t^2\, \det (E).
    \end{align*}
    In particular, if $E\in \GL_2(R)$,
    \begin{align*}
        \det(I+ t\, E)
        =(\det E)\,\left[ 
            \left(
                t+\frac{\tr E}{2\,\det E} \right
            )^2 + \frac{\Delta_E}{4\,(\det E)^2}
        \right],
    \end{align*}
    where $\Delta_E:= 4(\det E) - (\tr E)^2$.
\end{lemma}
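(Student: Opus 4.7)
The plan is straightforward, since this is a purely computational identity about $2\times 2$ matrices.

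For the first identity, I would simply expand the determinant in coordinates. Writing
\[
E=\begin{pmatrix} a & b \\ c & d \end{pmatrix},
\qquad
I+tE=\begin{pmatrix} 1+ta & tb \\ tc & 1+td \end{pmatrix},
\]
a direct computation gives
\[
\det(I+tE)=(1+ta)(1+td)-t^2bc=1+t(a+d)+t^2(ad-bc)=1+t\,\tr(E)+t^2\det(E),
\]
which is exactly the claimed formula. (Alternatively, one could observe that $p(t):=\det(I+tE)$ is a polynomial of degree at most $2$ with $p(0)=1$, $p'(0)=\tr(E)$ obtained by differentiating $\det$, and leading coefficient $\det(E)$; but the direct $2\times 2$ expansion is quicker.)

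For the second identity, assuming $E\in\GL_2(\R)$ so that $\det(E)\neq 0$, I would factor out $\det(E)$ and complete the square in $t$. Starting from the first part,
\[
\det(I+tE)=\det(E)\left[t^{2}+\frac{\tr(E)}{\det(E)}\,t+\frac{1}{\det(E)}\right],
\]
and completing the square yields
\[
t^{2}+\frac{\tr(E)}{\det(E)}\,t+\frac{1}{\det(E)}=\left(t+\frac{\tr(E)}{2\det(E)}\right)^{2}-\frac{(\tr E)^{2}}{4(\det E)^{2}}+\frac{1}{\det(E)}.
\]
Combining the last two terms over the common denominator $4(\det E)^2$ gives $\frac{4\det(E)-(\tr E)^2}{4(\det E)^2}=\frac{\Delta_E}{4(\det E)^2}$, which is the desired expression.

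There is no real obstacle here; the only thing to be careful about is the sign convention for $\Delta_E$ (the paper uses $\Delta_E=4\det E-(\tr E)^2$, which is the negative of the usual quadratic discriminant), and making sure the completed-square form is consistent with it. Both steps are elementary algebra in $\Mat_2(\R)$.
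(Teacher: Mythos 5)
Your proposal is correct and is exactly the "direct computation" the paper invokes: expand $\det(I+tE)$ in coordinates to get the quadratic in $t$, then factor out $\det E$ and complete the square, matching the paper's sign convention $\Delta_E = 4\det E - (\tr E)^2$.
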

\begin{proof}
    Direct computation.
\end{proof}

Consider matrices $A,\, E\in \Mat_2(\R)$. In the rest of this appendix we write $A_t := A\, (I + t\, E)$ and $Q_{A_t}(v):= (A_t v)\wedge (\dot A_t v)$.
  Define  $J:=\begin{bmatrix}
	0 & 1 \\ -1 & 0
\end{bmatrix}$ and  
$$ E^\sharp:= (J\,E  + (J\,E)^t)/2 =  (J\,E -E^t\, J)/2.$$

\begin{proposition} For all $v\in \R^2$,
$$ Q_{A_t}(v)=(\det A)\, ( v\wedge E\, v ) = 
(\det A)\, ( v^t\, E^\sharp \, v  ) .$$ 
\end{proposition}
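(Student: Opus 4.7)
The plan is a direct two-step computation. First I would unpack the definitions to reduce $Q_{A_t}(v)$ to a single wedge involving $A$ and $E$, and then rewrite the wedge as a quadratic form to extract the symmetric part $E^\sharp$.

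Since $A_t = A(I+tE)$ and $\dot A_t = AE$, expanding gives
\[
Q_{A_t}(v) = (A_t v)\wedge(\dot A_t v) = (Av + t\, AEv)\wedge(AEv).
\]
By bilinearity and antisymmetry of $\wedge$, the term $t\,(AEv)\wedge(AEv)$ vanishes, so
\[
Q_{A_t}(v) = (Av)\wedge(AEv).
\]
Now invoke the standard identity $(Mu)\wedge(Mw) = (\det M)\,(u\wedge w)$ for any $2\times 2$ matrix $M$ (which is just the definition of $\det$ for $M=A$), giving
\[
Q_{A_t}(v) = (\det A)\,(v\wedge Ev),
\]
which is the first claimed equality.

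For the second equality, I would rewrite the area form in matrix language using $J=\begin{bmatrix}0&1\\-1&0\end{bmatrix}$. A direct coordinate check shows $v\wedge w = v^t J w$ for all $v,w\in\R^2$. Applied to $w=Ev$, this yields
\[
v\wedge Ev = v^t (JE) v.
\]
Any $2\times 2$ matrix $M$ satisfies $v^t M v = v^t\bigl((M+M^t)/2\bigr)v$ because the antisymmetric part contributes nothing to a quadratic form. Applying this to $M=JE$ and recalling $(JE)^t = -E^t J$ (since $J^t=-J$), we obtain
\[
v^t(JE)v = v^t\,\frac{JE+(JE)^t}{2}\,v = v^t\,\frac{JE - E^t J}{2}\,v = v^t E^\sharp v,
\]
which is exactly the definition of $E^\sharp$ in the statement. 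Combining the two displays gives the identity $Q_{A_t}(v) = (\det A)\,(v^t E^\sharp v)$.

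There is no real obstacle here; the only mild subtlety is keeping the sign conventions for $J$ and $(JE)^t$ straight, so I would carry out the coordinate verification $v\wedge w = v^t J w$ once explicitly and record $J^t = -J$ so that the two forms of $E^\sharp$ in the paper's definition are manifestly equal.
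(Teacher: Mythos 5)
Your argument is correct and follows the same computation as the paper: expand $Q_{A_t}(v)=(A_t v)\wedge(\dot A_t v)$, drop the self-wedge term, pull out $\det A$ via the standard wedge identity, and rewrite $v\wedge Ev = v^t JE\, v = v^t E^\sharp v$ after symmetrizing. The only (welcome) extra detail you supply is the explicit observation that the antisymmetric part of $JE$ contributes nothing to the quadratic form, which the paper leaves implicit.
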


\begin{proof} Since $(A E v) \wedge (A E v)=0$,
\begin{align*}
Q_{A_t}(v) &= (A_t v)\wedge (\dot A_t v) = (A (I+t E) v )\wedge  (A E v) \\
& =  (A  v )\wedge  (A E v) = (\det A)\, ( v \wedge  (E v)   )\\
& =   (\det A)\, \langle v,\, J E\,  v\rangle  =   (\det A)\, ( v^t \, E^\sharp\,  v)   .
\end{align*}
\end{proof}

Consider the function $\Delta:\Mat_2(\R)\to \R$  in Lemma~\ref{lemma: det(I+t E) formula}.
\begin{proposition}
	\label{pos winding charact}
The following are equivalent:
\begin{enumerate}
	\item The quadratic form $Q_{A_t}$ is positive (resp. negative) definite;
	\item  $\Delta_E>0$  and $e_{12}<0<e_{21}$ (resp. $\Delta_E>0$  and  $e_{21}<0<e_{12}$);
	\item $E$ has no real eigenvalues and the solutions of the linear O.D.E. \, $\dot X=E\, X$
	wind positively (resp. negatively) around the origin.
\end{enumerate}
\end{proposition}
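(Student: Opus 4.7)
The plan is to reduce all three conditions to explicit properties of the symmetric $2\times 2$ matrix $E^\sharp$. Since $A\in\GL_2(\R)$ satisfies $\det A>0$, the formula $Q_{A_t}(v)=(\det A)\, v^t E^\sharp v$ (just proved) shows immediately that (1) is equivalent to $E^\sharp$ being positive (resp. negative) definite. So the whole proposition amounts to characterizing, in two independent ways, when $E^\sharp$ is definite.

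First I would compute $E^\sharp = (JE - E^t J)/2$ explicitly. A direct calculation with $E=\begin{pmatrix} e_{11} & e_{12}\\ e_{21} & e_{22}\end{pmatrix}$ gives
$$E^\sharp = \begin{pmatrix} e_{21} & \tfrac{e_{22}-e_{11}}{2}\\[2pt] \tfrac{e_{22}-e_{11}}{2} & -e_{12}\end{pmatrix}, \qquad \det E^\sharp = -e_{12}e_{21}-\tfrac{(e_{22}-e_{11})^2}{4}=\tfrac{\Delta_E}{4}.$$
For the equivalence (1)$\Leftrightarrow$(2), I apply Sylvester's criterion: $E^\sharp$ is positive definite iff its $(1,1)$-entry $e_{21}$ is positive and $\det E^\sharp>0$, i.e. $\Delta_E>0$. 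When $\Delta_E>0$, the identity above forces $e_{12}e_{21}<-(e_{22}-e_{11})^2/4\le 0$, so $e_{21}>0$ automatically yields $e_{12}<0$. The negative-definite case is identical after flipping signs.

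For (1)$\Leftrightarrow$(3), I consider a trajectory $w(t)=e^{tE}v_0$ of $\dot X=EX$. A standard computation of the angular velocity of $w(t)$ with respect to the origin gives
$$\dot\theta(t)=\frac{w\wedge \dot w}{\norm{w}^2}=\frac{w\wedge Ew}{\norm{w}^2}=\frac{w^t E^\sharp w}{\norm{w}^2},$$
so every non-trivial solution winds positively (resp. negatively) around the origin precisely when $E^\sharp$ is positive (resp. negative) definite. Finally, the characteristic polynomial of $E$ has discriminant $(\tr E)^2-4\det E=-\Delta_E$, so $E$ has no real eigenvalues iff $\Delta_E>0$, which is already implied by $\det E^\sharp>0$ in (2); thus the non-real-eigenvalue condition in (3) is subsumed and the two halves of (3) together are equivalent to (2).

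I do not anticipate a real obstacle: the entire argument is a short sign-tracking computation once the explicit formula for $E^\sharp$ and the key identity $4\det E^\sharp=\Delta_E$ are in hand. The only mild care needed is to verify that in (3) the orientation convention ``positive winding = counterclockwise'' is consistent with the sign convention used in Definition~\ref{winding def}; this amounts to reconciling $v\wedge Ev = v^t E^\sharp v$ with the sign chosen for $Q_{A_t}$, which is already done in the proposition preceding this one.
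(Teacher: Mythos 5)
Your proof is correct and follows essentially the same route as the paper: both compute $E^\sharp$ explicitly, use $\det E^\sharp=\tfrac{1}{4}\Delta_E$, and reduce (1) to definiteness of $E^\sharp$ via the identity $Q_{A_t}(v)=(\det A)\,v^t E^\sharp v$ established in the preceding proposition. The equivalence (1)$\Leftrightarrow$(2) via Sylvester's criterion is exactly what the paper does implicitly.

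The only place you diverge is in (1)$\Leftrightarrow$(3), and your version is slightly more streamlined. The paper first argues that definiteness of $Q_{A_t}$ is equivalent to $E$ having no real eigenvalues (via the observation that $v\wedge Ev=0$ iff $v$ is a real eigendirection), and then appeals to the tangency of the curves $t\mapsto A\,e^{tE}v$ and $t\mapsto A\,(I+tE)v$ at $t=0$ to transfer the winding \emph{direction} from the affine curve to the ODE flow. You instead compute the angular velocity of a solution $w(t)=e^{tE}v_0$ directly and observe it is $w^t E^\sharp w/\norm{w}^2$ — literally the same quadratic form — which gives (1)$\Leftrightarrow$(3) in one stroke, with the non-real-eigenvalue clause falling out as a byproduct (a real eigendirection would be a non-winding solution). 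Both arguments are sound; yours avoids the tangency detour by noticing that the affine curve and the exponential flow share the same signed angular velocity quadratic form.
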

\begin{proof}
 Simple calculations give
\begin{equation}
\label{E sharp characterization}
E^\sharp = 
\begin{bmatrix}
	e_{21} & \frac{e_{22} - e_{11}}{2} \\
	\frac{e_{22} - e_{11}}{2}  &  -e_{12}
\end{bmatrix}\; \text{ and } \; \det E^\sharp = \frac{1}{4}\, \Delta_E  
\end{equation} 
from which the equivalence (1)$\Leftrightarrow$ (2) follows.
For the equivalence (1)$\Leftrightarrow$ (3)
notice that $v\wedge (E v)=0$ if and only if
$v$ is an eigendirection of $E$ associated with some real eigenvalue. Therefore, the quadratic form $Q_{A_t}$
is definite (positive or negative) if and only if
$E$ has no real eigenvalues and this happens exactly when the discriminant  $-\Delta_E$ of the polynomial $\det(I+\lambda\, E)$ is strictly negative.
Finally notice that since the curves $A\, e^{t E}\,v$ and $A\, (I+t\, E)\, v$ are tangent at $t=0$, they wind in the same direction.
\end{proof}

\begin{proposition}
	\label{pos winding charact, case Delta=0}
	The following are equivalent:
	\begin{enumerate}
		\item The quadratic form $Q_{A_t}$ is positive (resp. negative) semi-definite but not definite;
		\item  $\Delta_E=0$, $e_{12}\leq 0\leq e_{21}$ and $e_{12}<e_{21}$  (resp. $\Delta_E=0$,  $e_{21}\leq 0\leq e_{12}$ and $e_{12}>e_{21}$);
		\item $E$ has a double real eigenvalue and the solutions of the linear O.D.E. \, $\dot X=E\, X$
		wind positively (resp. negatively) around the origin.
	\end{enumerate}
\end{proposition}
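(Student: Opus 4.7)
My plan is to mirror the strategy used in the proof of Proposition~\ref{pos winding charact}, relying on the identity
$$Q_{A_t}(v) = (\det A)\,(v^t E^\sharp v)$$
together with the explicit formula for $E^\sharp$ and the relation $\det E^\sharp = \Delta_E/4$ established in~\eqref{E sharp characterization}. Since $A\in\GL_2(\R)$ gives $\det A>0$, the sign character of the quadratic form $Q_{A_t}$ matches exactly that of the symmetric matrix $E^\sharp$, so the whole problem reduces to understanding when a $2\times 2$ symmetric matrix is semi-definite-but-not-definite with one eigenvalue bounded away from $0$, as required by Definition~\ref{winding def}.

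First, for (1)$\Leftrightarrow$(2), I will use the following elementary linear algebra fact: a nonzero $2\times 2$ symmetric matrix with vanishing determinant is rank one, hence has one zero eigenvalue and one nonzero eigenvalue equal to its trace; it is positive semi-definite precisely when its trace is strictly positive, and then both diagonal entries are automatically non-negative (since they cannot have opposite signs once $\det=0$). Applied to $E^\sharp$, whose diagonal entries are $e_{21}$ and $-e_{12}$ and whose trace is $e_{21}-e_{12}$, this gives that $Q_{A_t}$ is positive semi-definite but not definite with one eigenvalue bounded away from $0$ iff $\Delta_E=0$, $e_{12}<e_{21}$, and $e_{21}\geq 0\geq e_{12}$. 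The negative case is analogous, swapping the roles.

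For (2)$\Leftrightarrow$(3), the double eigenvalue claim is immediate: the characteristic polynomial of $E$ has discriminant $(\tr E)^2-4\det E=-\Delta_E$, so $\Delta_E=0$ iff $E$ admits a real eigenvalue of multiplicity two. For the winding direction, note that $\frac{d}{dt}(e^{tE}v)\big|_{t=0}=E\,v$, so the infinitesimal signed area swept out by a solution of $\dot X=E\,X$ at $v$ equals $v\wedge (Ev) = v^t E^\sharp v$ (the computation performed in the proof of Proposition~\ref{pos winding charact}). Hence the flow of $\dot X=E\,X$ winds positively around the origin iff $E^\sharp$ is positive semi-definite, and since the curves $t\mapsto A(I+tE)v$ and $t\mapsto Ae^{tE}v$ are tangent at $t=0$, the two winding directions coincide.

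The only subtle step is the first one: I need to make sure that the condition ``semi-definite but not definite with one eigenvalue bounded away from $0$'' in Definition~\ref{winding def} is precisely captured by the strict inequality $e_{12}<e_{21}$ (or its reverse), and is not the vacuous case $E^\sharp\equiv 0$. This happens because $\Delta_E=0$ combined with $e_{12}=e_{21}=0$ forces $e_{11}=e_{22}$ (via $\det E^\sharp = -((e_{22}-e_{11})/2)^2-e_{12}e_{21}=0$), and hence $E^\sharp=0$; the strict inequality in (2) excludes exactly this degeneracy and thereby guarantees the rank-one, bounded-away-from-zero eigenvalue required in the winding definition.
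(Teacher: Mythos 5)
Your argument is correct and takes essentially the same route the paper intends: the paper's proof of this proposition consists of a single line — ``Same argument as in the proof of Proposition~\ref{pos winding charact}'' — and you have simply carried out that argument in detail, using the identity $Q_{A_t}(v)=(\det A)(v^t E^\sharp v)$ together with the explicit expression \eqref{E sharp characterization} for $E^\sharp$, the relation $\det E^\sharp=\Delta_E/4$, and the tangency of $t\mapsto A(I+tE)v$ with $t\mapsto Ae^{tE}v$ at $t=0$. The one step you did that the terse reference leaves implicit, and that is genuinely worth spelling out, is the classification of rank-one symmetric $2\times 2$ matrices by the sign of their trace, and in particular the observation that the strict inequality $e_{12}<e_{21}$ in item (2) is exactly what rules out $E^\sharp=0$ (equivalently, $E$ scalar), which would make the ``winding'' claim in item (3) vacuously false. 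That remark resolves a small imprecision in item (1) as literally stated (``semi-definite but not definite'' would by itself allow $Q_{A_t}\equiv 0$), and it is a useful clarification.
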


\begin{proof}
Same argument as in the proof of Proposition~\ref{pos winding charact}.
\end{proof}

Consider the seminorm $\Xi$ and the  following sets
introduced in Subsection~\ref{subsection: conditions for winding}. 
\begin{align*}
	\Gamma_+ &:=\left\{ E \in\Mat_2(\R) \colon \Xi(E)>0,\, \Delta_E\geq 0  \, \text{ and }\,  e_{21}\leq 0\leq e_{12} \right\} \\
	\Gamma_- &:=\left\{ E \in\Mat_2(\R) \colon \Xi(E)>0,\, \Delta_E\geq 0 \, \text{ and }\,  e_{12}\leq 0\leq e_{21} \right\} 
\end{align*}
The intersection of these sets  is empty, i.e., $\Gamma_-\cap \Gamma_+=\emptyset$.

\begin{proposition}
\label{proposition: Gamma characterization}
Using the previous definitions:
\begin{align*}
	\Gamma_+ &=\{ E\in\Mat_2(\R)\, \colon \, Q_{I+t\,E} \, \text{ is positive definite or semi-definite } \} \\
	\Gamma_- &=\{ E\in\Mat_2(\R)\, \colon \, Q_{I+t\,E} \, \text{ is negative definite or  semi-definite } \} 
\end{align*}
\end{proposition}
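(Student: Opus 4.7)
The statement is a direct packaging of Propositions~\ref{pos winding charact} and~\ref{pos winding charact, case Delta=0} specialized to the base matrix $A=I$. Since $\det(I)=1$, the formula $Q_{A_t}(v)=(\det A)\,(v^t E^\sharp v)$ collapses to $Q_{I+tE}(v)=v^t E^\sharp v$, so the signature of the quadratic form $Q_{I+tE}$ coincides exactly with the signature of the symmetric $2\times 2$ matrix $E^\sharp$. The plan is therefore to translate ``$E^\sharp$ is positive (resp.\ negative) semi-definite and non-zero'' into the defining matrix-entry inequalities of $\Gamma_+$ (resp.\ $\Gamma_-$) using the explicit expression
$$E^\sharp=\begin{bmatrix} e_{21} & (e_{22}-e_{11})/2 \\ (e_{22}-e_{11})/2 & -e_{12}\end{bmatrix},\qquad \det E^\sharp=\Delta_E/4,$$
recorded in~\eqref{E sharp characterization}.

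Concretely, I would apply Proposition~\ref{pos winding charact} with $A=I$ to obtain the strict (definite) case, where $Q_{I+tE}$ is positive definite iff $\Delta_E>0$ together with the strict sign condition on $e_{12}$ and $e_{21}$ forced by positivity of the two diagonal entries of $E^\sharp$, and the analogous negative version. Then Proposition~\ref{pos winding charact, case Delta=0} handles the boundary case $\Delta_E=0$, where $Q_{I+tE}$ is semi-definite but not definite. Taking the union of these two disjoint cases and observing that $\Delta_E>0$ already forces $e_{12}\cdot e_{21}<0$ (otherwise $\Delta_E=-(e_{11}-e_{22})^2-4e_{12}e_{21}\le 0$), the strict and non-strict inequalities on $e_{12},e_{21}$ collapse to the weak ones, and the combined characterization matches the matrix-entry conditions in the definitions of $\Gamma_\pm$.

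The only point that requires a short check is that the non-triviality condition $e_{12}\neq e_{21}$ appearing in Proposition~\ref{pos winding charact, case Delta=0} is equivalent, under the other constraints, to $\Xi(E)>0$. Indeed, if $\Xi(E)=0$ then $e_{12}=e_{21}=0$ and $e_{11}=e_{22}$, forcing $E^\sharp=0$ (hence $Q_{I+tE}\equiv 0$); conversely, if $e_{12}\le 0\le e_{21}$ with $e_{12}=e_{21}$, then both are zero and $\Delta_E=0$ forces $e_{11}=e_{22}$, giving $\Xi(E)=0$. So excluding this degenerate case by $\Xi(E)>0$ is the same as excluding $E^\sharp=0$. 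There is no real obstacle; the main thing to be careful about is the sign bookkeeping between the two equivalent descriptions of $Q_{I+tE}$'s signature and the sign of the off-diagonal block entries $e_{12},e_{21}$ of $E$, which is handled uniformly by the explicit form of $E^\sharp$ above.
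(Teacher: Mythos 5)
Your proof is correct and follows essentially the same route as the paper's own argument: a case split on $\Delta_E=0$ versus $\Delta_E>0$, combined with the explicit form of $E^\sharp$ recorded in~\eqref{E sharp characterization} and the equivalences from Propositions~\ref{pos winding charact} and~\ref{pos winding charact, case Delta=0}, with the final observation that $\Xi(E)>0$ exactly excludes the degenerate case $E^\sharp=0$. One caveat worth flagging: the paper's displayed definition of $\Gamma_+$ uses $e_{21}\le 0\le e_{12}$, which is inconsistent with the sign of the diagonal entries of $E^\sharp$, with the definition of $N_+$, and with the Schr\"odinger example at the end of the appendix; both your proof and the paper's own proof of this proposition quietly use the corrected condition $e_{12}\le 0\le e_{21}$ for $\Gamma_+$ (and the mirror one for $\Gamma_-$).
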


\begin{proof}
Notice that
$$\Delta_E=4\,(e_{11}\, e_{22}- e_{21}\, e_{21}) -(e_{11}+e_{22})^2  = -4\, e_{12}\, e_{21}-(e_{11}-e_{22})^2  $$
so that if \, $\Delta_E=0$ and $e_{12}\leq 0 \leq e_{21}$ then  \;    
$$E\in\Gamma_+ \; \Leftrightarrow\;  \Xi(E)>0\; \Leftrightarrow\;  e_{12}< e_{21}\; \Leftrightarrow\;  Q_{I+t\,E} \, \text{ is positive  semi-def. } $$
Similarly, if \, $\Delta_E>0$ and $e_{12}\leq 0 \leq e_{21}$ then \,    
$$E\in\Gamma_+ \; \Leftrightarrow\;   \Xi(E)>0\; \Leftrightarrow\;  e_{12}< e_{21} \; \Leftrightarrow\;  Q_{I+t\,E} \, \text{ is positive  definite. }$$
An entirely analogous argument  works  for $\Gamma_-$.
\end{proof}

\begin{example}
	
In the case of Schr\"odinger matrices,
\begin{align*}
    S(x - t) :=
    \begin{bmatrix}
        x-t & -1\\
        1 & 0
    \end{bmatrix},
\end{align*}
we can write
\begin{align*}
    S(x - t) =
    \begin{bmatrix}
        x & -1\\
        1 & 0
    \end{bmatrix}
    \left(
        I + t\, 
        \begin{bmatrix}
            0 & 0\\
            1 & 0
        \end{bmatrix}
    \right) =: A\, (I + t\, E).
\end{align*}
with $E \in \Gamma_+$.
\end{example}

\medskip

\subsection*{Acknowledgments}
J.B., A.C., P.D. and C.F. were  supported by FCT-Funda\c{c}\~{a}o para a Ci\^{e}ncia e a Tecnologia through the project  PTDC/MAT-PUR/29126/2017.

J.B.  was also  supported by  the Center of Excellence "Dynamics, Mathematical Analysis and Artificial Intelligence" at Nicolaus Copernicus University in Torun.

P.D.   was also supported by CMAFCIO through FCT project  UIDB/04561/2020.

A.C. was also supported by a FAPERJ postdoctoral grant.

S.K. was supported by the CNPq research grant 313777/2020-9 and  by the Coordena\c{c}\~ao de Aperfei\c{c}oamento de Pessoal de N\'ivel Superior - Brasil (CAPES) - Finance Code 001.

\bigskip

\bibliographystyle{amsplain}

\providecommand{\bysame}{\leavevmode\hbox to3em{\hrulefill}\thinspace}
\providecommand{\MR}{\relax\ifhmode\unskip\space\fi MR }
\providecommand{\MRhref}[2]{%
	\href{http://www.ams.org/mathscinet-getitem?mr=#1}{#2}
}
\providecommand{\href}[2]{#2}

\end{document}